\tikzstyle{none}=[inner sep=0pt]
\tikzstyle{red}=[fill=red, draw=red, shape=circle,minimum size=1mm]
\tikzstyle{blue}=[fill=cyan, draw=cyan, shape=rectangle, minimum size=2mm]
\tikzstyle{sred}=[fill=red!20, draw=red, shape=circle, minimum size=1pt]
\tikzstyle{sblue}=[fill=cyan, draw=cyan, regular polygon, regular polygon sides=3, rotate=180, minimum size=1pt]
\definecolor{mygreen}{RGB}{4,200 ,7}
\tikzstyle{green}=[-, draw=mygreen]
\tikzstyle{black}=[-, fill={rgb,255: red,251; green,255; blue,5}, fill opacity = 0.4]
\pgfplotsset{compat=1.15}
\theoremstyle{plain}
\newtheorem{theorem}{Theorem}[section]
\newtheorem{lemma}[theorem]{Lemma}
\newtheorem{proposition}[theorem]{Proposition}
\newtheorem{corollary}[theorem]{Corollary}
\newtheorem{conjecture}[theorem]{Conjecture}
\newtheorem*{theorem*}{Theorem}
\theoremstyle{definition}
\newtheorem{definition}[theorem]{Definition}
\newtheorem{example}[theorem]{Example}
\newtheorem{remark}[theorem]{Remark}
\theoremstyle{remark}
\newtheorem{claim}[theorem]{Claim}
\renewcommand{\bar}{\overline}
\renewcommand{\tilde}{\widetilde}
 \gdef\Young#1{\hbox{$\vcenter
 {\mathcode`,="8000\mathcode`|="8000
  \def,{\global\advance\cols by 1 &}%
  \def|{\cr
        \multispan{\the\cols}\hrulefill\cr
        &\global\cols=2 }%
  \offinterlineskip\everycr{}\tabskip=0pt
  \dimen0=\ht\strutbox \advance\dimen0 by \dp\strutbox
  \halign
   {\vrule height \ht\strutbox depth \dp\strutbox##
    &&\hbox to \dimen0{\hss$##$\hss}\vrule\cr
    \noalign{\hrule}&\global\cols=2 #1\crcr
    \multispan{\the\cols}\hrulefill\cr%
   }
 }$}}
\gdef\Skew(#1:#2){\hbox{$\vcenter
{\mathcode`,="8000\mathcode`|="8000
  \dimen0=\ht\strutbox \advance\dimen0 by \dp\strutbox
  \def\boxbeg{\vbox
    \bgroup\hrule\kern-0.4pt\hbox to\dimen0\bgroup\strut\vrule\hss$}%
  \def\boxend{$\hss\egroup\hrule\egroup}%
  \def,{\boxend\boxbeg}%
  \def|##1:{\boxend\vrule\egroup\nointerlineskip\kern-0.4pt
    \moveright##1\dimen0\hbox\bgroup\boxbeg}%
  \def\\##1\\##2:{\boxend\vrule\egroup\nointerlineskip\kern-0.4pt
    \kern ##1\dimen0\moveright##2\dimen0\hbox\bgroup\boxbeg}%
  \moveright#1\dimen0\hbox\bgroup\boxbeg#2\boxend\vrule\egroup
 }$}}
\newcommand{\halfcirc}[2]{
    \draw [fill=cyan] (#1,#2) circle (2.5pt); 
    \fill[red] (#1,#2) --++ (180:2.5pt) arc (180:0:2.5pt) -- cycle; 
}
\newcommand{\bbQ}{\mathbb{Q}}
\newcommand{\bbZ}{\mathbb{Z}}
\newcommand{\bbN}{\mathbb{N}}
\newcommand{\bbC}{\mathbb{C}}
\newcommand{\bbR}{\mathbb{R}}
\newcommand{\bbT}{\mathbb{T}}
\newcommand{\raw}{\rightarrow}
\newcommand{\ra}{\rightarrow}
\newcommand{\affphi}{\widehat{\phi}}
\newcommand{\affell}{\widehat{\ell}}
\newcommand{\calA}{\mathcal{A}}
\newcommand{\calB}{\mathcal{B}}
\newcommand{\calP}{\mathcal{P}}
\newcommand{\calN}{\mathcal{N}}
\newcommand{\calD}{\mathcal{D}}
\newcommand{\affD}{\widehat{\calD}}
\newcommand{\Gr}{\mathcal{G}r}
\newcommand{\sch}[1]{\bar{\Gr_{#1}}}
\newcommand{\affW}{\widehat{W}}
\newcommand{\affX}{\widehat{X}}
\newcommand{\affPhi}{\widehat{\Phi}}
\newcommand{\affDelta}{\widehat{\Delta}}
\newcommand{\affeps}{\widehat{\epsilon}}
\newcommand{\affe}{\widehat{e}}
\newcommand{\afff}{\widehat{f}}
\newcommand{\eps}{\epsilon}
\newcommand{\op}{\operatorname}
\newcommand{\undc}{\underline{c}}
\newcommand{\extW}{\widehat{W}_{ext}}
\newcommand{\lam}{\lambda}
\newcommand{\lamk}{\lambda-k\varpi_2}
\newcommand{\lama}{\lambda-a\varpi_2}
\newcommand{\lamA}{\lambda-A\varpi_2}
\newcommand{\bPsi}{\bar{\Psi}}
\newcommand{\tightoverset}[2]{%
  \mathop{#2}\limits^{\vbox to -.2ex{\kern-0.75ex\hbox{$#1$}\vss}}}
\newcommand{\lce}{\left\lceil}
\newcommand{\lfl}{\left\lfloor}
\newcommand{\frf}[1]{\left\lfloor\frac{#1}{2}\right\rfloor}
\newcommand{\frc}[1]{\left\lceil\frac{#1}{2}\right\rceil}
\newcommand{\rce}{\right\rceil}
\newcommand{\rfl}{\right\rfloor}
\newcommand{\calH}{\mathcal{H}}
\newcommand{\bfH}{\mathbf{H}}
\newcommand{\undH}{\underline{\bfH}}
\newcommand{\bfN}{\mathbf{N}}
\newcommand{\tilN}{\tilde{\bfN}}
\newcommand{\htil}{\tilde{h}}
\newcommand{\Phio}{\Phi^{-\bar 1}}
\DeclareMathOperator{\Ima}{Im}
\DeclareMathOperator{\Arr}{Arr}
\DeclareMathOperator{\sgn}{sgn}
\DeclareMathOperator{\wt}{wt}
\DeclareMathOperator{\Conv}{Conv}
\DeclareMathOperator{\pat}{pat}
\DeclareMathOperator{\at}{at}
\DeclareMathOperator{\str}{str}
\DeclareMathOperator{\HL}{HL}
\DeclareMathOperator{\word}{word}
\DeclareMathOperator{\grdim}{grdim}
\newcommand{\mylabel}[2]{#2\def\@currentlabel{#2}\label{#1}}
\newcommand{\stn}{\str_2}
\title{Atoms and charge in type $C_2$}
\author{Leonardo Patimo, Jacinta Torres}
\begin{document}

\maketitle
\begin{abstract}
     We construct atomic decompositions for crystals of type $C_{2}$ and define a charge statistic on them, thus providing positive combinatorial formulas for Kostka--Foulkes polynomials associated to them together with a natural geometric interpretation.
\end{abstract}
\tableofcontents

\section*{Introduction}
Let $\mathfrak{g}$ be the symplectic Lie algebra $\mathfrak{sp}_4(\bbC)$, i.e. the simple Lie algebra of type $C_2$. 
The irreducible $\mathfrak{g}$-modules are the highest weight modules $V(\lam)$, with $\lambda$ a  dominant weight. Given an arbitrary weight $\mu$, we denote by $d_{\lam,\mu}$ the \emph{weight multiplicity}, i.e. the dimension of the weight space $ V(\lam)_\mu$.  
The weight multiplicity $d_{\lam,\mu}$ admits a $q$-analogue, known as the Kostka--Foulkes polynomial $K_{\lam,\mu}(q)$, so that $K_{\lam,\mu}(1)=d_{\lam,\mu}$. The Kostka--Foulkes polynomials have a natural representation-theoretic interpretation since  their coefficients record the dimension of the graded pieces of the Brilinski--Konstant filtration on weight spaces. Additionally, these polynomials are also (up to renormalization) special cases of affine Kazhdan--Lusztig polynomials and have positive coefficients.

The goal of this paper is to give a combinatorial interpretation for the Kostka--Foulkes polynomials $K_{\lam,\mu}(q)$. 
Finding  such a combinatorial formula amounts to finding:
\begin{enumerate}
\item a set $\calB(\lambda)_\mu$ of cardinality $d_{\lam,\mu}$ parametrizing a basis of the $\mu$-weight space $V(\lambda)_{\mu}$.
\item a combinatorial statistic $c: \calB(\lambda)_\mu \rightarrow \mathbb{Z}_{>0}$, called the \emph{charge}, such that the Kostka--Foulkes polynomial $K_{\lambda,\mu}$ is a generating function of $\op{ch}$ on $\calB(\lambda)_\mu$,
\[K_{\lambda,\mu}(q) = \underset{T \in \calB(\lambda)_\mu}{\sum} q^{c(T)}. \]
\end{enumerate}
\noindent The set $\calB(\lam)_\mu$ has many known realizations, some of which are geometric, such as Littelmann paths, others algebro-geometric, such as Mirković--Vilonen cycles, and some purely combinatorial, such as semi-standard Young tableaux in type A or Kashiwara--Nakashima tableaux for classical types. An important feature that all of these models have in common is that they are endowed with a \textit{crystal structure}, that is, for each of these models the set $\calB(\lambda)=\bigcup \calB(\lam)_\mu$ has cardinality $\op{dim}(V(\lambda))$ and can be endowed with  the structure of a normal crystal (cf. \cite{BG01,BSch17} ).

In type $A$,  the charge statistic was first described by Lascoux and \break Schützenberger in 1978  using a combinatorial procedure on tableaux called cyclage   \cite{lscharge}. In 1995, Lascoux, Leclerc and Thibon \cite{llt95} provided another formulation of the charge statistic in terms of the crystal structure on tableaux.

In a recent work by the first named author \cite{Pat}, an alternative description of the charge statistic was obtained through a more geometric approach, which involves translating the problem of finding the charge onto the affine Grassmannian, where it becomes a variation problem for the hyperbolic localization functor.
 This geometric approach makes the problem of finding a charge statistic more accessible even beyond type $A$ (to this day this remains a mostly  open problem, except that in some special situations, such as row tableaux in type $C$ \cite{DGT} or in weight $0$ \cite{LL20}).  In fact, in the present paper we develop a similar strategy to construct a charge statistic in type $C_2$. We believe that this strategy can be further extended to cover groups of higher ranks. 
 
\subsection*{Charges via the affine Grassmannian}

 We now briefly recall the results in \cite{Pat}, at the heart of which lies the geometric Satake correspondence. Recall that the affine Grassmannian associated to $G^{\vee}$ is endowed with an action of the extended torus $ \hat{T} = T^{\vee} \times \mathbb{C}^{*}$. For $\lam\in X_+$ let $\bar{\Gr^\lam}$ denote the corresponding Schubert variety in the affine Grassmannian of $G^\vee$ (cf. \cite[\S 2.1.2.]{Pat}). For any regular $\eta \in \affX$ and any $\mu \leq \lam$ the hyperbolic localization induces a functor 
\[ \HL^\eta_\mu: \calD^b_{T^\vee \times \bbC^*}(\bar{\Gr^\lam})\raw \calD^b(pt)\cong \mathrm{Vect}^\bbZ,\]
where $\calD^b_{T^\vee \times \bbC^*}(\bar{\mathcal{G}r}^\lam)$ is the derived category of $T^\vee \times \bbC^*$-equivariant constructible sheaves on the Schubert variety $\bar{\Gr^\lam}$ with $\bbQ$-coefficients, and $\calD^b(pt)$ is the derived category of sheaves on a point, which is equivalent to the category of graded $\bbQ$-vector spaces (see \cite[\S 2.4]{Pat}).
In general, for any regular $\eta\in \affX_\bbQ$ we can define $\HL^\eta_\mu$ as $\HL^{N\eta}_\mu$, where $N$ is any positive integer such that $N\eta\in \affX$. By abuse of terminology, we are then allowed to refer to all the elements in $\affX_\bbQ$ as cocharacters.

 If $\eta$ is a dominant $T^{\vee}$ cocharacter, then the hyperbolic localization functors are the \textit{weight functors}, which send the intersection cohomology sheaf $IC_{\lambda}$ to the weight space $V(\lambda)_{\mu}$ of the simple module $V(\lambda)$. In this case, as in \cite{Pat},  we say that $\eta$ is in the \textit{MV region}, where $MV$ is short for Mirkovi\'c--Vilonen. 
 If $\eta$ is $\hat{T}$-dominant, that is, dominant for the affine root system, then the hyperbolic localization functors return graded vector spaces whose graded dimensions are renormalized Kostka--Foulkes polynomials. In this case, we say that $\eta$ is in the \textit{KL region}, where $KL$ is short for Kazhdan--Lusztig. 

Let $\htil^\eta_{\mu,\lam}(v) := \grdim(\HL^\eta_\mu(IC_\lam))$. The polynomials $\htil^\eta_{\mu,\lam}(v)$ are called \emph{renormalized $\eta$-Kazhdan--Lusztig polynomials}. We say that a function $r_{\eta}:\calB(\lam)\raw \bbZ$ is a $\eta$-\emph{recharge} for $\eta$ if we have
\[\htil^\eta_{\mu,\lam}(q^{\frac12})=\sum_{T\in \calB(\lam)_\mu} q^{r_{\eta}(T)}\in \bbZ[q^{\frac12},q^{-\frac12}].\]
If $\eta_{KL}$ is in the KL chamber and $\mu \in X_+$, then
\[K_{\mu,\lam}(q)=\htil^{\eta_{KL}}_{\mu,\lam}(q^{\frac12})q^{\frac12 \ell(\mu)}\] is a Koskta--Foulkes polynomial by \cite[Proposition 2.14]{Pat}. So if $r_{KL}$ is a recharge for $\eta_{KL}$  in the KL region, we obtain a charge statistic $c:\calB(\lam)\raw \bbZ$ by setting $c(T):=r_{KL}(T)+\frac 12 \ell(\wt(T))$. 
Notice that if $\wt(T)\in X_+$ this is equal to $c(T)=r_{KL}(T)+\langle \wt(T),\rho^\vee\rangle$.\\

 It turns out that the only situation where the hyperbolic localization functors change their value is when they ``cross'' a hyperplane of the form 
 
 \begin{align*}
 H_{\alpha^{\vee}} = \left\{\eta \in X_{*}( \hat{T}) | \langle \eta,\alpha^{\vee}\rangle = 0 \right\}
 \end{align*}
 
 \noindent
 where $\alpha^{\vee}$ is a positive real root in the root system corresponding to the Langlands dual group $G^{\vee}$. In \cite{Pat}, the first named author has observed a simple rule to compute the hyperbolic localization functor after crossing such a wall. Let $\eta_{1}$ and $\eta_{2}$ be two cocharacters on opposite sides of such a wall $H_{\alpha^{\vee}}$. Then by \cite[Proposition 2.33]{Pat} we have, for $\nu = s_{\alpha^{\vee}}(\mu)$ such that $\mu < \nu \leq \lambda$:

 \begin{align*}
\htil^{\eta_2}_{\nu,\lam}(v) &= v^{-2}\htil^{\eta_1}_{\nu,\lam}(v) \hbox{ and }\\
\htil^{\eta_2}_{\mu,\lam}(v) &= \htil^{\eta_1}_{\mu,\lam}(v) + (1 - v^{-2})\htil^{\eta_1}_{\nu,\lam}(v).
 \end{align*}

 In order to efficiently track these changes, \textit{swapping functions} $\phi : \calB(\lambda)_{\mu} \rightarrow \calB(\lambda)_{s_{\alpha^{\vee}}(\mu)}$ are constructed with the property that $r_{\eta_1}(T) - 1 = r_{\eta_1}(\phi (T))$. This allows a definition of a recharge statistic $r_{\eta_2}$ for $\eta_2$ given a recharge statistic $r_{\eta_1}$ for $\eta_1$. To do this, \textit{modified root operators} $e_{\alpha}, f_{\alpha}$ are constructed for any positive root $\alpha \in \Phi$. Combining this with the \textit{atomic decomposition} of the crystals $B(\lambda)$ in type $A_{n-1}$ given by Lecouvey--Lenart \cite{LL21}, which are obtained independently in \cite{Pat}, it is shown that the charge statistic giving the Kostka--Foulkes polynomials in type $A_{n-1}$ is given by the sum $\sum_{\alpha \in \Phi^{+}} \epsilon_{\alpha}(b)$.


 \subsection*{Results}

 Our main results consist of the atomic decomposition of the type $C_2$ crystals $\mathcal{B}(\lambda)$, as well as the construction of swapping functions. As a result we obtain the following formula for the  charge statistic in type $C_2$
 \begin{align*}
     c: \calB(\lam)_+ &\raw \bbN
     T\mapsto \eps_1(T)+\eps_2(T)+\eps_{12}(T)+\affeps_{21}(T)
 \end{align*}
 where $\affeps_{21}$ is not attached to a modified crystal operator, but rather depends on the atom in which $T$ sits.
This yields a positive combinatorial formula for the Kostka-- Foulkes polynomials. We outline our methodology below. 

 \subsubsection*{Atomic decompositions and charge statistics}
In \cite{Pat}, the first named author has shown that the LL atoms \cite{LL21} coincide with the connected components of the graph with same vertices as $\calB(\lambda)$, given by the $W$-closure of the $f_n$-orbits. This is one of the first constraints which appears when considering type $C_2$ crystals: the $W, f_2$ connected components are not atoms (cf. \Cref{def:atom}). This calls for an alternative approach. As in \cite{Pat}, the language of adapted strings will be an important tool for us. We first define an embedding of crystals (cf. \Cref{emb})

\[\Phi: \calB(\lambda) \rightarrow \calB(\lambda + 2\varpi_1).\]

We call the complement of $\Phi$ in $\calB(\lambda + 2\varpi_1)$ the \textit{principal preatom} $\calP(\lambda+2\varpi_1)$. If $\lambda = \lambda_1 \varpi_1 + \lambda_2 \varpi_2$ is such that $\lambda_1 \leq 1$, we define $\calP(\lambda): = \calB(\lambda)$.
The map $\Phi$ has an easy definition using the combinatorics of Kashiwara--Nakashima tableaux which allows to prove its properties directly, however, its reformulation in terms of adapted strings allows us to give equations describing the principal preatoms $\calP(\lambda)$, which we use throughout this work. A \textit{preatomic decomposition} of our crystal can be defined recursively. We show that the preatoms are stable under the $W$ and $f_2$ action, hence naturally generalize the LL atoms. Once the preatomic decomposition of our crystal has been defined, we are ready to define its atomic decomposition. In \Cref{atomemb} we show that there exists a weight-preserving injection

\[ \bPsi: \calP(\lambda) \rightarrow \calP(\lambda + \varpi_2) \]

\noindent
such that the sets $\calA(\lambda)$ given by the complement of $\bPsi$ for $\lambda_1 \neq 0$, respectively by the complement of $\bPsi^2$ for $\lambda_1 = 0$, are atoms. The map $\bPsi$ is defined explicitly on the string parameters for the reduced expression of the longest Weyl group element given by $s_2s_1s_2s_1$. An explicit description in terms of Kashiwara--Nakashima tableaux is provided in the appendix, although we do not need tableaux combinatorics in this paper. To show that the sets $\calA(\lambda)$ are  atoms, we resort to algebraic computations directly in the Hecke algebra. In particular, we make use of pre-canonical bases, introduced by Libedinsky--Patimo--Plaza in \cite{LPP}. In analogy to the Satake isomorphism, which in particular identifies the ungraded character of the character of $\calB(\lambda)$ with the specialization at $v =1$ of the corresponding element of the Kazhdan--Lusztig basis of the spherical Hecke algebra, in \Cref{PreatomPrecan} it is shown that the ungraded character corresponds to the specialization at $v =1$ of a modification $\tilN^{3}$ of the precanonical basis $\bfN^3$ introduced in (\ref{modifiedprecan3}).\\

In fact, the atomic and preatomic decompositions alone are already enough to define our charge statistic in type $C_2$. Let $T \in \calB(\lambda)$. We define in \Cref{preatomicnumber,defatomicnumber} the \emph{atomic number} $\at(T)$ and the \emph{preatomic number} $\pat(T)$ to be the positive integers such that 

\[T\in \calA(\lam-\at(T)\varpi_2-2\pat(T) \varpi_1)\subset \calP(\lam-2\varpi_1(T))\subset \calB(\lam).\]

A consequence of our main result reads as follows (cf. \Cref{maincharge}).

\begin{theorem*}
The function
$c:\calB(\lam)_+\raw \bbZ$ defined as
\[c(T)=\langle \lambda -\wt(T),\rho^\vee\rangle -\at(T)-\pat(T)\]
is a charge statistic.
\end{theorem*}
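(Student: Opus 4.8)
The strategy is to derive the statement from the main charge formula of the paper, namely that $c(T)=\eps_1(T)+\eps_2(T)+\eps_{12}(T)+\affeps_{21}(T)$ is a charge statistic — this is where the modified root operators, the swapping functions attached to the walls separating the MV and KL regions, the wall-crossing formula and normalisations of \cite{Pat}, and the Hecke-algebraic identification of atoms via the modified pre-canonical basis $\tilN^3$ (\Cref{PreatomPrecan}) all enter. Granting that, it suffices to establish the pointwise identity
\[\eps_1(T)+\eps_2(T)+\eps_{12}(T)+\affeps_{21}(T)=\langle\lam-\wt(T),\rho^\vee\rangle-\at(T)-\pat(T)\]
for every $T\in\calB(\lam)_+$; then the right-hand side, being equal to a charge statistic, is one, and in particular lies in $\bbN$.

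To prove the identity I would induct on $\lam$, say on $\lam_1+\lam_2$, using the two embeddings. By the preatomic and atomic decompositions, a given $T\in\calB(\lam)_+$ either lies in the atom $\calA(\lam)$ — the case $\at(T)=\pat(T)=0$ — or (since $\Phi$ and $\bPsi$ preserve weights, hence dominance) is of the form $\Phi(T')$ with $T'\in\calB(\lam-2\varpi_1)_+$ or $\bPsi(T')$ with $T'\in\calP(\lam-\varpi_2)_+\subseteq\calB(\lam-\varpi_2)_+$ (with the variant $\bPsi^2$ when $\lam_1=0$), and $\pat$, $\at$ record the number of applications of $\Phi$, resp.\ of $\bPsi$ (up to the $\lam_1=0$ correction). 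For $T=\Phi(T')$ or $T=\bPsi(T')$ the right-hand side changes from its value at $T'$ by an explicit constant, determined by $\langle 2\varpi_1,\rho^\vee\rangle$ or $\langle\varpi_2,\rho^\vee\rangle$ and by the jump in $(\at,\pat)$; one checks that the left-hand side changes by the same constant using the explicit formulas for $\Phi$ and $\bPsi$ on the adapted string parameters for the reduced word $s_2s_1s_2s_1$ (\Cref{emb,atomemb}), which turns the verification into a finite, local computation on the string coordinates of how each of $\eps_1,\eps_2,\eps_{12},\affeps_{21}$ transforms. This reduces everything to the base case: the identity $\eps_1(T)+\eps_2(T)+\eps_{12}(T)+\affeps_{21}(T)=\langle\lam-\wt(T),\rho^\vee\rangle$ on a single atom.

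The base case is the heart of the matter. For it I would combine the defining equations of $\calA(\lam)$ on the string parameters (equivalently, its Kashiwara--Nakashima description from the appendix) with the identification in \Cref{PreatomPrecan} of the ungraded character of $\calA(\lam)$ with the $v=1$ specialisation of $\tilN^3$: reading off the graded structure of $\tilN^3$ pins down the values of the $\eps_\alpha$ on the atom and lets one check that they sum to $\langle\lam-\wt(T),\rho^\vee\rangle$. The main obstacle I expect is exactly this step, together with keeping control of $\affeps_{21}$ throughout the induction: since $\affeps_{21}$ is attached not to a crystal operator but to the atomic decomposition itself, every manipulation involving it — the transformation rules under $\Phi$ and $\bPsi$ as well as the base case — relies on the fine structure of atoms coming from the Hecke-algebra side, and the bookkeeping is most delicate in the $\lam_1=0$ regime, where $\bPsi$ must be iterated in pairs and the atoms are the complements of $\bPsi^2$. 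Once the atom case and the two transformation rules are in hand, the theorem follows by the induction above.
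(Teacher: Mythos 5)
Your proposal inverts the paper's logical order and, in doing so, introduces a gap it never closes. In the paper, \Cref{maincharge} is an immediate consequence of \Cref{maintheorem}: take $m=0$ (so that $\eta_0$ lies in the KL chamber), note that by definition $\calN_0=\calD_0=0$ and $\ell_0(\mu,\lama)=\ell(\mu)$, and substitute into \Cref{N=0} to get
\[
c(T)=r_0(T)+\tfrac12\ell(\wt(T))=\langle \lambda,\rho^\vee\rangle-\tfrac12\ell(\wt(T))-\at(T)-2\pat(T),
\]
which for dominant $\wt(T)$ is the displayed formula using $\ell(\wt(T))=2\langle\wt(T),\rho^\vee\rangle$. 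That is the whole proof. The formula $c=\eps_1+\eps_2+\eps_{12}+\affeps_{21}$ that you take as your starting point is a \emph{later} result of Section~6, itself derived from \Cref{maintheorem} by expressing $r_{KL}-r_{MV}$ as a sum of wall-crossing contributions $\Delta^\alpha$ and checking $\Delta^\alpha(T)=\eps_\alpha(T)$. Taking that as given and then converting back to the atomic/preatomic form by a fresh induction only re-derives something that falls out of \Cref{maintheorem} and \Cref{N=0} in one line, while relying on all the same machinery (swapping functions, wall-crossing, etc.) through Section~6.

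More seriously, your base case is not justified. You claim the identity on $\calA(\lam)$ can be ``read off'' from \Cref{PreatomPrecan}. But that proposition concerns the \emph{preatom} $\calP(\lam)$, not the atom, and it is an \emph{ungraded} character identity: $[\calP(\lam)]_{v=1}=(\tilN^3_\lam)_{v=1}$ equates multisets of weights and says nothing about individual values of $\eps_1,\eps_2,\eps_{12}$ on elements of a given weight. In particular, $\affeps_{21}$ is not attached to a crystal operator and cannot be read off from any Hecke-algebra coefficient; it is defined through the atomic highest weight $\zeta$. What actually pins down these quantities on atoms in the paper is the tropical identity of \Cref{atomicnumber} (verified by computer) together with \Cref{atomicphi2}, \Cref{phi12at0}, and \Cref{phi2claim}, and there is no shortcut through $\tilN^3$. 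Finally, the inductive step under $\bPsi$ is not a ``constant'' adjustment: by \Cref{phi1Psi} and \Cref{cor:phi12psi}, $\bPsi$ shifts $\phi_1$ and $\phi_{12}$ in a string-dependent way that is only \emph{collectively} compensated, so the claimed local check is more delicate than stated. The efficient and correct route is the paper's direct substitution of $m=0$.
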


Indeed, our main result \Cref{maintheorem} consists in the construction of a recharge statistic $r_{\eta_i}$ for each $\eta_i$ in a family of cocharacters defined in \Cref{family} which goes between the KL and MV regions. In order to construct such recharge statistics, we need first to carefully study the geometry of atoms in type $C_2$.

\subsubsection*{Twisted Bruhat graphs and non-swappable staircases}
We consider \textit{twisted Bruhat graphs} associated to a fixed infinite reduced decomposition $y_{\infty}$ in the affine Weyl group, as in \cite{Pat}. For any $m \in \mathbb{Z}_{>0}$, let $y_m$ be the product of the first $m$ elements of $y_{\infty}$ and let $N(y_m)$ be its set of inversions. The idea is to start off by considering the Bruhat graph $\Gamma_\lambda$ of a given dominant integral weight $\lambda$, that is, the moment graph of the Schubert variety $\sch{\lambda}$. The vertices of the graph $\Gamma_\lambda$ are all the weights lesser than or equal to  $\lam$ in the dominance order. We have an edge $\mu_1\raw \mu_2$ in $\Gamma_\lambda$ if and only if $\mu_2-\mu_1$ is a multiple of a root and $\mu_1\leq \mu_2$. From $\Gamma_{\lambda}$ we obtain our twisted Bruhat graph $\Gamma^{m}_{\lambda}$ by inverting the orientation of all the arrows in $\Gamma_\lambda$ with label in $N(y_m)$. For $\mu \leq \lambda$, let $\Arr_m(\mu,\lam)$ be the set of arrows pointing to $\mu$ in $\Gamma_m^\lambda$ and by $\ell_m(\mu,\lambda):=|\Arr_m(\mu,\lam)|$ the number of those arrows (cf. \Cref{twistedarrowslambdamu}). Let $t_{m+1}:=y_{m+1}y_m^{-1}$. If $\mu < t_{m+1}\mu$ then surprisingly, for  the twisted Bruhat graphs in type $A$ (\cite[Prop. 4.14]{Pat}) the following holds: $\ell_m(\mu,\lambda)=\ell_m(t_{m+1}\mu,\lambda)-1$ if $\mu <t_{m+1}\mu \leq \lambda$. This implies that
$\ell_{m+1}(\mu,\lambda)=\ell_m(t_{m+1}\mu,\lambda)$. However, as we show in \Cref{exampleswap}, this property does not hold in type $C_2$. In \Cref{def:swappableedge} we define an edge $\mu \raw t_{m+1}\mu$ in $\Gamma_{\lambda}$ to be \emph{swappable} if and only if 
\[
 \ell_m(\mu,\lambda)= \ell_m(t_{m+1}\mu,\lambda)-1.
\]

The whole of Section 4 is dedicated to their classification. We  pay particular attention to non-swappable edges. In \Cref{nonswappablenumber} we define the number of non-swappable edges in the following sense:
\[\calN_m(\mu,\lambda):=|\{k \leq m \mid  \mu<t_k\mu \leq \lambda \text{ and }\mu\raw t_k\mu\text{ is not swappable}\}|.\]

An important property of non-swappable edges is that they will always ``be swappable'' in an atom isomorphic to $\mathcal{A}(\lambda - k\varpi_2)$ for large enough $k$. This leads to the notion of non-swappable staircases (cf. \Cref{def:nonswappable}). Essentially, a non-swappable staircase  over $(\mu,\lambda)$ consists of a sequence of edges of the form $e_i:=(\mu \raw \mu - (n+i)\alpha)$ such that $e_i$ is non-swappable in $\calA(\lambda + i \varpi_2)$. We define  $\affD_m(\mu,\lam)$ to be the length of the longest NS-staircase over $(\mu,\lam)$ where the label of every edge in $e_i$ is a root in $N(y_m)$. Moreover, in \Cref{truncatedns} we define the following statistic, which considers only NS-staircases lying in a single preatom:
\[ \calD_m(\mu,\lam,k):=\min(k,\affD_m(\mu,\lam-k\varpi_2)).\]

We are now ready to define the recharge statistics $r_{\eta_m}$, which we define in \Cref{N=0}. For $T\in  \calP(\lam)\subset \calB(\lam')$ with $\mu:=\wt(T)$.  We define
\[ r_m(T):= -\ell_m(\mu,\lama)+\calN_m(\mu,\lama)-\calD_m(\mu,\lambda,a)-\at(T)-2\pat(T)+\langle \lam',\rho^\vee\rangle.\]
Our main result, from which descends our explicit formula for the charge statistic in type $C_2$, is the following (cf. \Cref{maintheorem}).

\begin{theorem*}
The function $r_m:\calB(\lambda)\raw \bbZ$ is a recharge statistic for $\eta_m$ for any $m\in \bbN\cup \{\infty\}$.
\end{theorem*}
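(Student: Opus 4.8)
The plan is to induct on $m$, using the wall-crossing formula of \cite[Proposition 2.33]{Pat} quoted above as the engine that propagates the recharge property from $\eta_m$ to $\eta_{m+1}$, and treating $m=0$ (or some convenient base in the MV region) as the base case where $r_0$ should agree, up to the explicit shifts $-\at(T)-2\pat(T)+\langle\lam',\rho^\vee\rangle$, with the known MV-region recharge coming from the atomic/preatomic decomposition — in fact the base case is precisely the content of the first displayed theorem about $c$, so one reduces to checking $r_m$ at the MV end against $c(T)-\frac12\ell(\wt(T))$. Concretely, one must show that for each $m$ the generating function $\sum_{T\in\calB(\lam)_\mu}q^{r_m(T)}$ equals $\htil^{\eta_m}_{\mu,\lam}(q^{1/2})$, and the inductive step amounts to comparing how the left-hand side changes under $t_{m+1}$ with how the right-hand side changes according to the two formulas $\htil^{\eta_{m+1}}_{\nu,\lam}=v^{-2}\htil^{\eta_m}_{\nu,\lam}$ and $\htil^{\eta_{m+1}}_{\mu,\lam}=\htil^{\eta_m}_{\mu,\lam}+(1-v^{-2})\htil^{\eta_m}_{\nu,\lam}$ for $\nu=t_{m+1}\mu>\mu$.

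First I would isolate the behavior of each term of $r_m$ under the passage $m\rightsquigarrow m+1$ at a fixed pair $(\mu,\nu=t_{m+1}\mu)$. The term $-\ell_m(\mu,\lama)$ changes by $-\big(\ell_{m+1}(\mu,\lama)-\ell_m(\mu,\lama)\big)$, and by the definition of $\Arr_m$ together with the inversion-set bookkeeping for $y_{m+1}y_m^{-1}=t_{m+1}$ one has $\ell_{m+1}(\mu,\lam)=\ell_m(t_{m+1}\mu,\lam)$ (this is the type-$A$ identity recalled in the excerpt, which still holds at the level of arrow counts once one accounts for swappability); the deviation from the naive type-$A$ picture is exactly measured by whether the flipped edge is swappable, which is why $\calN_m$ enters with the opposite sign so as to cancel the discrepancy. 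For the $\nu$-side I would check that the same substitution sends $r_m$ at $\nu$ to $r_{m+1}$ at $\nu$ shifted by $+1$ (matching the $v^{-2}$, i.e. $q^{-1}$, prefactor after the $q^{1/2}$-substitution), using that $\ell_{m+1}(\nu,\lam)=\ell_m(\nu,\lam)+1$ when the edge $\mu\to\nu$ becomes an inversion. The remaining terms $\at,\pat,\langle\lam',\rho^\vee\rangle$ are independent of $m$, and the staircase correction $\calD_m(\mu,\lambda,a)=\min(a,\affD_m(\mu,\lama))$ must be shown to change by exactly $1$ precisely along a non-swappable staircase — this is where the "non-swappable edges eventually become swappable in a larger atom" mechanism from Section 4 is used: $\affD_m$ counts how deep into that staircase the inversion set $N(y_m)$ currently reaches, and incrementing $m$ across such an edge advances that depth by one.

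The heart of the argument is therefore the following combinatorial identity, to be established by combining the wall-crossing recursion with the classification of swappable and non-swappable edges: for $\nu=t_{m+1}\mu>\mu$,
\begin{align*}
 r_{m+1}(T) &= r_m(T) \quad\text{if } \wt(T)\neq\mu,\nu,\\
 r_{m+1}(T) &= r_m(T)+1 \quad\text{if } \wt(T)=\nu,
\end{align*}
together with a bijection $\calB(\lam)_\mu\to\calB(\lam)_\mu\sqcup(\text{a shifted copy of }\calB(\lam)_\nu)$ realizing the recursion for the $\mu$-weight space — this is exactly what the swapping functions $\phi$ are built to provide, with the subtlety (flagged in the introduction as the $\affeps_{21}$ phenomenon) that for the non-swappable wall the relevant "swap" is not realized by a modified crystal operator but by the atomic embedding $\bPsi$, which changes $\at$; hence the $-\at(T)$ and $-\calD_m$ terms in $r_m$ are coupled and must be analyzed together. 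The main obstacle is precisely this step: proving that the swapping functions together with the atomic bookkeeping exactly reproduce the $(1-v^{-2})$-recursion on generating functions for every wall $t_{m+1}$, i.e. that every edge is either swappable — handled by a modified root operator $e_\alpha$ with $r_{\eta}(\phi(T))=r_\eta(T)-1$ — or lies on a non-swappable staircase accounted for by $\calD_m$ and $\bPsi$; the classification in Section 4 and the atomicity results proved via the Hecke-algebra computation with the modified precanonical basis $\tilN^3$ are what make this exhaustive, and assembling them into the single inductive identity above is the crux. Once this identity and the base case are in hand, the theorem follows, and the case $m=\infty$ is obtained by stabilization since $N(y_m)$ exhausts the positive affine real roots, placing $\eta_\infty$ in the KL region and thereby recovering the charge formula for Kostka–Foulkes polynomials.
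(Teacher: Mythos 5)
Your inductive framework (wall-crossing via swapping functions, with the $\calN_m$ and $\calD_m$ terms tracking swappable and non-swappable edges and the atomic embedding $\bPsi$ realizing the swap when an edge is non-swappable) matches the paper's inductive step in Section 6. However, your base case is both misplaced and circular, which is a genuine gap.

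In the paper's indexing, $\eta_0$ is in the \emph{KL} region, $\eta_\infty$ is in the \emph{parabolic} region, and the MV region lies beyond the parabolic region across further walls of type $H_{M\delta-\alpha_1^\vee}$. The induction runs from $m=\infty$ down to $m=0$: the base case is $m=\infty$, and the swapping functions propagate the recharge property across one wall at a time until one reaches the KL chamber. Your proposal places the base case at $m=0$ ``in the MV region,'' which is the wrong end of the family, and then proposes to verify the base case by appealing to ``the first displayed theorem about $c$.'' But \Cref{maincharge} is a \emph{corollary} of \Cref{maintheorem}: it is derived by plugging $m=0$ into the result you are trying to prove, so using it as the base case is circular.

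What is missing is the actual base case: the identification $r_\infty = r_P$ (Proposition \ref{rP}), where $r_P(T) = -\langle\rho^\vee,\wt(T)\rangle + \phi_1(T) - \ell^1(\wt(T))$ is the known recharge in the parabolic region obtained from the MV recharge by Levi branching. Establishing this identity is substantial: it requires rewriting $\sigma_\infty(T)$ in terms of the $Z$-function and the crystal data $\phi_1,\phi_2,\phi_{12}$ (\Cref{sigmainfnicelemma}), which in turn rests on the technical computations $\phi_2(T)=\affphi_2(\wt(T),\lama)$ (\Cref{atomicphi2}) and the much harder $\phi_{12}(T)=\affphi_{12}(\mu,\lama)-\calN_\infty(\mu,\lama)+\calD_\infty(\mu,\lam,a)$ (\Cref{phi2claim}, proved by induction on the atomic number and relying on \Cref{phi12at0} and \Cref{cor:phi12psi}). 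Your proposal does not engage with any of this; you implicitly assume that the formula at the MV or parabolic end is already known to match $r_m$, which is precisely what needs to be proven. The inductive step as you describe it is essentially correct, but without a noncircular base case the argument does not close.
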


To prove our main theorem, we need to construct swapping functions. 

\subsubsection*{Swapping functions}

The existence of non-swappable edges in type $C_2$ means that we cannot define swapping functions within a single atom as in type $A_n$. In 
 \Cref{sec:swapping} the swapping functions we construct involve two elements from two different atoms within the same preatom. In order to determine which are the two atoms involved we need to introduce a new quantity, which we call the elevation $\Omega(e)$ of an edge $e$ that measures the height of the maximal staircases of non-swappable edges lying underneath it. For any $\mu\in X$ such that $\mu<t\mu\leq \lambda$ we define the swapping functions 
\[ \psi_{t\mu}:\calB(\lambda)_{t\mu}\ra 
\calB(\lambda)_{\mu}\]
as follows. Let $T\in \calB(\lam)_{t\mu}$ and assume that $T\in \calA(\lam-a\varpi_2)\subset \calP(\lam)$. Let $e:=(\mu \raw t\mu)\in E(\lambda-a\varpi_2)$. Then $\psi_{t\mu}(T)=T'$, where $T'$ is the only element of weight $\mu$ in $\calA(\lam-(a+\Omega(e))\varpi_2)\subset \calP(\lam)$. To prove \Cref{maintheorem} we show in \Cref{swapcheck} that 
\[r_{m+1}(T)=r_{m+1}(\psi_{t\mu}(T))+1.\]

In the proof we use many results on non-swappable staircases and non-swappable edges obtained in Section 4. 

\subsubsection*{Alternative formula}
In Section 6, we obtain an alternative formula for the charge statistic by focusing on a single element and counting how many times its recharge gets changed by a swapping function. The formula we obtain is in terms of the modified crystal operators, which we define in \Cref{def:modified}. 

Let $T\in \calA(\zeta)$ be such that  $\wt(T)=\mu$.
Let $\affeps_{21}(T)$ be the maximum integer such that 
$\mu+k\alpha_i\leq \zeta$. In Section 6 we show that 

\[ c(T)=\eps_1(T)+\eps_2(T)+\eps_{12}(T)+\affeps_{21}(T)\]
is a charge statistic on $\calB_+(\lam)$. Finally, we conjecture a formula for a charge statistic in type $C_3$, which is a natural generalization of our formula.  

\section*{Acknowledgements}
J.T. was supported by the grant UMO-2021/43/D/ST1/02290
 and partially supported by the grant UMO-2019/34/A/ST1/00263. 

\section{The root system and Hecke algebra of type \texorpdfstring{$C_2$}{C2}}
\subsection{The root system and the affine Weyl group}

Let $(X,\Phi,X^\vee,\Phi^\vee)$ be the root datum of the reductive group $Sp(4,\bbC)$. The lattices $X$ and $X^\vee$ are isomorphic to $\bbZ^2$, with bases $\{\varpi_1,\varpi_2\}$ and $\{\varpi_1^\vee,\varpi_2^\vee\}$. Let $X_+$ and $X_+^\vee$ be the subsets of dominant weights and dominant coweights. 
Sometimes we use the notation $(\lambda_1,\lambda_2)$ to the denote the weight $\lambda=\lambda_1\varpi_1 +\lambda_2\varpi_2$.

The root system $\Phi\subset X$ is a root system of type $C_2$, with positive roots \[\{\alpha_1,\alpha_2,\alpha_{12}:=2\alpha_1+\alpha_2,\alpha_{21}:=\alpha_1+\alpha_2\}\] with $\alpha_2$ and $\alpha_{12}$ being the long roots. We have $\alpha_1=2\varpi_1-\varpi_2$ and $\alpha_2=-2\varpi_1+2\varpi_2$.

The coroot system $\Phi^\vee\subset X^\vee$ has positive coroots \[\{\alpha_1^\vee, \alpha_2^\vee,\alpha_{12}^\vee:=\alpha^\vee_1+\alpha^\vee_2,\alpha_{21}^\vee:=\alpha^\vee_1+2\alpha^\vee_2\}.\]
For any $i\in \{1,2,12,21\}$, $\alpha_i^\vee$ is the coroot corresponding to $\alpha_i$.

\begin{figure}[hbt!]
\begin{center}
\begin{tikzpicture}
\draw[->] (0,0) to (1,1) node[above] {$\alpha_{21}$};
\draw[->] (0,0) to (2,0) node[above] {$\alpha_{2}$};
\draw[->] (0,0) to (0,2) node[above] {$\alpha_{12}$};
\draw[->] (0,0) to (0,-2);
\draw[->] (0,0) to (-2,0);
\draw[->] (0,0) to (-1,-1);
\draw[->] (0,0) to (1,-1);
\draw[->] (0,0) to (-1,1) node[above] {$\alpha_{1}$};
\begin{scope}[scale=1.3, xshift=4cm]
\draw[->] (0,0) to (1,1) node[above] {$\alpha^\vee_{21}$};
\draw[->] (0,0) to (1,0) node[above] {$\alpha^\vee_{2}$};
\draw[->] (0,0) to (0,1) node[above] {$\alpha^\vee_{12}$};
\draw[->] (0,0) to (0,-1);
\draw[->] (0,0) to (-1,0);
\draw[->] (0,0) to (-1,-1);
\draw[->] (0,0) to (1,-1);
\draw[->] (0,0) to (-1,1) node[above] {$\alpha^\vee_{1}$};
\end{scope}
\end{tikzpicture}
\end{center}
\caption{The root system $\Phi$ and the coroot system $\Phi^\vee$.}\label{figrootsystems}
\end{figure}
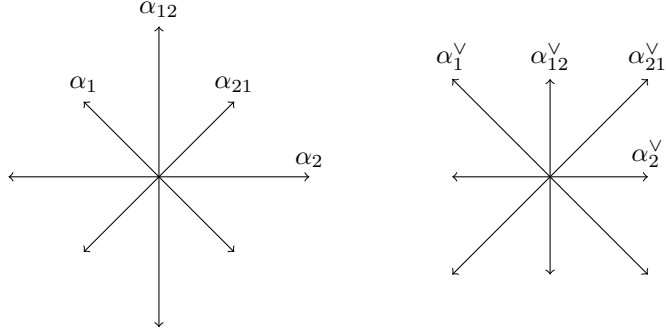

Let $\rho$ be the half-sum of the positive roots and $\rho^\vee$ be the half-sum of negative roots. We have $\rho=2\alpha_1 +\frac32 \alpha_2$ and $\rho^\vee= \frac32 \alpha_1^\vee + 2\alpha_2^\vee$.

We have $X/\bbZ \Phi\cong \bbZ/2\bbZ$ and the two classes are generated by $0$ and $\varpi_1$.

Let $\affW$ be the affine Weyl group of type $\tilde{C}_2$. The group $\affW$ has three simple reflections $s_0, s_1, s_2$ and has the following description as a Coxeter group:
\[ \affW \cong \langle s_0,s_1,s_2\mid s_0^2=s_1^2=s_2^2=(s_0s_2)^4=(s_1s_2)^4=(s_0s_1)^2=e\rangle .\]

Let $ \affX^\vee:=X^\vee\oplus \bbZ$ and let $\affPhi^\vee= \{ \alpha^\vee + m\delta \mid \alpha^\vee \in \Phi^\vee, m \in \bbZ\}$ be the corresponding affine root system. The positive roots in $\affPhi^\vee$ are \[\affPhi^\vee_{+} =\{ \alpha^\vee + m\delta \mid \alpha^\vee \in \Phi^\vee, m >0\} \cup \Phi^\vee_+\] and the simple roots are
 \[\affDelta^\vee = \{\alpha_1^\vee,\alpha_2^\vee,\alpha_0^\vee:=\delta -\alpha_{21}^\vee\}\]
There is a bijection between reflections in $\affW$ and positive roots $\affPhi^\vee_+$, with simple reflections corresponding to simple roots. For a reflection $t\in \affW$ we denote by $\alpha_t^\vee$ the corresponding root.

\subsection{The Hecke algebra and its pre-canonical bases}\label{sec:precan}

Recall from \cite{Knop, Lus83} the definition of the spherical Hecke algebra (see also \cite[\S 2.2]{LPP}).
We denote by $\calH$ the spherical Hecke algebra associated to the root system $\Phi$.
The Hecke algebra is the free module over $\bbZ[v,v^{-1}]$ with standard basis $\{\bfH_\lam\}_{\lam\in X_+}$ and a canonical basis, the Kazhdan-Lusztig basis, which we denote by $\{\undH_\lambda\}_{\lambda\in X_+}$.

The spherical Hecke algebra can be thought of as a deformation of the monoid algebra $\bbZ[X_+]$, which as an abelian group is free with basis $\{e^\lam\}_{\lam\in X_+}$. In fact, specializing at $v=1$, we obtain a ring homomorphism 
\begin{align*}(-)_{v=1}:\calH &\raw \bbZ[X_+]\\
\bfH_\lam& \mapsto e^\lam.
\end{align*}
If $\lam=\lam_1\varpi_1+\lam_2\varpi_2$ we write $\undH_{(\lam_1,\lam_2)}$ for $\undH_{\lam}$ and similarly for $\bfH$.

For $w\in W$ and $\lam\in X$ we denote by $w\cdot \lam = w(\lam+\rho)-\rho$ the dot action of $w$ on $\lam$.
We say that a weight $\lambda$ is singular if there exists $w\in W$ with $w(\lambda)=\lambda$. Clearly, a weight $\lambda$ is singular if and only if $\lambda+\rho$ is singular with respect to the dot action.

We extend to definition of $\undH_\lam$ to the whole $X$ by setting $\undH_\lam=0$ if $(\lam+\rho)$ is singular and $\undH_\lam=(-1)^{\ell(w)}\undH_{w\cdot \lam)}$ if $w\in W$ is such that $w\cdot \lam\in X_+$. Notice that in our setting $\lam=(\lam_1,\lam_2)$ is singular if and only if $\lam_1=-1$, $\lam_2=-1$, $\lam_1+\lam_2=-2$ or $\lam_1+2\lam_2=-3$.

Recall the definition of the pre-canonical bases. We have
\[ \bfN_\lam^i=\sum_{I \subset \Phi^{\geq i}} (-v^2)^{|I|}\undH_{\lam-\sum_{\alpha\in I}\alpha}\]
where $\Phi^{\geq i}$ is the subset of roots of height at least $i$. Notice that we have $\Phi^{\geq 3}=\alpha_{12}=2\varpi_1$ and $\Phi^{\geq 2}=\{\alpha_{12},\alpha_{21}\}=\{2\varpi_1,\varpi_2\}$. 
Recall by \cite[Theorem 1.2]{LPP} that $\bfN^1$ is the standard basis, while $\bfN^2$ is the atomic basis $\bfN$, that is we have
\[ \bfN^2_\lam=\bfN_\lam=\sum_{\mu \leq \lam} v^{2\langle\rho^\vee,\lam-\mu\rangle}\bfH_{\mu}.\]

It follows immediately from the definition that $\undH_\lam=\bfN^4_\lam$. 

\begin{example}
 Unfortunately, and contrary to the type $A$ situation, the coefficients of the $\undH$-basis in the $\bfN^3$-basis are in general not postive. For example, we have $\bfN^3_{(0,\lam_2)}=\undH_{(0,\lam_2)}+v^2\undH_{(0,\lam_2-1)}$. In particular, we get $\undH_{(0,1)} =\bfN^3_{(0,1)}-v^2\bfN^3_{(0,0)}$.
\end{example}

To recover positivity, we need to introduce a modification of the $\bfN^3$ basis. We define 
\begin{align}
\label{modifiedprecan3}
 \tilN^{3}_\lam=\begin{cases}\bfN^3_\lam & \text{if }\lam_1\neq 0\\
\undH_\lam & \text{if }\lam_1 =0
\end{cases}
\end{align}

\begin{lemma}
We have \[\undH_{(\lam_1,\lam_2)}=\sum_{i \leq \frf{\lam_1}} v^{2i}\tilN^3_{(\lam_1-2i,\lam_2)}\]
\end{lemma}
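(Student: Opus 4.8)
The plan is to prove the identity by induction on $\lam_1$, using the recursive structure already visible in the definitions of $\bfN^3$ and $\tilN^3$. First I would record the base case: when $\lam_1 \in \{0,1\}$ we have $\frf{\lam_1}=0$, so the claimed formula reduces to $\undH_{(\lam_1,\lam_2)}=\tilN^3_{(\lam_1,\lam_2)}$. For $\lam_1=0$ this is the definition (\ref{modifiedprecan3}); for $\lam_1=1$ I need $\tilN^3_{(1,\lam_2)}=\bfN^3_{(1,\lam_2)}=\undH_{(1,\lam_2)}$, which should follow from the explicit expansion of $\bfN^3$ in the $\undH$-basis together with the singularity conventions: the correction terms $\undH_{(1,\lam_2)-\alpha_{12}}=\undH_{(-1,\lam_2)}$ and those involving $\alpha_{12}+\alpha_{21}$ land on singular weights (recall $\lam_1=-1$ is singular) and hence vanish.

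Next I would set up the inductive step for $\lam_1 \geq 2$. Here $\tilN^3_{(\lam_1,\lam_2)}=\bfN^3_{(\lam_1,\lam_2)}$, and since $\Phi^{\geq 3}=\{\alpha_{12}\}=\{2\varpi_1\}$, the definition gives the clean recursion
\[ \bfN^3_{(\lam_1,\lam_2)} = \undH_{(\lam_1,\lam_2)} - v^2 \undH_{(\lam_1-2,\lam_2)}. \]
Equivalently $\undH_{(\lam_1,\lam_2)} = \tilN^3_{(\lam_1,\lam_2)} + v^2 \undH_{(\lam_1-2,\lam_2)}$. Now I apply the inductive hypothesis to $\undH_{(\lam_1-2,\lam_2)}$ (valid since $\lam_1-2 \geq 0$), obtaining
\[ \undH_{(\lam_1,\lam_2)} = \tilN^3_{(\lam_1,\lam_2)} + v^2 \sum_{i \leq \frf{\lam_1-2}} v^{2i} \tilN^3_{(\lam_1-2-2i,\lam_2)}. \]
Reindexing the sum via $j=i+1$ turns the second term into $\sum_{1 \leq j \leq \frf{\lam_1-2}+1} v^{2j}\tilN^3_{(\lam_1-2j,\lam_2)}$, and since $\frf{\lam_1-2}+1=\frf{\lam_1}$, combining with the $j=0$ term $\tilN^3_{(\lam_1,\lam_2)}$ yields exactly $\sum_{j \leq \frf{\lam_1}} v^{2j}\tilN^3_{(\lam_1-2j,\lam_2)}$, as desired.

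The only genuine subtlety — and the step I expect to require the most care — is the verification of the recursion $\bfN^3_{(\lam_1,\lam_2)} = \undH_{(\lam_1,\lam_2)} - v^2\undH_{(\lam_1-2,\lam_2)}$ and of the $\lam_1=1$ base case, since both rely on unwinding the convention $\undH_\mu = 0$ for $\mu+\rho$ singular (resp. $\undH_\mu=(-1)^{\ell(w)}\undH_{w\cdot\mu}$). I would handle this by checking directly that among the subsets $I \subseteq \Phi^{\geq 3}=\{\alpha_{12}\}$ the formula is immediate, but I should double-check whether the intended recursion instead comes from $\Phi^{\geq 2}$; reading the definition carefully, $\bfN^3$ uses $\Phi^{\geq 3}$, so only $I=\emptyset$ and $I=\{\alpha_{12}\}$ contribute, giving precisely the two-term expression above with no singularity issues for $\lam_1 \geq 2$. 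For $\lam_1 = 1$, the term $\undH_{(1,\lam_2)-2\varpi_1}=\undH_{(-1,\lam_2)}$ vanishes because $(-1,\lam_2)$ is singular, confirming $\bfN^3_{(1,\lam_2)}=\undH_{(1,\lam_2)}$. Once these elementary facts are in place, the induction closes without further difficulty.
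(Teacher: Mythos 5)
Your proof is correct and follows essentially the same route as the paper: induction on $\lam_1$ with base cases $\lam_1\in\{0,1\}$ (the latter killed by the singularity convention on $\undH_{(-1,\lam_2)}$) and the recursion $\tilN^3_\lam=\undH_\lam-v^2\undH_{\lam-2\varpi_1}$ coming from $\Phi^{\geq 3}=\{2\varpi_1\}$. You spell out the reindexing that the paper leaves as "the claim easily follows by induction," and the brief detour mentioning $\alpha_{12}+\alpha_{21}$ is a momentary slip that you correctly retract before it does any damage.
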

\begin{proof}
We prove it by induction on $\lam_1$.
The claim is clear if $\lam_1=0$.

If $\lam_1>0$, we have $\tilN^3_\lam=\undH_{\lam}-v^2\undH_{\lam-2\varpi_1}$.
If $\lam_1=1$ we have $\tilN^3_\lam=\undH_\lam$ since $\lam-2\varpi_1+\rho$ is singular. If $\lam_1\geq 2$, we get $\undH_{\lam}=\bfN^3_{\lam}+v^2\undH_{\lam-2\varpi_1}$ and the claim easily follows by induction.
\end{proof}

\begin{lemma}\label{precanN3}
We have \[\tilN^3_{(\lam_1,\lam_2)}=\begin{cases} \sum_{i \leq \lam_2} v^{2i}\bfN^2_{(\lam_1,\lam_2-i)} & \text{if }\lam_1>0\\
\sum_{i \leq \frf{\lam_2}} v^{4i}\bfN^2_{(\lam_1,\lam_2-2i)}&\text{if }\lam_1=0.\end{cases}\]
\end{lemma}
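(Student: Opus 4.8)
The plan is to deduce the claim from the two bases relations already established: the expansion of $\undH$ in the $\tilN^3$-basis (the previous Lemma) and the expansion of $\bfN^3$ in the standard basis (which follows from the definition of $\bfN^3$ together with $\bfN^2=\bfN$). We split into the two cases according to whether $\lam_1=0$ or $\lam_1>0$, exactly as in the statement.

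First I would treat the case $\lam_1>0$. Here $\tilN^3_\lam=\bfN^3_\lam=\sum_{I\subset\Phi^{\geq 3}}(-v^2)^{|I|}\undH_{\lam-\sum_{\alpha\in I}\alpha}$. Since $\Phi^{\geq 3}=\{\alpha_{12}\}=\{2\varpi_1\}$ is a single root, this is just $\bfN^3_\lam=\undH_\lam-v^2\undH_{\lam-2\varpi_1}$. Now I substitute the expansion of each $\undH$ in the $\bfN^2$-basis; but more efficiently, I can directly substitute $\undH_\mu=\sum_{j\leq\mu_2}v^{2j}\bfN^2_{(\mu_1,\mu_2-j)}\cdot(\dots)$ — wait, that relation isn't given. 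Instead, the cleanest route: use $\bfN^2_\mu=\sum_{\nu\leq\mu}v^{2\langle\rho^\vee,\mu-\nu\rangle}\bfH_\nu$ and the known triangular expansion of $\undH$ in $\bfH$. Actually the most economical approach is to invert: I want to show $\tilN^3_{(\lam_1,\lam_2)}-v^2\tilN^3_{(\lam_1,\lam_2-1)}=\bfN^2_{(\lam_1,\lam_2)}$ for $\lam_1>0$, which by the telescoping sum is equivalent to the claimed formula (using that the sum terminates because $\bfN^2_{(\lam_1,\lam_2-i)}=0$ once $\lam_2-i<0$, since $(\lam_1,-1)$ is singular or, more precisely, one checks the boundary term vanishes). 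So I would first establish the recursion $\tilN^3_{(\lam_1,\lam_2)}=\bfN^2_{(\lam_1,\lam_2)}+v^2\tilN^3_{(\lam_1,\lam_2-1)}$ for $\lam_1>0$. Expanding $\tilN^3_\lam=\undH_\lam-v^2\undH_{\lam-2\varpi_1}$ and using the previous Lemma to write $\undH_{(\lam_1,\lam_2)}=\sum_{i\leq\frf{\lam_1}}v^{2i}\tilN^3_{(\lam_1-2i,\lam_2)}$ — no, that reintroduces $\tilN^3$ of lower first coordinate.

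Let me restructure: the honest approach is induction on $\lam_2$ using the formula $\bfN^3_\lam-v^2\undH_{\lam-2\varpi_1}=\undH_\lam$ rearranged, combined with the definition of $\bfN$. Concretely, recall $\undH_\lam=\bfN^4_\lam=\sum_{I\subset\Phi^{\geq 4}}(\dots)=\sum_{I\subset\Phi^{\geq 2}}$... no. The genuinely clean path: I will prove both formulas by induction on $\lam_2$ (with base case $\lam_2=0$ or $\lam_2<0$ where everything vanishes or is trivial), using in the case $\lam_1>0$ the identity $\bfN^3_\lam=\undH_\lam-v^2\undH_{\lam-2\varpi_1}$ together with the previous Lemma to resolve $\undH_{\lam-2\varpi_1}$, and using in the case $\lam_1=0$ the Example's identity $\bfN^3_{(0,\lam_2)}=\undH_{(0,\lam_2)}+v^2\undH_{(0,\lam_2-1)}$, hence $\undH_{(0,\lam_2)}=\bfN^3_{(0,\lam_2)}-v^2\undH_{(0,\lam_2-1)}$, and the definition $\tilN^3_{(0,\lam_2)}=\undH_{(0,\lam_2)}$. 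In each case, substituting the known relation $\bfN^3_\mu=\sum_{i\leq\mu_2}v^{2i}\bfN^2_{(\mu_1,\mu_2-i)}$ for $\mu_1>0$ (which is itself standard, or can be derived from $\bfN^3_\mu=\bfN^2_\mu-v^2\bfN^2_{\mu-2\varpi_1}+\ldots$ — again $\Phi^{\geq 3}$ vs $\Phi^{\geq 2}$ being the issue) reduces everything to bookkeeping of geometric series in $v^2$.

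The main obstacle I anticipate is correctly handling the degenerate boundary terms: the sums must be shown to terminate at the right place, which relies on the singularity criterion ($(\lam_1,\lam_2)$ singular iff $\lam_1=-1$, $\lam_2=-1$, $\lam_1+\lam_2=-2$, or $\lam_1+2\lam_2=-3$), so that $\undH_{(\lam_1,-1)}=0$ and the telescoping in the $\lam_1>0$ case closes cleanly, while in the $\lam_1=0$ case one must carefully track that only even powers $v^{4i}$ survive because successive applications of $\undH_{(0,\lam_2)}=\bfN^3_{(0,\lam_2)}-v^2\undH_{(0,\lam_2-1)}$ pair up the $v^2$-shifts. I expect the $\lam_1=0$ case to be the more delicate of the two, and I would do it second, after the $\lam_1>0$ case (which feeds into it via the terms $\bfN^3_{(0,\lam_2)}$, whose $\bfN^2$-expansion is not directly covered but follows from the first case's method applied with the understanding that $\bfN^3\neq\tilN^3$ there). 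A sanity check against the Example ($\undH_{(0,1)}=\bfN^3_{(0,1)}-v^2\bfN^3_{(0,0)}$, i.e. $\tilN^3_{(0,1)}=\bfN^2_{(0,1)}+v^4\bfN^2_{(0,-1)}+\ldots$, and since $(0,-1)$ is singular this is just $\bfN^2_{(0,1)}$, consistent with $\frf{1}=0$) confirms the indexing.
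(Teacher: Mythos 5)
Your proposal correctly identifies the shape of the argument — a telescoping $v^2$-recursion for $\lam_1>0$ and a $v^4$-recursion for $\lam_1=0$ — and it correctly sanity-checks the boundary indexing against the Example. But the proof never closes because you never find the one-step bridge between $\bfN^2$ and $\bfN^3$, namely
\begin{equation*}
\bfN^2_\lam=\bfN^3_\lam-v^2\bfN^3_{\lam-\varpi_2},
\end{equation*}
which falls straight out of the definition $\bfN^i_\lam=\sum_{I\subset\Phi^{\geq i}}(-v^2)^{|I|}\undH_{\lam-\sum_{\alpha\in I}\alpha}$: since $\Phi^{\geq 2}\setminus\Phi^{\geq 3}=\{\alpha_{21}\}=\{\varpi_2\}$, refining the index set from $\Phi^{\geq 3}$ to $\Phi^{\geq 2}$ just amounts to the factor $(1-v^2\,[\text{shift by }\varpi_2])$. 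Instead you repeatedly detour through $\undH$ (either via $\bfN^3_\lam=\undH_\lam-v^2\undH_{\lam-2\varpi_1}$ or via the previous lemma expressing $\undH$ in $\tilN^3$), and as you correctly observe this only trades one unknown for another and never produces $\bfN^2$ on the right-hand side. At one point you even write the wrong shift, ``$\bfN^3_\mu=\bfN^2_\mu-v^2\bfN^2_{\mu-2\varpi_1}+\ldots$'', which conflates the $\alpha_{12}=2\varpi_1$ direction (the step from $\bfN^3$ to $\bfN^4=\undH$) with the $\alpha_{21}=\varpi_2$ direction (the step from $\bfN^2$ to $\bfN^3$) that is actually needed.

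With the displayed identity in hand, the $\lam_1>0$ case is immediate: $\tilN^3_\lam=\bfN^3_\lam=\bfN^2_\lam+v^2\bfN^3_{\lam-\varpi_2}=\bfN^2_\lam+v^2\tilN^3_{\lam-\varpi_2}$, and induction on $\lam_2$ gives the claimed sum (the termination argument you sketched via the singularity criterion is fine). For $\lam_1=0$ the paper does not route through $\bfN^3$ at all: it expands $\bfN^2_{(0,\lam_2)}$ directly in the $\undH$-basis via $\Phi^{\geq 2}=\{\varpi_2,2\varpi_1\}$, applies the reflection rule $\undH_{(-2,\lam_2)}=-\undH_{(0,\lam_2-1)}$, and arrives at $\bfN^2_{(0,\lam_2)}=\tilN^3_{(0,\lam_2)}-v^4\tilN^3_{(0,\lam_2-2)}$, which is exactly the $v^4$-recursion you guessed. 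Your plan of pulling the Example's identity $\bfN^3_{(0,\lam_2)}=\undH_{(0,\lam_2)}+v^2\undH_{(0,\lam_2-1)}$ into the mix would also work, but again only after you have the $\bfN^2$-to-$\bfN^3$ bridge to convert the $\bfN^3$'s into $\bfN^2$'s; as written the proposal has no way to introduce $\bfN^2$ and is therefore stuck.
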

\begin{proof}
We have $\bfN^2_\lam=\bfN^3_\lam-v^2\bfN^3_{\lam-\varpi_2}$. If $\lam_1>0$ we get $\tilN^3_\lam=\bfN^3_\lam=\bfN^2_\lam+v^2\tilN^3_{\lam-\varpi_2}$ and the claim easily follows by induction on $\lam_2$.

If $\lam_1=0$ we have $\tilN^3_\lam=\tilN^3_{(0,\lam_2)}=\undH^3_{(0,\lam_2)}$ and 
\begin{align*}
 \bfN^2_{(0,\lam_2)}&=\undH_{(0,\lam_2)}-v^2\undH_{(-2,\lam_2)}-v^2\undH_{(0,\lam_2-1)}+v^4\undH_{(-2,\lam_2-1)} \\
 &=\undH_{(0,\lam_2)}+v^2\undH_{(0,\lam_2-1)}-v^2\undH_{(0,\lam_2-1)}-v^4\undH_{(0,\lam_2-2)}\\
 &=\undH_{(0,\lam_2)}-v^4\undH_{(0,\lam_2-2)}=\tilN_{(0,\lam_2)}-v^4\tilN_{(0,\lam_2-2)}
\end{align*}

If $\lam_2 \leq 1$ we get $\bfN^2_{(0,\lam_2)}=\tilN_{(0,\lam_2)}$. For $\lam_2\geq 2$ we have $\tilN^3_{(0,\lam_2)}=\bfN^2_{(0,\lam_2)}+v^3\tilN^3_{(0,\lam_2-2)}$ and the claim follows by induction.
\end{proof}

\begin{remark}
The decomposition of the $\undH$-basis in terms of the $\bfN$ basis has been already computed in \cite[Theorem 1.1]{BBP}  with different methods. 
Here we prefer to reprove it using the precanonical bases since, as it turns out, also the $\tilN^3$ basis has a natural combinatorial interpretation in terms of the crystal (cf. \Cref{PreatomPrecan}).
\end{remark}

\section{Crystals and Weyl group actions}

A (seminormal) crystal for a complex finite dimensional Lie algebra $\mathfrak{g}$ consists of a non-empty set $B$ together with maps 
\begin{align*}
\op{wt}:& B \longrightarrow X \\
e_{i},f_{i}:& B \longrightarrow B \sqcup \left\{ 0\right\}, i \in [1, \operatorname{rank}(\mathfrak{g})]
\end{align*}

\noindent such that for all $b,b' \in B$:

\begin{itemize}	
\item $b' = e_i(b)$ if and only if $b = f_i(b')$,
\item if $f_i(b) \neq 0 $ then $\textsf{wt}(f_i(b)) = \textsf{wt}(b)-\alpha_i$;
\item
if $e_i(b) \neq 0$, then
$\textsf{wt}(e_i(b)) = \textsf{wt}(b)+\alpha_i$, and
\item  $\phi_i(b)-\eps_i(b)=  \langle \textsf{wt}(b),\alpha_i^\vee  \rangle$,
\end{itemize}

\noindent where 
\
\begin{align*}
\eps_i(b)&=\max\{a \in \mathbb{Z}_{\geq 0} :e_i^a(b)\neq 0\} \hbox{ and } \\
       \phi_i(b)&=\max\{a \in \mathbb{Z}_{\geq 0 }:f_i^a(b)\neq 0\}.
       \end{align*}

\noindent 
To each such crystal $B$ is associated a \textit{crystal graph}, a coloured directed graph with vertex set $B$ and edges coloured by elements $i \in [1,\operatorname{rank}(\mathfrak{g})]$, where if $f_{i}(b) = b'$ there is an arrow $b \overset{i}{\rightarrow} b'$. A crystal is irreducible if its corresponding crystal graph is connected and finite. A seminormal crystal is called normal if it is isomorphic to the crystal of a representation of $\mathfrak{g}$. 
Irreducible normal crystals are thus indexed by dominant integral weights of $\mathfrak{g}$. 
We refer the reader to \cite{BSch17} for more background on crystals.

For  a dominant weight  $\lambda$ we denote by $\mathcal{B}(\lambda)$ the corresponding normal crystal associated to the irreducible representation of $\mathfrak{g}$ 
of highest weight $\lambda$. 

\subsection{Crystals of Kashiwara--Nakashima tableaux}

In type $C$ we can realize crystals using Kashiwara--Nakashima tableaux.

\begin{definition}
Let $n$ be a positive integer. A Kashiwara--Nakashima tableau (KN tableau for short) is a semi-standard Young tableau of shape a partition of at most $n$ parts, in the alphabet 
\[\mathcal{P}_{n} := \left\{1 < \cdots < n < \bar n < \cdots < \bar 1 \right\}\]

\noindent which satisfy the following conditions:
\begin{itemize}
 \item Each one of their columns is \textbf{admissible} (cf. \Cref{defAdm}).
 \item Their \textbf{splitting} is a semi-standard Young tableau (cf. \Cref{defSpl}). 
\end{itemize}
\end{definition}

\begin{definition}\label{defAdm}
Let $C$ be a semi-standard column in the alphabet $\mathcal{P}_{n}$ of length at most $n$. Let $Z = \left\{z_{1} > ... > z_{m} \right\}$ be the set of non-barred letters $z$ in $\mathcal{P}_{n}$ such that both $z$ and $\bar z$ both appear in $C$. We say that the column $C$ is \emph{admissible} if there exists a set $T = \left\{t_{1} > ... > t_{m} \right\}$ of non-barred letters that satisfy:
\begin{itemize}
 \item $t_{1} < z_{1}$ and is maximal with the property $t_1, \bar t_1 \notin C$; 
 \item $t_{i} < \min(t_{i-1}, z_{i})$, $t_i, \bar t_i \notin C$ and is maximal with these properties.
\end{itemize}
\end{definition}

\begin{definition}
\label{defSpl}
The split of a column is the two-column tableau $lC rC$ where $lC$ is the column obtained from $C$ by replacing $ z_{i}$ by $ t_{i}$ and possibly re-ordering, and $rC$ is obtained from $C$ by replacing $ \bar z_{i}$ by $\bar t_{i}$ and possibly re-ordering.  

The \textit{splitting} of a semi-standard Young tableau consisting of admissible columns is the concatenation of the splits of its columns.
\end{definition}

\begin{example}
Let $n = 2$. The column $\Skew(0:\hbox{\tiny{$2$}}|0: \hbox{\tiny{$\bar 2$}})$ is admissible (we have $Z=\{2\}$ and $T=\{1\}$), however, $\Skew(0:\hbox{\tiny{$1$}}|0: \hbox{\tiny{$\bar 1$}})$ is not. Notice that although each one of its columns is admissible, the tableau $\Skew(0:\hbox{\tiny{$2$}},\hbox{\tiny{$2$}}|0: \hbox{\tiny{$\bar 2$}}, \hbox{\tiny{$\bar 2$}} )$ is not KN, because its split, $\Skew(0:\hbox{\tiny{$1$}},\hbox{\tiny{$2$}}, \hbox{\tiny{$1$}},\hbox{\tiny{$2$}} |0: \hbox{\tiny{$\bar 2$}}, \hbox{\tiny{$\bar 1$}}, \hbox{\tiny{$\bar 2$}}, \hbox{\tiny{$\bar 1$}})$ is not semi-standard. 
\end{example}

\begin{definition}
\label{def:weightofatableau}
Let $T$ be a KN tableau. For $i \in \left\{1,2\right\}$ let $n_i(T)$ denote the number of $i$'s appearing in $T$ and let $n_{\bar i}(T)$ denote the number of $\bar i$'s. Let $t_{i}(T) = n_{i}(T) - n_{\bar i}(T)$. Let $\lambda_{1}(T) = t_{1}(T) - t_{2}(T)$ and $\lambda_{2}(T) = t_{2}(T)$. The weight of $T$ is defined to be $\wt(T) = (\lambda_{1}(T),\lambda_2(T)) = \lambda_1 (T)\varpi_1 + \lambda_2(T) \varpi_2$.
\end{definition}

\subsection{Words and signatures. Crystal operators and Weyl group action.}
The \textit{word} of a KN tableau $T$ is the reading of its entries, column by column, starting from the right most column and reading each column from top to bottom. We will denote the word of $T$ by $word(T)$. For example, if 

\begin{align}
\label{T}
T = \Skew(0:\hbox{\tiny{$1$}},\hbox{\tiny{$2$}}|0: \hbox{\tiny{$\bar 2$}}, \hbox{\tiny{$\bar 1$}} )
\end{align}

\noindent we have $word(T) = 2 \bar 1 1 \bar 2$. For each $1\leq i \leq n$, to a word $w\in \mathcal{P}_{n}$ we assign a labelling of the letters of $w$ by $+,-$ or no label. For $i \leq n-1$, label the letters $i, \overline{i+1}$ by $+$ and the letters $i+1, \overline{i}$ by $-$. If $i = n$, label $n$ by $+$ and $\overline{n}$ by $-$. The remaining letters remain without label. Finally, cancel out pairs of labels of the form $+ -$, that is, cancel out every label $+$ with the first $-$ to its right, starting from the left-most one. For example, if the sequence of labels is 
$-+\hbox{ }--\hbox{ }++ \hbox{ }$ (blank spaces mean no label), after the cancelling out process we obtain $-\hbox{ }\hbox{ }\hbox{ }-\hbox{ }++ \hbox{ }$. Like this, we obtain a sequence of labels which looks like this (after ignoring blank spaces):
\[(-)^r(+)^{s}\]

\noindent for some $r,s \in \mathbb{Z}_{\geq 0}$. This is the \textbf{i-signature} of $w$ (but we also keep a record of the position of the remaining labels). We will denote it by $\sigma_{i}(w)$. For example, the 1-signature of $word(T)$ as in (\ref{T}) is $- - + +$. Its 2-signature is empty. 
To apply the root operator $f_{i}$ to $T$, we replace in $T$ the letter $a$ which is tagged by the left-most $+$ in the i-signature of $word(T)$, by the letter $\bar a$, where $\overline{\bar a } = a$. If $s = 0$, then $f_{i}$ is not defined. To apply $e_{i}$, we replace in $T$ the letter $a$ which is tagged by the right-most $-$ in the i-signature of $word(T)$, by the letter $\bar a$, where $\overline{\bar a } = a$. If $r = 0$, then $e_{i}$ is not defined. 

\subsection{Plactic relations for words. }\label{sec:lecrelations}

Note that the definition of the crystal operators and therefore of the simple reflections make sense on arbitrary words in the alphabet $\mathcal{P}_{n}$. In \cite{lec02} the following plactic relations (R1-3) on words are introduced.

\begin{enumerate}
\item[\mylabel{R1}{$R1$}]
$yzx \sim yxz \hbox{ for } x \leq y < z\hbox{ with } z \neq \bar x \hbox{ and }
xzy \sim zxy \hbox{ for } x < y \leq z \hbox{ with } z \neq \bar x
$;
\medskip
\item[\mylabel{R2}{$R2$}]
$
y \overline{x-1}(x-1) \sim y x \overline{x}  \hbox{ and }
x \overline{x} y \cong \overline{x-1} (x-1)y \hbox{ for }
1 < x \leq n \hbox{ and } x \leq y \leq \bar x
$;
\medskip
\item[\mylabel{R3}{$R3$}] 
$w\sim w\setminus\{z,\overline{z}\}$, where $w\in \mathcal{P}^{*}_{n}$ and $z\in [n]$ are such that $w$ is a non-admissible column, $z$ is the
lowest non-barred letter in $w$ such that $N(z) = z+1$ and any proper factor of $w$ is an admissible column.
\end{enumerate}

\noindent
 These relations  define an equivalence relation $\cong$ on the word monoid $\mathcal{P}^{*}_{n}$. 
Each word $w \in \mathcal{P}^{*}_{n}$ is equivalent via plactic relations to the word of a unique KN tableau $P(w)$. Moreover, there is the following characterization. Let $u,v \in \mathcal{P}^{*}_{n}$ and let $U, V$ the connected components (both normal $U_{q}(\mathfrak{sp}(2n,\mathbb{C}))$-crystals) in which they lie. Then $u \cong v$ if and only if there exists a crystal isomorphism $\eta: U \rightarrow V$ such that $\eta (u) = v$.

\subsection{Weyl group actions and modified crystal operators}

Let $\sigma_{i}(word(T))=(-)^r(+)^{s}$ be the $i$-signature of $word(T)$ as defined in the previous paragraph.
To apply the simple reflection $s_{i}$ to $T$ do the following: 

\begin{itemize}
 \item If $r = s$, then $s_{i}(T) = T$.
 \item If $r>s, s_{i}(T) = e^{r-s}_{i}(T)$.
 \item If $s>r, s_{i}(T) = f^{s-r}_{i}(T)$.
\end{itemize}

Let $x = s_{i_1}\cdots s_{i_r} 
 \in W$. The action of $x$ on a KN tableau $T$ is defined by 
 
 \[ s_{i_1}(\cdots (s_{i_r}(T))).\]

 More generally, given a crystal $B$ there is an action of the Weyl group $W$ on $B$ where $s_i$ acts by reversing the $f_i$-string, i.e. for $T\in B$ with $r=\eps_i(T)$ and $s=\phi_i(T)$, we define $s_i(T)$ as $e_i^{r-s}(T)$ if $r\geq s$ and $f_i^{s-r}(T)$ if $s\geq r$.

For a proof that this defines an action of $W$ also on the weights, see \cite[Proposition 2.36]{BSch17}.  For any $x \in W$ we have $x(\wt(T))=\wt(x(T))$.

\begin{definition}
\label{def:modified}
    In analogy with \cite{Pat}, we introduce the modified crystal operator $e_{12}:=s_1 e_2 s_1$ and $f_{12}:=s_1 f_2 s_1$. 
\end{definition}

\begin{remark}
Unfortunately, we cannot just define $e_{21}$ as $s_2 e_1 s_2$ to be the modified crystal operator attached to the root $\alpha_{21}$. In fact, in our inductive procedure we need the crystal operator  to be constructed by conjugating the root of higher index, but it is not possible here since $\alpha_{21}$ and $\alpha_2$ lie in different orbits under the Weyl group ($\alpha_2$ is long while $\alpha_{21}$ is short).  One of the main hurdles of generalizing the charge statistic from type $A$ to type $C$ is in fact to find an appropriate replacement for this crystal operator in the charge formula.
\end{remark}
\subsection{Adapted strings}

There are two reduced expressions for the longest element $w_0$ of type $C_2$: $s_1s_2s_1s_2$ and $s_2s_1s_2s_1$. 
After fixing a reduced expression $\sigma=s_{i_1}s_{i_2}s_{i_3} s_{i_{4}}$ of $w_0$, an element $T\in\calB(\lam)$ is uniquely determined by a quadruple of non-negative integers $\str_\sigma(T)=(a,b,c,d)$, called the adapted string, such that $T = f_{i_1}^{a}f^{b}_{i_2}f_{i_3}^{c}f_{i_4}^{d}(v_{\lambda})$, where $v_{\lambda} \in \calB(\lambda)$ is the highest weight vertex. 
We abbreviate $\str_{s_1s_2s_1s_2}$ as $\str_1$ and $\str_{s_2s_1s_2s_1}$ as $\str_2$.
The adapted strings for each of the different reduced expressions form a cone, denoted by $C_1$ and $C_2$. The precise relation between these two cones has been given by Littelmann.

\begin{theorem}[{\cite[Prop. 2.4]{conescrystalspatterns}}]
\label{adaptedstring}
There exists piecewise linear mutually inverse bijections 
  $\theta_{12}: C_{1} \rightarrow C_{2}$ and $\theta_{21}: C_{2} \rightarrow C_{1}$, such that $\theta_{12}\circ \str_1=\str_2$ and $\theta_{21}\circ \str_2=\str_1$, given by $\theta_{12}(a,b,c,d) = (a',b',c',d')$, where
\begin{align*}
    a' &= \operatorname{max}\left\{d, c-b,b-a \right\} \\
    b' &= \operatorname{max}\left\{c,a-2b+2c,a+2d \right\} \\
    c' &= \operatorname{min}\left\{b, 2b-c+d, a+d\right\} \\
    d' &= \operatorname{min}\left\{a, 2b-c, c-2d \right\},
\end{align*}
\noindent 
and $\theta_{21}(a,b,c,d) = (a',b',c',d')$, where
\begin{align*}
    a' &= \operatorname{max}\left\{d, 2c-b, b-2a\right\} \\
    b' &= \operatorname{max}\left\{c, a+d, a+2c-b \right\} \\
    c' &= \operatorname{min}\left\{ b, 2b-2c+d,d+2a \right\} \\
    d' &= \operatorname{min}\left\{a, c-d, b-c \right\}.
\end{align*}
\end{theorem}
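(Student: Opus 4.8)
The plan is to reduce the statement to a single rank-two computation. The two reduced words $s_1s_2s_1s_2$ and $s_2s_1s_2s_1$ of $w_0$ are related by exactly one application of the braid relation $(s_1s_2)^4=e$, so the transition map between the two adapted-string parametrizations is one of the ``local'' braid-move transition maps. The general theory of string parametrizations (Berenstein--Zelevinsky, and Littelmann in the cited paper) guarantees that such transition maps are piecewise linear and, for two reduced words differing by a single braid move, depend only on that braid move; I would invoke this to reduce the problem to computing the transition map attached to the length-four braid relation of the rank-two system of type $C_2$. Note that $\str_1$ and $\str_2$ are, for each dominant $\lambda$, bijections from $\calB(\lambda)$ onto the lattice points of the corresponding string polytope (by the very definition of adapted string, together with Littelmann's polytope description), so the transition map is automatically a well-defined bijection; what must be pinned down is its explicit form.

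For that computation I would lift to the geometric, totally positive picture. Adapted-string parameters are the tropicalization, in Lusztig's sense, of a coordinate chart on the unipotent radical $U^-$ (equivalently, on a Bott--Samelson-type variety) attached to a reduced word, and the change-of-chart map is the tropicalization of a subtraction-free birational map between two such charts. For the length-four braid relation in type $B_2/C_2$ this birational map is explicitly known in the total-positivity literature; tropicalizing it --- replacing $\cdot$ by $+$, $+$ by $\max$ (or $\min$, according to the convention) and $/$ by $-$ --- produces precisely expressions of the shape appearing in $\theta_{12}$ and $\theta_{21}$, namely maxima and minima of three linear forms. One then matches conventions (choice of Borel, highest- versus lowest-weight normalization, and the greedy reading order of the adapted string) to fix the signs and obtain the stated formulas. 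A more hands-on route, avoiding the geometric lift, is the one Littelmann actually uses: compute $\str_1(\pi)$ and $\str_2(\pi)$ directly for Littelmann (LS) paths and compare; alternatively, use that a positively homogeneous piecewise linear map of $\bbR^4$ is determined by finitely many data, so the recursive description $\calB(\lambda)\hookrightarrow\calB(\lambda-\varpi_i)\otimes\calB(\varpi_i)$ together with the small crystals $\calB(\varpi_1),\calB(\varpi_2)$ lets one compute both adapted strings of enough elements to determine the linear form of $\theta_{12}$ on each chamber and to verify the formula.

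It then remains to check that $\theta_{12}$ and $\theta_{21}$ are mutually inverse and that $\theta_{12}(C_1)=C_2$. The first is automatic from the geometric lift, since the two subtraction-free birational maps are inverse and tropicalization is functorial on subtraction-free maps; failing that, one verifies the tropical identity $\theta_{21}\circ\theta_{12}=\mathrm{id}$ directly by a finite chamber-by-chamber check on $\bbR^4$, showing that each coordinate of the composite collapses to the corresponding identity coordinate using distributivity of $\max$ and $\min$ over addition. The cone statement then follows formally: for each $\lambda$, $\theta_{12}$ carries the lattice points of the $\str_1$-polytope bijectively onto those of the $\str_2$-polytope (because $\theta_{12}\circ\str_1=\str_2$), and since the lattice points of $C_i$ are exactly $\bigcup_\lambda\str_i(\calB(\lambda))$ and $\theta_{12}$ is positively homogeneous and continuous, taking the union over all $\lambda$ and closing up gives $\theta_{12}(C_1)=C_2$, with $\theta_{21}$ its inverse.

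The main obstacle is the explicit rank-two transition map for the length-four braid relation: unlike the familiar length-three ($A_2$) case, where the map has only two chambers and a single nontrivial term $\min(a,c)$, the $C_2$ case genuinely involves maxima and minima of three linear forms, several chambers, and a more delicate matching of conventions between the geometric and the combinatorial sides. The delicate point is to be certain that the stated $\theta_{12}$ and $\theta_{21}$ are honestly inverse --- not merely inverse up to a chamber one has overlooked --- which is exactly where careful bookkeeping is required. In the paper this is of course quoted from Littelmann rather than reproved.
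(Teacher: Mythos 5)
The paper does not prove this statement at all; it is quoted directly from Littelmann~\cite{conescrystalspatterns}, so there is no internal proof to compare against. Read on its own terms, your outline is a plausible and genuinely different route to Littelmann's result: Littelmann's original argument manipulates the string coordinates directly (essentially a hands-on path/crystal computation), whereas you invoke the geometric lift and tropicalization picture of Lusztig and Berenstein--Zelevinsky, a later technology that reorganizes the computation and makes several of the soft claims automatic --- in particular mutual inverseness, since tropicalization is functorial on subtraction-free rational maps, and the cone statement, which follows once one knows $\theta_{12}\circ\str_1=\str_2$ on each $\calB(\lambda)$ together with positive homogeneity. You also correctly isolate the two structural facts that make either route work: the two reduced words of $w_0$ in type $C_2$ differ by a single length-four braid move, and the transition map between string cones is local to that braid move.

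That said, your proposal never actually performs the computation that carries the content of the theorem. You would need to write down the explicit subtraction-free birational change of coordinates attached to the $C_2$ braid relation (or, equivalently, compute the transition on enough crystal elements to pin down the piecewise-linear map chamber by chamber) and then match it term-by-term to the stated $\theta_{12}$ and $\theta_{21}$, including the convention bookkeeping (choice of $\max$ vs.\ $\min$, highest- vs.\ lowest-weight normalization, and which of $\alpha_1,\alpha_2$ is long). You flag this yourself as ``the main obstacle,'' and you are right: the $C_2$ case has maxima and minima of three linear forms over several chambers, and an overlooked chamber or a flipped sign would invalidate the stated formulas. As written, then, this is a correct and well-informed proof \emph{outline}, not a proof; completing it requires either reproducing Littelmann's direct verification or exhibiting and tropicalizing the explicit $C_2$ birational map, neither of which is carried out.
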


Moreover, Littelmann precisely characterizes the adapted strings which occur in a given crystal $\calB(\lam)$.

\begin{theorem}[{\cite[Corollary 2, Prop. 1.5]{conescrystalspatterns}}]
\label{Littelmannineq}
Let $\lambda=\lambda_1\varpi_1+\lambda_2\varpi_2$. Given $(a,b,c,d)\in \bbZ_{\geq 0}^4$, there exists $x\in \calB(\lambda)$ with $\stn(x)=(a,b,c,d)$ if and only if the following inequalities hold:
\begin{itemize}
    \item $b\geq c \geq d$
    \item $d\leq \lambda_1$
    \item $c\leq \lam_2+d$
    \item $b\leq \lam_1-2d+2c$
    \item $a\leq \lam_2+d-2c+b$
\end{itemize}
\end{theorem}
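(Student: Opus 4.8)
The plan is to prove \Cref{Littelmannineq} by combining the two pieces of information we already have from \Cref{adaptedstring}: the explicit bijection $\theta_{21}:C_2\to C_1$ together with the (presumably already known, or easily described) defining inequalities of the cone $C_1$ associated to the reduced expression $s_1s_2s_1s_2$. In type $A_1\times A_1$-style string cones the inequalities for a fixed reduced word are the standard Littelmann/Berenstein--Zelevinsky ones, so I would first record the inequalities cutting out the set $\{(a,b,c,d):$ there exists $x\in\calB(\lambda)$ with $\str_1(x)=(a,b,c,d)\}$ --- these are immediate from the order in which the root operators $f_1f_2f_1f_2$ are applied, using at each step that the number of times $f_i$ can be applied is controlled by $\eps_i+\phi_i=\langle\wt,\alpha_i^\vee\rangle$ of the partially-lowered vector, i.e.\ the usual telescoping bound. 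Then the claimed inequalities for $\stn=\str_2$ should be exactly the image of the $\str_1$-inequalities under $\theta_{12}$ (equivalently, the preimage under $\theta_{21}$), and the proof reduces to a finite piecewise-linear verification.

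Concretely, I would carry out the following steps. First, fix $x\in\calB(\lambda)$ and set $(a,b,c,d)=\stn(x)$, so $x=f_2^af_1^bf_2^cf_1^d(v_\lambda)$. The outermost conditions $b\ge c\ge d$ and $d\le\lambda_1$ come directly from analyzing this factorization from the inside: $d\le\phi_1(v_\lambda)=\langle\lambda,\alpha_1^\vee\rangle=\lambda_1$ gives $d\le\lambda_1$; then $f_1^d(v_\lambda)$ has weight $\lambda-d\alpha_1$, and $c\le\phi_2(f_1^d(v_\lambda))$, which one computes to be $\langle\lambda-d\alpha_1,\alpha_2^\vee\rangle+\eps_2(f_1^d v_\lambda)=\lambda_2+2d+0$ --- wait, here one must be careful, since $\eps_2$ of the intermediate vector need not vanish; this is exactly where the two-step (rather than one-step) telescoping enters, and where comparing with the $\str_1$ description via $\theta$ is cleaner than arguing directly. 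So rather than pushing the direct string-cone computation through all four steps, I would instead prove it through $C_1$: establish the (easier, because $s_1$ comes first against a $B_2$-dominant $v_\lambda$) inequalities for $C_1$, and then show that $(a,b,c,d)$ satisfies the five displayed inequalities if and only if $\theta_{21}(a,b,c,d)$ satisfies the $C_1$-inequalities. This last equivalence is a matter of substituting the $\min/\max$ formulas for $\theta_{21}$ and checking the implications on each linearity chamber.

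The main obstacle I expect is precisely this piecewise-linear bookkeeping: $\theta_{21}$ is given by three $\min$'s and two $\max$'s, so a priori there are many chambers, and one has to verify that the five inequalities $b\ge c\ge d$, $d\le\lambda_1$, $c\le\lambda_2+d$, $b\le\lambda_1-2d+2c$, $a\le\lambda_2+d-2c+b$ are equivalent to the pullback of the $C_1$-inequalities on all of them simultaneously. The saving grace is that $C_1$ and $C_2$ are genuine (polyhedral) cones --- as asserted in the text preceding \Cref{adaptedstring} --- so it suffices to match their finitely many facets; one identifies which $\min$/$\max$ branch is ``active'' on each facet of $C_2$ and reads off that the corresponding inequality becomes one of the five in the statement (or a consequence of $b\ge c\ge d$). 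I would also double-check the direction of the argument one wants: \Cref{adaptedstring} already tells us $\theta_{12}$, $\theta_{21}$ restrict to bijections $C_1\leftrightarrow C_2$, so it is enough to show that the polyhedral cone $C_2'$ defined by the five displayed inequalities equals $\theta_{12}(C_1)$; since $\theta_{12}(C_1)=C_2$ by hypothesis, and $C_2\subseteq C_2'$ is the ``only if'' (realizability forces the inequalities, checked by the telescoping bound above on the one step we can do cleanly plus transport along $\theta$ for the rest), the reverse inclusion $C_2'\subseteq C_2$ follows from a dimension/facet count once we know no facet of $C_2'$ is redundant. Alternatively, and perhaps most economically, one verifies the ``if'' direction by constructing $x$ explicitly: given $(a,b,c,d)$ satisfying the inequalities, set $x:=f_2^af_1^bf_2^cf_1^d(v_\lambda)$ and check inductively, using $\phi_i(y)=\eps_i(y)+\langle\wt(y),\alpha_i^\vee\rangle$ at each stage, that none of the four operator applications returns $0$ --- the inequalities are tailored exactly so that each partial exponent stays within the relevant $\phi_i$.
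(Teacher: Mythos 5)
The paper does not prove this statement: \Cref{Littelmannineq} is imported verbatim from Littelmann's work (Cor.\ 2 and Prop.\ 1.5 of \cite{conescrystalspatterns}), so there is no internal proof to compare against, and any attempt here is a reproof of a cited result.

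Your roadmap --- derive the defining inequalities of the cone $C_1$ for the word $s_1s_2s_1s_2$ and then transport along $\theta_{21}$ --- is plausible in outline but has two concrete gaps. First, the $C_1$ inequalities are never written down; you call them ``presumably already known, or easily described,'' but they are \emph{also} one of Littelmann's results (the very paper you would be citing for $\theta_{12},\theta_{21}$), so the argument is either circular or requires an independent derivation that is not supplied. The claim that they are easier ``because $s_1$ comes first against a $B_2$-dominant $v_\lambda$'' is not right: for $\str_1$ the factorization is $x=f_1^af_2^bf_1^cf_2^dv_\lambda$, so the operator applied first to $v_\lambda$ is $f_2$, and one faces the same difficulty of controlling $\epsilon_i$ of intermediate vertices that you yourself flag as problematic in the $\str_2$ computation. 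Second, and more seriously, the logic ``$C_2\subseteq C_2'$ plus no facet of $C_2'$ is redundant implies $C_2'\subseteq C_2$'' does not hold: an irredundant system of inequalities can cut out a strictly larger region than $C_2$ without any of its facets being implied by the others. To conclude $C_2'=C_2$ one must actually match the facets (or the lattice-point counts with $\dim V(\lambda)$), and that piecewise-linear verification --- which you explicitly defer --- is the whole content of the theorem. Your final alternative (``check inductively that none of the operator applications returns $0$'') establishes $f_2^af_1^bf_2^cf_1^dv_\lambda\neq 0$ from $\epsilon_i\geq 0$, but does not show that the resulting element has adapted string \emph{equal to} $(a,b,c,d)$; for that you need the string-cone inequalities $b\geq c\geq d$ to already be known to characterize $\calB(\infty)$-strings, which again loops back to the uncited Littelmann input. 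As written, the proposal is a plan rather than a proof.
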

\section{The atomic and preatomic decompositions}

In this section we introduce some important decompositions of the crystal $\calB(\lam)$.

\subsection{Preatoms}

We start by defining the preatomic decomposition. As we note in \Cref{remarkPreatomic}, the preatoms turn out to be a direct generalization of the LL atoms in type $A$, although they can contain several elements with the same weight.





\label{preatoms}




\begin{proposition}
\label{emb}
There is an embedding of crystals $\Phi: \calB(\lambda) \rightarrow \calB(\lambda + 2 \varpi_{1})$.  
\end{proposition}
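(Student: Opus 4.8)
The plan is to construct $\Phi$ explicitly on Kashiwara--Nakashima tableaux and then verify directly that it commutes with the crystal operators $e_i, f_i$ and shifts the weight by $2\varpi_1$. Concretely, a KN tableau $T \in \calB(\lambda)$ with $\lambda = \lambda_1\varpi_1 + \lambda_2\varpi_2$ has shape the partition with $\lambda_1 + \lambda_2$ columns of height at most $2$, of which $\lambda_2$ have height $2$ and $\lambda_1$ have height $1$. To land in $\calB(\lambda + 2\varpi_1)$ I need to add two columns of height $1$. The natural candidate: prepend (at the left, i.e. the ``beginning'' of the tableau, which is the highest-weight end of the reading word) two boxes filled with $1$; equivalently, define $\Phi(T)$ to be the tableau whose word is $1\,1\cdot \word(T)$ read appropriately, or more precisely insert two $1$-columns in the leftmost position so that semistandardness and admissibility are preserved. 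First I would check that $\Phi(T)$ is again a legal KN tableau: the new length-$1$ columns $\boxed{1}$ are trivially admissible, and since $1$ is the smallest letter, prepending them keeps the splitting semistandard.

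Next I would compute the weight: adding two $1$'s increases $n_1(T)$ by $2$, hence $t_1$ by $2$ and $t_2$ unchanged, so $\lambda_1(T)$ increases by $2$ and $\lambda_2(T)$ is unchanged, giving $\wt(\Phi(T)) = \wt(T) + 2\varpi_1$ as required. The main verification is that $\Phi$ intertwines the crystal operators, i.e. $\Phi \circ f_i = f_i \circ \Phi$ and $\Phi\circ e_i = e_i\circ \Phi$ for $i \in \{1,2\}$. For this I would analyze the effect of prepending $1\,1$ on the $i$-signature of $\word(T)$. For $i = 2$: the letters $1, \bar 1$ carry no label in the $2$-signature (only $2,\bar 3,\ldots$ and their partners do, but in rank $2$ the labelled letters for $i=2$ are $2$ with $+$ and $\bar 2$ with $-$), so the two prepended $1$'s contribute nothing and the $2$-signature — hence the action of $e_2, f_2$ — is unchanged. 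For $i=1$: the letter $1$ carries a $+$ label. Prepending two $+$'s at the far left; after the bracketing cancellation these two extra $+$'s will either cancel with $-$'s already present or survive, but crucially they sit to the left of everything, so in the reduced word $(-)^r(+)^s$ they only ever increase $s$ (the surviving $+$'s), never change which letter is tagged by the \emph{leftmost} surviving $+$ among the original letters unless $T$ had no original surviving $+$. One must check that the leftmost $+$ in $\sigma_1(\word(\Phi(T)))$ tags the same letter of $T$ as the leftmost $+$ in $\sigma_1(\word(T))$ whenever the latter exists — this holds because prepending $+$'s at the left does not create new cancellations for the original letters (a prepended $+$ can only be cancelled by a $-$ to its right, i.e. possibly consuming an original $-$, which would only expose \emph{more} original $+$'s, but the leftmost surviving original $+$ is unaffected) — wait, that needs care: an extra leftmost $+$ could cancel the leftmost original $-$, which never affects $+$'s. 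So the positions of surviving $+$'s among original letters are preserved, hence $f_1$ acts the same way, and similarly $e_1$ (the rightmost $-$ is among original letters and is unaffected). Then injectivity of $\Phi$ is immediate since one recovers $T$ by deleting the two leftmost $1$'s.

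Alternatively — and this is the route the paper's phrasing (``its reformulation in terms of adapted strings'') suggests may be cleaner for later use — I would express $\Phi$ on string data. Using $\stn = \str_{s_2s_1s_2s_1}$, highest weight vectors map to highest weight vectors, and since $\Phi$ is a crystal embedding it must act as $\stn(\Phi(T)) = \stn(T) + (\text{shift})$ on the relevant cone, or more precisely $\Phi$ restricted to string coordinates is the identity on $(a,b,c,d)$ while the ambient inequalities of \Cref{Littelmannineq} relax from those for $\lambda$ to those for $\lambda + 2\varpi_1$ (only the bounds involving $\lambda_1$, namely $d \le \lambda_1$ and $b \le \lambda_1 - 2d + 2c$, loosen). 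To make this rigorous I would show: (i) the map on strings $(a,b,c,d)\mapsto(a,b,c,d)$ sends the cone of $\calB(\lambda)$ into the cone of $\calB(\lambda+2\varpi_1)$ — clear from \Cref{Littelmannineq} since every defining inequality for $\lambda$ implies the corresponding one for $\lambda+2\varpi_1$; (ii) it commutes with lowering operators by the standard fact that $f_i$ acts on adapted strings in a way depending only on the string and not on $\lambda$ (within the cone), so a common-domain string-preserving map automatically intertwines them; (iii) it preserves weight up to $+2\varpi_1$, which one reads off from the weight formula in terms of the string.

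The hard part will be the $i=1$ signature bookkeeping in the tableau approach: one has to be genuinely careful that prepending $1$'s (which all carry $+$ in the $1$-signature) does not alter the bracketing outcome for the original letters in edge cases (e.g. when $T$ itself is $e_1$-highest, or when the prepended $+$'s get partially cancelled). This is where I would spend the most effort — writing out the signature as a lattice-path/balanced-parenthesis argument and checking that appending $+\,+$ on the left commutes with taking the ``leftmost unmatched $+$'' and ``rightmost unmatched $-$''. The adapted-string argument sidesteps this entirely but requires invoking the $\lambda$-independence of the $f_i$-action on strings, which I would either cite or verify from \Cref{adaptedstring} and the cone description; given the paper's setup I expect it is cleanest to do the tableau definition for concreteness and immediately translate to strings for the properties, exactly as the introduction indicates.
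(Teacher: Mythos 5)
Your proposed map — prepend two boxes filled with $1$, equivalently the identity $(a,b,c,d)\mapsto(a,b,c,d)$ on adapted string parameters — shifts the weight: $\wt(\Phi(T))=\wt(T)+2\varpi_1$, as you compute yourself. But a crystal embedding must \emph{preserve} $\wt$ (it is a map to the fixed lattice $X$, not a relative quantity), and this is precisely what the paper needs: the whole point of $\Phi$ is that $v_\lambda$ is sent to an element of $\calB(\lambda+2\varpi_1)$ of weight $\lambda$, \emph{not} to the new highest weight vector $v_{\lambda+2\varpi_1}$. With your map, $\Phi(v_\lambda)=v_{\lambda+2\varpi_1}$, so $\Ima(\Phi)$ contains the top of the crystal and the intended principal preatom $\calP(\lambda+2\varpi_1)=\calB(\lambda+2\varpi_1)\setminus\Ima(\Phi)$ would be missing its highest weight element; the character identity $[\calP(\lambda)]_{v=1}=(\tilN^3_\lambda)_{v=1}$ (\Cref{PreatomPrecan}), which is proved by subtracting characters and explicitly invokes ``$\Phi$ is weight preserving and injective,'' also collapses. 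So this is not a technicality about conventions: the map you describe is simply not the map the proposition is about, and it cannot serve the role $\Phi$ plays in the rest of the paper. (A secondary slip: prepending columns to the left of the tableau appends the new letters to the \emph{end} of the reading word, not the beginning, since the word is read right-to-left by columns; but this does not affect the main conclusion.)

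The correct construction adds a $1$ at the left end \emph{and} a $\bar 1$ at the right end of the first row, with the correction rule that a resulting column $\Skew(0:\hbox{\tiny{$1$}}|0:\hbox{\tiny{$\bar 1$}})$ (which is inadmissible) is replaced by $\Skew(0:\hbox{\tiny{$2$}}|0:\hbox{\tiny{$\bar 2$}})$. This is weight-preserving since it adds one $1$ and one $\bar 1$ (and the correction swaps a $\{1,\bar 1\}$ pair for a $\{2,\bar 2\}$ pair). The word then becomes plactic-equivalent to $\bar 1\cdot\word(T)\cdot 1$ (this is the paper's \Cref{importantembtool}), whose $1$-signature is $-\,\sigma_1(\word(T))\,+$ and whose $2$-signature equals $\sigma_2(\word(T))$, giving compatibility with $e_i,f_i$. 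On string data the resulting map is not the identity: $\str_2(\Phi(T))=(a,b+1,c+1,d+1)$, equivalently $\str_1(\Phi(T))=(a+1,b+1,c+1,d)$, as in \Cref{emb1str}; and in particular $\Phi(v_\lambda)=f_1f_2f_1(v_{\lambda+2\varpi_1})$, the element of weight $\lambda$, not the highest weight vector.
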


\begin{proof} 
We define the map $\Phi$ on Kashiwara-Nakashima tableaux as follows. Note that  since $n = 2$, all tableaux will have at most two rows. Let $T$ be a Kashiwara-Nakashima tableaux of shape a partition $[a,b]$. Then we replace the first row of $T$, say $r^{1} = \Skew(0:\hbox{\tiny{$r^{1}_1$}} , ... , \hbox{\tiny{$r^{1}_k$}})$, by $\Skew(0:\hbox{\tiny{$1$}},\hbox{\tiny{$r^1_1$}} , ... , \hbox{\tiny{$r^{1}_k$}}, \hbox{\tiny{$\bar 1$}})$. The resulting tableau will be denoted by $\Phi'(T)$. 

If $\Phi'(T)$ contains the column $\Skew(0:\hbox{\tiny{$ 1$}}|0: \hbox{\tiny{$\bar 1$}})$, we replace it with the column $\Skew(0:\hbox{\tiny{$2$}}|0: \hbox{\tiny{$\bar 2$}})$. The new tableau will be denoted by $\Phi(T)$. Note that by semi-standardness, since $T$ does not contain a column $\Skew(0:\hbox{\tiny{$ 1$}}|0: \hbox{\tiny{$\bar 1$}})$, $\Phi'(T)$ can contain at most one such column.

The map $\Phi$ is well defined:
the tableau $\Phi'(T)$ is clearly semi-standard; this implies that, in case $\Phi'(T) \neq \Phi(T)$, then the latter must be semi-standard as well. Assume then that $\Phi'(T) \neq \Phi(T)$. The $1$ in the column $\Skew(0:\hbox{\tiny{$ 1$}}|0: \hbox{\tiny{$\bar 1$}})$ of $\Phi'(T)$ is necessarily the right-most one, so all entries to its right in $\Phi'(T)$ must be strictly larger than $1$. In the second row of $\Phi'(T)$, the $\bar 1$ which  is replaced by $\bar 2$ to obtain $\Phi(T)$ has to be the left-most one, since otherwise $\Phi'(T)$ would contain the column $\Skew(0:\hbox{\tiny{$ 1$}}|0: \hbox{\tiny{$\bar 1$}})$, which is impossible, since it is not an admissible column. The last thing missing to check in order to establish that $\Phi(T)$ is indeed a KN tableau is that it does not contain as a sub tableau $\Skew(0:\hbox{\tiny{$ 2$}},\hbox{\tiny{$2$}}|0:\hbox{\tiny{$\bar 2$}}, \hbox{\tiny{$\bar 2$}})$. But this is impossible, because then $\Phi'(T)$ would necessarily have to contain $\Skew(0:\hbox{\tiny{$ 1$}},\hbox{\tiny{$2$}}|0:\hbox{\tiny{$\bar 1$}}, \hbox{\tiny{$\bar 2$}})$ as a sub-tableau, which is not semi-standard. Note  that, by construction, $\Phi$ is weight-preserving. The case $\Phi'(T) = \Phi(T)$ is left to the reader, as the arguments are very similar to the ones above. \\

It remains to show that $\Phi$ is injective and that it commutes with the crystal operators. We start with a lemma.


\def \Phio {\Phi^{-\bar 1}}
\begin{lemma}
\label{importantembtool}
Let $T$ be a Kashiwara-Nakashima tableau, and let $w = \word(T)$ be its word. Then the word $\bar 1 w 1$ is plactic equivalent to $\word(\Phi(T))$. Moreover, the word $\bar 1 w 1$ is plactic equivalent to $\bar 1 u$, where $u$ is a word plactic equivalent to $w1$.
\end{lemma}

\begin{proof}

Let $r,s$ be positive integers such that the second row of $T$ has length $s$ and the first row, $s+r$. Let $a_{1} \leq \cdots \leq a_{r+s}$ be the entries in the first row, respectively $b_{1} \leq \cdots \leq b_{s}$ the entries in the second row of $T$ (if any).

Adding a $\bar 1$ at the end of the first row of a tableau just adds a $\bar 1$ at the beginning of its word. Let $\Phio(T)$ be the tableau obtained by removing the rightmost $\bar 1$ from $\Phi(T)$. It is then enough to show that $\word(\Phio(T))\cong w 1$. 

We actually prove a slightly stronger statement by induction on $s$: we have $\word(\Phio(T))\cong w 1$ and $\word(\Phio(T))=a_{r+s}v$, for some $v\in \calP_2^*$.


  The claim is clear if $s=0$. For $s>0$, consider  the tableau $U$ consisting of the first $s-1$ columns of $T$ and let $u=\word(U)$. We have $w=a_{r+s}\ldots a_{s+1}a_sb_s u$ and by induction we have
\[w1\cong a_{r+s}\ldots a_{s+1}a_sb_s\word(\Phio(U))=a_{r+s}\ldots a_{s+1}a_sb_s a_{s-1}v.\]

Since $a_{s-1}\leq a_s<b_s$ by Relation \ref{R1} in \S \ref{sec:lecrelations} we have 
\[a_{s}b_{s}a_{s-1} \cong a_{s} a_{s-1} b_{s}\]
\noindent except for when $b_{s} = \overline{a_{s-1}}$. Assume then we are in this case. Note that $b_{s} = \bar 2$ is impossible since semi-standardness alone then implies that $a_{s-1} = a_{s} = 2$ and $b_{s-1} = b_{s} = \bar 2$ but the tableau $\Skew(0:\hbox{\tiny{$2$}}, \hbox{\tiny{$2$}} |0: \hbox{\tiny{$\bar 2$}},\hbox{\tiny{$\bar 2$}})$ is not KN. Therefore the only option is $b_{s} = \bar 1$ and $a_{s-1} = 1$. In this case we have $a_s\in \{2,\bar 2\}$ and Relation \ref{R2} tells us that 
\[a_{s}\bar 1 1 \cong a_{s} 2 \bar 2.\]
Notice that the case $b_s=\bar 1$ precisely occurs when the $s$-th column of $\Phi'(T)$ is $\Skew(0:\hbox{\tiny{1}}|0:\hbox{\tiny{$\bar 1$}})$ and is replaced by $\Skew(0:\hbox{\tiny{2}}|0:\hbox{\tiny{$\bar 2$}})$ in $\Phi(T)$. From this we observe that in both cases we have $w1\cong \word(\Phio(T))$.
\end{proof}






We now go back to the proof of \Cref{emb}. From \Cref{importantembtool} we see immediately that $\Phi$ is injective.
Let $T$ be a KN tableau and $w = \word(T)$. We have $\sigma_{1}(\bar 1 w 1) = - \sigma_{1}(w) +$. This implies that, if $f_{1}$ is defined on $ w$ then it is also defined on $\bar 1 w 1$ and 
\begin{align}
\label{emb1}
f_{1}(\bar 1 w 1) = \bar 1 f_{1}(w) 1
\end{align}
Similarly, if $e_1(w)$ is defined, then $e_1(\bar 1 w 1) = \bar 1 e_1(w) 1$. 
We know by Lemma \ref{importantembtool}
that $\bar 1 w 1 \cong \word(\Phi(T))$ therefore
\begin{align}
\label{emb2}
     f_{1}(\word(\Phi(T))) \cong f_{1}(\bar 1 w 1) = \bar 1 f_{1}(w) 1\cong \word(\Phi(f_{1}(T))).
\end{align}

This implies that, since $ f_{1}(\Phi(T)), e_{1}(\Phi(T)), \Phi(e_{1}(T))$ and $\Phi(f_{1}(T))$ are $KN$ tableaux, we have
\begin{align}
\label{emb3}
f_{1}(\Phi(T)) = \Phi(f_{1}(T))\qquad
e_{1}(\Phi(T)) = \Phi(e_{1}(T))
\end{align}

\noindent as desired. Now, $\sigma_{2}(\bar 1w 1) = \sigma_{2}(w)$ by definition, so $e_2$ and $f_2$ are defined on $\Phi(T)$ if and only if are defined on $T$. Hence $f_2(\word(\Phi(T))=\word(\Phi(f_2(T)))$ and \eqref{emb3} hold after replacing $f_{1}$ by $f_{2}$ and $e_{1}$ by $e_{2}$. 
\end{proof}

\begin{corollary}
\label{corplactic}
Given a KN tableau $T$, the new tableau $\Phi(T)$ is defined by first column inserting the letter $1$ into $T$ using symplectic insertion and subsequently adding a $\bar 1$ at the end of the first row. 
\end{corollary}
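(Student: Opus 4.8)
The claim of \Cref{corplactic} is that the combinatorially-defined map $\Phi$ admits the alternative description: column-insert a $1$ into $T$ via symplectic (Kashiwara--Nakashima) insertion, then append a $\bar 1$ to the end of the first row. The plan is to reduce everything to the plactic characterization already established in \Cref{importantembtool}. Recall that symplectic insertion is designed precisely so that inserting a letter $x$ into a KN tableau $T$ produces the unique KN tableau whose word is plactic equivalent to $\word(T)\cdot x$ (with column insertion the word is $x\cdot\word(T)$ read appropriately, but for our conventions reading right-to-left/top-to-bottom it is $\word(T)1$). So let $T'$ be the tableau obtained by column-inserting $1$ into $T$; by definition of symplectic insertion, $\word(T') \cong \word(T)\,1 = w1$.

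First I would append $\bar 1$ to the first row of $T'$ and call the result $T''$. Adding a $\bar 1$ at the end of the first row of any KN tableau prepends a $\bar 1$ to its word, so $\word(T'') = \bar 1 \,\word(T')$. Then $\word(T'') = \bar 1\,\word(T') \cong \bar 1\, u$ where $u := \word(T')$ is plactic equivalent to $w1$. By the second assertion of \Cref{importantembtool}, the word $\bar 1 w 1$ is plactic equivalent to $\bar 1 u$ for any such $u$; hence $\word(T'') \cong \bar 1 w 1$. On the other hand, the first assertion of \Cref{importantembtool} says $\bar 1 w 1 \cong \word(\Phi(T))$. Chaining these equivalences gives $\word(T'') \cong \word(\Phi(T))$.

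Finally I would invoke the uniqueness statement from \S\ref{sec:lecrelations}: every word is plactic equivalent to the word of a \emph{unique} KN tableau. Since both $T''$ and $\Phi(T)$ are KN tableaux (the former because symplectic insertion outputs a KN tableau and appending $\bar 1$ to the first row of the result preserves the KN property here — this needs a one-line check that no forbidden configuration like $\Skew(0:\hbox{\tiny{$1$}}|0:\hbox{\tiny{$\bar 1$}})$ or $\Skew(0:\hbox{\tiny{$2$}},\hbox{\tiny{$2$}}|0:\hbox{\tiny{$\bar 2$}},\hbox{\tiny{$\bar 2$}})$ is created, exactly as in the proof of \Cref{emb}; the latter by \Cref{emb}) and their words are plactic equivalent, we conclude $T'' = \Phi(T)$, which is precisely the assertion.

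The only genuinely delicate point is making sure the second assertion of \Cref{importantembtool} is being applied correctly: it requires that $u$ be plactic equivalent to $w1$, which is exactly what the defining property of symplectic column insertion gives us, so there is no circularity. The remaining verification — that appending $\bar 1$ to the first row of the inserted tableau lands in the set of KN tableaux, and that the $1$-in-first-row / $\bar 1$-in-first-row bookkeeping matches the original definition of $\Phi'$ versus $\Phi$ (the column $\Skew(0:\hbox{\tiny{$1$}}|0:\hbox{\tiny{$\bar 1$}})\to\Skew(0:\hbox{\tiny{$2$}}|0:\hbox{\tiny{$\bar 2$}})$ substitution) — I expect to be routine, since the plactic uniqueness theorem does all the real work and forces the two tableaux to agree on the nose.
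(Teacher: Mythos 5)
Your argument is correct and follows the same route the paper intends: the paper's proof is the one-line ``follows immediately from \Cref{importantembtool},'' and your proposal is precisely the expansion of that remark, chaining the two assertions of \Cref{importantembtool} with the defining property of symplectic column insertion and the uniqueness of the KN representative in a plactic class. Your parenthetical worry about the ``one-line check'' that appending $\bar 1$ to the first row preserves the KN property is indeed routine (a one-box column with the maximal letter $\bar 1$ is admissible and does not disturb the splitting), so the proposal stands as written.
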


\begin{proof}
The proof follows immediately from Lemma \ref{importantembtool}.
\end{proof}

\begin{corollary}\label{preatomsclosed}
The complement of $\Ima(\Phi)$ is closed under the action of $W$, under $e_2$ and  under outwards  $e_1$, i.e.
if $T\not \in \Ima(\Phi)$ and $\langle wt(T),\alpha_1^\vee\rangle\geq 0$ and $e_1(T)\neq 0$, then $e_1(T)\not \in \Ima(\Phi)$.
\end{corollary}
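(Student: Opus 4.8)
The plan is to analyze the three operations one at a time, in each case translating the question about lying in $\Ima(\Phi)$ into a concrete statement about the tableau's shape, using \Cref{corplactic} which characterizes $\Phi(T)$ as "insert $1$ by symplectic column insertion, then append $\bar 1$ at the end of the first row". The crucial observation is that a KN tableau $S$ lies in $\Ima(\Phi)$ precisely when it has a $1$ in the first row and a $\bar 1$ in the first row (equivalently, $n_1(S)\geq 1$ and $n_{\bar 1}(S)\geq 1$ with those extremal letters occupying the corners of the first row), since then one can reverse the construction: remove the corner $\bar 1$, then symplectic-uninsert a $1$. So "$T\not\in\Ima(\Phi)$" means $T$ has no $1$ in the top-left corner or no $\bar 1$ in the top-right corner — more precisely, after inserting a $1$ and appending a $\bar 1$ the resulting tableau is genuinely "bigger", i.e. $T$ itself is not of that form. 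I would first make this characterization precise (it essentially follows from the injectivity and explicit description already established), and then each of the three closure claims becomes a bookkeeping check on where $1$'s and $\bar 1$'s sit.

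For the $W$-action: since $\Ima(\Phi)$ is a subcrystal by \Cref{emb} (it is closed under all $e_i,f_i$), its complement need not be a subcrystal, but the Weyl group action $s_i$ is built from $e_i,f_i$ — specifically $s_i$ reverses the $f_i$-string. The key point is that $\Ima(\Phi)$ being a union of $f_i$-strings (as a subcrystal) means its complement is too, hence $s_i$ preserves the complement; since the $s_i$ generate $W$, the complement is $W$-stable. This is the easy part. Closure under $e_2$ is immediate from the same reasoning: $\Ima(\Phi)$ is closed under $e_2$ and $f_2$, so its complement is closed under $e_2$ (if $T\notin\Ima(\Phi)$ but $e_2(T)\in\Ima(\Phi)$, then $T=f_2(e_2(T))\in\Ima(\Phi)$, contradiction).

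The genuinely non-formal part is closure under \emph{outward} $e_1$: here the hypothesis $\langle \wt(T),\alpha_1^\vee\rangle\geq 0$ is essential, because $\Ima(\Phi)$ is closed under $e_1$ outright, so naively the complement would be closed under $e_1$ with no hypothesis — but that is false in general (this is exactly why $\Ima(\Phi)$ is only an embedded copy, not a "downward-closed" piece), so I must have misidentified something. Let me reconsider: $\Ima(\Phi)$ \emph{is} closed under $e_1$ and $f_1$ by \eqref{emb3}, so its complement genuinely is closed under both — wait, then the statement would be vacuously stronger than claimed. The resolution must be that \eqref{emb3} gives closure only when the relevant operator is defined \emph{and stays inside}; re-examining, $\Phi$ commutes with $e_1,f_1$ as a map of crystals-with-zero, so $\Ima(\Phi)\cup\{0\}$ is $e_1$-stable and $f_1$-stable, hence the complement is $e_1$- and $f_1$-stable without hypotheses. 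So the hypothesis $\langle\wt(T),\alpha_1^\vee\rangle\geq 0$ in the statement is simply a convenient (non-tight) formulation, and the proof of all three parts reduces to the single observation: \textbf{$\Ima(\Phi)$ is a subcrystal (closed under all $e_i,f_i$ and hence under $W$), therefore its complement is closed under all $e_i,f_i$ and under $W$ as well.} I would write it exactly that way — the content is entirely in \Cref{emb}, and this corollary is a one-line formal consequence, with the word "outwards" and the sign hypothesis included only to match the form in which the statement is used later. The main thing to be careful about is the convention that $T\notin\Ima(\Phi)$ and $e_1(T)\in\Ima(\Phi)$ forces $T=f_1(e_1(T))\in\Ima(\Phi)$, using that $\Phi$ intertwines $f_1$ and that $f_1$ is a genuine partial inverse to $e_1$ in a seminormal crystal.
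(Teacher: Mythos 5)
Your proposal contains a genuine error that invalidates the final conclusion. You convince yourself that ``$\Phi$ commutes with $e_1,f_1$ as a map of crystals-with-zero, so $\Ima(\Phi)\cup\{0\}$ is $e_1$- and $f_1$-stable'' and hence that $\Ima(\Phi)$ is a subcrystal. This is false, and you had in fact already noticed the warning sign before talking yourself out of it. Equation \eqref{emb3} asserts $\Phi(f_1(T))=f_1(\Phi(T))$ and $\Phi(e_1(T))=e_1(\Phi(T))$ only when $f_1(T)$ (resp.\ $e_1(T)$) is nonzero; it says nothing when the operator kills $T$. Indeed, from $\sigma_1(\bar 1 w 1)=-\,\sigma_1(w)\,+$ one reads off
\[
\eps_1(\Phi(T))=\eps_1(T)+1,\qquad \phi_1(\Phi(T))=\phi_1(T)+1,
\]
so $\Ima(\Phi)$ lies in the set $\{U:\eps_1(U)\geq 1\text{ and }\phi_1(U)\geq 1\}$, which is \emph{not} closed under $e_1$ or $f_1$: if $\phi_1(T')=0$ then $\phi_1(\Phi(T'))=1$, so $U:=f_1(\Phi(T'))$ is a nonzero element with $\phi_1(U)=0$, hence $U\notin\Ima(\Phi)$. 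A concrete instance is $T'=v_\lambda$ with $\lambda_1=0$: then $U=f_1(\Phi(v_\lambda))\notin\Ima(\Phi)$, yet $e_1(U)=\Phi(v_\lambda)\in\Ima(\Phi)$ and $\langle\wt(U),\alpha_1^\vee\rangle=-2<0$. So the ``outward'' hypothesis is genuinely needed, not a cosmetic restatement, and the statement you tried to prove without it is false.

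The correct line of reasoning, and the one the paper takes, handles the three operators separately. For $W$: $\Phi$ commutes with $s_1,s_2$ because $\phi_i-\eps_i=\langle\wt,\alpha_i^\vee\rangle$ is preserved by the weight-preserving map $\Phi$ (this does not require $\Ima(\Phi)$ to be a union of $f_i$-strings, which for $i=1$ it is not), so $\Ima(\Phi)$ and its complement are $W$-stable. For $e_2$: since $\sigma_2(\Phi(w))=\sigma_2(w)$, $\Phi$ preserves $\phi_2$ and $\eps_2$, so $\Ima(\Phi)\cup\{0\}$ genuinely is $e_2$- and $f_2$-stable and your formal argument works in this case only. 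For $e_1$: one must use the hypothesis $\langle\wt(T),\alpha_1^\vee\rangle\geq 0$ precisely to deduce $\langle\wt(T'),\alpha_1^\vee\rangle>0$, hence $\phi_1(T')\geq 1$, hence $f_1(T')\neq 0$, which is the step that licenses $T=f_1(\Phi(T'))=\Phi(f_1(T'))$ and gives the contradiction. Your $e_2$ and $W$ conclusions are correct (though the stated justification for $W$ via $f_i$-strings is not), but the unified ``$\Ima(\Phi)$ is a subcrystal'' shortcut does not exist.
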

\begin{proof}
Since $\Phi$ commutes with $W$, its complement is union of $W$-orbits.
Let $T\not \in \Ima(\Phi)$.
We know that $\Phi(e_i(T))=e_i(\Phi(T))$ if $e_i(T)\neq 0$. 

 Assume $e_2(T)\neq 0$. If $e_2(T)=\Phi(T')$, then it follows from $\sigma_2(\Phi(w))=\sigma_2(w)$, that $f_{2}(T') \neq 0$ and therefore $T = f_{2}(\Phi(T')) = \Phi(f_{2}(T'))$, which is impossible.

Assume $e_1(T)\neq 0$ and $\langle \wt(T),\alpha_1^\vee\rangle\geq 0$. Assume $e_{1}(T) = \Phi(T')$. Since $\langle \wt(T'),\alpha_1^\vee\rangle =\langle \wt(T),\alpha_1^\vee\rangle +2 >0$, we have $f_1(T')\neq 0$, hence $T=f_1(\Phi(T'))=\Phi(f_1(T'))$, which is impossible.
\end{proof}


\begin{remark}
\label{remarkPreatomic}
    In analogy with \cite[Definition 3.18]{Pat} we can consider the connected components obtained as  $f_2$-closure of the $W$-orbits in the crystal graph.
    From \Cref{preatomsclosed} we see that preatoms are unions of the $f_2$-closure, and moreover, it turns out that for most $\lam$ (i.e. for $\lam_1>0$) each preatom consists of exactly one or two connected components, depending on the parity of $\lam_1$. In this sense, we can think of the preatoms in type $C_2$ as a direct generalization of the LL atoms in type $A$.
\end{remark}



\begin{definition}
For $\lambda$ such that $\lambda_{1} \geq 2$, we define the \textit{principal preatom} $\calP(\lambda)$ to be the complement of $\operatorname{Im}(\Phi)$ in $\calB(\lambda)$. If $\lambda_{1} \leq 1$, we define $\calP(\lambda):= \mathcal{B}(\lambda)$.

We define the \emph{preatomic decomposition} by induction on $\lam_1$. If $\lam_1\geq 2$, let
$\calB(\lam-2\varpi_1)=\bigsqcup \calP(\mu_i)$ be the preatomic decomposition. Then, the preatomic decomposition of $\calB(\lambda)$ is
\[ \calB(\lambda) = \calP(\lambda) \sqcup \bigsqcup \Phi(\calP(\mu_i)).\]
\end{definition}

Notice that all the preatoms in $\calB(\lam)$ are image of a principal preatom $\calP(\lam-2k\varpi_1)$ under the map $\Phi^k$. By \Cref{emb1str}
 we see that each preatom has a unique element of maximal weight. In particular, for any $\lam\in X$ every preatom of highest weight $\lam$ is isomorphic via some power of $\Phi$ to the principal preatom $\calP(\lam)\subset \calB(\lam)$. 





\begin{proposition}\label{emb1str}
Let $T \in \calB(\lambda)$ and consider $\Phi : \calB(\lambda) \rightarrow \calB(\lambda + 2\varpi_{1})$. Then 
\begin{enumerate}
    \item If $str_1(T) = (a,b,c,d)$, we have $str_1(\Phi(T)) = (a+1,b+1,c+1,d)$.
    \item If $\str_2(T) = (a,b,c,d)$ we have $\str_2(\Phi(T))=(a,b+1,c+1,d+1)$.
\end{enumerate}
\end{proposition}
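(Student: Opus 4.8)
The plan is to translate the combinatorial description of $\Phi$ from Corollary \ref{corplactic} into the language of adapted strings, using the crystal-theoretic characterization of $\Phi$ already established in Proposition \ref{emb}. The cleanest route is the following. First, I would pin down the highest weight vertex: since $\Phi$ is weight-preserving and commutes with all crystal operators (by \eqref{emb3} and its $f_2$-analogue), it sends $v_\lambda$ to $v_{\lambda+2\varpi_1}$. Indeed $v_\lambda$ is the unique element killed by $e_1$ and $e_2$, and $\Phi(v_\lambda)$ is killed by $e_1, e_2$ as well (if $e_i(\Phi(v_\lambda))=\Phi(T')\neq 0$ then $e_i(v_\lambda)=T'\neq 0$, a contradiction — this is essentially the argument in Corollary \ref{preatomsclosed}); hence $\Phi(v_\lambda)=v_{\lambda+2\varpi_1}$.

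Next, for claim (1), recall $\str_1(T)=(a,b,c,d)$ means $T=f_1^a f_2^b f_1^c f_2^d(v_\lambda)$. Applying $\Phi$ and using that $\Phi$ intertwines $f_1,f_2$ on the nose gives $\Phi(T)=f_1^a f_2^b f_1^c f_2^d(v_{\lambda+2\varpi_1})$. So the content of (1) is purely a statement about the highest-weight vertex: I must show $\str_1(v_{\lambda+2\varpi_1} \text{ as reached from } \Phi)$ — more precisely, that $\str_1(\Phi(T)) = (a+1,b+1,c+1,d)$, which by the above equals showing that the element $f_1^a f_2^b f_1^c f_2^d(v_{\lambda+2\varpi_1})$ has $\str_1$-coordinates $(a+1,b+1,c+1,d)$ relative to the shifted highest weight. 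Here is where a slightly different tack is cleaner: apply $\Phi$ to the highest weight element's own string realization and compute directly on tableaux. Concretely, $v_\lambda$ is the tableau with $\lambda_1+\lambda_2$ ones in row 1 and $\lambda_2$ twos in... wait — in type $C_2$, $v_\lambda$ has first row $1^{\lambda_1+\lambda_2}$ and second row $2^{\lambda_2}$... let me instead just use: $\str_1(v_\lambda)=(0,0,0,0)$, and I want $\str_1(\Phi(v_\lambda))$; since $\Phi(v_\lambda)=v_{\lambda+2\varpi_1}$, this is also $(0,0,0,0)$. So the formula $(a+1,b+1,c+1,d)$ with $(a,b,c,d)=(0,0,0,0)$ would give $(1,1,1,0)\neq(0,0,0,0)$ — contradiction. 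Hence the correct reading must be that $\Phi$ does *not* simply commute through the string word; rather the claim is genuinely about how the adapted-string cone shifts, and I need the explicit tableau description.

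So the real plan: use Corollary \ref{corplactic}, which says $\Phi(T)$ is obtained by symplectic column-insertion of $1$ into $T$ followed by appending $\bar 1$ to the end of row 1. I would verify the formula on the two reduced words by an explicit, low-complexity computation. For $\str_2 = \str_{s_2 s_1 s_2 s_1}$, I compute $\str_2$ via the standard recursive "peeling" algorithm (repeatedly reverse $f_{i_k}$-strings), tracking how the insertion-of-$1$-plus-$\bar 1$ operation interacts with each root operator; the key observation is that inserting a $1$ at the front of row 1 and a $\bar 1$ at the back changes $\phi_1$ and $\phi_2$ values of $v_\lambda$ in a controlled way — it adds exactly one to the "room" available for $f_1$ at the innermost two steps and one for $f_2$ at the relevant step, giving the shift $(a,b,c,d)\mapsto(a,b+1,c+1,d+1)$. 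For $\str_1$, either redo the computation directly or, more efficiently, apply Littelmann's piecewise-linear map $\theta_{21}$ from Theorem \ref{adaptedstring} to the $\str_2$ formula: plugging $(a,b+1,c+1,d+1)=\theta_{21}(\str_1(\Phi(T)))$ and $(a,b,c,d)=\theta_{21}(a',b',c',d')$ and solving should yield $\str_1(\Phi(T))=(a+1,b+1,c+1,d)$; this reduces claim (1) to an algebraic identity among max/min expressions that one checks on the cone $C_1$ using the inequalities of Theorem \ref{Littelmannineq}.

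**The main obstacle** will be the bookkeeping in the direct string computation: the root operators $f_1, f_2$ act on the *word* via signature rules, and the appended $\bar 1$ and prepended $1$ (after column insertion, which may itself merge a column $\Skew(0:\hbox{\tiny{$1$}}|0:\hbox{\tiny{$\bar 1$}})$ into $\Skew(0:\hbox{\tiny{$2$}}|0:\hbox{\tiny{$\bar 2$}})$) interact with $\sigma_1$ and $\sigma_2$ non-trivially — precisely the case analysis that made Lemma \ref{importantembtool} delicate. The cleanest way around this is to not compute strings of $\Phi(T)$ for general $T$, but rather to establish the result first for $T=v_\lambda$ replaced by the *lowest* weight vertex or by using that both sides are piecewise-linear bijections of cones agreeing with the tableau description, then invoke uniqueness of the adapted string (each $T$ has a *unique* string) together with the fact — provable from \eqref{emb1}–\eqref{emb3} — that $\Phi$ conjugates the $f_1,f_2$ actions, so it suffices to check the claimed formula is consistent with one application each of $f_1$ and $f_2$ and with the highest-weight normalization. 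I would therefore structure the proof as: (i) reduce to a statement purely about string coordinates via Corollary \ref{corplactic}; (ii) prove the $\str_2$ formula by induction on $|\lambda|$ or on the string length, with the base case $T=v_\lambda$ and the inductive step checking compatibility with $f_2$ then $f_1$; (iii) derive the $\str_1$ formula from the $\str_2$ formula via $\theta_{21}$, verifying the resulting max/min identity on $C_1$.
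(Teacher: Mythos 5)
Your opening argument for $\Phi(v_\lambda)=v_{\lambda+2\varpi_1}$ is the central flaw, and it is worth being precise about why it fails. The embedding $\Phi$ commutes with $e_i$ only in the weak sense that $e_i(T)\neq 0 \Rightarrow e_i(\Phi(T))=\Phi(e_i(T))$; it does \emph{not} preserve the condition $e_i=0$. The image of $\Phi$ is a proper subcrystal, and nothing forces $e_i(\Phi(v_\lambda))$ to lie in $\Ima(\Phi)$, so the contradiction argument you sketch never gets started. You notice the resulting inconsistency (you would get $\str_1(\Phi(v_\lambda))=(0,0,0,0)\neq(1,1,1,0)$), but you never resolve it by computing the correct value. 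That computation --- showing $\Phi(v_\lambda)=f_1 f_2 f_1(v_{\lambda+2\varpi_1})$, hence $\str_2(\Phi(v_\lambda))=(0,1,1,1)$ and $\str_1(\Phi(v_\lambda))=(1,1,1,0)$ --- is precisely the base case your restructured plan needs in step (ii), and it is exactly what the paper extracts from Lemma \ref{importantembtool}: applying $\bar 1 w 1 \cong \word(\Phi(T))$ to the highest weight tableau $w=\word(v_\lambda)$ and simplifying. Without it your plan has no foundation.

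Even granting the base case, your inductive step needs sharpening. It is not enough to ``check compatibility with $f_2$ then $f_1$'': applying $f_2$ to $T$ shifts only the first coordinate of $\str_2$ (easy), but applying $f_1$ is a genuine piecewise-linear transformation of the whole adapted string, and tracking it requires the commutation rules of \cite[Prop.~2.4, Cor.~2]{conescrystalspatterns}. This is what the paper does, but not by induction: having $\Phi(T)=f_1^a f_2^b f_1^c f_2^d \, f_1 f_2 f_1(v_{\lambda+2\varpi_1})$ from the base case and the intertwining property of $\Phi$, it rewrites this word once and for all into canonical $\str_1$ (resp.\ $\str_2$) form by braiding the trailing $f_1 f_2 f_1$ through the preceding powers, step by step, via Littelmann's piecewise-linear $\theta$-maps. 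Your plan to derive the $\str_1$ formula from the $\str_2$ one through $\theta_{21}$ is sound and close to what the paper does, but the upstream computation you have left as a sketch is where the real work lies.
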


\begin{proof}
If $T = v_{\lambda}$ is the highest weight vector, then it follows from Lemma \ref{importantembtool} that 
\begin{align}
\label{embhwstring2}
\Phi(T) = f_{1}f_{2}f_{1}(v_{\lambda + 2\varpi_{1}}).
\end{align}
\noindent
In this case $str_1(T) = str_1(v_{\lambda}) = (0,0,0,0)$ so the claim follows since $(1,1,1,0)$ is an adapted string for $\Phi(T)$. For arbitrary $T \in \calB(\lambda)$ it follows from \Cref{emb} that 
\begin{align}
    \label{embstringbeta}
\Phi(T) = f_{1}^{a}f^{b}_{2}f_{1}^{c}f_{2}^{d}f_{1}f_{2}f_{1}(v_{\lambda + 2\varpi_{1}}). 
\end{align}
 We introduce the following notation: 
  \begin{align*}
 (a',b',c',d') &:=\str_1(f^{d}_{2}f_{1}f_{2}f_{1}(v_{\lambda + 2\varpi_{1}}))=\theta_{21}(d,1,1,1)\\
(a'',b'',c'',d'') &:=\str_1(f^{a'+c}_{1}f^{b'}_{2}f^{c'}_{1}f^{d'}_{2}(v_{\lambda + 2\varpi_{1}}))\\
(a''',b''',c''',d''') &:=\str_2(f^{a''+b}_{2}f^{b''}_{1}f^{c''}_{2}f^{d''}_{1}(v_{\lambda + 2\varpi_{1}})).
\end{align*}
 
By \Cref{adaptedstring}, we have $(a',b',c',d')=\theta_{12}(d,1,1,1)=(1,d+1,1,0)$. Moreover, it follows from \cite[Cor. 2, ii.]{conescrystalspatterns} that 
$(a'',b'',c'',d'')=(0,c+1,d+1,1)$ and $(a''',b''',c''',d''')=(1,b+1,c+1,d)$.
Putting all of this together we get that 
\begin{align*}
\Phi(T) &= f_{1}^{a}f^{b}_{2}f_{1}^{c}f_{2}^{d}f_{1}f_{2}f_{1}(v_{\lambda + 2\varpi_{1}}) \\
& = f_{1}^{a}f^{b}_{2}f_{1}^{a'+c}f_{2}^{b'}f_{1}^{c'}f_{2}^{d'}(v_{\lambda + 2\varpi_{1}}) \\
& = f_{1}^{a}f^{b+a''}_{2}f_{1}^{b''}f_{2}^{c''}f_{1}^{d''}(v_{\lambda + 2\varpi_{1}}) \\
& = f_{1}^{a+a'''}f^{b'''}_{2}f_{1}^{c'''}f_{2}^{d'''}(v_{\lambda + 2\varpi_{1}}). 
\end{align*}
Therefore
\[(a+a''',b'',c''',d''') = (a+1,b+1,c+1, d) = \str_1(\Phi(T)),\]
showing the first statement. The proof of the second statement is similar. It follows from Lemma \ref{importantembtool} that 
\begin{equation}
 \label{embhwstring}
\Phi(T) = f_{1}f_{2}f_{1}(v_{\lambda + 2\varpi_{1}}),    
\end{equation}
so that $\str_2(\Phi(v_{\lambda + 2\varpi_{1}})) = (0,1,1,1)$. Using \cite[Prop. 2.4]{conescrystalspatterns}
we get that 
\begin{align*}
    \Phi(T) &= f^{a}_{2}f^{b}_{1}f^{c}_{2}f^{d}_{1}(f_{1}f_{2}f_{1}(v_{\lambda + 2\varpi_{1}})) \\
    & = f^{a}_{2}f^{b}_{1}f^{c}_{2}f^{d+1}_{1}f_{2}f_{1}(v_{\lambda + 2\varpi_{1}})\\
    & = f^{a}_{2}f^{b}_{1}f_{1}f^{c+1}_{2}f^{d+1}_{1}(v_{\lambda + 2\varpi_{1}}) \\
    & = f^{a}_{2}f^{b+1}_{1}f^{c+1}_{2}f^{d+1}_{1} (v_{\lambda + 2\varpi_{1}}).
\end{align*}
This concludes the proof. 
\end{proof}

\begin{remark}
    Notice that one can avoid the recourse to tableaux combinatorics and use the equation in \Cref{emb1str} as the definition of $\Phi$. Then one can use the explicit description of the adapted strings in \Cref{adaptedstring} to the check that $\Phi$ is well defined and that has the desired properties.
\end{remark}

The description of the embedding $\Phi$ in terms of adapted strings allows us to give a convenient description of the elements in principal preatom $\calP(\lambda)\subset \calB(\lam)$.

\begin{corollary}
\label{preatominequalities}
There exists $T\in \calP(\lambda)$ with $\stn(T)=(a,b,c,d)$ if \textbf{and only if} all the inequalities in \Cref{Littelmannineq} hold and at least one of the following equations hold.
\begin{itemize}
    \item $d=0$
    \item $d=\lam_1$
    \item $b= \lam_1-2d +2c$
\end{itemize}
\end{corollary}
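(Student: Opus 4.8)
The plan is to deduce this from Proposition~\ref{emb1str}(2), which tells us that the embedding $\Phi:\calB(\lam-2\varpi_1)\to\calB(\lam)$ acts on $\str_2$-coordinates by $(a,b,c,d)\mapsto(a,b+1,c+1,d+1)$. By definition $\calP(\lam)$ (for $\lam_1\geq 2$) is the complement of $\Ima(\Phi)$ in $\calB(\lam)$, so I must characterize which $(a,b,c,d)\in C_2\cap$\,(Littelmann polytope for $\lam$) are \emph{not} of the form $(a',b'+1,c'+1,d'+1)$ for some $(a',b',c',d')$ lying in the Littelmann polytope for $\lam-2\varpi_1$. First I would record, using \Cref{Littelmannineq}, that $(a,b,c,d)$ occurs in $\calB(\lam)$ iff $b\geq c\geq d$, $d\leq\lam_1$, $c\leq\lam_2+d$, $b\leq\lam_1-2d+2c$, and $a\leq\lam_2+d-2c+b$; call this system $(\ast_\lam)$.

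Next, I would compute the system characterizing the image. An element $(a,b,c,d)$ lies in $\Ima(\Phi)$ iff $b,c,d\geq 1$ and $(a,b-1,c-1,d-1)$ satisfies $(\ast_{\lam-2\varpi_1})$. Substituting $\lam_1\mapsto\lam_1-2$, $\lam_2\mapsto\lam_2$ and shifting, the five inequalities of $(\ast_{\lam-2\varpi_1})$ on $(a,b-1,c-1,d-1)$ become: $b-1\geq c-1\geq d-1$ (i.e.\ $b\geq c\geq d$, automatic), $d-1\leq\lam_1-2$ (i.e.\ $d\leq\lam_1-1$), $c-1\leq\lam_2+d-1$ (i.e.\ $c\leq\lam_2+d$, automatic given $(\ast_\lam)$), $b-1\leq(\lam_1-2)-2(d-1)+2(c-1)$ (i.e.\ $b\leq\lam_1-2d+2c-1$), and $a\leq\lam_2+(d-1)-2(c-1)+(b-1)=\lam_2+d-2c+b$ (automatic). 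So, given that $(a,b,c,d)$ already satisfies $(\ast_\lam)$, membership in $\Ima(\Phi)$ is equivalent to the three conditions $d\geq 1$, $d\leq\lam_1-1$, and $b\leq\lam_1-2d+2c-1$; equivalently $1\leq d\leq\lam_1-1$ and $b\neq\lam_1-2d+2c$ (using $b\leq\lam_1-2d+2c$ from $(\ast_\lam)$). Negating, $(a,b,c,d)\in\calP(\lam)$ iff $(\ast_\lam)$ holds and $\big(d=0\text{ or }d=\lam_1\text{ or }b=\lam_1-2d+2c\big)$, which is exactly the asserted description. I would also handle the boundary case $\lam_1\leq 1$ separately: there $\calP(\lam)=\calB(\lam)$ by definition, and indeed $d\leq\lam_1\leq 1$ forces $d=0$ or $d=\lam_1$, so the disjunction is vacuously true and the statement still holds.

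I expect the main obstacle to be purely bookkeeping: making sure every inequality of $(\ast_{\lam-2\varpi_1})$, after the coordinate shift, is correctly compared against $(\ast_\lam)$ so that exactly the right ones become automatic and exactly the three nontrivial constraints survive. In particular one must be careful that the ``automatic'' implications genuinely follow from $(\ast_\lam)$ and not the reverse, and that the edge cases where $b-1,c-1$ or $d-1$ could be forced to be nonnegative are consistent with requiring $b,c,d\geq 1$ in the first place (here $c\geq d$ and $b\geq c$ from $(\ast_\lam)$ mean that $d\geq 1$ alone already forces $b,c\geq 1$, so only the single condition $d\geq 1$ is needed on that front). Once this table of implications is laid out cleanly, the proof is a one-line negation of a conjunction. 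A secondary point worth a sentence is the observation in the remark preceding the corollary: since one may take the $\str_2$-formula in \Cref{emb1str}(2) as the \emph{definition} of $\Phi$, this corollary is self-contained and does not rely on the tableaux description.
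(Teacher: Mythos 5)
Your proposal is correct and follows essentially the same route as the paper's own proof: apply \Cref{emb1str}(2) to identify $\Ima(\Phi)$ with those strings $(a,b,c,d)$ such that $(a,b-1,c-1,d-1)$ satisfies the Littelmann inequalities for $\lambda-2\varpi_1$, observe that after the shift only the three constraints $d\geq 1$, $d\leq\lambda_1-1$, and $b\leq\lambda_1-2d+2c-1$ survive, and negate. Your explicit treatment of the boundary case $\lambda_1\leq 1$ (where $\calP(\lambda)=\calB(\lambda)$ by definition and the disjunction holds automatically since $d\leq\lambda_1\leq 1$) is a small but welcome addition that the paper leaves implicit.
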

\begin{proof}
Let $T\in\calB(\lam)$ with $\str_2(a,b,c,d)$, so all the inequalities in \Cref{Littelmannineq} hold.
There exists $U\in \calB(\lambda-2\varpi_1)$ with $\str_2(U)=(a,b-1,c-1,d-1)$ so that $\Phi(U)=T$ if and only if all the inequalities in \Cref{Littelmannineq} hold for $(a,b-1,c-1,d-1)$ and $\lambda-2\varpi_1$, which written explicitly means that $d\geq 1$, $d\leq \lambda_1-1$ and $b\leq \lam_1-2d+2c-1$ (the others remain unchanged). The claim now easily follows for $\lambda_{1} \geq 2$. 
\end{proof}

\begin{definition}\label{preatomicnumber}
Let $T\in \calB(\lambda)$.
Let $\pat(T)\in \bbZ_{\geq 0}$ be such that $T\in \calP(\lambda-2\pat(T)\varpi_1)\subset \calB(\lambda)$. We call $\pat(T)$ the \emph{preatomic number} of $T$.

In other words, $\pat(T)$ is the maximum integer with $T\in \Ima(\Phi^{\pat(T)})$.
\end{definition}

We now compute the size of the preatoms using the precanonical bases from \Cref{sec:precan}.

\begin{definition}
Let $\calB^+(\lam)$ be the subset of $\calB(\lam)$ consisting of elements whose wight is dominant. For a subset of $C\subset \calB^+(\lam)$ we define the \emph{ungraded character of $C$} as 
 \[ [C]_{v=1} := \sum_{c\in C} e^{\wt(c)} \in \bbZ[X_+]\]
More generally, for a subset $C\subset \calB(\lam)$ stable under the $W$-action we define 
  \[ [C]_{v=1} := [C\cap \calB^+(\lam)]_{v=1}\]

\end{definition}

\begin{proposition}\label{PreatomPrecan}
We have $[\calB(\lam)]_{v=1}=(\undH_{\lam})_{v=1}$ and $[\calP(\lam)]_{v=1}=(\tilN^3_\lam)_{v=1}$.
\end{proposition}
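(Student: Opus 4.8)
The plan is to prove the two character identities separately, exploiting the Satake-type dictionary between crystals and Kazhdan--Lusztig combinatorics together with the precanonical basis computations already carried out in \Cref{precanN3}.

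\emph{Step 1: the identity $[\calB(\lam)]_{v=1}=(\undH_\lam)_{v=1}$.} This is essentially the Satake isomorphism. Recall from \Cref{sec:precan} that $\bfN^2_\lam=\bfN_\lam=\sum_{\mu\leq\lam}v^{2\langle\rho^\vee,\lam-\mu\rangle}\bfH_\mu$ and that under $(-)_{v=1}$ one has $\bfH_\mu\mapsto e^\mu$. Hence $(\bfN_\lam)_{v=1}=\sum_{\mu\leq\lam}e^\mu$ where the sum runs over all \emph{dominant} $\mu\leq\lam$. On the crystal side, $[\calB(\lam)]_{v=1}=\sum_{c\in\calB^+(\lam)}e^{\wt(c)}$, and this equals $\sum_{\mu\in X_+}\dim V(\lam)_\mu\, e^\mu$; since $V(\lam)_\mu\neq 0$ for dominant $\mu$ exactly when $\mu\leq\lam$, the Weyl character formula (equivalently, Satake) gives $\sum_\mu\dim V(\lam)_\mu e^\mu=\sum_{\mu\in X_+}(\ldots)$, and comparing with $(\undH_\lam)_{v=1}$ reduces to the well-known fact $(\undH_\lam)_{v=1}=\cha V(\lam)$ restricted to $X_+$, i.e. the specialization of the spherical Kazhdan--Lusztig basis at $v=1$ is the character of the simple module. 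I would cite \cite{Lus83} (or \cite[\S2.2]{LPP}) for this and spell out only the bookkeeping that $[\calB(\lam)]_{v=1}$, being defined as the dominant part of the $W$-invariant sum, matches the dominant-weight expansion of $\cha V(\lam)$.

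\emph{Step 2: the identity $[\calP(\lam)]_{v=1}=(\tilN^3_\lam)_{v=1}$.} Here I would induct on $\lam_1$ using the preatomic decomposition. For $\lam_1\leq 1$ we have $\calP(\lam)=\calB(\lam)$ by definition, and on the Hecke side $\tilN^3_\lam=\undH_\lam$ when $\lam_1=0$ while for $\lam_1=1$ the \Cref{precanN3} computation together with $\lam-2\varpi_1$ being singular gives $\tilN^3_\lam=\undH_\lam$ as well; so the base case follows from Step 1. For $\lam_1\geq 2$, the preatomic decomposition reads $\calB(\lam)=\calP(\lam)\sqcup\Phi(\calB(\lam-2\varpi_1))$ (using that $\Phi$ is a weight-preserving crystal embedding, \Cref{emb} and \Cref{emb1str}, so $\Phi(\calB(\lam-2\varpi_1))$ is a union of preatoms and has the same ungraded character as $\calB(\lam-2\varpi_1)$). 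Taking ungraded characters and using Step 1 gives
\[
[\calP(\lam)]_{v=1}=(\undH_\lam)_{v=1}-(\undH_{\lam-2\varpi_1})_{v=1}.
\]
On the Hecke side, the first \Cref{precanN3}-style lemma (the one expressing $\undH_{(\lam_1,\lam_2)}=\sum_{i\leq\frf{\lam_1}}v^{2i}\tilN^3_{(\lam_1-2i,\lam_2)}$) gives $\undH_\lam=\tilN^3_\lam+v^2\undH_{\lam-2\varpi_1}$ for $\lam_1\geq 2$; specializing at $v=1$ yields $(\tilN^3_\lam)_{v=1}=(\undH_\lam)_{v=1}-(\undH_{\lam-2\varpi_1})_{v=1}$, which matches. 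This closes the induction.

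\emph{Main obstacle.} The routine parts are the Hecke-algebra manipulations, which are already done in \Cref{precanN3} and the preceding lemma. The genuine content — and the step I expect to need the most care — is justifying that $\calB(\lam)=\calP(\lam)\sqcup\Phi(\calB(\lam-2\varpi_1))$ \emph{on the level of ungraded characters}, i.e. that $\Phi$ being injective and weight-preserving (which we have) lets us conclude $[\Phi(\calB(\lam-2\varpi_1))]_{v=1}=[\calB(\lam-2\varpi_1)]_{v=1}$; one must check that $\Phi$ restricts to a weight-preserving bijection on the dominant parts, i.e. that $\Phi$ and its complement are $W$-stable (which is \Cref{preatomsclosed}) so that the ungraded character, defined via $\calB^+$, is additive along this decomposition. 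Once that is in place the proof is a two-line comparison in each case. Alternatively, one could bypass the recursion and directly match $[\calP(\lam)]_{v=1}$ with $(\tilN^3_\lam)_{v=1}=\sum_{i}v^{2i}\bfN^2_{(\lam_1,\lam_2-i)}|_{v=1}$ from \Cref{precanN3} using the inequalities in \Cref{preatominequalities} to count dominant weights in $\calP(\lam)$ with multiplicity, but the inductive route is cleaner.
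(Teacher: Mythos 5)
Your proof is correct and matches the paper's argument: Step 1 is the Satake isomorphism, and Step 2 uses the disjoint decomposition $\calB(\lam)=\calP(\lam)\sqcup\Phi(\calB(\lam-2\varpi_1))$ (with $W$-stability from \Cref{preatomsclosed} ensuring additivity of ungraded characters) together with the Hecke identity $\tilN^3_\lam=\undH_\lam-v^2\undH_{\lam-2\varpi_1}$ for $\lam_1\geq 2$. One small remark: the induction scaffold in Step 2 is vestigial, since the computation for $\lam_1\geq 2$ only invokes Step 1 applied to both $\lam$ and $\lam-2\varpi_1$ and never actually uses the inductive hypothesis about $\calP$ for smaller $\lam_1$ — the paper presents the same argument without the induction wrapper.
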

\begin{proof}
The statement about $\calB(\lam)$ follows by the Satake isomorphism (see for example \cite{Knop}). The second statement follows  easily from the definition of $\tilN^3_\lam$. In fact, if $\lam_1\leq 1$ we have $\calB(\lam)=\calP(\lam)$. If $\lam_1\geq 2$ we have $\calP(\lam)=\calB(\lam)\setminus \Phi(\calB(\lam-2\varpi_1))$. Since $\Phi$ is weight preserving and injective, we have 
\[[\calP(\lam)]_{v=1}=[\calB(\lam)]_{v=1}-[\calB(\lam-2\varpi_1)]_{v=1}=(\undH_{\lam}-\undH_{\lam-2\varpi_1})_{v=1}=(\tilN^3_{\lam})_{v=1}.\qedhere\]
\end{proof}

\subsubsection{The preatomic \texorpdfstring{$Z$}{Z} function}

In analogy with \cite[Definition 3.23]{Pat} we define a $Z$ function in type $C$. 
\begin{definition}
For $T\in \calB(\lam)$, let $Z(T):=\phi_1(T)+\phi_2(T)+\phi_{21}(T)$. 
\end{definition}

The $Z$-function is not constant along preatoms but nevertheless can be used to give an explicit formula for the preatomic number.
\begin{proposition}\label{atomicnumber}


Assume $T\in \calB(\lam)$ and let $\mu:=\wt(T)$. Then we have 
\begin{equation}\label{Zform}Z(T)= \lambda_1+\lambda_2+\mu_1+\mu_2+\max\left(0,\frac{|\mu_1|-\lambda_1}{2}\right)+\pat(T).\qedhere\end{equation}
\end{proposition}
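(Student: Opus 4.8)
The plan is to compute $Z(T)=\phi_1(T)+\phi_2(T)+\phi_{21}(T)$ by relating each $\phi_i$ to the adapted string of $T$ and then use \Cref{emb1str} together with \Cref{preatominequalities} to pin down the contribution of $\pat(T)$. First I would recall the elementary crystal identities $\phi_i(T)-\varepsilon_i(T)=\langle\wt(T),\alpha_i^\vee\rangle$ for $i=1,2$, so that the problem reduces to understanding $\varepsilon_1(T)$, $\varepsilon_2(T)$, and $\phi_{21}(T)$. The natural coordinates here are the adapted strings: if $\str_2(T)=(a,b,c,d)$ with $w_0=s_2s_1s_2s_1$, then $a=\varepsilon_2(T)$ by definition of the adapted string, and $\phi_{21}$ (being $f_{21}$ applied along the short root $\alpha_{21}$) should be extractable from the string data via the piecewise-linear change of coordinates $\theta_{21}$ of \Cref{adaptedstring}; similarly $\phi_1$ or $\varepsilon_1$ is read off from $\str_1(T)$, which is $\theta_{21}(a,b,c,d)$. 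So the first computational step is to express $Z(T)$ purely in terms of $(a,b,c,d)$ and $(\lambda_1,\lambda_2)$, using that $\mu_1,\mu_2$ are affine-linear in $(a,b,c,d)$ as well.

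The second step is to bring in $\pat(T)$. By \Cref{emb1str}(2), applying $\Phi$ sends $\str_2(T)=(a,b,c,d)$ to $(a,b+1,c+1,d+1)$, so $T\in\Ima(\Phi^k)$ exactly when $(a,b-k,c-k,d-k)$ is still a valid string for $\calB(\lambda-2k\varpi_1)$; by the inequalities of \Cref{Littelmannineq} applied to $\lambda-2k\varpi_1$ this holds iff $k\le d$, $k\le \lambda_1-d$, and $k\le \tfrac12(\lambda_1-2d+2c-b)$, i.e.
\[ \pat(T)=\min\!\left(d,\ \lambda_1-d,\ \left\lfloor\tfrac{\lambda_1-2d+2c-b}{2}\right\rfloor\right).\]
(This is exactly the negation of the ``at least one equation holds'' criterion in \Cref{preatominequalities}, iterated.) Then I would substitute $\mu=\wt(T)$; since $\mu_1=\lambda_1-2d+\text{(something)}$ and in particular $|\mu_1|$ relates to $d$ versus $\lambda_1-d$, the term $\max(0,(|\mu_1|-\lambda_1)/2)$ appears precisely as the correction that converts the unconstrained combination $\phi_1+\phi_2+\phi_{21}$ into one that sees the walls of the preatom. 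The verification is then a finite piecewise-linear identity: in each chamber of the $\theta$-maps both sides are affine in $(a,b,c,d)$, and one checks equality chamber by chamber, or more cleanly by checking it on a spanning set of extreme rays of $C_2$ and on the highest weight vector (where $Z(v_\lambda)=\lambda_1+\lambda_2$, $\mu=\lambda$, $\pat=0$, and the formula reads $2(\lambda_1+\lambda_2)+0+0$—wait, one must instead anchor at lowest weight or at small cases, which I would do carefully).

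Actually the cleanest route, and the one I would ultimately follow, is induction on $\pat(T)$ using $\Phi$. For $\pat(T)=0$, i.e. $T\in\calP(\lambda)$, one proves \eqref{Zform} directly: by \Cref{preatominequalities} at least one of $d=0$, $d=\lambda_1$, $b=\lambda_1-2d+2c$ holds, and in each of these three boundary cases one computes $\phi_1,\phi_2,\phi_{21}$ from the string and checks the identity with $\pat(T)=0$; this is the base case. For the inductive step, write $T=\Phi(T')$ with $T'\in\calB(\lambda-2\varpi_1)$ and $\pat(T')=\pat(T)-1$. Since $\Phi$ is a crystal embedding commuting with $e_1,e_2,f_1,f_2$ (hence with $W$ and with $f_{12},f_{21}$ built from these), one has $\phi_i(T)=\phi_i(T')$ for $i=1,2$ and $\phi_{21}(T)=\phi_{21}(T')$ provided $\Phi$ stays inside the image under the relevant strings—here one must be slightly careful because $\Phi$ is only an embedding, not an isomorphism, so $\phi_i$ can in principle jump; but $\wt(T)=\wt(T')$ forces $\phi_i(T)-\varepsilon_i(T)=\phi_i(T')-\varepsilon_i(T')$, and \Cref{emb1str} pins the strings, so the $\phi$'s agree. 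Then $Z(T)=Z(T')$ while on the right-hand side $\lambda\rightsquigarrow\lambda+2\varpi_1$ increases $\lambda_1$ by $2$, $\mu$ is unchanged, $\max(0,(|\mu_1|-\lambda_1)/2)$ decreases by $1$ (as long as $|\mu_1|>\lambda_1-2$, which holds in this regime), and $\pat$ increases by $1$: the two changes cancel, completing the induction.

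\textbf{Main obstacle.} The delicate point is the claim that $\phi_1,\phi_2,\phi_{21}$ are genuinely unchanged by $\Phi$ (and, in the base case, the bookkeeping of which of the three boundary equations of \Cref{preatominequalities} is active and how $|\mu_1|$ compares to $\lambda_1$ there). For $\phi_1,\phi_2$ this follows from \Cref{emb3} and its $f_2$-analogue together with weight-preservation, but $\phi_{21}$ is governed by the modified operator along the \emph{short} root and is not directly one of the operators $\Phi$ is shown to commute with; I expect to need the explicit string formula for $\phi_{21}$ via $\theta_{21}$ and \Cref{emb1str}(1)--(2) to see that the relevant $\min$/$\max$ expression is invariant under $(a,b,c,d)\mapsto(a,b+1,c+1,d+1)$ simultaneously with $\lambda_1\mapsto\lambda_1+2$. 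Handling the absolute value $|\mu_1|$ — i.e. the split between the regions $\mu_1\ge 0$ and $\mu_1<0$, and the boundary $|\mu_1|=\lambda_1$ where the $\max$ switches — is the part where the piecewise-linear case analysis genuinely has to be done, and that is where I would spend the most care.
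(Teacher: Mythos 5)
Your outline follows the same skeleton as the paper (induction on $\pat(T)$ via $\Phi$, with the base case being a boundary check on the preatom), but the inductive step as you wrote it is wrong in a way that makes the argument collapse.

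You claim that $\Phi$ leaves $\phi_1$ and $\phi_2$ unchanged, arguing that $\wt(T)=\wt(T')$ forces $\phi_i-\eps_i$ to be preserved and that ``\Cref{emb1str} pins the strings, so the $\phi$'s agree.'' But \Cref{emb1str} says precisely the opposite for $i=1$: it gives $\str_1(\Phi(T'))=(a+1,b+1,c+1,d)$, so $\eps_1(\Phi(T'))=\eps_1(T')+1$ and hence $\phi_1(\Phi(T'))=\phi_1(T')+1$. (The identity $\phi_i-\eps_i=\langle\wt,\alpha_i^\vee\rangle$ controls the \emph{difference} $\phi_i-\eps_i$, not either quantity separately; weight-preservation alone is not enough, and $\eqref{emb3}$ only says $\Phi$ intertwines $e_1,f_1$ where they are nonzero, which is compatible with $\phi_1$ jumping at the end of the string.) The paper's inductive step hinges on exactly this $+1$: it shows $\phi_1(T)=\phi_1(T')+1$, $\phi_2(T)=\phi_2(T')$ and $\phi_{12}(T)=\phi_{12}(T')$ (the last via $\phi_{12}(T)=\phi_2(s_1T)$ and the fact that $\Phi$ commutes with $s_1$), so $Z(T)=Z(T')+1$, not $Z(T)=Z(T')$ as you assert. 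Note also that the one term you flagged as delicate, $\phi_{21}$, is the one the paper handles cleanly through $s_1$-conjugation; it is $\phi_1$, which you took for granted, that actually shifts.

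Your bookkeeping on the right-hand side is also internally inconsistent even on your own terms: you list $\lambda_1\mapsto\lambda_1+2$ (contributing $+2$), the $\max$ term dropping by $1$, and $\pat$ rising by $1$, and then say ``the two changes cancel,'' but $+2-1+1=+2\neq 0$, which does not match the $Z(T)-Z(T')=0$ you claimed. This mismatch should have flagged that something was off in the crystal computation. Finally, the base case is not the routine string check you describe: after translating into string coordinates, the three boundary regimes $d=0$, $d=\lambda_1$, $b=\lambda_1-2d+2c$ lead to a nontrivial piecewise-linear (tropical) identity in six variables, which the paper verifies symbolically in SageMath precisely because doing the case analysis by hand is unwieldy. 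Your proposal does not engage with that verification.
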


\begin{proof}
We show the claim by induction on $\pat(T)$.
    We first assume $\pat(T)=0$, or equivalently that $T\in \calP(\lam)\subset \calB(\lam)$. Let $(a,b,c,d)=\stn(T)$.  

Let $\bbT=(\bbQ\cup \{+\infty\}, \oplus, \odot)$ be the tropical semiring (see \cite{TropicalBook}), where 
 $x\oplus y=\min(x,y)$ denotes the tropical addition and $x\odot y= x+y$ is the tropical multiplication. We also write fractions in $\bbT$ for the tropical division, i.e. $\frac{x}{y}=x-y$. A tropical polynomial is the function expressing the minimum of several linear functions. A tropical rational function is the difference of two tropical polynomials.

    Our first goal is to reinterpret both sides of \eqref{Zform} as tropical rational functions in $a,b,c,d,\lam_1$ and $\lam_2$.
    For example, $\mu_1$ can be expressed as a tropical rational function: since we have $\mu_1=\lam_1+2a+2c-2b-2d$, we can write $\mu_1=\frac{\lam_1\odot a^{\odot 2} \odot c^{\odot 2}}{b^{\odot 2}\odot d^{\odot 2}}$.
    In the rest of this proof we make the notation lighter by simply writing $xy$ for $x\odot y$ and  $x^n$ for $x^{\odot n}$.
    Since $\pat(T)=0$ we can rewrite the RHS in \eqref{Zform} as 
    \[RHS(T):=\frac{\lam_1^2 \lam_2^2}{b d (1 \oplus \frac{\lam_1 ac}{bd}\oplus \frac{bd}{ac})}=\frac{ac\lam_1^2\lam_2^2}{a^2c^2\lam_1\oplus abcd\oplus b^2d^2}.\]
    Expressing the LHS of \eqref{Zform} is unfortunately a much longer computation. 
    We have $Z(T)=\phi_2(T)\odot \phi_1(T) \odot \phi_{12}(T)$ and
    \begin{itemize}
        \item $\phi_2(T)=\frac{bd\lam_2}{ac^2}$
        \item$\phi_1(T)=\phi_1'\circ \theta_{21}(a,b,c,d)$, where $\phi_1'(a,b,c,d)=\frac{b^2d^2\lam_1}{ac^2}$ and $\theta_{21}$ is as in \Cref{adaptedstring}. 
        \item $\phi_{12}(T)=\phi_2\circ \theta_{12}\circ \sigma_1\circ \theta_{21}(a,b,c,d)$ where $\sigma_1(a,b,c,d)=(\frac{\lam_1b^2d^2}{ac^2},b,c,d)$ is the transformation expressing the   action of the simple reflection $s_1$ on $\str_1$.
    \end{itemize} 
    From this, we can obtain an explicit expression of $Z(T)$ as a tropical rational function. However, this is a rather unfeasible task to do by hand, so we resort to the help of the computer algebra software  \cite{SageMath}. In Sage we can simply compute $Z(t)$ by formally treating its three factors as ordinary rational functions in $\bbQ(a,b,c,d,\lam_1,\lam_2)$.

    Then, to check the claim, we need to show that $Z(T)=RHS(T)$ when $d=0$, $d=\lam_1$ or $b=\lam_1+2d-2c$. In other words, we need to show that, as tropical rational functions on the set of elements of the crystal, we get $Z(T)/RHS(T)=1$ if we specialize $d=1$,\footnote{Recall that $0\in \bbQ$ is the multiplicative unity in $\bbT$} $d=\lam_1$ or $b=\lam_1d^2/c^2$. Again, this can be  checked with the help of SageMath. In \Cref{appendix} we attach the code that proves our claim.

    Assume now $\pat(T)>0$, so $T=\Phi(T')$ for some $T'\in \calB(\lam-2\varpi_1)$. Since $\pat(T)=\pat(T')+1$, by induction it suffices to show that $Z(T)=Z(T')+1$.  From \Cref{emb1str} it follows that $\phi_1(T)=\phi_1(T')+1$ and $\phi_2(T)=\phi_2(T')$. Moreover, we have \[\phi_{12}(T)=\phi_2(s_1(T))=\phi_2(s_1(\Phi(T')))=\phi_2(\Phi(s_1(T')))=\phi_2(s_1(T'))=\phi_{12}(T')\]
    since $\Phi$ commutes with $s_1$, and the claim follows.
\end{proof}

\subsection{Atoms}

The goal of this section is to describe a finer decomposition of $\calB(\lam)$ into atoms.
\begin{definition}
\label{def:atom}
    We call a subset $A\subset \calB(\lam)$ an \emph{atom} if $[A]_{v=1}=(\bfN_\mu)_{v=1}$ for some $\mu \in X_+$. This means that there exists $\mu\in X_+$ such that every weight smaller or equal than $\mu$ in $X$ occurs exactly once as the weight of an element in $A$.

    An \emph{atomic decomposition} is a decomposition of $\calB(\lam)$ into atoms.
\end{definition}

\begin{proposition}
\label{atomemb}
 There is an injective weight-preserving map $\bPsi:\mathcal{P}(\lambda) \hookrightarrow \mathcal{P}(\lambda + \varpi_{2})$. If $\lam_1\neq 0$ then the set $\calA(\lambda+\varpi_2):=\calP(\lambda+\varpi_2) \setminus \bar{\Psi}(\calP(\lambda))$ is an atom. If $\lam_1=0$ then the set $\calA(\lambda+2\varpi_2):=\calP(\lambda+2\varpi_2) \setminus \bar{\Psi}^2(\calP(\lambda))$ is an atom.
\end{proposition}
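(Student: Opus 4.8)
The plan is to work entirely with the $\str_2$-parametrization of preatoms, using the inequality description of $\calP(\lambda)$ from \Cref{preatominequalities} together with the character identity $[\calP(\lambda)]_{v=1}=(\tilN^3_\lambda)_{v=1}$ from \Cref{PreatomPrecan}. First I would define $\bPsi$ explicitly on string parameters: in analogy with the formula $\str_2(\Phi(T))=(a,b+1,c+1,d+1)$ for the $2\varpi_1$-shift, I expect $\bPsi$ to be given on $\str_2$ by a piecewise-linear (tropical) formula raising the string so as to land in $\calB(\lambda+\varpi_2)$ while staying in the principal preatom. The natural guess, matching the "$d=0$ or $d=\lambda_1$ or $b=\lambda_1-2d+2c$" defining equations of $\calP$, is something like $\str_2(\bPsi(T))=(a,b+1,c+1,d)$ when $d<\lambda_1$, adjusted on the locus $d=\lambda_1$; I would pin down the exact formula by requiring that it be a well-defined injection $\calP(\lambda)\to\calB(\lambda+\varpi_2)$ landing in $\calP(\lambda+\varpi_2)$, checking this against the Littelmann inequalities of \Cref{Littelmannineq} and the preatom equations of \Cref{preatominequalities}. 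Injectivity and weight-preservation are then immediate from the string formula (the string shift is by a fixed vector on each linear piece, and $\wt$ is a linear function of the string plus $\lambda$, designed to cancel the $\varpi_2$).

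Next, for the atom claim, the key is the character computation. By \Cref{PreatomPrecan}, $[\calP(\lambda+\varpi_2)]_{v=1}=(\tilN^3_{\lambda+\varpi_2})_{v=1}$ and $[\bPsi(\calP(\lambda))]_{v=1}=[\calP(\lambda)]_{v=1}=(\tilN^3_\lambda)_{v=1}$ since $\bPsi$ is injective and weight-preserving. Hence when $\lambda_1\neq 0$,
\[[\calA(\lambda+\varpi_2)]_{v=1}=(\tilN^3_{\lambda+\varpi_2}-\tilN^3_\lambda)_{v=1},\]
and by \Cref{precanN3} (the case $\lambda_1>0$) we have $\tilN^3_{(\lambda_1,\lambda_2+1)}-\tilN^3_{(\lambda_1,\lambda_2)}=\bfN^2_{(\lambda_1,\lambda_2+1)}=\bfN_{\lambda+\varpi_2}$, so $[\calA(\lambda+\varpi_2)]_{v=1}=(\bfN_{\lambda+\varpi_2})_{v=1}$, which is exactly the defining condition of an atom. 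When $\lambda_1=0$, iterating $\bPsi$ twice gives $[\bPsi^2(\calP(\lambda))]_{v=1}=(\tilN^3_\lambda)_{v=1}=(\tilN^3_{(0,\lambda_2)})_{v=1}$, and \Cref{precanN3} (the case $\lambda_1=0$) yields $\tilN^3_{(0,\lambda_2+2)}-\tilN^3_{(0,\lambda_2)}=\bfN^2_{(0,\lambda_2+2)}=\bfN_{\lambda+2\varpi_2}$, giving the atom condition for $\calA(\lambda+2\varpi_2)$. The case split on $\lambda_1=0$ is forced precisely by the fact that, as seen in \Cref{precanN3}, $\tilN^3$ in the $\lambda_1=0$ column jumps by $\bfN^2$ only in steps of $2\varpi_2$ (the $v^4$-grading), reflecting the singular-weight phenomenon noted after the definition of $\undH_\lambda$.

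The main obstacle I anticipate is establishing that $\bPsi$ as defined on strings is genuinely \emph{well-defined} and lands in $\calP(\lambda+\varpi_2)$ rather than merely in $\calB(\lambda+\varpi_2)$ — i.e., that a string satisfying the Littelmann inequalities for $\lambda$ and at least one preatom equation is sent to a string satisfying the Littelmann inequalities for $\lambda+\varpi_2$ and again at least one preatom equation. Because the natural formula for $\bPsi$ is piecewise-linear (one piece governed by $d<\lambda_1$, one by $d=\lambda_1$, and possibly further subdivision along $b=\lambda_1-2d+2c$), this verification is a finite but fiddly case analysis over the pieces, each reducing to checking a handful of linear inequalities; as with \Cref{atomicnumber} it may be cleanest to phrase it tropically and, if needed, defer part of the bookkeeping to a short computer check. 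Once $\bPsi$ is under control, the remainder — injectivity, weight-preservation, and the two character identities — is routine given the lemmas already in hand. I would also remark that the explicit $\str_2$-formula for $\bPsi$ gives, via \Cref{adaptedstring} and \Cref{corplactic}, the Kashiwara--Nakashima tableau description deferred to the appendix, but that translation is not needed for the proof itself.
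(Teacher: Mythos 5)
Your overall strategy---define the map on $\str_2$-parameters, verify well-definedness against the inequalities of \Cref{preatominequalities}, then deduce the atom property from the character identity $\tilN^3_{\lambda+\varpi_2}-\tilN^3_\lambda=\bfN^2_{\lambda+\varpi_2}$ (resp.\ $\tilN^3_{\lambda+2\varpi_2}-\tilN^3_\lambda=\bfN^2_{\lambda+2\varpi_2}$ when $\lambda_1=0$) via \Cref{precanN3}---is exactly the paper's proof, and the character step is carried out correctly. Two small corrections, though. First, your guessed case split for the string formula is inverted: the paper's $\Psi$ sends $(a,b,c,d)\mapsto(a,b+1,c+1,d)$ precisely when $d\in\{0,\lambda_1\}$, and $(a,b,c,d)\mapsto(a,b,c+1,d+1)$ when $0<d<\lambda_1$. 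The reason is visible from \Cref{preatominequalities}: on the interior stratum $0<d<\lambda_1$ the preatom is cut out by $b=\lambda_1-2d+2c$, and only the shift $(a,b,c+1,d+1)$ preserves that equation (since $\lambda_1-2(d+1)+2(c+1)=\lambda_1-2d+2c$), whereas $(a,b+1,c+1,d)$ would destroy it; at $d=0$ and $d=\lambda_1$ one is on a boundary stratum and the $b\mapsto b+1$ shift is the one that stays in $\calP(\lambda+\varpi_2)$. Second, the paper distinguishes $\Psi$ from $\bPsi$: it first defines $\Psi$ as above and then sets $\bPsi(T)=\Psi(T)$ if $\wt(T)_1\le 0$ and $\bPsi(T)=s_1(\Psi(s_1(T)))$ otherwise. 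This $s_1$-symmetrization is irrelevant to the injectivity and the character count (so your argument for the present proposition goes through with either $\Psi$ or $\bPsi$), but it is what makes the resulting atoms $s_1$-stable, which the paper needs downstream; you should keep it in mind when you use $\bPsi$ later.
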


We divide the proof into several steps. 
We begin by defining a map $\Psi$ directly in terms of the adapted strings (we give in \Cref{sec:psitableaux} an alternative construction in terms of KN tableaux.) The map $\bar{\Psi}$ is then obtaining by making $\Psi$ symmetric  along $s_1$.
Then we prove injectivity in \Cref{omega2} and that the complement is an atom in \Cref{atoms}.

\begin{lemma}
\label{welldefomega2}
Let $T\in \calP(\lambda)$ with $\stn(T)=(a,b,c,d)$. Then we have the following:
\begin{enumerate}
    \item If $d\in \{0,\lambda_1\}$, there exists $U\in \calP(\lambda+\varpi_2)$ with $\stn(U)=(a,b+1,c+1,d)$;
    \item If $d\not\in \{0,\lambda_1\}$, there exists $U\in \calP(\lambda+\varpi_2)$ with $\stn(U)=(a,b,c+1,d+1)$.
\end{enumerate}
\end{lemma}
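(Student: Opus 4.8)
The statement is a direct verification using Littelmann's characterization of the adapted strings in $C_2$ (\Cref{Littelmannineq}) together with the description of the principal preatom $\calP(\lambda)$ provided by \Cref{preatominequalities}. The strategy is to take $(a,b,c,d)=\stn(T)$ with $T\in\calP(\lambda)$, in each of the two cases propose the explicit candidate string for $U$, and check that it satisfies (i) all five inequalities of \Cref{Littelmannineq} for the weight $\lambda+\varpi_2$, and (ii) at least one of the three preatom equations of \Cref{preatominequalities} for $\lambda+\varpi_2$. This then furnishes the desired element $U\in\calP(\lambda+\varpi_2)$.

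\textbf{Case $d\in\{0,\lambda_1\}$.} Here I would set $\stn(U)=(a,b+1,c+1,d)$. First check the inequalities of \Cref{Littelmannineq} with $\lambda_1$ replaced by $\lambda_1$ and $\lambda_2$ by $\lambda_2+1$: the inequality $b+1\geq c+1\geq d$ follows from $b\geq c\geq d$; the inequality $d\leq\lambda_1$ is unchanged; $c+1\leq(\lambda_2+1)+d$ follows from $c\leq\lambda_2+d$; $b+1\leq\lambda_1-2d+2(c+1)$ follows from $b\leq\lambda_1-2d+2c$; and $a\leq(\lambda_2+1)+d-2(c+1)+(b+1)=\lambda_2+d-2c+b$ follows from $a\leq\lambda_2+d-2c+b$ (the shift in $c$ is compensated by the shifts in $\lambda_2$ and $b$). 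For the preatom condition: if $d=0$ then $U$ satisfies $d=0$; if $d=\lambda_1$ then $U$ satisfies $d=\lambda_1$ (the first coordinate $\lambda_1$ of $\lambda+\varpi_2$ is still $\lambda_1$), so in both subcases $U\in\calP(\lambda+\varpi_2)$ by \Cref{preatominequalities}.

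\textbf{Case $d\notin\{0,\lambda_1\}$.} Since $T\in\calP(\lambda)$ and neither $d=0$ nor $d=\lambda_1$ holds, \Cref{preatominequalities} forces the third equation $b=\lambda_1-2d+2c$. Here I would set $\stn(U)=(a,b,c+1,d+1)$. Check the inequalities for $(\lambda_1,\lambda_2+1)$: $b\geq c+1$ needs $b\geq c+1$, which holds because $b=\lambda_1-2d+2c\geq c+1$ is equivalent to $\lambda_1\geq d+1+(d-c)$; since $d\leq\lambda_1$ and... — actually the cleanest route is $b\geq c$ plus $b\neq c$: if $b=c$ then $c=\lambda_1-2d+2c$ gives $c=2d-\lambda_1\leq d$ so $c=d$ and $b=c=d$, forcing $\lambda_1-2d+2d=d$, i.e. $d=\lambda_1$, contradicting our hypothesis; hence $b\geq c+1$. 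Next $c+1\geq d+1$ from $c\geq d$; $d+1\leq\lambda_1$ from $d<\lambda_1$; $c+1\leq(\lambda_2+1)+(d+1)$ from $c\leq\lambda_2+d$; the fourth inequality $b\leq\lambda_1-2(d+1)+2(c+1)=\lambda_1-2d+2c$ holds with equality by our hypothesis $b=\lambda_1-2d+2c$; the fifth $a\leq(\lambda_2+1)+(d+1)-2(c+1)+b=\lambda_2+d-2c+b$ from $a\leq\lambda_2+d-2c+b$. For the preatom condition, $U$ satisfies $b=\lambda_1-2(d+1)+2(c+1)$, the third equation for $\lambda+\varpi_2$, so $U\in\calP(\lambda+\varpi_2)$ by \Cref{preatominequalities}.

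\textbf{Main obstacle.} The only genuinely delicate point is in the second case: showing the strict inequality $b\geq c+1$ (equivalently $b>c$) when $d\notin\{0,\lambda_1\}$, which requires using the preatom equation $b=\lambda_1-2d+2c$ together with $d\leq\lambda_1$ to rule out $b=c$; the argument above does this by deriving $b=c=d$ and then $d=\lambda_1$, a contradiction. Everything else is a mechanical substitution, so no computer algebra is needed here (in contrast with \Cref{atomicnumber}). Once $\bar\Psi$ is defined by symmetrizing $\Psi$ along $s_1$, injectivity and the atom property will be established separately in the subsequent lemmas, as announced.
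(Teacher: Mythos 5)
Your proof is correct and follows essentially the same strategy as the paper's: reduce to verifying the Littelmann inequalities and the preatom equations for the candidate string, note that all but $b\geq c+1$ in the second case are immediate from the old inequalities (with the $\lambda_2\mapsto\lambda_2+1$ shift compensating the shifts in $b,c,d$), and rule out $b=c$ via the preatom equation and $d<\lambda_1$. The only difference is cosmetic: the paper derives the contradiction from $d=\tfrac12(c+\lambda_1)$ together with $d\leq c$ and $d<\lambda_1$, while you first force $b=c=d$ and then $d=\lambda_1$; both are equivalent one-line arithmetic arguments.
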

\begin{proof}
Assume first $d=0$ and $d=\lam_1$. The Littelmann inequalities for $(a,b+1,c+1,d)$ and $\lambda+\varpi_2$ are implied by the original ones for $(a,b,c,d)$ and $\lambda$, so there exists such $U\in \calB(\lambda+\varpi_2)$. Since $d=0$ or $d=\lam_1$ we also see that $U\in \calP(\lambda+\varpi_2)$.

Assume now $d\neq 0$ and $d\neq \lambda_1$. Since $T\in \calP(\lambda)$ we have $b=\lam_1-2d+2c$. The Littelmann inequalities for $(a,b,c+1,d+1)$ and $\lambda+\varpi_2$ are:
\begin{itemize}
    \item $b\geq c+1 \geq d+1$,
    \item $d+1\leq \lambda_1$,
    \item $c+1\leq \lam_2+1+d+1$,
    \item $b\leq \lam_1-2d+2c$, and
    \item $a\leq \lam_2+d-2c+b$.
\end{itemize}
All these inequalites are implied by the original ones (and by $d\neq \lambda_1$) except $b\geq c+1$. However, if $b<c+1$ then $b=c$ and $c=\lam_1-2d+2c$ or, equivalently, $d=\frac12 (c+\lam_1)$. Since $d\leq c$ and $d< \lam_1$ this is impossible. It follows that there exists $U\in \calB(\lambda+\varpi_2)$ with $\stn(U)=(a,b,c+1,d+1)$. Moreover, $b=\lam_1-2(d+1)+2(c+1)$, so $U\in \calP(\lambda+\varpi_2)$
\end{proof}

\Cref{welldefomega2} ensures that the following function is well defined.
\begin{definition}\label{defPsi}
    We define $\Psi:\calP(\lambda)\raw \calP(\lambda+\varpi_2)$ as follows. Let $T\in \calP(\lam)$ with $\stn(T)=(a,b,c,d)$. Then $\Psi(T)=U$ with
    \[ \stn(U)=\begin{cases}
    (a,b+1,c+1,d) & \text{if }d=0\text{ or }d=\lam_1\\
    (a,b,c+1,d+1) &\text{otherwise}.\end{cases}\]

    We also define $\bPsi:\calP(\lambda)\raw \calP(\lambda+\varpi_2)$ as follows. 
    \[\bPsi(T)=\begin{cases}\Psi(T)& \text{if }\wt(T)_1\leq 0\\
    s_1(\Psi(s_1(T)))&\text{if }\wt(T)_1\geq 0\end{cases}\]
\end{definition}

\begin{lemma}\label{lemmaonPsi}
For $T\in \calP(\lam)$ we have:
\begin{enumerate}
\item $\wt(\Psi(T))=\wt(\bPsi(T))=\wt(T)$
\item $\phi_2(\Psi(T))=\phi_2(T)$.
\item If $f_2(T)\neq 0$ also $f_2(\Psi(T))=\Psi(f_2(T))$.
\item If $e_2(T)\neq 0$ also $e_2(\Psi(T))=\Psi(e_2(T))$.
\item $s_1(\bPsi(T))=\bPsi(s_1(T))$.
\end{enumerate}
\end{lemma}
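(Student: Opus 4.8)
The plan is to verify the five items of \Cref{lemmaonPsi} essentially by unwinding the definition of $\Psi$ (and $\bPsi$) on adapted strings and invoking the explicit inequalities of \Cref{Littelmannineq} together with the string description of the crystal operators. Throughout, write $(a,b,c,d)=\stn(T)$ and note that $T\in\calP(\lam)$ means, by \Cref{preatominequalities}, that one of $d=0$, $d=\lam_1$, $b=\lam_1-2d+2c$ holds.

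First I would dispatch item (1): $\Psi$ changes $\stn$ either to $(a,b+1,c+1,d)$ or to $(a,b,c+1,d+1)$, and in $\str_2$-coordinates (reduced word $s_2s_1s_2s_1$) the weight is $\wt(T)=\lam-a\alpha_2-b\alpha_1-c\alpha_2-d\alpha_1$; in both cases the change to $(b,c,d)$ is by adding a multiple of $\alpha_1+\alpha_2=\alpha_{21}$ in a way that is compensated by the shift $\lam\mapsto\lam+\varpi_2$ — concretely, $\varpi_2=\alpha_1+\alpha_2$... I should double-check the exact combination, but the upshot is that the shift $+\varpi_2$ in the target highest weight exactly cancels the shift in $-(\,\cdot\,)\alpha_i$ coming from the incremented string entries, so $\wt(\Psi(T))=\wt(T)$. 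Since $s_1$ preserves nothing about the weight's $\alpha_1^\vee$-pairing sign in a way that matters here — rather $s_1$ is a crystal automorphism with $\wt(s_1(T))=s_1(\wt(T))$ — we get $\wt(\bPsi(T))=s_1(\wt(\Psi(s_1(T))))=s_1(s_1(\wt(T)))=\wt(T)$ in the case $\wt(T)_1\geq 0$, and directly $\wt(\Psi(T))=\wt(T)$ otherwise.

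For items (2)--(4), the key tool is the string-coordinate formula for $\phi_2$, already recorded in the proof of \Cref{atomicnumber} as the tropical expression $\phi_2(T)=\frac{bd\lam_2}{ac^2}$, i.e. $\phi_2(T)=\lam_2+b+d-a-2c$ (this is just the last Littelmann inequality $a\leq\lam_2+d-2c+b$ saying $\phi_2\ge 0$, read as the maximal power). In the case $d\in\{0,\lam_1\}$, $\Psi$ sends $(a,b,c,d)\mapsto(a,b+1,c+1,d)$, so $\phi_2$ changes by $+1-2=-1$... so I will need to be careful: actually with the correct sign conventions one should get $\phi_2(\Psi(T))=\lam_2$ (new) $+b+1+d-a-2(c+1)=\lam_2+b+d-a-2c-1$, and since the new highest weight has second coordinate $\lam_2+1$, this equals $(\lam_2+1)+b+d-a-2(c+1)+\text{(correction)}$ — I will recompute this carefully, but the point is the increments are rigged precisely so that $\phi_2$ is unchanged, consistent with (1) and with the fact that $\langle\wt,\alpha_2^\vee\rangle$ is unchanged. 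Similarly in the case $d\notin\{0,\lam_1\}$ we have $(a,b,c,d)\mapsto(a,b,c+1,d+1)$, and one checks $\phi_2$ is again preserved. Then (3) and (4) follow because $f_2$ (resp. $e_2$) acts on $\str_2$ by incrementing (resp. decrementing) the first coordinate $a$ — since $s_2$ is the first letter of the chosen reduced word — provided $\phi_2>0$ (resp. $\eps_2>0$); one must additionally check that $f_2$ does not change which of the three defining equations ($d=0$, $d=\lam_1$, $b=\lam_1-2d+2c$) holds, so that $\Psi$ is defined by the same case before and after, whence $\Psi$ and $f_2$ visibly commute on $\str_2$-coordinates. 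I expect the bookkeeping of which defining equation persists under $f_2$ to be the only mildly delicate point, handled by noting $f_2$ fixes $b,c,d$.

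Finally, item (5): by definition $\bPsi(T)=\Psi(T)$ if $\wt(T)_1\le 0$ and $s_1\Psi s_1(T)$ if $\wt(T)_1\ge 0$. Note $\wt(s_1(T))_1=-\wt(T)_1$, and in the boundary case $\wt(T)_1=0$ both formulas must agree — which they do, since $s_1(T)=T$ then (as $\eps_1(T)=\phi_1(T)$ when $\langle\wt(T),\alpha_1^\vee\rangle=0$) and $\Psi$ commutes with $s_1$ on the $\alpha_1^\vee$-zero fibre... actually more simply, when $\wt(T)_1=0$, $s_1$ acts trivially, so $s_1\Psi s_1(T)=\Psi(T)$. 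With the cases consistently defined, for $\wt(T)_1\le 0$ we have $\wt(s_1(T))_1\ge 0$, so $\bPsi(s_1(T))=s_1\Psi s_1(s_1(T))=s_1\Psi(T)=s_1\bPsi(T)$; for $\wt(T)_1\ge 0$, $\wt(s_1(T))_1\le 0$, so $\bPsi(s_1(T))=\Psi(s_1(T))=s_1(s_1\Psi(s_1(T)))=s_1\bPsi(T)$. Hence $s_1\bPsi(T)=\bPsi(s_1(T))$ in all cases. The main obstacle, such as it is, is purely the care needed in translating $\Psi$ through the change-of-coordinates $\theta_{12},\theta_{21}$ of \Cref{adaptedstring} when one wants to see the commutation with $s_1$ (which is most naturally expressed in $\str_1$-coordinates) against the definition of $\Psi$ (given in $\str_2$-coordinates); everything else is a direct check against the Littelmann inequalities.
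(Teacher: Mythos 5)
Your proposal is correct and matches the paper's approach, whose proof is simply the one-line observation that all five claims are immediate from the explicit action of $\Psi$ and $\bPsi$ on $\str_2$-coordinates. The only stumble is in your verification of $\phi_2(\Psi(T))=\phi_2(T)$, which you acknowledge; the clean calculation is that $\phi_2=\lam_2+b+d-a-2c$ (with $\lam_2$ the second coordinate of the ambient highest weight), the shift $\lam\mapsto\lam+\varpi_2$ contributes $+1$, and in either case the increments $b\mapsto b+1,\ c\mapsto c+1$ (resp.\ $c\mapsto c+1,\ d\mapsto d+1$) contribute $(+1)+(-2)=-1$, for a net change of $0$.
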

\begin{proof}
This is clear by the definition of $\stn$.
\end{proof}

\begin{lemma}
\label{omega2}
The maps $\Psi,\bPsi:\calP(\lambda)\raw \calP(\lambda+\varpi_2)$ are injective.
\end{lemma}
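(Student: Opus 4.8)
The plan is to establish injectivity of $\Psi$ directly on adapted strings, and then deduce injectivity of $\bPsi$ from the fact that $s_1$ is a bijection. Since $\bPsi(T) = \Psi(T)$ when $\wt(T)_1 \leq 0$ and $\bPsi(T) = s_1 \Psi(s_1(T))$ when $\wt(T)_1 \geq 0$, and $\bPsi$ preserves weight (hence preserves $\wt(\cdot)_1$) by \Cref{lemmaonPsi}(1), it suffices to handle the two cases separately: on the sublocus $\{\wt(T)_1 \leq 0\}$ we need $\Psi$ injective, and on $\{\wt(T)_1 \geq 0\}$ we need $s_1 \circ \Psi \circ s_1$ injective, which follows once $\Psi$ is injective because $s_1$ is invertible. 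The two sublocci overlap only on $\{\wt(T)_1 = 0\}$, where one must check the two formulas agree — but this is exactly \Cref{lemmaonPsi}(5) combined with $s_1$ fixing weight-$0$-in-first-coordinate elements' $1$-string appropriately, or more simply it is already implicit in $\bPsi$ being well defined; so the real content is injectivity of $\Psi$.

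First I would fix $\lambda$ and suppose $\Psi(T) = \Psi(T')$ for $T, T' \in \calP(\lambda)$ with $\stn(T) = (a,b,c,d)$ and $\stn(T') = (a',b',c',d')$. Since $\Psi$ only ever modifies the last three string coordinates (by adding $1$ in two of three positions depending on whether $d \in \{0,\lambda_1\}$), we immediately get $a = a'$. The case analysis splits into: (i) both $d, d' \in \{0,\lambda_1\}$, (ii) both $d, d' \notin \{0,\lambda_1\}$, (iii) one in, one out. In cases (i) and (ii) the formula for $\Psi$ is the same injective affine shift on $(b,c,d)$, so $(b,c,d) = (b',c',d')$ and $T = T'$ since an element of $\calB(\lambda)$ is determined by its adapted string. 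The only genuine work is case (iii): say $d \in \{0,\lambda_1\}$ and $d' \notin \{0,\lambda_1\}$, so $\stn(\Psi(T)) = (a, b+1, c+1, d)$ and $\stn(\Psi(T')) = (a, b', c'+1, d'+1)$. Equating gives $b+1 = b'$, $c+1 = c'+1$, $d = d'+1$, i.e. $c = c'$, $d = d'+1$, $b = b'-1$. Now I would use the preatom equations from \Cref{preatominequalities}: $T' \notin \{0,\lambda_1\}$ forces $b' = \lambda_1 - 2d' + 2c'$, and $T \in \calP(\lambda)$ with $d \in \{0,\lambda_1\}$ — I would substitute both possibilities $d = 0$ and $d = \lambda_1$ and derive a contradiction with the Littelmann inequalities (e.g. $b \geq c \geq d$, $d \leq \lambda_1$) for one of $T, T'$, showing this mixed case cannot occur.

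The main obstacle I anticipate is case (iii): making sure the preatom-membership constraints genuinely rule it out. For instance if $d = 0$ then $d' = -1 < 0$, an immediate contradiction since string coordinates are non-negative — so that subcase is trivial. If $d = \lambda_1$ then $d' = \lambda_1 - 1$, and we need $b = b' - 1 = \lambda_1 - 2(\lambda_1-1) + 2c - 1 = -\lambda_1 + 2c + 1$; combined with $b \geq c$ this gives $c \geq \lambda_1 - 1 = d'$, which is consistent, so here one must push further, using perhaps $a = a'$ together with the fifth Littelmann inequality $a \leq \lambda_2 + d - 2c + b$ for $T$ versus $a' \leq (\lambda_2+1) + d' - 2c' + b'$ for $T'$, or the constraint that $T'$ actually satisfies $b' \geq c'+1$ needed inside the proof of \Cref{welldefomega2}. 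Essentially I would track which of the defining inequalities becomes an equality and show the two descriptions are incompatible. Once case (iii) is excluded, $\Psi$ is injective, and the deduction for $\bPsi$ is a one-line argument: on each of the two weight-sublocci $\bPsi$ is a composition of injections ($\Psi$, possibly conjugated by the bijection $s_1$), and weight-preservation keeps the two sublocci from being confused, so $\bPsi$ is injective globally.
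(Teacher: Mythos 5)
Your high-level plan is the same as the paper's: reduce $\bPsi$ to $\Psi$ (the $\bPsi$ part of your argument is fine), work on adapted strings, note $a=a'$, and split into cases according to whether the two string coordinates $d,d'$ lie in $\{0,\lambda_1\}$. Cases (i), (ii) are correctly dispatched. The genuine gap is that you do not actually finish case (iii) when $d=\lambda_1$: you write down the constraint system $d'=\lambda_1-1$, $c'=c$, $b=b'-1=-\lambda_1+2c+1$, then test it against $b\geq c$, find no contradiction, and leave the argument open-ended (``one must push further, using perhaps $a=a'$ together with the fifth Littelmann inequality $\ldots$ or the constraint $b'\geq c'+1$''). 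Neither of those is what's needed, and as written the proof is incomplete.

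The ingredient you should have reached for is the \emph{fourth} Littelmann inequality from \Cref{Littelmannineq} applied to $T$ itself: $b\leq\lambda_1-2d+2c$. With $d=\lambda_1$ this reads $b\leq-\lambda_1+2c$, which contradicts the value $b=-\lambda_1+2c+1$ forced by the preatom equality $b'=\lambda_1-2d'+2c'$ for $T'$ (via $b'=b+1$, $c'=c$, $d'=d-1$). In fact this works uniformly without splitting on $d=0$ vs.\ $d=\lambda_1$: in case (iii) one always gets $b=\lambda_1-2d+2c+1>\lambda_1-2d+2c\geq b$. The paper does exactly this computation but with the roles of $T$ and $T'$ swapped (i.e.\ $T$ outside $\{0,\lambda_1\}$, so the preatom equality $b=\lambda_1-2d+2c$ is used for $T$ and the Littelmann inequality for the other element), which gives the symmetric contradiction $b\leq\lambda_1-2d+2c-1$. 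So the right idea and all the needed tools are present in your write-up; you just didn't select the correct inequality in the final step.
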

\begin{proof}
It is enough to prove the statement for $\Psi$.
Assume $\Psi(T)=\Psi(U)$ with $T\neq U$. Let $\stn(T)=(a,b,c,d)$ and $\stn(T)=(a',b',c',d')$. We can assume that 
$d\not\in\{ 0,\lambda_1\}$, $d'\in \{0,\lam_1\}$ and that \[\stn(\Psi(T))=(a,b,c+1,d+1)=(a',b'+1,c'+1,d').\] 

It follows that $d'=d+1$, $c'=c$ 
and $b'=b-1$. 
Since \[b-1=b'\leq \lam_1-2d'+2c'=\lam_1-2(d+1)+2c\] it follows that $b\leq \lam_1-2d+2c-1$. But this contradicts the fact that $b=\lam_1-2d+2c$.
\end{proof}

Recall the atomic basis $\bfN=\bfN^2$ of the spherical Hecke algebra from \Cref{sec:precan}.

\begin{proposition}
\label{atoms}
We have $[\calA(\lam)]_{v=1}=(\bfN_{\lam})_{v=1}$. In particular the set $\calA(\lam)$ is an atom.  
\end{proposition}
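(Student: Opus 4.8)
The plan is to compute the ungraded character $[\calA(\lam)]_{v=1}$ by combining the recursive definition of $\calA$ with the character formulas for preatoms already established in \Cref{PreatomPrecan} and the explicit expansion of $\tilN^3$ in the $\bfN$-basis from \Cref{precanN3}. Write $\lam = \lam_1\varpi_1 + \lam_2\varpi_2$. First I would treat the case $\lam_1 \neq 0$, where $\calA(\lam) = \calP(\lam)\setminus \bPsi(\calP(\lam-\varpi_2))$. Since $\bPsi$ is weight-preserving and injective by \Cref{omega2}, and both $\calP(\lam)$ and $\bPsi(\calP(\lam-\varpi_2))$ are $W$-stable (the latter because $\bPsi$ commutes with $s_1$ by \Cref{lemmaonPsi}(5), and $\calP$ is $W$-stable by construction — a union of $W$-orbits), we get
\[ [\calA(\lam)]_{v=1} = [\calP(\lam)]_{v=1} - [\calP(\lam-\varpi_2)]_{v=1} = (\tilN^3_\lam)_{v=1} - (\tilN^3_{\lam-\varpi_2})_{v=1}.\]
Now invoke \Cref{precanN3}: for $\lam_1 > 0$, $\tilN^3_{(\lam_1,\lam_2)} = \sum_{i\leq \lam_2} v^{2i}\bfN^2_{(\lam_1,\lam_2-i)}$, and the analogous sum for $\lam-\varpi_2 = (\lam_1,\lam_2-1)$ telescopes against it, leaving exactly $\bfN^2_{(\lam_1,\lam_2)}$ after specializing $v=1$. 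Care is needed at the boundary $\lam_2 = 0$, where $\calP(\lam-\varpi_2)$ may be empty or the formula degenerates; I would check that the sum for $\tilN^3_{(\lam_1,-1)}$ is empty (consistent with $\undH$ vanishing at singular weights, since $\lam_2 = -1$ is singular), so the identity still holds.

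Next I would handle $\lam_1 = 0$, where by \Cref{atomemb} the atom is $\calA(\lam) = \calP(\lam)\setminus \bPsi^2(\calP(\lam-2\varpi_2))$, so
\[ [\calA(\lam)]_{v=1} = (\tilN^3_{(0,\lam_2)})_{v=1} - (\tilN^3_{(0,\lam_2-2)})_{v=1}.\]
Here I use the second branch of \Cref{precanN3}: $\tilN^3_{(0,\lam_2)} = \sum_{i\leq \frf{\lam_2}} v^{4i}\bfN^2_{(0,\lam_2-2i)}$. Subtracting the corresponding expansion for $(0,\lam_2-2)$ telescopes (the $v^{4i}$-shifted terms cancel, noting $\frf{\lam_2} = \frf{\lam_2-2}+1$), leaving $\bfN^2_{(0,\lam_2)}$ after $v=1$. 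Again the small cases $\lam_2 \in \{0,1\}$ need to be checked directly against the definition $\calP(\lam) = \calB(\lam)$ for $\lam_1 \leq 1$ and the fact that $\bPsi$ lands in $\calP(\lam+\varpi_2)$ which for $\lam_1 = 0$ has first coordinate still $0$.

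The final sentence of the statement — that $\calA(\lam)$ is therefore an atom — is immediate from \Cref{def:atom} once the character identity $[\calA(\lam)]_{v=1} = (\bfN_\lam)_{v=1}$ is established, since $\bfN = \bfN^2$ and $(\bfN_\lam)_{v=1} = \sum_{\mu\leq\lam} e^\mu$ shows every weight $\mu \leq \lam$ occurs exactly once. The main obstacle I anticipate is not the telescoping algebra, which is routine, but rather making sure the two subtracted sets are genuinely $W$-stable subsets of $\calP(\lam)$ so that the ungraded-character subtraction (which by definition only counts dominant-weight elements, extended $W$-equivariantly) is valid — this hinges on \Cref{lemmaonPsi}(5) and on $\calP$ being a union of $W$-orbits — together with a careful treatment of the degenerate low-weight cases where the recursive construction bottoms out. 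I would also double-check that $\bPsi^2(\calP(\lam-2\varpi_2)) \subset \calP(\lam)$ genuinely holds when $\lam_1 = 0$, i.e. that applying $\bPsi$ twice stays inside the principal preatom, which follows from iterating \Cref{welldefomega2} and \Cref{omega2}.
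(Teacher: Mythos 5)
Your proposal is correct and follows essentially the same route as the paper: express $[\calA(\lam)]_{v=1}$ as the difference of preatom characters using the weight-preserving injectivity of $\bPsi$ (from \Cref{omega2}) together with \Cref{PreatomPrecan}, then identify the result with $(\bfN_\lam)_{v=1}$ via \Cref{precanN3}, treating the low-weight boundary cases separately. The only cosmetic difference is that you telescope the full $\tilN^3$-expansion, whereas the paper reads off $\tilN^3_\lam-\tilN^3_{\lam-\varpi_2}=\bfN^2_\lam$ (resp.\ with $\lam-2\varpi_2$ when $\lam_1=0$) directly from the recursion underlying \Cref{precanN3}.
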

\begin{proof}
If $\lam_2=0$ we have $\calA(\lam)=\calP(\lam)$, so $[\calA(\lam)]_{v=1}=(\tilN_{\lam})_{v=1}=(\bfN_{\lam})_{v=1}$.

If $\lam_2=1$ and $\lam_1=0$ then we can easily check that $\calB(\lam)$ consists of a single atom. If $\lam_2>1$ and $\lam_1=0$ then we have
$\calA(\lam)=\calP(\lam)\setminus \bPsi^2(\calP(\lam-\varpi_2))$.
Since $\bPsi$ is injective and weight-preserving, we have by \Cref{precanN3} that
\[ [\calA(\lam)]_{v=1}=[\calP(\lam)]_{v=1}-[\calP(\lam-2\varpi_2)]_{v=1}=(\tilN^3_{\lam}-\tilN^3_{\lam-2\varpi_2})_{v=1}=(\bfN_\lam)_{v=1}.\]
Finally, assume $\lam_2 >0$ and $\lam_1>0$. Then, we have $\calA(\lam)=\calP(\lam)\setminus \bPsi(\calP(\lam-\varpi_2))$. Since $\bPsi$ is injective and weight-preserving, we have
\[ [\calA(\lam)]_{v=1}=[\calP(\lam)]_{v=1}-[\calP(\lam-\varpi_2)]_{v=1}=(\tilN^3_{\lam}-\tilN^3_{\lam-\varpi_2})_{v=1}=(\bfN_\lam)_{v=1}.\qedhere\]
\end{proof}

From this we can  obtain an atomic decomposition of $\calB(\lam)$. Because we already know how to decompose $\calB(\lam)$ into preatoms, it is enough to decompose each preatom $\calP(\lam)$ into atoms. If $\lam_2=0$ or if $\lam=(0,1)$ we have $\calP(\lam)=\calA(\lam)$. If $\lam_2>0$ and $\lam\neq (0,1)$ then we have 
\[ \calP(\lam) = \begin{cases}\calA(\lam) \sqcup \bPsi(\calP(\lam -\varpi_2))&\text{if }\lam_1>0\\
\calA(\lam) \sqcup \bPsi^2(\calP(\lam -2\varpi_2))&\text{if }\lam_1=0\\
\end{cases}\]
so, applying $\bPsi$, we obtain an atomic decomposition by induction.

\begin{remark}
It is worth noting that an atomic decomposition can also be obtained by taking the complement of $\Psi$ rather than $\bPsi$. The advantage of using $\bPsi$ is to ensure that atoms are stable under $s_1$. This stability is crucial, as our approach inherently relies on $s_1$-symmetry, as discussed for example in \Cref{21swappable}. It is therefore 
 essential to ensure that the structures we define are compatible with this symmetry.
\end{remark}

\begin{lemma}\label{phi1Psi}
Let $T\in \calP(\lam)$ with $\stn(T)=(a,b,c,d)$. Then 
\[ \phi_1(\Psi(T))=\begin{cases}
\phi_1(T)& \text{if }d=0\text{ and }2a>b>2c\text{ or }d\neq 0,\lam_1\text{ and }b> 2a+d\\
\phi_1(T)+1 \hspace{-2pt}&\text{otherwise}.
\end{cases}\]

Moreover, if $\phi_1(\Psi(T))=\phi_1(T)$ and $\mu_1\leq 0$, then $\phi_1(T)=0$
\end{lemma}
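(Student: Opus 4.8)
The strategy is a direct computation with adapted strings, using the explicit piecewise-linear formula for $\theta_{21}$ from \Cref{adaptedstring} together with the transformation $\phi_1' (a,b,c,d) = \dfrac{b^2 d^2 \lam_1}{a c^2}$ (in tropical notation, i.e. $\phi_1(T) = \lam_1 + 2b + 2d - a - 2c$ when $\stn_1(T) = (a,b,c,d)$), exactly as in the proof of \Cref{atomicnumber}. Concretely, $\phi_1(T) = \phi_1' \circ \theta_{21}(\stn_2(T))$, so I need to compare $\phi_1' \circ \theta_{21}(a,b,c,d)$ with $\phi_1' \circ \theta_{21}$ applied to either $(a,b+1,c+1,d)$ (the case $d \in \{0,\lam_1\}$) or $(a,b,c+1,d+1)$ (the case $d \notin \{0,\lam_1\}$). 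First I would write out $\theta_{21}(a,b,c,d) = (a',b',c',d')$ with $a' = \max\{d, 2c-b, b-2a\}$, $b' = \max\{c, a+d, a+2c-b\}$, $c' = \min\{b, 2b-2c+d, d+2a\}$, $d' = \min\{a, c-d, b-c\}$, and then observe that $\phi_1'$ of this equals $\lam_1 + 2b' + 2d' - a' - 2c'$; since each of $a', b', c', d'$ is a min/max of linear forms, $\phi_1'\circ\theta_{21}$ is a piecewise-linear function whose pieces I can enumerate.

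The key step is the case analysis. In the case $d \neq 0, \lam_1$, I pass from $(a,b,c,d)$ to $(a,b,c+1,d+1)$. Since $T \in \calP(\lam)$ forces $b = \lam_1 - 2d + 2c$, the relevant inequalities from \Cref{Littelmannineq}/\Cref{preatominequalities} ($b \geq c \geq d \geq 1$, $d \leq \lam_1-1$, etc.) constrain which linear piece of each min/max is active. I expect that generically the perturbation $c \mapsto c+1$, $d\mapsto d+1$ changes $\phi_1$ by $+1$, and that the exceptional locus where it stays constant is precisely where the active piece of one of the min/max expressions shifts — I anticipate this happens exactly when $b > 2a + d$ (so that in $b' = \max\{c,a+d,a+2c-b\}$ and $d' = \min\{a,c-d,b-c\}$ the minimizing/maximizing choices conspire to absorb the increment). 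Similarly in the case $d = 0$, passing to $(a,b+1,c+1,0)$, I expect the constant locus to be governed by the chain of inequalities $2a > b > 2c$. For the case $d = \lam_1$ I expect the outcome $\phi_1(\Psi(T)) = \phi_1(T)+1$ always, consistent with the stated formula (which only exempts the $d=0$ subcase). The honest way to organize this is to split into the three regimes $d=0$, $d=\lam_1$, $0 < d < \lam_1$, and within each determine the active branch of each of the four min/max expressions before and after the shift; the arithmetic is routine once the branches are pinned down, and I would present it compactly rather than exhaustively.

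For the final assertion: suppose $\phi_1(\Psi(T)) = \phi_1(T)$ and $\mu_1 = \langle \wt(T), \alpha_1^\vee \rangle \leq 0$. By the first part we are in one of the two exceptional subcases, so either ($d=0$ and $2a > b > 2c$) or ($d \neq 0,\lam_1$ and $b > 2a+d$). I would translate $\mu_1 \leq 0$ into a linear inequality in $(a,b,c,d,\lam_1,\lam_2)$ — recall $\mu_1 = \lam_1 + 2a + 2c - 2b - 2d$ in the notation of \Cref{atomicnumber}'s proof — and combine it with the exceptional-case inequalities and the Littelmann inequalities to force $\phi_1(T) = \lam_1 + 2b + 2d - a - 2c$ (again via $\theta_{21}$) to vanish. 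I expect that in the exceptional regime the expression for $\phi_1(T)$ simplifies — e.g. when $b > 2a+d$ one of the min/max branches makes $a' = b-2a$ or similar, collapsing $\phi_1(T)$ to a short linear form — and that $\mu_1 \leq 0$ then pins it to $0$.

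The main obstacle I foresee is bookkeeping: correctly identifying, in each regime and each of the exceptional subcases, which branch of each of the four nested $\min/\max$ expressions in $\theta_{21}$ is active, both before and after the string perturbation, without missing a boundary case where two branches coincide. This is exactly the kind of computation that in the proof of \Cref{atomicnumber} was delegated to SageMath, and I would be tempted to do the same here — verifying the piecewise-linear identity $\phi_1'\circ\theta_{21}(\text{perturbed string}) - \phi_1'\circ\theta_{21}(\text{string}) \in \{0,1\}$ with the stated description of the constant locus — rather than grinding through all branch combinations by hand.
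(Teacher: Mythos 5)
Your plan is on the right track in spirit — work in string coordinates and compare $\phi_1$ before and after the shift — but you have missed the one simplification that turns a delicate four-variable piecewise-linear case check (which you propose to offload to SageMath) into a one-line computation. The crystal axiom $\phi_1(T) - \eps_1(T) = \langle \wt(T),\alpha_1^\vee\rangle$, together with the fact that $\eps_1(T)$ is just the first coordinate of $\str_1(T) = \theta_{21}(\str_2(T))$, gives
\[
\phi_1(T) = \lam_1 + 2a - 2b + 2c - 2d + \max(d,\,2c-b,\,b-2a),
\]
so only a single $\max$ survives instead of the four nested $\min$/$\max$ terms you plan to juggle. (Equivalently: your formula $\phi_1(T) = \lam_1 + 2b' + 2d' - a' - 2c'$ collapses because $a'+c' = b+d$ and $b'+d' = a+c$.) Since $\Psi$ preserves weight, $\phi_1(\Psi(T)) - \phi_1(T)$ is literally the difference of two $\max$ expressions, and the stated trichotomy falls out in two lines once you split on $d=0$, $d=\lam_1$, $0<d<\lam_1$ and use $b = \lam_1 - 2d + 2c$ when $0 < d < \lam_1$; no branch enumeration needed. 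The paper's proof is exactly this.

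For the last assertion, your sketch is directionally right but you never actually close it. With the reduced formula it is immediate: if $d=0$ and $2a>b>2c$, the $\max$ vanishes and $\phi_1(T) = \mu_1 \le 0$ forces $\phi_1(T)=0$ (since $\phi_1 \ge 0$); if $0<d<\lam_1$ and $b>2a+d$, the $\max$ equals $b-2a$ and with $b = \lam_1-2d+2c$ one gets $\phi_1(T)=0$ identically. So the "$\mu_1 \le 0$" hypothesis is only really used in the $d=0$ subcase. Without the simplified form of $\phi_1$ you would not see this cancellation, which is why your plan falls back on the computer; with it, the whole lemma is a short by-hand check.
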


\begin{proof}
Let $\pi_1:\bbZ^4\raw \bbZ$ be the projection onto the first component. Then, we have 
\begin{equation}\label{phi1stn}\phi_1(T)=\pi_1(\theta_{21}(\stn(T)))+(\wt(T))_1=\lam_1+2a-2b+2c-2d+\max(d,2c-b,b-2a).\end{equation}
From here we see that, if $d=0$ or $d=\lam_1$, we have
\[\phi_1(\Psi(T))-\phi_1(T)=\max(d,2c-b+1,b-2a+1)-\max(d,2c-b,b-2a).\]
If $d=0$, then $\phi_1(\Psi(T))=\phi_1(T))$ if and only if $2a>b>2c$. If $d=\lam_1$, we have $2c-b\geq 2d-\lam_1=\lam_1$, so $\phi_1(\Psi(T))-\phi_1(T)=1$.

If $0<d<\lam_1$ and $b=\lam_1-2d+2c$, then \[\phi_1(\Psi(T))-\phi_1(T)=\max(d+1,2c-b+2,b-2a)-\max(d,2c-b,b-2a),\]
but $2c-b=2d-\lam_1<d$, so $\phi_1(\Psi(T))-\phi_1(T)=\max(d+1,b-2a)-\max(d,b-2a)$ and the claim easily follows.
\end{proof}

\begin{corollary}\label{cor:phi12psi}
Let $T\in \calP(\lam)$ with $\stn(T)=(a,b,c,d)$. Then 
\[ \phi_{12}(\Psi(T))=\begin{cases}
\phi_{12}(T)+1& \text{if }d=0\text{ and }2a>b>2c\text{ or }d\neq 0,\lam_1\text{ and }b> 2a+d\\
\phi_{12}(T)&\text{otherwise}.
\end{cases}\]
\end{corollary}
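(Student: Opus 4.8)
The plan is to deduce \Cref{cor:phi12psi} from \Cref{phi1Psi} by exploiting the additivity of the statistic $Z(T)=\phi_1(T)+\phi_2(T)+\phi_{12}(T)$. Since \Cref{phi1Psi} already pins down how $\phi_1$ behaves under $\Psi$, it is enough to control $Z$ and $\phi_2$ under $\Psi$ and then solve for $\phi_{12}$.

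First I would note that $\phi_2(\Psi(T))=\phi_2(T)$, which is part of \Cref{lemmaonPsi}. Next I would show $Z(\Psi(T))=Z(T)+1$. Here the point is that $T\in\calP(\lam)$ lies in a principal preatom, so $\pat(T)=0$, and by construction (\Cref{defPsi}) $\Psi(T)\in\calP(\lam+\varpi_2)$ also lies in a principal preatom, so $\pat(\Psi(T))=0$. Since $\Psi$ is weight-preserving (\Cref{lemmaonPsi}), writing $\mu:=\wt(T)=\wt(\Psi(T))$ and comparing the formula for $Z$ in \Cref{atomicnumber} applied to $T\in\calB(\lam)$ and to $\Psi(T)\in\calB(\lam+\varpi_2)$, the only term that changes is $\lam_2$, which increases by $1$; in particular $\lam_1$, $\mu_1$, $\mu_2$, and hence the quantity $\max(0,(|\mu_1|-\lam_1)/2)$, are all unaffected, and both preatomic numbers vanish. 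Thus $Z(\Psi(T))-Z(T)=1$.

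Combining these two facts yields
\[
\phi_1(\Psi(T))+\phi_{12}(\Psi(T))=\phi_1(T)+\phi_{12}(T)+1.
\]
Now I would invoke \Cref{phi1Psi}: when $d=0$ and $2a>b>2c$, or when $d\neq 0,\lam_1$ and $b>2a+d$, we have $\phi_1(\Psi(T))=\phi_1(T)$, so the displayed identity forces $\phi_{12}(\Psi(T))=\phi_{12}(T)+1$; otherwise $\phi_1(\Psi(T))=\phi_1(T)+1$, so $\phi_{12}(\Psi(T))=\phi_{12}(T)$. This is precisely the case division in the statement, so the corollary follows.

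The argument is short and the only step requiring any real attention is checking that $T$ and $\Psi(T)$ both have vanishing preatomic number, so that the $\pat$-term in \Cref{atomicnumber} contributes nothing to $Z(\Psi(T))-Z(T)$; this is immediate once one recalls that $\Psi$ maps a principal preatom into a principal preatom. In particular this route avoids computing $\phi_{12}(\Psi(T))$ directly through the piecewise-linear coordinate changes $\theta_{12},\theta_{21}$ of \Cref{adaptedstring}, which would be considerably more laborious.
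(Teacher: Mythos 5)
Your argument is correct and is essentially the same as the paper's, which deduces the identity $\phi_{12}(\Psi(T))-\phi_{12}(T)=1-(\phi_1(\Psi(T))-\phi_1(T))$ from \Cref{atomicnumber} and then invokes \Cref{phi1Psi}. You merely spell out the intermediate steps (the $\phi_2$-invariance from \Cref{lemmaonPsi} and the fact that $\pat(T)=\pat(\Psi(T))=0$ so the $Z$-formula yields $Z(\Psi(T))-Z(T)=1$), which the paper leaves implicit.
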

\begin{proof}
It follows from \Cref{atomicnumber} that
\[ \phi_{12}(\Psi(T))-\phi_{12}(T)=1-(\phi_1(\Psi(T))-\phi_1(T)),\]
so we conclude by \Cref{phi1Psi}.
\end{proof}

\begin{definition}\label{defatomicnumber}
Let $T\in \calB(\lambda)$.
Let $\at(T)\in \bbZ_{\geq 0}$ be the maximum integer such that $T$ is in the image of $\bPsi^{\at(T)}:\calP(\lam-\at(T)\varpi_2)\raw \calP(\lam)$. We call $\at(T)$ the \emph{atomic number} of $T$.
\end{definition}

\begin{proposition}\label{atomformula}
Let $T\in \calP(\lambda)\subset \calB(\lam)$ with $\stn(T)=(a,b,c,d)$ and $\wt(T)_1\leq 0$.  We have
\[ \at(T)=\begin{cases} \min(c,\lam_1+2c-b)& \text{if }d=0\\
\lam_1+2c-2d-b+\min(\lam_2+d-c,d-1) &\text{if }d>0.\end{cases}\]
\end{proposition}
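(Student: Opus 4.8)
The plan is to compute $\at(T)$ directly from the recursive structure of the atomic decomposition, tracking how the adapted string $\stn$ changes under iterated applications of $\bPsi$. By hypothesis $\wt(T)_1\le 0$, so the relevant branch of $\bPsi$ is just $\Psi$, and \Cref{defPsi} tells us precisely how the string transforms: from $(a,b,c,d)$ we pass to $(a,b+1,c+1,d)$ when $d\in\{0,\lam_1\}$ and to $(a,b,c+1,d+1)$ otherwise, where here the relevant $\lam_1$ is that of the \emph{source} preatom, which does not change under $\bPsi$ (since $\bPsi$ adds $\varpi_2$). So $\at(T)$ is the largest $k$ such that one can run this recursion backwards $k$ times and land in a legitimate element of $\calP(\lam-k\varpi_2)$; equivalently, the largest $k$ such that there is $(a_0,b_0,c_0,d_0)\in\calP(\lam-k\varpi_2)$ with $\stn(T)$ obtained from it by $k$ steps of $\Psi$. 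The natural first move is to distinguish the two cases $d=0$ and $d>0$ according to the statement.

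First I would handle $d=0$. In this case, as long as we stay in the $d=0$ branch, each backward step of $\Psi$ sends $(a,b,c)\mapsto(a,b-1,c-1)$ and leaves $d=0$; we can keep going as long as $c$ stays $\ge 0$ and the Littelmann/preatom inequalities of \Cref{Littelmannineq} and \Cref{preatominequalities} for the shrinking weight $\lam-k\varpi_2$ remain satisfied. Reading off \Cref{Littelmannineq} for the source $(a_0,b_0,c_0,0)=(a,b-k,c-k,0)$ and weight $(\lam_1,\lam_2-k)$, the binding constraints are $c-k\ge 0$ and $b-k\le \lam_1-0+2(c-k)$, i.e. $k\le c$ and $k\le \lam_1+2c-b$ (one checks the remaining inequalities $b\ge c\ge d$, $c\le \lam_2-k+d$ and $a\le \lam_2-k+d-2c_0+b_0 = \lam_2-k-c+b$ are implied or non-binding, using $\wt(T)_1\le 0$ which forces $a$ small relative to $b$; this is the routine part). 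Hence $\at(T)=\min(c,\lam_1+2c-b)$ in the $d=0$ case, matching the claim. Crucially one must also verify that staying in the $d=0$ branch throughout is forced — i.e. that once $d_0=0$ we never want to instead run the ``otherwise'' branch — which follows because the ``otherwise'' branch has $d_0>0$ and decreasing $d$ back to $0$ would require a $d\in\{0,\lam_1\}$ step, but $d_0>0$ and $d_0\ne\lam_1$ is exactly the regime where $b_0=\lam_1-2d_0+2c_0$ is forced, giving a different string; a short bookkeeping argument closes this.

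Next I would treat $d>0$. Here $\wt(T)_1\le 0$ together with $T\in\calP(\lam)$ gives $b=\lam_1-2d+2c$ by \Cref{preatominequalities} (since $d\ne 0$; and $d=\lam_1$ would force $\wt(T)_1\ge 0$ unless equality, a boundary case to check separately). Running $\Psi$ backward from $(a,b,c,d)$ in the ``otherwise'' branch sends $(c,d)\mapsto(c-1,d-1)$ and fixes $(a,b)$, and we may continue while $d-k\ge 1$ and the preatom/Littelmann inequalities for $(a,b,c-k,d-k)$ and $(\lam_1,\lam_2-k)$ hold; the binding ones turn out to be $d-k\ge 1$ (so $k\le d-1$), $c-k\le(\lam_2-k)+(d-k)$ i.e. $k\le \lam_2+d-c$, and finally — when $d-k$ first hits $1$ or the string reaches the $d=0$ boundary — a switch into the $d=0$ branch may allow a few more steps, contributing the constant $\lam_1+2c-2d-b$ (using $b=\lam_1-2d+2c$ this is $0$... so more carefully, the formula $\lam_1+2c-2d-b+\min(\lam_2+d-c,d-1)$ must be re-derived without assuming $\wt(T)_1\le 0$ forces $b=\lam_1-2d+2c$; it is the general $\calP(\lam)$ element that matters). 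I expect the main obstacle to be exactly this interface: carefully determining, when one backward branch ``runs out'', whether and how one transitions to the other branch, and confirming that the two-phase recursion produces precisely $\min(\lam_2+d-c,\,d-1)$ extra steps on top of the $\lam_1+2c-2d-b$ accounted for by the phase transition. This is a finite piecewise-linear optimization over the recursion tree, and the cleanest way to organize it is probably to observe that $\at$ is characterized by $T\in\bPsi^{\at(T)}(\calP(\lam-\at(T)\varpi_2))\setminus\bPsi^{\at(T)+1}(\ldots)$ and then directly solve, using \Cref{preatominequalities}, for the largest $k$ admitting a preimage — reducing the whole argument to checking a system of linear inequalities in $a,b,c,d,\lam_1,\lam_2,k$, exactly as was done for \Cref{atomicnumber} (and, if needed, one can again defer the case analysis to a SageMath verification as in \Cref{appendix}).
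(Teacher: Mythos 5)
Your overall strategy is the same as the paper's: characterize $\at(T)$ as the largest $k$ for which iterating $\Psi^{-1}$ on $\stn(T)$ produces a legal element of $\calP(\lam-k\varpi_2)$, and then solve the resulting piecewise-linear feasibility problem using \Cref{preatominequalities} and \Cref{Littelmannineq}. The $d=0$ branch of your argument is essentially the paper's. However, there is a genuine gap in the $d>0$ case.

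You claim that $d=\lam_1$ together with $\wt(T)_1\le 0$ can only happen as a boundary case ($\wt(T)_1=0$), and on this basis you conclude that for all $d>0$ one has $b=\lam_1-2d+2c$, which makes $\lam_1+2c-2d-b=0$ and reduces the formula to $\min(\lam_2+d-c,d-1)$. This is false: take $\lam=(2,1)$ and $\stn(T)=(0,3,3,2)$. Then $T\in\calP(\lam)$ because $d=\lam_1=2$, all Littelmann inequalities hold, yet $\wt(T)_1=\lam_1-2(b+d)+2(a+c)=2-10+6=-2<0$, and $b=3\neq \lam_1-2d+2c=4$, so $\lam_1+2c-2d-b=1\neq 0$. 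Thus $d=\lam_1$ with $b<\lam_1-2d+2c$ and $\wt(T)_1<0$ is a genuine generic regime, not a degenerate boundary, and your argument does not cover it. Moreover, even granting the case split, your picture of the two-phase iteration is not quite right: for $d=\lam_1$ the backward iteration first applies $\psi_2^{-1}\colon (a,b,c,d)\mapsto (a,b-1,c-1,d)$, keeping $d=\lam_1$ fixed, for exactly $\lam_1+2c-b-2d$ steps (this is where the constant in the formula actually comes from), and only then switches to $\psi_1^{-1}\colon (a,b,c,d)\mapsto(a,b,c-1,d-1)$ for $\min(\lam_2+d-c,d-1)$ further steps. It is not a matter of $d$ running down to the $d=0$ branch, as you describe. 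Fixing the proof requires doing this two-phase analysis for $d=\lam_1$ explicitly (this is precisely what the paper does via the decomposition $\stn(T)=\psi_2^{\at_1}\psi_1^{\at_2}(\stn(U))$); after that, the $0<d<\lam_1$ subcase does follow from the observation that $b=\lam_1-2d+2c$ forces the phase-one contribution to vanish, as you note.
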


\begin{proof}
Notice that since $\wt(T)_1\leq 0$ we have $\Psi=\bPsi$.

First recall that by \Cref{Littelmannineq}, we have $0 \leq d \leq \lambda_{1}$. If $d = 0$, $\at(T)$ is the maximal amount we can substract simultaneously from $b$ and $c$, decreasing at the same time the value of $\lambda_{2}$ by the same amount, so that the inequalities and equalities mentioned in Corollary \ref{preatominequalities} still hold. Since $b \geq c$, we can focus only on $c$ and the inequality $\lambda_{1}+2c-b \geq 0$, which is the only other inequality describing $\mathcal{P}(\lambda)$ which is affected after reducing $b,c$ and $\lambda_{2}$ in equal amounts. Now if we decrease $b$ and $c$ simultaneously by the same amount, the quantity $\lambda_{1}+2c-b$ decreases by the same amount. Therefore, in this case $\at(T)= \min\left(c,\lambda_{1}+2c-b \right)$ as desired. 

Assume now  $d =\lam_1$. Recall that we need to find the maximal  $\at(T)$ such that the map $\Psi^{\at(T)}(U) = T$ for an element $U \in \mathcal{P}(\lambda-\at(T)\varpi_2)$. Recall that the definition of the map $\Psi$ depends on the value of $d$. Let $\psi_1$ and  $\psi_2$ be the two possible actions on adapted strings defined by $\Psi$, corresponding to the cases $0< d< \lambda_1$ and $d = 0,\lambda_1$ respectively (i.e. we have $\psi_1,\psi_2:\bbZ^4\raw \bbZ^4$ with $\psi_1(a,b,c,d)=(a,b,c+1,d+1)$ and $\psi_2(a,b,c,d)=(a,b+1,c+1,d)$). The definitions imply that we must have 
\[ \str_2(T) = \psi_{2}^{\at_2(T)}\psi_{1}^{\at_1(T)}(\str_2(U))\]
for some $\at_1(T),\at_2(T)\in \bbN$ with $\at_1(T)+\at_2(T)=\at(T)$.

Now, to calculate $\at_1(T)$ we first need to subtract the largest possible amount from $b$, $c$ and $\lambda_{2}$ such that our inequalities and equalities stated in Corollary \ref{preatominequalities} will still hold. Analogously to the case  $d=0$ we can conclude that this number is $\at_{1}(T) = \min(c,\lambda_{1}+2c-b -2d )$. 
In this case the inequality $0 \leq \lambda_{1} + 2c-2d-b$ becomes $0 \leq 2c -\lambda_{1} -b \leq c $ since $c \leq b$. Therefore $\at_{1}(T) = \lambda_{1}+2c-b -2d$. To compute $\at_2(T)$ in this case, after already reducing $b,c$ and $\lambda_{2}$ by $\at_{1}(T)$ we need to further reduce $c' = c - \at_{1}(T)$ as well as $d$ and $\lambda_{2}' = \lambda_{2} - \at_{1}(T)$ by the maximal possible amount strictly smaller than $d$ such that the preatom inequalities/equalities will still hold. This amount is 
\[\at_{2}(T) =  \min \left(\lambda_{2}' +d - c', d-1 \right) = \min \left(\lambda_{2}+d-c, d-1 \right)\]

\noindent
since the inequality $\lambda_{2}+d - c'\geq 0$ is the only preatom inequality affected by decreasing $c,d$ and $\lambda_{2}$ simultaneously by the same amount. Moreover, it decreases precisely by this amount. 

Finally,  assume $0<d < \lambda_1$. As in the discussion above we have 
\[\str_2(T) = \psi_{2}^{\at_2(T)}(\str_2(U)),\]
and thus $\at(T)=\at_2(T)$. Moreover, if $0<d<\lam_1$ we have $b=\lam_1-2c+2d$ so we can also write $\at(T)=\lam_1+2c-2d-b+\at_2(T)=\lam_1+2c-2d-b+\at_2(T)+\min(\lam_2+d-c,d-1)$.
\end{proof}

\begin{corollary}\label{atom0}
Let $U\in \calP(\lambda)\subset \calB(\lam)$ with $\stn(U)=(a,b,c,d)$. Then $U \not \in \Psi(\calP(\lambda-\varpi_2))$ if and only if one of the following two conditions holds:
\begin{itemize}
    \item $b=\lam_1-2d+2c$ and ($d\leq 1$ or $c=\lam_2+d$)
    ;
\item  $b<\lam_1-2d+2c$ and $c=d=0$.
\end{itemize}

\end{corollary}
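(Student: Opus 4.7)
The plan is to unfold the definition of $\Psi$ from \Cref{defPsi} and determine, branch by branch, when a preimage of $U$ inside $\calP(\lambda-\varpi_2)$ can exist. Since $\Psi$ acts by one of two formulas---either $\stn(\Psi(T))=(a(T),b(T)+1,c(T)+1,d(T))$ when $d(T)\in\{0,\lambda_1\}$, or $\stn(\Psi(T))=(a(T),b(T),c(T)+1,d(T)+1)$ when $0<d(T)<\lambda_1$---a preimage $T$ of $U$ must have string either $(a,b-1,c-1,d)$ (``branch I'') or $(a,b,c-1,d-1)$ (``branch II''). Existence of such $T$ in $\calP(\lambda-\varpi_2)$ is then controlled by the explicit inequalities of \Cref{preatominequalities} applied with $\lambda_2$ replaced by $\lambda_2-1$.

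I would split the argument into three cases based on the value of $d=d(U)$. In the case $d=0$, only branch I is available (with $d(T)=0$), and a direct comparison of the inequalities for $(a,b-1,c-1,0)$ against those for $U$ shows that only two conditions become strictly stronger: $c\geq 1$ and $b\leq \lambda_1+2c-1$. Failure of at least one of these is precisely $c=0$ or $b=\lambda_1-2d+2c$, which matches exactly the disjunction of the two bullets of the corollary restricted to $d=0$. In the case $0<d<\lambda_1$, membership of $U$ in $\calP(\lambda)$ forces $b=\lambda_1-2d+2c$; branch I is unavailable since $d\notin\{0,\lambda_1\}$, and for branch II the preatom inequalities on $(a,b,c-1,d-1)$ reduce to $d\geq 2$ and $c<\lambda_2+d$, whose negation is exactly the first bullet.

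The slightly subtler case is $d=\lambda_1\geq 1$, where both branches may a priori contribute. Branch I with $d(T)=\lambda_1$ produces a valid $T$ precisely when $b<\lambda_1-2d+2c$; the constraint $c>d$ needed to satisfy $c-1\geq d$ is then automatic from $b\geq c$ and $c>(b+\lambda_1)/2$. Branch II, available only when $\lambda_1\geq 2$ (so that $d(T)=\lambda_1-1\geq 1$), requires $b=\lambda_1-2d+2c$ together with $c<\lambda_2+d$. Splitting on whether $b$ meets its upper bound $\lambda_1-2d+2c$ and intersecting the failure conditions of both branches yields once more exactly the first bullet of the corollary. The three cases exhaust $\calP(\lambda)$ (since $0\leq d\leq \lambda_1$), and their characterizations assemble to the stated equivalence.

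The main obstacle I foresee is the bookkeeping in the case $d=\lambda_1$: one must rule out both branches simultaneously, and the argument relies on the observation that $b<\lambda_1-2d+2c$ together with $b\geq c$ forces $c>d$, so branch I is never obstructed by the ordering inequalities. Everything else is a careful but essentially mechanical translation of \Cref{preatominequalities} under the shift $\lambda\mapsto\lambda-\varpi_2$.
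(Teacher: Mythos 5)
Your proof is correct and takes a genuinely different route from the paper's. The paper deduces the corollary from the closed formula for the atomic number in \Cref{atomformula}, via the identification $U\notin\Psi(\calP(\lambda-\varpi_2))\iff\at(U)=0$, and then reads off case by case when that formula vanishes. You instead bypass \Cref{atomformula} entirely: you unfold \Cref{defPsi} to obtain the two candidate string parameters of a preimage (one per branch of $\Psi$) and test each directly against the description of $\calP(\lambda-\varpi_2)$ in \Cref{preatominequalities}. The underlying bookkeeping is the same in spirit as the computations inside the proof of \Cref{atomformula}, but your version is more self-contained and elementary, needing only the definition of $\Psi$ and the preatom inequalities. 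A small side benefit: since you argue directly about $\Psi$ rather than through $\at$ (which is defined via $\bPsi$), your argument applies to all $U$ without restriction on the sign of $\wt(U)_1$, whereas the paper's reduction to $\at(U)=0$ tacitly uses the hypothesis $\wt(U)_1\leq 0$ of \Cref{atomformula}. All three of your cases ($d=0$, $0<d<\lambda_1$, $d=\lambda_1$) check out, including the key observation in the $d=\lambda_1$ case that $b<\lambda_1-2d+2c$ together with $b\geq c$ already forces $c>d$, so branch I is obstructed only by the preatom upper bound on $b$.
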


\begin{proof}
We know that $U \notin \Psi(\mathcal{P}(\lambda-\varpi_2)) \iff \operatorname{at}(U) = 0$. First assume $\at(U) = 0$. If $b = \lambda_1 - 2d +2c$ then from \Cref{atomformula} we see that either $d \leq 1$ or if $d>1$, we must have $\min(\lambda_2 +d-c, d-1) = 0$. Since $d >1$ this implies that $\lambda_2 +d - c = 0$. If $b < \lambda_1 - 2d +2c$ then by \Cref{atomformula} $d>0$ is impossible, so $d = 0$ necessarily. Moreover, since $\at(U) = 0$ we must have $\operatorname{min}(c, \lambda_1 + 2c-b)$, but since the second term is strictly larger than zero by assumption, we conclude $c = 0$. Conversely, if $b = \lambda_1 - 2d +2c$ and $d \leq 1$, it follows directly from \Cref{atomformula} that $\at(U) = 0$. If $c = \lambda_2 + d$ and $d>1$ then $\at(U) = 0$ also by \Cref{atomformula}. Now, if $b < \lambda_1 - 2d +2c$ and $c = d = 0$ then $\at(U) = 0$ applying the first formula in \Cref{atomformula}. 
\end{proof}

\subsection{Example: The atomic decomposition \texorpdfstring{of $\calB(k\varpi_2)$}{of B(kw2)}}
\label{sec:lam1=0}
 Let $B_{k} := \calB(k \varpi_{2})$. By definition $B_k$ consists of a single preatom. We describe now the atomic decomposition of $B_k$.
 Since $\lam_1=0$ we have $\str_2(T)=(a,b,c,0)$ for any $T\in B_k$. By \Cref{phi1Psi}, we see that $\phi_1(\Psi(T))=\phi_1(T)$ for any $T\in B_k$, hence $\Psi$ commutes with $s_1$ and we have $\Psi=\bPsi$. Then by \Cref{lemmaonPsi}.3, we see that $\Psi$ also commutes with $f_2$.
 
 Here we refer to the connected components under $W, f_2$ simply as \textit{connected components} (cf. \Cref{remarkPreatomic}). Notice that $\Psi$ preserves these connected components.
 We claim that the crystal $B_{k}$ has precisely $k+1$ connected components
\
\[
    B_{k} = \bigsqcup_{i = 0}^{k} B_{k}[i].
\]
and that $\Psi(B_{k-1}[i])=B_k[i]$.
In particular, it follows that $\calA(k\varpi_2)=B_k\setminus \Psi^2(B_{k-1})=B_k[k]\sqcup B_k[k-1]$.
\noindent

 The crystal $B_0$ consists of a single element, the empty tableau, so the claim is trivial. In $B_1$ there are two connected components. In fact, it is easy to see that 
\[B_{1}[0] = \left\{\Skew(0:\hbox{\tiny{$2$}}|0:\hbox{\tiny{$\bar 2$}}) \right\}\]

\noindent is fixed under the action of $f_{2}$ and $s_{1}$, and that its complement in $B_{1}$ is a connected component of cardinality $4$.

The weights of the elements in $\calA(k\varpi_2)$ form two square grid of side $k$ and $k+1$ as shown in \Cref{figurewts}, so $|\calA(k\varpi_2)|=(k+1)^2+k^2$. From this, it follows that $|B_k|-|B_{k-1}|=(k+1)^2$.

By induction, to show our claim it is enough to show that the complement of $\Psi(B_{k-1})$ in $B_{k}$ is a single connected component of cardinality $(k+1)^2$.


The complement of $\Psi$ always contains the highest weight vector $T_k\in B_k$. Then, for $0\leq r\leq k$, the tableaux 
\[ f^{r}_{2}(T_{k})=
\Skew(0:\hbox{\tiny{1}},\hbox{\tiny{$\cdots $}}, \hbox{\tiny{1}}, \hbox{\tiny{1}}, \hbox{\tiny{$\cdots$}}, \hbox{\tiny{1}}|0:\hbox{\tiny{2}},\hbox{\tiny{$\cdots$}},\hbox{\tiny{2}},\hbox{\tiny{$\bar 2$}},\hbox{\tiny{$\cdots$}},\hbox{\tiny{$\bar 2$}} ), \]
in which there are $r$ column of the  form $\Skew(0:\hbox{\tiny{1}}|0:\hbox{\tiny{$\bar2$}})$,  are also in the same connected component as $T_k$. 
 We obtain $s_1(f_2^r(T))$ from $f_2^r(T)$ by replacing the columns of the form $\Skew(0:\hbox{\tiny{1}}|0:\hbox{\tiny{$\bar2$}})$ by columns of the form $\Skew(0:\hbox{\tiny{2}}|0:\hbox{\tiny{$\bar1$}})$.
The  tableaux $s_1(f_2^r(T))$ are the highest element in their $f_2$-string, and there are $k+1$ elements in their $f_2$-orbit,
 given by barring some of the $2$'s. So we have seen that
 there are at least $(k+1)^2$ elements in the connected components of $T_k$. Since $\Psi$ is an embedding and $|B_k|-|B_{k-1}|=(k+1)^2$, these are precisely all the elements in the complement of $\Psi$.



 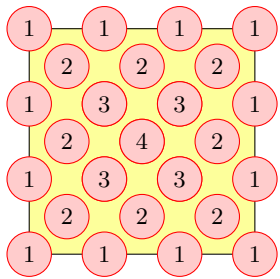
\begin{figure}
\begin{center}
 \begin{tikzpicture}[scale=0.5]
\draw[style=black] (3,3) to (-3,3) to (-3,-3) to (3,-3) to cycle;
\foreach \x in {0,1,2,3}{
    \foreach \y in {0,1,2,3}{
        \node[style=sred] at (3-2*\x,3-2*\y) {\small 1}; 
}}
\foreach \x in {0,1,2}{
    \foreach \y in {0,1,2}{
        \node[style=sred] at (2-2*\x,2-2*\y) {\small 2}; 
}}	
\foreach \x in {0,1}{
    \foreach \y in {0,1}{
        \node[style=sred] at (1-2*\x,1-2*\y) {\small 3}; 
}}	
\foreach \x in {0}{
    \foreach \y in {0}{
        \node[style=sred] at (0-2*\x,0-2*\y) {\small 4}; 
}}
 \end{tikzpicture}
 \end{center}
  \caption{The weight multiplicities of the crystal $B_3$.}
  \label{figurewts}
\end{figure}
\section{Swappable edges and their classification}

\subsection{Twisted Bruhat graphs}

The Bruhat order on the weight lattice $X$ is the order generated by the following relations

\begin{equation}\label{bruhatorder}s_\alpha^\vee(\lambda)< \lambda \iff \begin{cases}\langle \lambda,\beta^\vee\rangle > M &\text{if }M\geq 0,\\
\langle \lambda,\beta^\vee\rangle <  M&\text{if }M<0.\end{cases}\end{equation}
where $\alpha^\vee=M\delta+\beta^\vee$, with $\beta^\vee\in \Phi_+^\vee$ and $\lambda\in X$. The set of elements smaller that $\lambda$ in the Bruhat order, which we denote by $\{\leq \lambda\}$, can be characterized as
\begin{equation}\label{smallerthanlam}
    \{\leq \lambda\} =\Conv(W\cdot \lambda)\cap (\lambda+\bbZ\Phi)
\end{equation}
(see for example \cite[Chap. VIII, §7, exerc. 1]{Bou78}).

Let $\lambda\in X_+$. Let $\Gamma_\lambda$ denote the moment graph of the Schubert variety $\sch{\lambda}$. This is a directed labeled graph, also called the \emph{Bruhat graph} of $\lambda$. We recall  from \cite[\S 2.3]{Pat} the explicit description of $\Gamma_\lambda$.
The vertices of the graph $\Gamma_\lambda$ are all the weights in $\{\leq \lam\}$. We have an edge $\mu_1\raw \mu_2$ in $\Gamma_\lambda$ if and only if $\mu_2-\mu_1$ is a multiple of a root $\beta\in \Phi$ and $\mu_1\leq \mu_2$. In this case, the label of the edge $\mu_1\raw \mu_2$ is $m\delta-\beta^\vee$, where
\[m =-\frac{\langle \beta^\vee,\mu_1+\mu_2\rangle}{2}\] (cf. \cite[Lemma 2.6]{Pat}).  Notice that $s_{m\delta-\beta^\vee}(\mu_1)=\mu_2$. We denote by $E(\lambda)$ the set of edges in $\Gamma_\lam$.

Let $\Gamma_X$ denote the union of all the graphs $\Gamma_{\lambda}$, for $\lambda\in X_+$ (where $\Gamma_{\lambda}$ is regarded as a subgraph of $\Gamma_{\lambda'}$ if $\lambda\leq \lambda'$) and call it the Bruhat graph of $X$.

For $w\in \affW$ we denote by \[N(w):=\{\alpha \in \affPhi^\vee_+\mid w^{-1}(\alpha)\in \affPhi^\vee_-\}\] the set of inversions.
If $w=s_{i_1}\ldots s_{i_k}$ is a reduced expression for $w$ then \[N(w)=\{\alpha^\vee_{i_1},s_{i_1}(\alpha^\vee_{i_2}),\ldots,s_{i_1}s_{i_2}\ldots s_{i_{k-1}}(\alpha^\vee_{i_k})\}.\] 

We say that $w=s_1s_2\ldots s_k\ldots$ is a reduced infinite expression if for any $j$  the starting expression $w_j:=s_1s_2\ldots s_j$ is reduced.
If $w$ is a reduced infinite expression, let $N(w)=\bigcup_{j=1}^\infty N(w_j)$.

Consider $\undc = s_0s_2s_1s_2$. Then $y_\infty:=\undc \undc \undc\ldots$ is an infinite reduced expression.
Let $y_m$ be the element given by the first $m$ simple reflections in $y_{\infty}$.
We order the roots in $N(y_\infty)$ as follows:
\begin{multline}\label{reflectionorder}
\delta-\alpha_{21}^\vee<\delta-\alpha_{12}^\vee<2\delta-\alpha_{21}^\vee<  \delta -\alpha_{2}^\vee< 3\delta-\alpha_{21}^\vee<2\delta-\alpha_{12}^\vee< \\
\ldots<M\delta-\alpha_{12}^\vee<2M\delta-\alpha_{21}^\vee<M\delta-\alpha_{2}^\vee<(2M+1)\delta-\alpha_{21}^\vee<\ldots    
\end{multline}
so that the first $m$ roots in \eqref{reflectionorder} are precisely the elements of $N(y_m)$.


	We define the \emph{$m$-twisted Bruhat order} $\leq_m$ of $\extW$ by setting 
	\[v\leq_m w\text{ if and only if }y_m^{-1}v\leq y_m^{-1}w,\]
	and the $m$-twisted length by $\ell_m(v):=\ell(y_m^{-1}v)$. 
Recall that $X\cong \extW/W$. Hence, the twisted Bruhat order on $\extW$ also induces a twisted Bruhat order on $X$. Concretely, this means that we regard $\lambda\in X$ as a right coset in $\extW$ and denote by $\lambda_m\in \extW$ the element of minimal $y_m$-twisted length in the coset $\lambda$. Then we set $\ell_m(\lambda):=\ell_m(\lambda_m)$ and $\mu\leq_m \lambda$ if $\lambda_m\leq_m \mu_m$.

	For every $m\in \bbZ_{\geq 0}$ we define $\Gamma_\lambda^m$, the $y_m$\emph{-twisted Bruhat graph of} $\lambda$, to be the directed labeled graph with the same vertices of $\Gamma_{\lambda}$ and where there is an edge $\mu\ra \lambda$ if there exists $\alpha^\vee\in \affPhi^\vee$ such that $s_{\alpha^\vee}(\mu)=\lambda$ and $\mu<_m \lambda$. Concretely, we can obtain $\Gamma_\lambda^m$ from $\Gamma_{\lambda}$ by inverting the orientation of all the arrows in $\Gamma_\lambda$ with label in $N(y_m)$.
	
	Since each graph $\Gamma_{\lambda}$ has only a finite number of edges, the twisted graphs $\Gamma_{\lambda}^m$ stabilize for $m$ big enough, so we can define $\Gamma_{\lambda}^\infty:=\Gamma_{\lambda}^m$ for $m\gg 0$. 

For $m\in \bbZ_{\geq 0} \cup \{\infty\}$, we define $\Gamma^m_X$ as the union of all the graphs $\Gamma^m_\lambda$, for $\lambda\in X_+$. The graph $\Gamma^m_X$ can be obtained from $\Gamma_X$ by inverting the orientation of all the arrows with label in $N(y_m)$.

\begin{definition} 
\label{twistedarrowslambdamu}
	For $\mu\leq \lambda$, we denote by $\Arr_m(\mu,\lam)$ the set of arrows pointing to $\mu$ in $\Gamma_m^\lambda$ and by $\ell_m(\mu,\lambda):=|\Arr_m(\mu,\lam)|$ the number of those arrows. 
	
	For $i \in \{1,2,21,12\}$ let $\Arr_m^{i}(\mu,\lam)$ be arrows pointing to $\mu$ in $\Gamma_m^\lambda$ of the form $\mu-k\alpha_i\raw \mu$ for $k\in \bbZ$. Let $\ell_m^i(\mu,\lam)=|\Arr_m^{i}(\mu,\lam)|$.
	
	Let $\Arr_\mu(\mu)$ be the set of arrows pointing to $\mu$ in $\Gamma_X^m$. For $i\in \{1,2,21,12\}$, the set $\Arr_m^i(\mu)$ is defined accordingly.
\end{definition}

Recall from \cite[Lemma 4.6]{Pat} that $|\Arr_m(\mu)|=\ell_m(\mu)$. We have 
\begin{equation}\label{ellsubdivided} \Arr_m(\mu,\lambda)=\bigcup_{i\in \{1,2,12,21\}}\Arr_m^i(\mu,\lambda)\quad\text{and}\quad
\ell_m(\mu,\lambda)=\sum_{i\in \{1,2,12,21\}} \ell_m^i(\mu,\lam)
 \end{equation}
 for any $\mu\leq \lambda$. Notice that, since there are no arrows in $N(y_\infty)$ of the form $M\delta-\alpha_1^\vee$, the set $\Arr_m^1(\mu,\lambda)$ does not depend on $m$, and does not depend on $\lambda$ as long as $\mu\leq \lambda$. If $\mu \leq \lambda$, for all $m$ by \eqref{bruhatorder} we have 
 \begin{align*}
 \Arr_m^1(\mu,\lambda)&=\{\mu - k\alpha_1\ra \mu \mid \mu-k\alpha_1\leq \mu\}\\
 &=\begin{cases}
 \{\mu - k\alpha_1\ra \mu \mid 0<k\leq \mu_1\}&\text{if }\mu_1\geq 0\\
 \{\mu - k\alpha_1\ra \mu \mid 0>k>\ \mu_1\}&\text{if }\mu_1< 0.
 \end{cases}
 \end{align*}
Hence, we have  \begin{equation}\label{ell1}\ell_m^1(\mu,\lam)=\begin{cases} \mu_1 &\text{if }\mu_1\geq 0\\
-\mu_1-1& \text{if }\mu_1<0.\end{cases}\end{equation}

\subsection{Swappable edges}

To pass from $\Gamma^m_\lambda$ to $\Gamma^{m+1}_\lambda$ (and from $\Gamma^m_X$ to $\Gamma^{m+1}_X)$ we need to invert the arrows with label $\alpha^\vee_{t_{m+1}}$, where $t_{m+1}$ is the reflection \begin{equation}\label{tm}t_{m+1}:=y_{m+1}y_m^{-1} =y_ms'_{m+1} y_m^{-1}.
\end{equation}
Here $s'_{m+1}$ denotes the $(m+1)$-th simple reflection in $y_\infty$.
Notice that $\{\alpha^\vee_{t_{m+1}}\}= N(y_{m+1})\setminus N(y_m)$.

If $\mu < t_{m+1}\mu$, then $\Arr_{m+1}(t_{m+1}\mu)\setminus \Arr_m(t_{m+1}\mu)=\{\mu\raw t_{m+1} \mu\}$ and $\Arr_{m}(\mu)$ is in bijection with $\Arr_{m}(t_{m+1}\mu)\setminus \{\mu\raw t_{m+1}\mu\}$ by \cite[Lemma 4.8]{Pat}. In particular, we have 
\begin{equation}\label{lmX}
\ell_m(\mu)=\ell_m(t_{m+1}\mu)-1.
\end{equation}

A remarkable property of the twisted Bruhat graphs in type $A$ (\cite[Prop. 4.14]{Pat}) is that the same is true if we restrict to $\Gamma_\lambda$, i.e. $\ell_m(\mu,\lambda)=\ell_m(t_{m+1}\mu,\lambda)-1$ if $\mu <t_{m+1}\mu \leq \lambda$. This implies that
$\ell_{m+1}(\mu,\lambda)=\ell_m(t_{m+1}\mu,\lambda)$ and $\ell_m(\mu,\lam)=\ell_{m+1}(t_{m+1}\mu,\lam)$. However, as we will see in \Cref{exampleswap}, this property does not hold in type $C_2$. The goal of this section is to classify the set of edges for which it holds.

\begin{definition}
\label{def:swappableedge}
	We say that an edge $\mu \raw t_{m+1}\mu$ in $\Gamma_{\lambda}$ is \emph{swappable} if 
 \begin{equation}\label{eqswapdef}
 \ell_m(\mu,\lambda)= \ell_m(t_{m+1}\mu,\lambda)-1.
 \end{equation}
 We also say that an edge is \emph{NS} if it is not swappable.
	We denote by $E^S(\lambda)$ and $E^N(\lambda)$ the sets of swappable and non-swappable edges in $\Gamma_\lambda$, respectively. 
\end{definition}

As it turns out, to determine if an edge is swappable or not, we have to solve an elementary geometric problem,  as the next example illustrates.


\begin{example}\label{exampleswap}
In the \Cref{figswapp,fignonswapp} the starting points of the arrows in $\ell_m(\mu,\lam)$ are denoted by red circles while the starting points of the arrows in $\ell_m(t_{m+1}\mu,\lam)$ are denoted by blue squares. 

Assume that $\lambda=(2,2)$, $\mu=(2,-1)$ and that $m+1=8$, i.e. that $t:=t_{m+1}$ is the reflection corresponding to the root $2\delta-\alpha_2^\vee$.
In \Cref{figswapp}, the yellow octagon is the convex hull of $W\cdot \lambda$ while the green octagon is (the border of) the convex hull of $y_mWy_m^{-1}\cdot \mu$. As we will observe in \Cref{sec:twistedreflection}, the arrows in $\Arr_m(\mu,\lam)$ and $\Arr_m(t_{m+1}\mu,\lam)$ can be characterized as the weights in the diagonal of the green octagon which lie inside the yellow octagon. In this case we see that there are 
are $7$ red dots and $8$ blue squares, meaning that  the edge $\mu\raw t\mu$ is swappable.

Now assume that $\lam=(2,2)$, $\mu=(4,-2)$ and $m+1=12$, i.e. that $t:=t_{m+1}=s_{3\delta-\alpha_2^\vee}$. As illustrated in \Cref{fignonswapp}, we have $9$ red dots and $9$ blue squares, so in this case the edge $\mu\raw t\mu$ is not swappable.

\begin{figure}
\centering
\begin{minipage}{.5\textwidth}
  \centering
	\begin{tikzpicture}[scale=0.5]
 \node [style=none] (0) at (-2, 4) {};
	\node [style=none] (3) at (2, 4) {$\lambda$};
	\node [style=none] (4) at (-4, 2) {};
	\node [style=none] (5) at (4, 2) {};
	\node [style=none] (6) at (4, -2) {};
	\node [style=none] (7) at (2, -4) {};
	\node [style=none] (8) at (-4, -2) {};
	\node [style=none] (9) at (-2, -4) {};
	\node [style=none] (10) at (-1, 1) {};
	\node [style=none] (11) at (-3, 1) {};
	\node [style=none] (12) at (1, -1) {};
	\node [style=none] (13) at (1, -3) {};
	\node [style=none] (14) at (-1, -5) {};
	\node [style=none] (15) at (-3, -5) {};
	\node [style=none] (17) at (-5, -3) {};
	\node [style=none] (18) at (-5, -1) {};
 	\draw [style=black] (5.center) 	to (3.center) to (0.center) to (4.center) to (8.center) to (9.center) to (7.center) to (6.center) to cycle;
	\draw [style=green] (11.center) to (10.center);
	\draw [style=green] (10.center) to (12.center);
	\draw [style=green] (12.center) to (13.center);
	\draw [style=green] (13.center) to (14.center);
	\draw [style=green] (14.center) to (15.center);
	\draw [style=green] (15.center) to (17.center);
	\draw [style=green] (17.center) to (18.center);
	\draw [style=green] (18.center) to (11.center);
	\node [style=blue] (35) at (-1, 1) {};

	\node [style=none] (19) at (-1, 1) {$\mu$};
	\node [style=red] (20) at (-1, -1) {};
	\node [style=red] (21) at (-1, -3) {};
	\node [style=red] (22) at (-2, 0) {};
	\node [style=red] (23) at (-3, -1) {};
	\node [style=red] (24) at (-4, -2) {};
	\node [style=red] (25) at (0, 0) {};
	\node [style=red] (26) at (1, -1) {};
	\node [style=none] (27) at (-3, 1) {$t\mu$};
	\node [style=red] (36) at (-2, 0) {};
	\node [style=red] (37) at (-1, -1) {};
	\node [style=red] (29) at (-3, -1) {};
	\node [style=blue] (28) at (-3, -1) {};
	\node [style=blue] (30) at (-2, 0) {};
	\node [style=blue] (31) at (-1, -1) {};
	\node [style=blue] (32) at (0, -2) {};
	\node [style=blue] (33) at (1, -3) {};
	\node [style=blue] (34) at (-3, -3) {};
	\node [style=blue] (35) at (-4, 0) {};
    \node at (0,-8) {};
    \draw[green, very thick, ->] (19) to (27);
	\end{tikzpicture}
 \captionof{figure}{A swappable edge}
  \label{figswapp}
\end{minipage}%
\begin{minipage}{.5\textwidth}
  \centering
  \begin{tikzpicture}[scale=0.5]
		\begin{pgfonlayer}{nodelayer}
			\node [style=blue] (40) at (-2,2) {};
			\node [style=none] (0) at (-2, 4) {};
			\node [style=none] (3) at (2, 4) {$\lambda$};
			\node [style=none] (4) at (-4, 2) {};
			\node [style=none] (5) at (4, 2) {};
			\node [style=none] (6) at (4, -2) {};
			\node [style=none] (7) at (2, -4) {};
			\node [style=none] (8) at (-4, -2) {};
			\node [style=none] (9) at (-2, -4) {};
			\node [style=none] (10) at (-2, 2) {};
			\node [style=none] (11) at (-4, 2) {};
			\node [style=none] (12) at (2, -2) {};
			\node [style=none] (13) at (2, -4) {};
			\node [style=none] (14) at (-2, -8) {};
			\node [style=none] (15) at (-4, -8) {};
			\node [style=none] (17) at (-8, -4) {};
			\node [style=none] (18) at (-8, -2) {};
			\node [style=none] (19) at (-2, 2) {$\mu$};
			\node [style=red] (20) at (-2, 0) {};
			\node [style=red] (21) at (-2, -2) {};
			\node [style=red] (22) at (-3, 1) {};
			\node [style=red] (23) at (-4, 0) {};
			\node [style=red] (24) at (-2, -4) {};
			\node [style=red] (25) at (-1, 1) {};
			\node [style=red] (26) at (0, 0) {};
			\node [style=none] (27) at (-4, 2) {$t\mu$};
			\node [style=red] (29) at (-4, 0) {};
			\node [style=blue] (28) at (-4, 0) {};
			\node [style=red] (36) at (-3, 1) {};
			\node [style=red] (37) at (-2, 0) {};
			\node [style=red] (38) at (1, -1) {};
			\node [style=red] (39) at (2, -2) {};
			\node [style=blue] (40) at (1, -3) {};
			\node [style=blue] (41) at (2, -4) {};
			\node [style=blue] (30) at (-3, 1) {};
			\node [style=blue] (31) at (-2, 0) {};
			\node [style=blue] (32) at (-1, -1) {};
			\node [style=blue] (33) at (0, -2) {};
			\node [style=blue] (34) at (-4, -2) {};
            \draw[green, very thick, ->] (19) to (27);

		\end{pgfonlayer}
		\begin{pgfonlayer}{edgelayer}
			\draw [style=black] (5.center)
			to (3.center)
			to [in=0, out=180] (0.center)
			to (4.center)
			to (8.center)
			to (9.center)
			to (7.center)
			to (6.center)
			to (5.center);
			\draw [style=green] (11.center) to (10.center);
			\draw [style=green] (10.center) to (12.center);
			\draw [style=green] (12.center) to (13.center);
			\draw [style=green] (13.center) to (14.center);
			\draw [style=green] (14.center) to (15.center);
			\draw [style=green] (15.center) to (17.center);
			\draw [style=green] (17.center) to (18.center);
			\draw [style=green] (18.center) to (11.center);
		\end{pgfonlayer}
  	\end{tikzpicture}
  \captionof{figure}{A non-swappable edge}
  \label{fignonswapp}
\end{minipage}
\end{figure}

\end{example}

\subsection{Geometry of atoms}

We fix $\lambda\in X_+$. Recall that $\label{leqlambda} \{\leq\lambda\}=(\lambda+\bbZ\Phi) \cap \Conv(W\cdot \lambda).$
In our situation, the convex hull $\Conv(W\cdot \mu)$ is an octagon with vertices as in \Cref{figoctagon}. We can make the actual conditions more explicit.

\begin{figure}[hbt!]
\begin{center}
\begin{tikzpicture}[scale=0.39]
\begin{pgfonlayer}{nodelayer}
\node [style=none] (0) at (3, 5) {};
\node [style=none] (1) at (5, 3) {};
\node [style=none] (2) at (5, -3) {};
\node [style=none] (3) at (-3, 5) {};
\node [style=none] (4) at (-5, 3) {};
\node [style=none] (5) at (-5, -3) {};
\node [style=none] (6) at (-3, -5) {};
\node [style=none] (7) at (3, -5) {};
\node [style=none] (8) at (2, 4.25) {};
\node [style=none] (9) at (2, 4.25) {};
\node [style=none] (10) at (2, 4.25) {};
\node [style=none] (11) at (4, 5.75) {\small{$\lambda=(\lambda_1,\lambda_2)$}};
\node [style=none] (13) at (-10.2, 3.5) {      \small{ $s_2s_1\lambda = (\lambda_1+2\lambda_2,-\lambda_1-\lambda_2)$}};
\node [style=none] (14) at (-9.5, -3.4) {\small{$s_2s_1s_2\lambda=(\lambda_1,-\lambda_1-\lambda_2)$}};
\node [style=none] (15) at (-3, -5.75) {\small{$w_0\lambda=(-\lambda_1,-\lambda_2)$}};
\node [style=none] (16) at (7.5, -5.75) {\small{$s_1s_2s_1\lambda=(-\lambda_1-2\lambda_2,\lambda_2)$}};
\node [style=none] (17) at (10.1, -3.4) {\small{$s_1s_2\lambda=(-\lambda_1-2\lambda_2,\lambda_1+\lambda_2)$}};
\node [style=none] (18) at (9, 3.5) {\small{$s_1\lambda=(-\lambda_1,\lambda_1+\lambda_2)$}};
\node [style=none] (19) at (-3.5, 5.75) {\small{$s_2\lambda=(\lambda_1+2\lambda_2,-\lambda_2)$}};
\end{pgfonlayer}
\begin{pgfonlayer}{edgelayer}
\draw (4.center) to (5.center);
\draw (5.center) to (6.center);
\draw (6.center) to (7.center);
\draw (7.center) to (2.center);
\draw (2.center) to (1.center);
\draw (1.center) to (0.center);
\draw (0.center) to (3.center);
\draw (3.center) to (4.center);
\end{pgfonlayer}
\end{tikzpicture}
\end{center}
\caption{The $W$-orbit and the convex hull of $\lambda$}\label{figoctagon}
\end{figure}
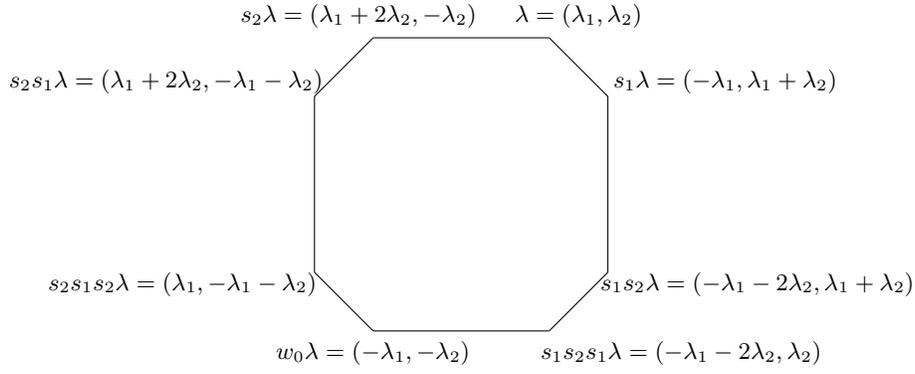

\begin{lemma}
\label{octineq}
We have $\mu\leq \lambda$ if and only if $\mu_1\equiv \lam_1 \pmod{2}$ and the following inequalities hold:
\begin{alignat*}{4}
    &-\lam_1-2\lam_2 	&&\leq \mu_1 &&=\langle \mu,\alpha_1^\vee\rangle &&\leq \lam_1+2\lam_2\\
    &-\lam_1-\lam_2 	&&\leq \mu_1+\mu_2 &&=\langle \mu,\alpha_{12}^\vee\rangle &&  \leq  \lam_1+\lam_2\\
    &-\lam_1-\lam_2	&&\leq\mu_2 &&=\langle \mu,\alpha_2^\vee\rangle&&  \leq \lam_1+\lam_2\\
    &-\lam_1-2\lam_2	&&\leq\mu_1+2\mu_2&&=\langle \mu,\alpha_{21}^\vee\rangle &&   \leq \lam_1+2\lam_2.
\end{alignat*}

\end{lemma}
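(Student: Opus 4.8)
The plan is to use the description \eqref{smallerthanlam}, $\{\leq\lambda\}=\Conv(W\cdot\lambda)\cap(\lambda+\bbZ\Phi)$, which splits the statement into two independent claims: that $\lambda+\bbZ\Phi=\{\mu\in X: \mu_1\equiv\lambda_1\pmod 2\}$, and that $\Conv(W\cdot\lambda)$ is exactly the region cut out by the eight displayed inequalities. For the lattice claim I would note that $\alpha_{12}=2\varpi_1$ and $\alpha_{21}=\varpi_2$ both lie in $\bbZ\Phi$, while $\varpi_1\notin\bbZ\Phi$ since $X/\bbZ\Phi\cong\bbZ/2\bbZ$ is generated by the class of $\varpi_1$; hence $\bbZ\Phi=\bbZ(2\varpi_1)\oplus\bbZ\varpi_2$, so $\mu\in\lambda+\bbZ\Phi$ if and only if $\mu_1\equiv\lambda_1\pmod 2$.

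For the convex hull the key observation is that each of the eight bounds in the lemma has the form $\langle\mu,\beta^\vee\rangle\le m_{\beta^\vee}$ (or $\ge -m_{\beta^\vee}$), where $\beta^\vee$ ranges over the positive coroots and $m_{\beta^\vee}=\max_{w\in W}\langle w\lambda,\beta^\vee\rangle$. A short check over the vertices of the octagon of \Cref{figoctagon} gives $m_{\beta^\vee}=\lambda_1+2\lambda_2$ for the long coroots $\alpha_1^\vee,\alpha_{21}^\vee$ and $m_{\beta^\vee}=\lambda_1+\lambda_2$ for the short coroots $\alpha_2^\vee,\alpha_{12}^\vee$; these eight hyperplanes are precisely the lines supporting the eight edges of the octagon. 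Since every $w\lambda$ satisfies $\langle w\lambda,\beta^\vee\rangle\le m_{\beta^\vee}$ and the inequalities are affine, $\Conv(W\cdot\lambda)$ is contained in the region $Q$ they define, which gives the ``only if'' direction.

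For the converse I would first check that $Q$ is $W$-stable: the eight functionals are exactly $\pm$ the positive coroots, i.e.\ all of $\Phi^\vee$, which $W$ permutes, and the bound attached to a coroot depends only on its length, hence only on its $W$-orbit; so $w$ permutes the defining inequalities. Therefore $\mu\in Q$ if and only if the dominant representative $\mu^+$ of $W\cdot\mu$ lies in $Q$. For a dominant weight $\nu$, writing $\lambda-\nu=x\alpha_1+y\alpha_2$ and solving the resulting $2\times2$ system gives $x=\langle\lambda-\nu,\alpha_{12}^\vee\rangle$ and $y=\tfrac12\langle\lambda-\nu,\alpha_{21}^\vee\rangle$, so the two inequalities $\langle\nu,\alpha_{12}^\vee\rangle\le\lambda_1+\lambda_2$ and $\langle\nu,\alpha_{21}^\vee\rangle\le\lambda_1+2\lambda_2$ (both among the eight) force $x,y\ge0$, whence $\nu\in\Conv(W\cdot\lambda)$ by the standard description of the convex hull of a $W$-orbit intersected with the dominant cone (see e.g.\ \cite{Bou78}). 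Applying this to $\nu=\mu^+$ and using $W$-invariance of $\Conv(W\cdot\lambda)$ yields $\mu\in\Conv(W\cdot\lambda)$, finishing the proof.

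The only routine computations are the finite check of the maxima $m_{\beta^\vee}$ and the $2\times2$ linear solve; the degenerate (singular $\lambda$) cases need no separate treatment since regularity of $\lambda$ is never used. The one point genuinely deserving care is the sufficiency direction: verifying $W$-stability of $Q$ and isolating which two of the eight inequalities govern membership in $\Conv(W\cdot\lambda)$ once one has reduced to the dominant chamber. An alternative to the $W$-reduction, if one prefers to stay in coordinates, is to verify directly that the eight supporting lines found above bound the octagon and invoke that a convex polygon equals the intersection of the half-planes supported on its edges.
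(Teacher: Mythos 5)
Your proof is correct and amounts to filling in the details the paper leaves implicit — the paper's entire argument is ``easy to see... easily deduced from Figure~\ref{figoctagon},'' and the strategy you take (split via \eqref{smallerthanlam} into the lattice condition and the convex hull, read off the eight supporting half-planes from the octagon's vertices, and for sufficiency reduce to the dominant chamber and use the standard dominance-order description) is exactly how one would make that sketch rigorous. One incidental remark: the paper's proof text says ``$\mu_1=\lambda_1$'' where the statement (and your argument) have the correct condition $\mu_1\equiv\lambda_1\pmod 2$; that is a typo in the paper, and your treatment of $\bbZ\Phi=\bbZ(2\varpi_1)\oplus\bbZ\varpi_2$ is the right one.
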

\begin{proof}
	 It is easy to see that $\mu \equiv \lambda \pmod{\bbZ \Phi}$ if and only if $\mu_1=\lambda_1$. The inequalities can be easily deduced from \Cref{figoctagon}
\end{proof}

We introduce now some helpful quantities which evaluate the distance of a weight $\mu$ from the walls of $\Conv(W\cdot \lambda)$.

\begin{definition}\label{affphidef}
	For $i\in \{1,2,21,12\}$, let $\affphi_i(\mu,\lambda)$ be the maximum integer $k$ such that $\mu-k\alpha_i\leq \lambda$. 
\end{definition}

\begin{lemma}\label{affphicompute}
	Let $\mu\leq \lambda$. We have
	\begin{enumerate}
		\item 
		$
		\affphi_{21}(\mu,\lambda)= \lam_2+\mu_2+\min\left(\lam_1,\frac{\lam_1+\mu_1}{2},\lam_1+\mu_1\right) $   
		\item $		\affphi_{12}(\mu,\lambda)= \frac{\lambda_1+\mu_1}{2} +\min\left(\lambda_2+\mu_2,\lfl\frac{\lambda_2+\mu_2}{2}\rfl,\lambda_2\right)$ 
		
		%
		\item $\affphi_{2}(\mu,\lambda):=\frac{\lambda_1-\mu_1}{2} +\min\left(\lambda_2+\mu_1+\mu_2,\lfl\frac{\lambda_2+\mu_1+\mu_2}{2}\rfl,\lambda_2\right).$	
	\end{enumerate}
	
\end{lemma}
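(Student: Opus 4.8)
The plan is to reduce each of the three identities to the explicit inequality description of $\{\leq\lambda\}$ from \Cref{octineq} and then to solve an elementary one-variable problem. First I would record the effect of translating by $k\alpha_i$ on coordinates: using $\alpha_{21}=\varpi_2$, $\alpha_{12}=2\varpi_1$ and $\alpha_2=-2\varpi_1+2\varpi_2$, one gets $\mu-k\alpha_{21}=(\mu_1,\mu_2-k)$, $\mu-k\alpha_{12}=(\mu_1-2k,\mu_2)$ and $\mu-k\alpha_2=(\mu_1+2k,\mu_2-2k)$. Since $\alpha_i\in\bbZ\Phi$, the weight $\mu-k\alpha_i$ lies in the same coset as $\mu$, hence as $\lambda$, so the congruence condition of \Cref{octineq} is automatic, and $\mu-k\alpha_i\leq\lambda$ is equivalent to the eight linear inequalities of \Cref{octineq} evaluated at $\mu-k\alpha_i$. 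Moreover, since $\mu\leq\lambda$ the value $k=0$ is admissible, so it suffices to determine the largest admissible $k\geq 0$.

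Next, for each $i$ I would substitute and sort the eight inequalities according to how $k$ enters them. Using $\mu\leq\lambda$, every inequality in which $k$ has coefficient $0$, or appears with a sign making it a lower bound, holds automatically for all $k\geq 0$; this leaves exactly three inequalities in each case, each of which rearranges into an upper bound on $k$. Concretely, for $\affphi_{21}$ the surviving inequalities are $\langle\mu-k\varpi_2,\beta^\vee\rangle\geq -\langle\lambda,\beta^\vee\rangle$ for $\beta^\vee\in\{\alpha_{12}^\vee,\alpha_2^\vee,\alpha_{21}^\vee\}$, which give $k\leq\lambda_1+\lambda_2+\mu_1+\mu_2$, $k\leq\lambda_1+\lambda_2+\mu_2$ and $k\leq\lambda_2+\mu_2+\tfrac{\lambda_1+\mu_1}{2}$; taking the minimum and factoring out $\lambda_2+\mu_2$ yields the first identity. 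The computations for $\affphi_{12}$ and $\affphi_2$ are completely analogous, the surviving inequalities being the ones attached to $\{\alpha_1^\vee,\alpha_{12}^\vee,\alpha_{21}^\vee\}$ and to $\{\alpha_1^\vee,\alpha_2^\vee,\alpha_{21}^\vee\}$ respectively. The one point that requires attention is integrality: $k$ must be an integer, so a surviving bound whose right-hand side is a half-integer must be replaced by its floor, and I would then use $\mu_1\equiv\lambda_1\pmod 2$ to push the integer summand $\tfrac{\lambda_1+\mu_1}{2}$ (for $\affphi_{12}$), respectively $\tfrac{\lambda_1-\mu_1}{2}$ (for $\affphi_2$), through the floor so that the three derived bounds match the three terms displayed in the second and third identities. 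Finally I would observe that the resulting minimum is a nonnegative integer, since $k=0$ is always admissible.

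I do not expect any conceptual obstacle here; the only real friction is the bookkeeping — keeping track of which of the eight inequalities is vacuous in each of the three cases — together with the floor manipulations needed to reconcile the derived minima with the stated closed forms.
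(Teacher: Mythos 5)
Your proposal is correct and takes essentially the same route as the paper: the paper also reduces to the linear inequalities of \Cref{octineq}, finding the point of the ray $\mu - x\alpha_i$ that hits the boundary of $\Conv(W\cdot\lambda)$ and then taking the floor, which is exactly your integer-$k$ bookkeeping phrased through a real parameter. The identification of the three binding inequalities and the treatment of the half-integer bound via the floor both match the paper's computation (which it only spells out for $\affphi_{21}$, declaring the other two analogous, as you also do).
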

\begin{proof}
	We prove only the first statement, since the other two are analogous. 
	Consider the maximal $x\in \mathbb{R}_{\geq 0}$ such that $\nu:=\mu-x\alpha_{21}\in \Conv(W\cdot \lambda)$. Then $\mu-x\alpha_{21}$ belongs to the boundary of $\Conv(W\cdot \lambda)$ and $\affphi_{21}(\mu,\lambda)=\lfloor x \rfloor$. 
	
	We have $(\nu_1,\nu_2)=(\mu_1,\mu_2-x)$, hence by \Cref{octineq} the following inequalities three inequalities hold
	\begin{align*}
	-\lam_1-\lam_2 \leq &\; \mu_1 +\mu_2-x\\
	-\lam_1-\lam_2\leq &\;\mu_2-x\\
	-\lam_1 -2 \lam_2 \leq&\; \mu_1+2\mu_2-2x	
	\end{align*} 
	and since we are on the boundary at least one of them must be an equality. It follows that
\begin{align*}
x &= \min(\mu_1+\mu_2+\lam_1+\lam_2,\lam_1+\lam_2+\mu_2,\frac{\lam_1+\mu_1}{2} + \lam_2+\mu_2)\\
  &= \lam_2+\mu_2+\min(\lam_1,\frac{\lam_1+\mu_1}{2},\lam_1+\mu_1).\qedhere
\end{align*}
	
%
%
%
%
%
%
\end{proof}

\subsection{Twisted Reflection Groups}
\label{sec:twistedreflection}


For $k\geq 0$ consider the reflection subgroup \[ W^{k}:=y_{k}Wy_{k}^{-1}\subset \affW.\]
Note that for any $k$ we have 
$W^{k+1}=t_{k+1}W^k t_{k+1}$.

\begin{lemma}
\label{twistedweylgroups}
For any $M>0$ we have 
$W^{4M-3}=W^{4M-2}=W^{4M-1}=W^{4M}$. Moreover, the reflections in $W^{4M}$ correspond to the roots
 \[\{\alpha_1^\vee,M\delta-\alpha_2^\vee,M\delta-\alpha_{12}^\vee,2M\delta-\alpha_{21}^\vee\}.\]
\end{lemma}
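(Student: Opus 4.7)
The plan is to split the lemma into two parts.

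\textbf{Equality of the four subgroups.} The pattern of the infinite word $y_\infty = \undc\,\undc\cdots$ with $\undc = s_0 s_2 s_1 s_2$ shows that the simple reflections at positions $4M-2,\,4M-1,\,4M$ are respectively $s_2,\,s_1,\,s_2$, all lying in $W$. Hence $y_{4M-j} = y_{4M-3}\,w_j$ with $w_j \in W$ (explicitly $w_3 = e$, $w_2 = s_2$, $w_1 = s_2 s_1$, $w_0 = s_2 s_1 s_2$), and the identity $w_j W w_j^{-1} = W$ yields $W^{4M-j} = y_{4M-3}\,W\,y_{4M-3}^{-1} = W^{4M-3}$ for $j = 0,1,2,3$.

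\textbf{Identification of the four reflections.} Since $W^{4M}$ is conjugate to $W$, it contains exactly four positive reflections, namely $y_{4M}\,s_{\alpha^\vee}\,y_{4M}^{-1} = s_{y_{4M}(\alpha^\vee)}$ for $\alpha^\vee \in \Phi^\vee_+$. I would compute each $y_{4M}(\alpha^\vee) = \undc^M(\alpha^\vee)$ using the linear action of $\affW$ on $\affX^\vee$. The key observation is that both $W$ and $s_0$ fix $\delta$, so $\undc(\delta) = \delta$; hence if $\undc(\alpha^\vee) = \alpha^\vee - c_\alpha\,\delta$ for some constant $c_\alpha \in \bbZ$, then $\undc^M(\alpha^\vee) = \alpha^\vee - M c_\alpha\,\delta$ by iteration. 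The four constants are then a routine calculation using $s_2 s_1 s_2 = s_{\alpha_{21}^\vee}$ and the standard formula $s_0(\beta^\vee) = s_{\alpha_{21}^\vee}(\beta^\vee) + \langle \alpha_{21},\beta^\vee\rangle\,\delta$ for $\beta^\vee \in X^\vee$: one finds $c_{\alpha_1^\vee} = 0$, $c_{\alpha_2^\vee} = c_{\alpha_{12}^\vee} = 1$, and $c_{\alpha_{21}^\vee} = 2$. Passing to positive representatives yields exactly the four affine coroots in the statement.

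\textbf{Main obstacle.} The only genuinely computational step is the asymmetry $c_{\alpha_{21}^\vee} = 2$ versus $c_{\alpha_2^\vee} = c_{\alpha_{12}^\vee} = 1$, which traces back to the asymmetric Cartan pairings $\langle \alpha_{21},\alpha_{21}^\vee\rangle = 2$ but $\langle \alpha_{21},\alpha_{12}^\vee\rangle = \langle \alpha_{21},\alpha_2^\vee\rangle = 1$. A useful sanity check is the enumeration of $N(y_\infty)$ listed earlier in the paper: in each period of four simple reflections, one picks up a single new affine coroot with finite part $-\alpha_{12}^\vee$ and one with $-\alpha_2^\vee$, but two with $-\alpha_{21}^\vee$, matching the claim. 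Alternatively, one may identify $\undc = s_0\cdot s_{\alpha_{21}^\vee}$ with the translation $t_{\alpha_{21}^\vee}$ (the product of two parallel affine reflections) and read off the same four constants directly from the translation picture.
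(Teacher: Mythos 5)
Your proof is correct, but it takes a genuinely different route from the paper's. The paper proves the lemma by induction on $M$: it conjugates the four reflections of $W^{4M-4}$ by $t_{4M-3}=s_{(2M-1)\delta-\alpha_{21}^\vee}$ and verifies the four images explicitly, then notes $t_{4M-i}\in W^{4M-3}$ for $0\leq i\leq 3$ to conclude the equalities. You instead observe from the outset that $y_{4M-j}$ differs from $y_{4M-3}$ by right multiplication by an element of $W$, which immediately gives all four equalities, and then identify the reflections in one closed-form step by exploiting that $\undc=s_0\,s_{\alpha_{21}^\vee}$ is a translation, so $\undc(\beta^\vee)=\beta^\vee-\langle\alpha_{21},\beta^\vee\rangle\delta$ and hence $y_{4M}=\undc^M$ just adds $-M\langle\alpha_{21},\beta^\vee\rangle\delta$. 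Your computed constants $c_{\alpha_1^\vee}=0$, $c_{\alpha_2^\vee}=c_{\alpha_{12}^\vee}=1$, $c_{\alpha_{21}^\vee}=2$ do match the statement, and the formula $s_0(\beta^\vee)=s_{\alpha_{21}^\vee}(\beta^\vee)+\langle\alpha_{21},\beta^\vee\rangle\delta$ is correct given the paper's normalization $\alpha_0^\vee=\delta-\alpha_{21}^\vee$. The trade-off: your argument is shorter and makes transparent why $\alpha_{21}^\vee$ is the outlier (it pairs to $2$ with $\alpha_{21}$ while the others pair to $1$ or $0$), whereas the paper's induction, though more hands-on, sets up notation ($t_{4M-3}$-conjugation) that it reuses downstream in the swappability analysis. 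One minor presentational remark: you state the form $\undc(\alpha^\vee)=\alpha^\vee-c_\alpha\delta$ as an ``if'' hypothesis before verifying it; since this is the crux of the translation observation, it is cleaner to derive it directly from $\undc(\beta^\vee)=s_0s_{\alpha_{21}^\vee}(\beta^\vee)$ before iterating, as you in effect do in the ``main obstacle'' discussion.
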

\begin{proof}
We check this by induction. Recall that for any $M>0$, $t_{4M-3}$, $t_{4M-2}$, $t_{4M-1}$, and $t_{4M}$ are the reflections corresponding to the roots $(2M-1)\delta-\alpha_{21}^\vee$,
$M\delta-\alpha_{12}^\vee$, $2M\delta-\alpha_{21}^\vee$, and $M\delta-\alpha_2^\vee$, respectively. 

Recall that for any $M\in \bbN$ we have
\[W^{4M-3}=t_{4M-3}W^{4M-4}t_{4M-3}.\] 

By induction, the reflections in $W^{4M-4}$ correspond to the roots $\alpha^\vee_{1}, (M-1)\delta -\alpha_{2}^\vee , (M-1)\delta -\alpha_{12}^\vee$, and $2(M-1)\delta - \alpha_{21}^\vee$.

The claim follows since
\begin{align*}
   s_{(2M-1)\delta-\alpha_{21}^\vee}(\alpha_1^\vee)&=\alpha_1^\vee\\
s_{(2M-1)\delta-\alpha_{21}^\vee}((M-1)\delta-\alpha_2^\vee)&=-M+\alpha_{12}^\vee\\ s_{(2M-1)\delta-\alpha_{21}^\vee}((M-1)\delta-\alpha_{12}^\vee)&=-M+\alpha_{2}^\vee\\ s_{(2M-1)\delta-\alpha_{21}^\vee}(2(M-1)\delta-\alpha_{21}^\vee)&=-2M\delta+\alpha_{21}^\vee ,
\end{align*}
therefore $t_{4M-i} \in W^{4M-3}$ for $0 \leq i \leq 3$, which implies that 

\[W^{4M}=W^{4M-1}=W^{4M-2}=W^{4M-3}. \qedhere \]
\end{proof}

The set of reflections in $W^{4M}$ is $\{s_1, v_M, q_M, r_M\}$, where $v_M$, $q_M$ and $r_M$ are the reflection  corresponding to the roots $M\delta-\alpha_2^\vee,M\delta-\alpha_{12}^\vee,2M\delta-\alpha_{21}^\vee$, as depicted in \Cref{Wmu}. More explicitly, we have
 \begin{alignat}{3}\label{vMmu}
 &v_M\mu &&=\mu-(\mu_2+M)\alpha_{2} &&=(\mu_1+2\mu_2+2M,-\mu_2-2M) \\
  &q_M\mu &&= \mu-(\mu_1+\mu_2+M)\alpha_{12} &&=(-\mu_1-2\mu_2-2M,\mu_2)\\
 &	r_M\mu  &&=\mu-(\mu_1+2\mu_2+2M)\alpha_{21} &&= (\mu_1,-\mu_1-\mu_2-2M)
 \end{alignat}
We also have $q_M=s_1v_Ms_1$ and $r_M=v_Ms_1v_M$.

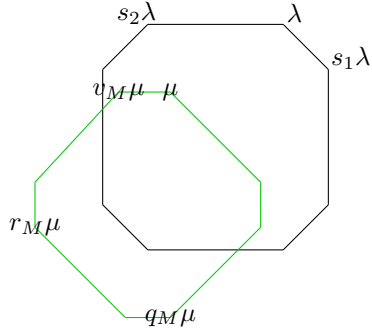
\begin{figure}
	\begin{center}
	\begin{tikzpicture}[scale=0.3]
	\begin{pgfonlayer}{nodelayer}
	\node [style=none] (20) at (-2, 2) {$\mu$};
	\node [style=none] (21) at (-4.3, 2) {$v_M\mu$};
	\node [style=none] (22) at (2, -2) {};
	\node [style=none] (23) at (2, -4) {};
	\node [style=none] (24) at (-2, -8) {$q_M\mu$};
	\node [style=none] (25) at (-4, -8) {};
	\node [style=none] (27) at (-8, -4) {$r_M\mu$};
	\node [style=none] (28) at (-8, -2) {};
	
	\node [style=none] (0) at (3, 5) {};
	\node [style=none] (1) at (5, 3) {};
	\node [style=none] (2) at (5, -3) {};
	\node [style=none] (3) at (-3, 5) {};
	\node [style=none] (4) at (-5, 3) {};
	\node [style=none] (5) at (-5, -3) {};
	\node [style=none] (6) at (-3, -5) {};
	\node [style=none] (7) at (3, -5) {};
	\node [style=none] (8) at (2, 4.25) {};
	\node [style=none] (9) at (2, 4.25) {};
	\node [style=none] (10) at (2, 4.25) {};
	\node [style=none] (11) at (3.5, 5.5) {$\lambda$};
	\node [style=none] (18) at (6, 3.5) {$s_1\lambda$};
	\node [style=none] (19) at (-3.5, 5.5) {$s_2\lambda$};
	\end{pgfonlayer}
	\begin{pgfonlayer}{edgelayer}
	\draw (4.center) to (5.center);
	\draw (5.center) to (6.center);
	\draw (6.center) to (7.center);
	\draw (7.center) to (2.center);
	\draw (2.center) to (1.center);
	\draw (1.center) to (0.center);
	\draw (0.center) to (3.center);
	\draw (3.center) to (4.center);
	\end{pgfonlayer}
	\begin{pgfonlayer}{edgelayer}
	\draw [style=green] (21.center) to (20.center);
	\draw [style=green] (20.center) to (22.center);
	\draw [style=green] (22.center) to (23.center);
	\draw [style=green] (23.center) to (24.center);
	\draw [style=green] (24.center) to (25.center);
	\draw [style=green] (25.center) to (27.center);
	\draw [style=green] (27.center) to (28.center);
	\draw [style=green] (28.center) to (21.center);
	\end{pgfonlayer}
	\end{tikzpicture}
	\end{center}
	\caption{The green octagon is the border of the convex hull of $W^{4M}\cdot \mu$.}
	\label{Wmu}
\end{figure}

We can use the twisted reflection subgroups $W^m$ to describe the set of smaller elements with respect to twistet Bruhat order.

\begin{lemma}\label{leqmu}
Let $\mu\in X$.
\begin{enumerate}
    \item For any $m\geq 0$ we have $\{\leq_{m} \mu\}\subset \Conv(W^m\cdot \mu)$. 
    \item If $\mu_1\geq  0$ and $\mu\leq v_M\mu$, we have
	\[\{\leq_{4M} \mu\} 
	=\Conv(W^{4M} \cdot \mu)\cap (\mu+\bbZ\Phi)=\{\leq_{4M-1} v_M\mu\}\]
\end{enumerate}
\end{lemma}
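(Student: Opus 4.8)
\textbf{Proof plan for \Cref{leqmu}.}

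The plan is to reduce both parts to the known characterization of the ordinary Bruhat order via convex hulls, namely $\{\le \nu\} = \Conv(W\cdot\nu)\cap(\nu+\bbZ\Phi)$ from \eqref{smallerthanlam}, transported through the twist by $y_m$. For part (1), I would use that $\mu\le_m \nu$ means exactly $y_m^{-1}\mu \le y_m^{-1}\nu$ in the ordinary Bruhat order on $\extW$ (equivalently on $X\cong\extW/W$ after identifying cosets with their minimal-length representatives). Applying \eqref{smallerthanlam} to the element $y_m^{-1}\mu$ gives $\{\le y_m^{-1}\mu\}\subset \Conv(W\cdot y_m^{-1}\mu)$; now translating back by $y_m$ and using that $y_m$ acts by an affine isometry that sends $W\cdot y_m^{-1}\mu$ to $y_mWy_m^{-1}\cdot\mu = W^m\cdot\mu$, one gets $\{\le_m\mu\}\subset \Conv(W^m\cdot\mu)$. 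The only subtlety here is that $\Conv(W\cdot y_m^{-1}\mu)$ is taken in the affine space $X^\vee_\bbR$ and one must check the $y_m$-action is compatible with convex hulls, which is automatic since $y_m$ acts affinely.

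For part (2), the equality $\{\le_{4M}\mu\} = \{\le_{4M-1} v_M\mu\}$ should follow from the general relation between twisted orders at consecutive indices: by \eqref{tm} and the discussion around it, $\Gamma^{4M}_X$ is obtained from $\Gamma^{4M-1}_X$ by inverting the single arrow labelled $\alpha^\vee_{t_{4M}}$, and $t_{4M} = v_M$ by \Cref{twistedweylgroups} (the root $M\delta - \alpha_2^\vee$). When $\mu < v_M\mu$, inverting this arrow makes $v_M\mu$ the ``new'' element directly above $\mu$, and the down-sets rearrange exactly so that $\{\le_{4M}\mu\}$ coincides with $\{\le_{4M-1} v_M\mu\}$; this is the coset-level shadow of the elementary fact that if $v\lessdot t v$ then $\{\le t v\}$ in the old order equals $\{\le v\}$ in the new order. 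I would make this precise by passing to $\extW$, writing $\mu_{4M}$ and $(v_M\mu)_{4M-1}$ for the minimal representatives, and checking $y_{4M}^{-1}\mu_{4M}$ and $y_{4M-1}^{-1}(v_M\mu)_{4M-1}$ have the same down-set; the hypothesis $\mu_1\ge 0$ guarantees we are on the correct side of the wall so that indeed $\mu\lessdot v_M\mu$ is a covering relation rather than something longer.

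The remaining content of part (2) is the explicit identification $\{\le_{4M}\mu\} = \Conv(W^{4M}\cdot\mu)\cap(\mu+\bbZ\Phi)$, i.e.\ upgrading the inclusion of part (1) to an equality in this specific range of indices. I expect this to be the main obstacle. By \Cref{twistedweylgroups} the group $W^{4M}$ is the finite reflection group with reflections $\{s_1, v_M, q_M, r_M\}$, a type-$C_2$ copy whose fundamental chamber is a translate/reflection of the standard one; so $\Conv(W^{4M}\cdot\mu)$ is the octagon depicted in \Cref{Wmu}, and I would verify equality by the same octagon-inequality bookkeeping as in \Cref{octineq}, but now for the twisted group: show that every weight in $\mu+\bbZ\Phi$ lying inside this octagon is actually $\le_{4M}\mu$. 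The cleanest route is to transport \eqref{smallerthanlam} itself through $y_{4M}$: since $W^{4M} = y_{4M}Wy_{4M}^{-1}$ and $y_{4M}$ permutes the relevant coset, $\{\le_{4M}\mu\}$ is literally the $y_{4M}$-image of $\{\le y_{4M}^{-1}\mu\}$, which by \eqref{smallerthanlam} equals $\Conv(W\cdot y_{4M}^{-1}\mu)\cap(y_{4M}^{-1}\mu + \bbZ\Phi)$, and applying $y_{4M}$ to both sides gives exactly $\Conv(W^{4M}\cdot\mu)\cap(\mu+\bbZ\Phi)$ — provided one checks the lattice coset $\mu+\bbZ\Phi$ is preserved, which holds because $y_{4M}$ fixes $\bbZ\Phi$ setwise (affine Weyl translations lie in the appropriate lattice and the finite part permutes roots). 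Assembling these three pieces — convexity inclusion, the consecutive-index swap, and the transported equality — completes the proof.
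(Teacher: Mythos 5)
There is a genuine gap in your treatment of part (2). Your plan for the equality
$\{\le_{4M}\mu\}=\Conv(W^{4M}\cdot\mu)\cap(\mu+\bbZ\Phi)$ consists in transporting \eqref{smallerthanlam} through $y_{4M}$: you write $\{\le_{4M}\mu\}$ as the $y_{4M}$-image of $\{\le y_{4M}^{-1}\mu\}$ and then immediately invoke \eqref{smallerthanlam} for the weight $y_{4M}^{-1}\mu$. But \eqref{smallerthanlam} is a statement about \emph{dominant} weights; for a non-dominant weight $\nu$ the inclusion $\{\le\nu\}\subset\Conv(W\cdot\nu)\cap(\nu+\bbZ\Phi)$ is strict in general (already in rank one: $\{\le -1\}=\{-1\}$ but the intersection with the convex hull is $\{-1,1\}$). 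So you cannot apply \eqref{smallerthanlam} to $y_{4M}^{-1}\mu$ without first verifying that this weight is dominant, and this verification is exactly the crux of the paper's argument.

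Relatedly, you attribute the hypotheses $\mu_1\ge 0$ and $\mu\le v_M\mu$ to a vague ``covering relation'' role in the consecutive-index swap, but the identity $y_{4M}^{-1}\mu=y_{4M-1}^{-1}v_M\mu$ is a pure group-theoretic fact ($v_M=t_{4M}=y_{4M}y_{4M-1}^{-1}$ is an involution) and needs no hypotheses. The hypotheses are instead precisely what make $y_{4M}^{-1}\mu$ dominant: $\mu_1\ge 0$ gives $s_1\mu\le\mu$, which is unaffected by the twist (roots of the form $M\delta\pm\alpha_1^\vee$ never appear in $N(y_m)$) and, since $s_1$ commutes with $y_{4M}$, yields $s_1(y_{4M}^{-1}\mu)\le y_{4M}^{-1}\mu$; and $\mu\le v_M\mu$ is equivalent to $v_M\mu\le_{4M}\mu$, which after applying $y_{4M}^{-1}$ gives $s_2(y_{4M}^{-1}\mu)=y_{4M-1}^{-1}\mu\le y_{4M}^{-1}\mu$. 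These two inequalities are what you need; once dominance of $y_{4M}^{-1}\mu$ is established, both the swap and the transported convex-hull description drop out of \eqref{smallerthanlam} applied to $y_{4M}^{-1}\mu$, so the proof is actually shorter than your three-piece assembly suggests.
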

\begin{proof}
Let $\nu\leq_{m} \mu$. Then $y_{m}^{-1}\nu\leq y_{m}^{-1}\mu$, so $y_{m}^{-1}\nu\in \Conv(W\cdot y_{m}^{-1}\mu)$. This shows the first part.
For the second part, because of \eqref{smallerthanlam},
it is enough to show that $y_{4M}^{-1}\mu=y_{4M-1}^{-1}v_M\mu$ is dominant, since then 
\[\{\leq_{4M} \mu\}=\{\leq_{4M-1} v_M\mu\}=\{\leq y_{4M}^{-1} \mu\} 
	=\Conv(W \cdot y_{4M}^{-1}\mu)\cap (\mu+\bbZ\Phi). \]

\noindent Recall that a weight $\tau \in X$ is dominant if and only if $\tau\geq s_1\tau$ and $\tau\geq s_2\tau$. We have $s_1\mu\leq \mu$, and this is equivalent to $s_1\mu\leq_{4M} \mu$. Moreover, $s_1$ commutes with $y_4$ and therefore also with $y_{4M}$. It follows that $s_1y_{4M}^{-1}\mu=y_{4M}^{-1}s_1\mu\leq  y_{4M}^{-1}\mu$.
	
We have $\mu \leq v\mu$, and this is equivalent to $v\mu\leq_{4M}\mu$, so $y_{4M}^{-1}\mu \geq y_{4M}^{-1}v\mu=y_{4M-1}^{-1}\mu=s_2y_{4M}^{-1}\mu$.
\end{proof}

Recall from \Cref{affphidef} the definition of $\affphi_i(\mu,\lambda)$.

\begin{lemma}\label{l=phi}
Assume that $\mu \leq v_M\mu$. Then we have
\begin{equation}\label{eqtu}v_M\mu \not\leq \lambda \iff M>\affphi_2(\mu,\lam)-\mu_2 \iff  \ell^2_{4M-1}(\mu,\lam)=\affphi_2(\mu,\lam),\end{equation}
\end{lemma}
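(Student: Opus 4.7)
The plan is to establish the two equivalences separately, starting from the straightforward first one and then carefully enumerating the $\alpha_2$-type arrows pointing to $\mu$ in $\Gamma^{4M-1}_\lambda$ to settle the second.

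For the first equivalence, I would use that $\{\leq \lambda\}$ is the intersection of the convex polytope $\Conv(W\cdot\lambda)$ with the coset $\lambda + \bbZ\Phi$ (from \eqref{smallerthanlam}); consequently the set of $k\in\bbZ$ with $\mu - k\alpha_2 \leq \lambda$ is an interval containing $0$, whose maximum is $\affphi_2(\mu,\lambda)$ by definition. Writing $v_M\mu = \mu - (\mu_2+M)\alpha_2$ and using that $\mu \leq v_M\mu$ forces $\mu_2 + M \geq 0$, the condition $v_M\mu \leq \lambda$ is equivalent to $\mu_2+M \leq \affphi_2(\mu,\lambda)$, and negation yields the first equivalence.

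For the second equivalence, the plan is to show the clean formula
\[\ell^2_{4M-1}(\mu,\lambda) \;=\; \min\bigl(\mu_2+M-1,\;\affphi_2(\mu,\lambda)\bigr),\]
from which the claim is immediate. First, for each nonzero $k\in\bbZ$ there is a unique reflection in $\affW$ sending $\mu$ to $\mu - k\alpha_2$, whose positive affine coroot label is $(k-\mu_2)\delta - \alpha_2^\vee$ for $k > \mu_2$, equals $\alpha_2^\vee$ for $k = \mu_2$, and is $(\mu_2-k)\delta + \alpha_2^\vee$ for $k < \mu_2$. From the enumeration \eqref{reflectionorder}, only labels of the first type lie in $N(y_\infty)$, and $M'\delta - \alpha_2^\vee$ lies in $N(y_{4M-1})$ precisely for $M' < M$. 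Combining this with the direction of the untwisted arrow (read off from \eqref{bruhatorder}) and splitting into the three cases $\mu_2 > 0$, $\mu_2 = 0$, and $\mu_2 < 0$, I would then verify that in each case the set of $k$ for which there is an arrow $\mu - k\alpha_2 \to \mu$ in $\Gamma^{4M-1}_\lambda$ (before imposing the Bruhat constraint $\mu - k\alpha_2 \leq \lambda$) is exactly the interval $\{1,2,\ldots,\mu_2+M-1\}$, i.e., the lattice points strictly between $\mu$ and $v_M\mu$ on the line $\mu + \bbR\alpha_2$. Intersecting with $k \leq \affphi_2(\mu,\lambda)$ yields the desired formula.

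The main obstacle is the case analysis in the last step: the untwisted arrows already pointing to $\mu$ and those flipped by the twist come from disjoint ranges of $k$ depending on the sign of $\mu_2$, but they must recombine into the uniform interval $\{1,\ldots,\mu_2+M-1\}$. The hypothesis $\mu \leq v_M\mu$ is essential here: it ensures $\mu_2 + M \geq 0$, so that this interval is well-defined, and it rules out sign pathologies in the classification of the untwisted arrow directions (and of which $\alpha_2$-labels actually belong to $N(y_\infty)$).
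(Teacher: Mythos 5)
Your proof is correct and reaches the same intermediate formula as the paper, namely $\ell^2_{4M-1}(\mu,\lambda)=\min(\mu_2+M-1,\affphi_2(\mu,\lambda))$, but by a different route. The paper derives the description
\[
\Arr^2_{4M}(\mu)=\{(\mu-k\alpha_2\to\mu)\mid 1\le k\le \mu_2+M\}
\]
in one stroke from \Cref{leqmu}.2, which identifies $\{\leq_{4M}\mu\}$ with $\Conv(W^{4M}\cdot\mu)\cap(\mu+\bbZ\Phi)$ and hence reduces the count to the lattice points on the segment from $\mu$ to $v_M\mu$; the swap from $\Gamma^{4M-1}$ to $\Gamma^{4M}$ then amounts to adding or removing the single edge $v_M\mu\to\mu$. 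You instead classify the $\alpha_2$-labeled affine reflections directly: determining the positive affine coroot $(k-\mu_2)\delta-\alpha_2^\vee$ (or $\alpha_2^\vee$, or $(\mu_2-k)\delta+\alpha_2^\vee$) attached to $\mu\mapsto\mu-k\alpha_2$, observing that only the first family meets $N(y_\infty)$ and determining membership in $N(y_{4M-1})$ from \eqref{reflectionorder}, then combining with \eqref{bruhatorder} to recover the arrow direction. This is more elementary and more explicit, at the cost of the sign case analysis on $\mu_2$, while the paper's route is shorter but leans on the auxiliary convex-hull description and its hypothesis $\mu_1\ge 0$ (which your argument sidesteps). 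Both approaches share the same benign degenerate case $\mu_2+M=0$ (i.e.\ $v_M\mu=\mu$), where the interval $\{1,\dots,\mu_2+M-1\}$ should be interpreted as empty; in practice the lemma is invoked only for strict inequality $\mu<v_M\mu$, so this is harmless.
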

\begin{proof}
    By \eqref{vMmu} and the definition of $\affphi_2$ we have $v_M\mu\leq \lam$ if and only if $\mu_2+M\leq \affphi_2(\mu,\lam)$.
    It follows from \Cref{leqmu}.2) that $\Arr_{4M}^2(\mu)$ consists precisely of the arrows $(\mu-k \alpha_2\raw \mu)$, with $\mu -k\alpha_2$ lying on the segment between $\mu$ and $v_M\mu$. In other words,  we have
\[ \Arr_{4M}^{2}(\mu)=\{(\mu-k\alpha_2 \raw \mu) \mid 1 \leq k \leq \mu_2 +M\}\]
If $v_M\mu\leq \lam$, then $\Arr_{4M}^2(\mu)=\Arr_{4M}^2(\mu,\lam)$, so \[\ell^2_{4M-1}(\mu,\lam)=\ell^2_{4M}(\mu,\lam)-1=\mu_2+M-1< \affphi_2(\mu,\lam).\] If $v_M\not \leq \lam$ we have \[\Arr_{4M}^2(\mu,\lam)=\{\{(\mu-k\alpha_2) \raw \mu \mid 1 \leq k \leq \affphi_2(\mu,\lam)\}\] and so $\ell^2_{4M-1}(\mu,\lam)=\ell^2_{4M}(\mu,\lam)= \affphi_2(\mu,\lam)$.
\end{proof}

Similarly, we have
\begin{itemize}
    \item $\displaystyle \Arr_{4M-2}^{12}(\mu)=\{(\mu-k\alpha_{12}) \raw \mu \mid 1 \leq k \leq \mu_1+\mu_2 + M\}.$ and if $\mu\leq q_M\mu$ we have 
\begin{equation}\label{eqqu} q_M\mu \not \leq \lambda \iff M> \affphi_{12}(\mu,\lambda)-\mu_1- \mu_2\iff \ell_{4M-3}^{12}(\mu,\lam)=\affphi_{12}(\mu,\lambda)
\end{equation}
\item $\displaystyle\Arr_{4M-1}^{21}(\mu)=\{(\mu-k\alpha_{21}) \raw \mu \mid 1 \leq k \leq \mu_1 + 2\mu_2 +2M\}$ and if $\mu\leq r_M\mu$ we have
\begin{equation}\label{eqru}
    r_M\mu \not \leq \lambda \iff 2M > \affphi_{21}(\mu,\lambda)-\mu_1 -2 \mu_2\iff \ell_{4M-2}^{21}(\mu,\lam)=\affphi_{21}(\mu,\lambda).
\end{equation} 
\end{itemize}


In the following Lemma we describe the Bruhat order on a $W^{4M}$-orbit.

\begin{lemma}\label{bruhatorderWk}
Let $\mu \in X$ and $v_M,r_M,q_M$ as before.
If $\mu < v_M\mu$ and $\mu_1\geq 0$ or if $\mu<q_M\mu$ and $\mu_1\leq 0$, then $v_M\mu\leq r_Mv_M\mu<r_M\mu$ and $q_M\mu < r_M\mu$.
\end{lemma}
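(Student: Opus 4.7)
The plan is to reduce to a single case using the $s_1$-symmetry of the data, then handle the three claims separately by applying \eqref{bruhatorder}. First I observe that each hypothesis forces both $\mu<v_M\mu$ and $\mu<q_M\mu$: by \eqref{bruhatorder} these are equivalent to $\mu_2>-M$ and $\mu_1+\mu_2>-M$ respectively, and the sign assumption on $\mu_1$ makes either imply the other. Since $s_1$-conjugation swaps $v_M$ and $q_M$ (as $s_1(\alpha_2^\vee)=\alpha_{12}^\vee$) and fixes $r_M$ (as $s_1(\alpha_{21}^\vee)=\alpha_{21}^\vee$), while the Bruhat order on $X$ is $W$-invariant thanks to the convex-hull description \eqref{smallerthanlam}, it is enough to treat case 1 ($\mu_1\geq 0$); case 2 is obtained by applying the case-1 statement to $s_1\mu$.

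For Part (1), $v_M\mu\leq r_Mv_M\mu$, a direct computation from \eqref{vMmu} yields $r_M(v_M\mu)=v_M\mu-\mu_1\alpha_{21}$. Writing the coroot $2M\delta-\alpha_{21}^\vee$ in the form $M'\delta+\beta^\vee$ with $\beta^\vee\in\Phi^\vee_+$ gives $M'=-2M$ and $\beta^\vee=\alpha_{21}^\vee$, and \eqref{bruhatorder} reduces the claim to $\langle v_M\mu,\alpha_{21}^\vee\rangle=\mu_1-2M\geq -2M$, which is $\mu_1\geq 0$. For Part (2), $r_Mv_M\mu<r_M\mu$, the dihedral identity $r_Mv_M=q_Mr_M$ in $W^{4M}$ (both sides reduce to $v_Ms_1$ using $v_M^2=e$) lets me rewrite $r_Mv_M\mu=q_M(r_M\mu)$. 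Applying \eqref{bruhatorder} to $q_M$ translates the claim into $\langle r_M\mu,\alpha_{12}^\vee\rangle=-\mu_2-2M<-M$, i.e.\ $\mu_2>-M$, which holds.

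The main obstacle will be Part (3), $q_M\mu<r_M\mu$. These weights are related by the rotation $s_1v_M\in W^{4M}$ rather than by a reflection, so \eqref{bruhatorder} does not apply directly and a single-edge comparison is not available. I plan to overcome this by constructing a two-step chain using the identity $r_M=s_1v_Mq_M$, a consequence of $(s_1v_M)^4=e$ in the dihedral group:
\[
q_M\mu \;<\; v_M(q_M\mu) \;\leq\; s_1(v_M(q_M\mu)) \;=\; r_M\mu.
\]
The first step is strict, being a $v_M$-reflection and using $\langle q_M\mu,\alpha_2^\vee\rangle=\mu_2>-M$. The second is weakly monotone by $s_1$-reflection together with the short computation $v_Mq_M\mu=(-\mu_1,-\mu_2-2M)$, so that $\langle v_Mq_M\mu,\alpha_1^\vee\rangle=-\mu_1\leq 0$. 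The strictness of the first step ensures the chain delivers the desired $q_M\mu<r_M\mu$, completing case 1 and hence, by the initial symmetry reduction, the lemma.
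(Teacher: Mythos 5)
Your proof of case 1 — the three wall-crossing computations and the chain $q_M\mu < v_M q_M\mu \leq s_1 v_M q_M\mu = r_M\mu$ — is correct and follows the paper's argument essentially line by line; where you diverge numerically you are in fact right, since $\langle r_M\mu,\alpha_{12}^\vee\rangle = -\mu_2-2M$ as you compute, whereas the paper writes $-\mu_1-\mu_2-2M$ (an apparent typo; both are $<-M$ under $\mu_2>-M$, so the conclusion is unaffected). Be careful, however, with the reduction of case 2 to case 1. Applying the case-1 conclusion to $s_1\mu$ and conjugating back by $s_1$ yields $q_M\mu\leq r_Mq_M\mu<r_M\mu$ and $v_M\mu<r_M\mu$, which is the $s_1$-mirror of the stated conclusion, not the conclusion as literally written. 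And the literal inequality $v_M\mu\leq r_Mv_M\mu$ actually fails whenever $\mu_1<0$: by your own Part (1) computation it is equivalent to $\langle v_M\mu,\alpha_{21}^\vee\rangle = \mu_1-2M\geq -2M$, i.e.\ $\mu_1\geq 0$. The paper's proof carries the same imprecision (it proves case 1 and declares case 2 ``similar''), and every later invocation of the lemma only uses case 1, so your proposal is on the same footing as the source; still, for the record, the correct statement in the regime $\mu_1\leq 0$ is the one with $v_M$ and $q_M$ swapped in the first chain, and your symmetry reduction proves exactly that mirror version rather than the one displayed.
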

\begin{proof}
Assume first $\mu_1\geq 0$ and $\mu< v_M\mu$, so $\mu_2> -M$.
We have $\langle v_M\mu,\alpha_{21}^\vee\rangle=(v_M\mu)_1+2(v_M\mu_2)=\mu_1-2M\geq -2M$, so $r_Mv_M\mu \geq v_M\mu$ by \eqref{bruhatorder}. We have $q_Mr_M =r_Mv_M$ and
$\langle r_M\mu,\alpha_{12}^\vee\rangle=-\mu_1-\mu_2-2M<-M$ so $r_Mv_M\mu<r_M\mu$. Similarly, we have $\mu < q_M\mu \leq v_Mq_M\mu \leq s_1v_Mq_M\mu=r_M\mu$. 
The case $\mu_1\leq 0$ and $\mu<q_M\mu$ is similar.
\end{proof}


\begin{lemma}\label{smallerissmaller}
Let $m>0$ and assume $(t_m\mu)_1 \geq 0$ and $\mu\leq t_m\mu\leq \lam$. Then $t_kt_m\mu \leq t_m \mu$ for all $k\leq m$ corresponding to roots of the form $K\delta-\alpha_2^\vee$. 

Assume instead $\mu_1 \geq 0$ and $\mu\leq t_m\mu\leq \lam$. Then $t_k\mu \leq \lam$ for all $k\leq m$ corresponding to roots of the form $K\delta-\alpha_2^\vee$.
\end{lemma}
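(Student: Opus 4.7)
The plan is to proceed via case analysis on $t_m$. Using \Cref{twistedweylgroups} and the ordering \eqref{reflectionorder}, the reflection $t_m$ is determined by $m\bmod 4$: it is $v_M$, $q_M$, $r_M$, or $r'_M:=s_{(2M-1)\delta-\alpha_{21}^\vee}$, corresponding to $m=4M,\,4M-2,\,4M-1,\,4M-3$ respectively. In each case the action on weights is given by \eqref{vMmu} together with the direct computation $r'_M\mu=(\mu_1,-\mu_1-\mu_2-(2M-1))$. The reflections $t_k$ with label of the form $K\delta-\alpha_2^\vee$ are exactly the $t_{4K}=v_K$, so the indices appearing in the statement are $1\leq K\leq \lfloor m/4\rfloor$; in every case $K\leq M$.

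For the first statement I would use \eqref{bruhatorder} (applied to the positive root $-K\delta+\alpha_2^\vee$) to translate $v_K(t_m\mu)\leq t_m\mu$ into the coordinate inequality $(t_m\mu)_2\leq -K$. It then suffices to prove the uniform bound $(t_m\mu)_2\leq -M$, since $K\leq M$ gives the rest. In each of the four cases, this bound is obtained by combining $(t_m\mu)_1\geq 0$ with the inequality for $\mu\leq t_m\mu$ produced by \eqref{bruhatorder}. The model case is $t_m=r_M$: then $(t_m\mu)_1=\mu_1\geq 0$ and $\mu\leq r_M\mu$ reads $\mu_1+2\mu_2\geq -2M$; adding these yields $\mu_1+\mu_2\geq -M$, whence $(r_M\mu)_2=-\mu_1-\mu_2-2M\leq -M$. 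The case $r'_M$ is identical up to a small parity argument that upgrades $\mu_1+2\mu_2\geq -(2M-1)$ to $\mu_1+\mu_2\geq -M+1$.

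For the second statement I would use \Cref{octineq} to reformulate $v_K\mu\leq \lam$ as four coordinate inequalities, using the pairings
\begin{align*}
\langle v_K\mu,\alpha_1^\vee\rangle &=\mu_1+2\mu_2+2K, & \langle v_K\mu,\alpha_2^\vee\rangle &=-\mu_2-2K,\\
\langle v_K\mu,\alpha_{12}^\vee\rangle &=\mu_1+\mu_2, & \langle v_K\mu,\alpha_{21}^\vee\rangle &=\mu_1-2K.
\end{align*}
The $\alpha_{12}^\vee$-inequality is $K$-independent and follows from $\mu\leq \lam$; four further bounds (the lower bounds in the $\alpha_1$ and $\alpha_2$ pairings and the upper bound in the $\alpha_{21}$ pairing) follow from $\mu\leq\lam$ combined with $K\geq 0$ and $\mu_1\geq 0$. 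The remaining non-trivial inequalities are the upper bounds on $\mu_1+2\mu_2+2K$ and $-\mu_2-2K$, and the lower bound on $\mu_1-2K$. I would verify these case by case: when $t_m=v_M$ all three read off directly from the corresponding coordinate inequalities for $v_M\mu\leq\lam$, since each left-hand side is monotone in $K$ on $[1,M]$. When $t_m\in\{q_M,r_M,r'_M\}$ the upper bounds on $\mu_1+2\mu_2+2K$ and $-\mu_2-2K$ are obtained analogously from the coordinate inequalities of $t_m\mu\leq\lam$.

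The main obstacle is the lower bound $\mu_1-2K\geq -\lam_1-2\lam_2$ in the cases $t_m=r_M,r'_M$, since there the hypothesis does not directly constrain how large $M$ can be relative to $\lam$. The decisive step is to combine the $\alpha_{21}^\vee$-coordinate inequality of $t_m\mu\leq\lam$ with $\mu\leq t_m\mu$: for instance, for $t_m=r_M$, the upper bound $\mu_1+2\mu_2+4M\leq\lam_1+2\lam_2$ together with $\mu_1+2\mu_2\geq -2M$ gives $2M\leq\lam_1+2\lam_2$, and then $\mu_1-2K\geq -2(M-1)\geq -(\lam_1+2\lam_2)+2$, as required. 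The case $r'_M$ is the same with $2M$ replaced by $2M-1$. This maneuver — extracting a bound on $M$ in terms of $\lam$ from the combination of both hypotheses — is what makes the argument go through uniformly.
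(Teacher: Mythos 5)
Your proof of the first statement is essentially the paper's: reduce $t_kt_m\mu\leq t_m\mu$ to the coordinate inequality $(t_m\mu)_2\leq -K$ via \eqref{bruhatorder}, then check it case by case from $(t_m\mu)_1\geq 0$ and $\mu\leq t_m\mu$. You phrase the check as a uniform bound $(t_m\mu)_2\leq -M$, which is fine.

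Your proof of the second statement takes a genuinely different route. The paper first reduces to $\mu<v_K\mu$, observes that the intermediate weight $r_K\mu$ (or $q_K\mu$ when $m\equiv 2\pmod 4$) lies on the segment from $\mu$ to $t_m\mu$ and hence is $\leq\lam$, and then reaches $v_K\mu\leq\lam$ via \Cref{bruhatorderWk} together with $s_1$-stability of $\{\leq\lam\}$. You instead verify directly that $v_K\mu$ satisfies the four coordinate inequalities of \Cref{octineq}. This works and avoids the auxiliary lemma, at the cost of a longer case analysis; two issues arise there. First, a labeling slip: in the $\alpha_2^\vee$-pairing the constraint that is \emph{not} automatic is the lower bound $-\lam_1-\lam_2\leq -\mu_2-2K$ (equivalently $\mu_2+2K\leq\lam_1+\lam_2$), whereas the upper bound $-\mu_2-2K\leq\lam_1+\lam_2$ already follows from $\mu\leq\lam$ and $K\geq 0$; you have these reversed, though the estimates you describe in fact establish the lower bound, so this is cosmetic. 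Second, and this is a genuine omission: you verify $\mu_1-2K\geq -\lam_1-2\lam_2$ only for $t_m\in\{v_M,r_M,r'_M\}$, leaving $t_m=q_M$ (i.e.\ $m\equiv 2\pmod 4$) unaddressed. It happens to be the easiest case, since $\langle q_M\mu,\alpha_{21}^\vee\rangle=-\mu_1-2M\geq -\lam_1-2\lam_2$ together with $\mu_1\geq 0$ gives $2M\leq\lam_1+2\lam_2$ outright, without combining with $\mu\leq t_m\mu$; then $\mu_1-2K\geq -2(M-1)>-\lam_1-2\lam_2$. You should state this, since as written your verification does not cover one of the four residue classes of $m$.
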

\begin{proof}
First we prove the first part of the lemma. By assumption we have $k = 4K$, since $t_{k}$ corresponds to a root of the form $K\delta - \alpha^{\vee}_{2}$. First assume that $m = 4M$, so $t_m=s_{M\delta-\alpha_2^\vee}$. Since $k \leq m$ we have $K \leq M$. By (\ref{bruhatorder}) we have that for $k = 4K \leq m = 4M$,
\begin{align*}
 t_{k} t_{m} \mu &\leq t_{m} \mu \iff
 \langle t_{m} \mu, -\alpha_{2}^\vee \rangle = \mu_{2} +2M > K.
 \end{align*}
 We conclude the proof in this case since by assumption $\mu = t_{m}t_{m}\mu \leq t_{m} \mu $ and $K\leq M$. 
 
 Now we assume $m = 4M-2$, so that $t_m=s_{M\delta-\alpha_{12}^\vee}$. In this case $4K \leq 4M-2$, so in particular $K <M$. We have 
\begin{align*}
 t_{k} t_{m} \mu \leq t_{m} \mu \iff
 \langle t_{m} \mu, -\alpha_{2}^\vee \rangle = -\mu_{2} > K.
 \end{align*}
 Our assumption $\mu\leq t_m\mu$ implies that
 $\langle t_m\mu,-\alpha_{12}^\vee\rangle =\mu_1+\mu_2+2M>M$ and $(t_m\mu)_1\geq 0$ implies $\mu_1+2\mu_2+2M>0$.  Putting them together we obtain:
\begin{align*}
K < M \leq 
\mu_{1}+\mu_{2}+2M  \leq 
\mu_1 +2\mu_2 +2M -\mu_2 \leq -\mu_2.
\end{align*}
which finishes the proof in this case. 

Now we assume $m = 4M-1$, so that $t_m=s_{2M\delta-\alpha_{21}^\vee}$. In this case, we have
\begin{align*}
 t_{k} t_{m} \mu \leq t_{m} \mu \iff
 \langle t_{m} \mu, -\alpha_{2}^\vee \rangle = \mu_{1}+\mu_{2} +2M > K.
 \end{align*}
Our assumption $\mu\leq t_m\mu$ implies that
 $\langle t_m\mu,-\alpha_{21}^\vee\rangle =\mu_1+2\mu_2+4M\geq 2M$ and $\mu_1=(t_m\mu)_1\geq 0$.
Putting them together we obtain:
\[2K < 2M \leq 
\mu_{1}+2\mu_{2}+4M  \leq 
2\mu_1 +2\mu_2 +4M.\]
Finally, assume that $m=4M-3$, so that $t_m=s_{(2M-1)\delta-\alpha_{21}^\vee}$. This case follows by the same argument of the case $m=4M-1$ since we have $2K<2M-1$.

Now we proceed to prove the second part of the lemma, namely that, assuming $\mu_1 \geq 0$ and $\mu \leq t_m \mu \leq\lambda$, then $t_k \mu \leq \lambda$ for  $k=4K \leq m$. We can assume $\mu <t_k\mu$, otherwise the statement is obvious.

The case $m=4M$ is clear since $t_k\mu$ lies on the segment between $t_m\mu$ and $\mu$.
Assume now $m=4M-1$ or $m=4M-3$. In both cases, we have $r_K\mu=t_{k-1}\mu\leq \lam$ since it lies on the segment between $\mu$ and $t_m\mu$. We conclude by \Cref{bruhatorderWk}, since we get $v_K\mu\leq r_Kv_K\mu\leq r_K\mu$.
The last case to consider is $m=4M-2$. Similarly, we have $q_K\mu\leq \lam$ and also $s_1q_K\mu=r_Kv_K\mu\leq \lam$. We conclude again by \Cref{bruhatorderWk} since $v_K\mu\leq r_Kv_K\mu$.

\end{proof}

\subsection{Analysis of \texorpdfstring{$\alpha_{2}$-}{horizontal }edges}

In this section we fix $m+1=4M$ so that $v:=v_M=t_{m+1}$ is the reflection corresponding to the affine root $M\delta -\alpha_2^\vee$, i.e. the reflection over the vertical axis $\{x\mid \langle x,\alpha_2^\vee\rangle = -M\}$. Let $r:=r_M$ and $q:=q_M$.



	


\subsubsection{Sufficient conditions for swappableness}

We assume in this section that $\mu<v\mu \leq \lam$
The goal of this section is to provide a first important constraint on an $\alpha_2$-edge to be swappable (see \Cref{figqmu<lam})

%

\begin{proposition}\label{qmuswappable}
	
	Assume that $q\mu \leq \lambda$. Then $\mu\raw v\mu$ is swappable.
\end{proposition}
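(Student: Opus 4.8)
The plan is to count $\ell_m(\mu,\lambda)$ and $\ell_m(v\mu,\lambda)$ by summing the contributions $\ell_m^i$ over the four root directions $i\in\{1,2,12,21\}$, using the identity \eqref{ellsubdivided}, and then compare the two sums. The hypothesis $\mu<v\mu\leq\lambda$ together with $q\mu\leq\lambda$ is exactly the geometric constraint that will let us pin down these contributions: recall from \Cref{leqmu} and the discussion around \eqref{eqtu}, \eqref{eqqu}, \eqref{eqru} that each $\ell_{m}^i(\mu,\lambda)$ is either a full $W^{4M}$-orbit segment (when the relevant reflection image stays $\leq\lambda$) or is truncated to $\affphi_i(\mu,\lambda)$. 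So the proof reduces to a careful bookkeeping, for each $i$, of whether the relevant vertex ($v\mu$, $q\mu$, $r\mu$ and their images) lies $\leq\lambda$, and then checking that the total for $v\mu$ exceeds the total for $\mu$ by exactly one. The key structural input is \Cref{bruhatorderWk}: since $\mu<v\mu$ and (implicitly, from $q\mu\leq\lambda$ and the octagon inequalities of \Cref{octineq}) we are in a position where $v\mu\leq r_Mv_M\mu<r_M\mu$ and $q_M\mu<r_M\mu$, so the Bruhat order on the $W^{4M}$-orbit of $\mu$ is completely understood.

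\textbf{Main steps.} First I would use \Cref{lmX} together with the local picture: passing from $\Gamma^m$ to $\Gamma^{m+1}$ inverts only the single edge $\mu\raw v\mu$, and \cite[Lemma 4.8]{Pat} gives $\ell_m(\mu)=\ell_m(v\mu)-1$ in the \emph{full} graph $\Gamma_X^m$. The content of swappableness is that this identity survives the restriction to $\{\leq\lambda\}$, i.e. that no arrow counted on the $v\mu$ side ``falls off'' the Schubert variety while its partner on the $\mu$ side does not. Concretely, the bijection $\Arr_m(\mu)\leftrightarrow\Arr_m(v\mu)\setminus\{\mu\raw v\mu\}$ of \cite[Lemma 4.8]{Pat} is realized by the reflection $v=v_M$ conjugated appropriately; I would track, arrow by arrow in each of the four directions, the source weight of an arrow into $\mu$ and the source weight of its image arrow into $v\mu$, and show that $q\mu\leq\lambda$ forces: whenever the source into $v\mu$ is $\leq\lambda$, so is the corresponding source into $\mu$. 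Since $v\mu\leq\lambda$ is assumed, the new edge $\mu\raw v\mu$ is present in $\Gamma_\lambda$, contributing the ``$-1$''. The directions $\alpha_2$ and $\alpha_{12}=q$-direction are the ones where $q\mu\leq\lambda$ does real work: $q\mu\leq\lambda$ means (by \eqref{eqqu}) the $\alpha_{12}$-string from $\mu$ is a full orbit segment and not truncated, and by \Cref{bruhatorderWk} it controls the relative position of $r_M\mu$ as well, which governs the $\alpha_{21}$-direction. The $\alpha_1$-direction is easy since by \eqref{ell1} it is insensitive to $m$ and to $\lambda$ (as long as $\mu_1$ is fixed, and $v$ preserves... wait — $v\mu$ has first coordinate $\mu_1+2\mu_2+2M\neq\mu_1$ in general, so here I must recompute $\ell^1$ for $v\mu$ using \eqref{ell1} with $(v\mu)_1$; this shift must be accounted for and compensated by the other directions).

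\textbf{Expected main obstacle.} The hard part will be the precise accounting in the $\alpha_{21}$-direction and the interaction with the $\alpha_1$-shift just mentioned. Because $v\mu$ and $\mu$ have different $\alpha_1$-pairings, the naive ``orbit segments match up'' picture is off by the difference $\ell_m^1(v\mu,\lambda)-\ell_m^1(\mu,\lambda)$, and one must show that the $\alpha_{21}$-direction (and possibly $\alpha_{12}$) absorbs exactly this discrepancy — this is where $q\mu\leq\lambda$, via \Cref{bruhatorderWk} giving $q_M\mu<r_M\mu$ and $v_M\mu\leq r_Mv_M\mu$, is essential: it guarantees $r_Mv_M\mu\leq\lambda$ (it lies between $\mu$ and... no, between $v_M\mu$ and $r_M\mu$; but $q_M\mu\leq\lambda$ and $q_Mr_M=r_Mv_M$ put $r_Mv_M\mu=q_Mr_M\mu$ in a controlled position), so the $\alpha_{21}$-arrows into $v\mu$ form a full segment of the right length. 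I expect the cleanest route is to split into the cases $\mu_1\geq 0$ and $\mu_1<0$ (as the paper does elsewhere), use $s_1$-symmetry to reduce to one of them, and in that case verify the count $\ell_m(v\mu,\lambda)=\ell_m(\mu,\lambda)+1$ by explicitly writing all four $\ell_m^i$ via \Cref{affphicompute}, \eqref{ell1}, and the orbit-segment descriptions, checking the arithmetic identity directly. The role of $q\mu\leq\lambda$ throughout is to ensure every relevant truncation is \emph{trivial} (full segment) rather than genuinely cutting — once that is established the identity is forced.
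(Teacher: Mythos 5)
Your high-level strategy (decompose $\ell_m(\cdot,\lambda)$ direction-by-direction and use $\ell_m(\mu)=\ell_m(v\mu)-1$ from \eqref{lmX}) is the right one, but the specific mechanism you propose in the middle — tracking individual arrows under the $v_M$-conjugation bijection $\Arr_m(\mu)\cong\Arr_m(v\mu)\setminus\{\mu\raw v\mu\}$ and showing that $q\mu\leq\lambda$ forces this bijection to preserve the condition ``source $\leq\lambda$'' — does not work, and this is a genuine gap. The linear part of $v_M$ is $s_2$, which swaps $\alpha_1\leftrightarrow\alpha_{21}$ (and preserves the $\alpha_2$- and $\alpha_{12}$-lines). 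So the conjugation bijection sends $\alpha_{21}$-arrows into $\mu$ to $\alpha_1$-arrows into $v\mu$. But by \eqref{ell1} the $\alpha_1$-direction never suffers any loss on restriction to $\{\leq\lambda\}$, so those $\alpha_1$-arrows into $v\mu$ always have sources $\leq\lambda$, while their $\alpha_{21}$-partners into $\mu$ are precisely the ones that may fall off $\Conv(W\cdot\lambda)$ when $r_M\mu\not\leq\lambda$. Conversely, the $\alpha_{12}$-arrows into $\mu$ all have sources $\leq\lambda$ (since $q\mu\leq\lambda$), but some of their $\alpha_{12}$-images into $v\mu$ fall off. So the bijection \emph{mixes} which arrows are lost; it does not preserve the constraint pointwise, and the one-sided implication you set out to prove is simply false.

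What is true, and what the paper's argument establishes, is that the two loss \emph{counts} coincide: it suffices to show $\ell_m(\mu)-\ell_m(\mu,\lambda)=\ell_m(v\mu)-\ell_m(v\mu,\lambda)$, and since $q\mu\leq\lambda$ (and hence $s_1q\mu=r_Mv_M\mu\leq\lambda$) the losses occur \emph{only} in the $\alpha_{21}$-direction for $\mu$ and \emph{only} in the $\alpha_{12}$-direction for $v\mu$, while the $\alpha_1$- and $\alpha_2$-directions contribute zero on both sides. This sidesteps entirely the ``$\alpha_1$-shift'' you flag as the main obstacle — the $\alpha_1$-counts differ for $\mu$ and $v\mu$, but they cancel identically inside the differences. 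The remaining identity $\affphi_{21}(\mu,\lambda)$-loss $=\affphi_{12}(v\mu,\lambda)$-loss is verified by a short computation, but it relies on a geometric input you do not anticipate: namely \Cref{qmuineq}, which from $q\mu\leq\lambda$ and $r\mu\not\leq\lambda$ extracts the constraints $-\lambda_1\leq\mu_1\leq\lambda_1$ and $(v\mu)_2\leq-\lambda_2$, making the explicit formulas in \Cref{affphicompute} collapse to matching expressions. You would also need to handle separately the easy case $r\mu\leq\lambda$ (then $\Conv(W^{m+1}\cdot\mu)\subset\Conv(W\cdot\lambda)$ and both losses are zero). Your fallback plan — brute-force arithmetic over all four directions via \Cref{affphicompute} — could in principle be pushed through, but without the reduction to loss-differences and without the constraints of \Cref{qmuineq} it will be considerably messier than the paper's argument.
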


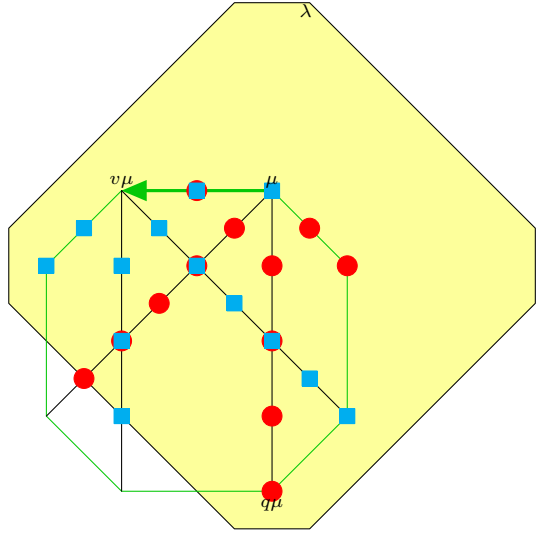
\begin{figure}
\begin{center}
\begin{tikzpicture}[line cap=round,line join=round,>=triangle 45,x=.5cm,y=.5cm]
\draw[style=black] (1,7) -- (-1,7) -- (-7,1) -- (-7,-1) -- (-1,-7) -- (1,-7) -- (7,-1) -- (7,1) -- cycle;
\draw[style=green] (-6,-4) -- (-6,0) -- (-4,2) -- (0,2) -- (2,0) -- (2,-4) -- (0,-6) -- (-4,-6) -- cycle;
\draw (-4,2)-- (0,2);
\draw (0,2)-- (-6,-4);
\draw (0,2)-- (0,-6);
\draw  (-4,2)-- (-4,-6);
\draw  (-4,2)-- (2,-4);
\begin{scriptsize}
\draw[green, very thick, ->] (0,2) to (-4,2);
\draw (.9,6.8) node {$\lam$};
\node [style=blue] at (0,2) {};
\draw (0,2.23) node {$\mu$};
\draw (-4,2.23) node {$v\mu$};
\node [style=red] at (2,0) {};
\node [style=blue] at (2,-4) {};
\node [style=blue] at (-6,0) {};
\node [style=red] at (0,-6) {};
\draw (0,-6.35) node {$q\mu$};
\node [style=red] at (-5,-3) {};
\node [style=red] at (-4,-2) {};
\node [style=blue] at (-4,-2) {};
\node [style=blue] at (-4,-4) {};
\node [style=red] at (0,0) {};
\node [style=red] at (0,-2) {};
\node [style=red] at (0,-4) {};
\node [style=blue] at (0,-2) {};
\node [style=blue] at (-4,0) {};
\node [style=blue] at (-5,1) {};
\node [style=red] at (1,1) {};
\node [style=red] at (-2,2) {};
\node [style=blue] at (-2,2) {};
\node [style=red] at (-1,1) {};
\node [style=red] at (-2,0) {};
\node [style=blue] at (-2,0) {};
\node [style=red] at (-3,-1) {};
\node [style=blue] at (-3,1) {};
\node [style=blue] at (-1,-1) {};
\node [style=blue] at (1,-3) {};
\end{scriptsize}
\end{tikzpicture}
\caption{In this example $q\mu\leq \lam$ and the edge $\mu\raw v\mu$ is indeed swappable.}\label{figqmu<lam}
\end{center}
\end{figure}

We begin with a preliminary computation.
\begin{lemma}\label{qmuineq}
    If $q\mu\leq \lambda$ and $r\mu\not \leq \lambda$, then $-\lambda_1\leq \mu_1 \leq \lambda_1$ and $(v\mu)_2=-\mu_2-2M\leq - \lam_2$.
\end{lemma}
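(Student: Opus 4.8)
The plan is to unpack the two hypotheses $q\mu \le \lambda$ and $r\mu \not\le \lambda$ into explicit inequalities via \Cref{octineq} and \Cref{affphicompute}, and then extract the two claimed inequalities by a short elementary argument. Recall $q\mu = (-\mu_1-2\mu_2-2M,\mu_2)$ and $r\mu = (\mu_1,-\mu_1-\mu_2-2M)$, and that we are in the standing situation of the section, namely $m+1 = 4M$ and $\mu < v\mu \le \lambda$, so in particular $\mu \le \lambda$ and $\mu_2 + M > 0$ (the latter because $\mu < v_M\mu$ means $\mu_2 > -M$).

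First I would use $q\mu \le \lambda$. Applying \Cref{octineq} to $q\mu$, the first pair of inequalities reads $-\lambda_1-2\lambda_2 \le \langle q\mu,\alpha_1^\vee\rangle = -\mu_1-2\mu_2-2M \le \lambda_1+2\lambda_2$, and the third pair gives $-\lambda_1-\lambda_2 \le \langle q\mu,\alpha_2^\vee\rangle = \mu_2 \le \lambda_1+\lambda_2$; also, since $q$ is a reflection fixing the convex hull, one directly has $\langle q\mu,\alpha_{12}^\vee\rangle = -(\mu_1+\mu_2+M) -M \ge -(\lambda_1+\lambda_2)$, i.e. $\mu_1+\mu_2+2M \le \lambda_1+\lambda_2$. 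In particular $q\mu \le \lambda$ is equivalent to $\affphi_{12}(\mu,\lambda) \ge \mu_1+\mu_2+M$ by the defining property of $\affphi_{12}$, and by \eqref{eqqu} the hypothesis $\mu\le q_M\mu$ needed there holds because $\mu_1 \le 0$ is forced (if $\mu_1 > 0$ then... — actually I will get $\mu_1 \le \lambda_1$ from the displayed inequalities regardless).

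Next I would use $r\mu \not\le \lambda$, which by \eqref{eqru} (valid since $\mu \le r_M\mu$ follows from $\mu < v_M\mu$ together with $\mu_1 \ge$ or $\le 0$ and \Cref{bruhatorderWk}) is equivalent to $2M > \affphi_{21}(\mu,\lambda) - \mu_1 - 2\mu_2$, i.e. $\affphi_{21}(\mu,\lambda) < \mu_1 + 2\mu_2 + 2M$. Plugging in the formula $\affphi_{21}(\mu,\lambda) = \lambda_2 + \mu_2 + \min(\lambda_1, \tfrac{\lambda_1+\mu_1}{2}, \lambda_1+\mu_1)$ from \Cref{affphicompute}, this says $\lambda_2 + \mu_2 + \min(\lambda_1,\tfrac{\lambda_1+\mu_1}{2},\lambda_1+\mu_1) < \mu_1 + 2\mu_2 + 2M$, which I rearrange to $\min(\lambda_1,\tfrac{\lambda_1+\mu_1}{2},\lambda_1+\mu_1) < \mu_1 + \mu_2 + 2M - \lambda_2 \le \lambda_1$ using the inequality $\mu_1+\mu_2+2M\le\lambda_1+\lambda_2$ derived above. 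So the minimum is strictly less than $\lambda_1$, hence it equals either $\tfrac{\lambda_1+\mu_1}{2}$ or $\lambda_1+\mu_1$ and in either case this forces $\mu_1 < \lambda_1$, giving the first claim $-\lambda_1 \le \mu_1 \le \lambda_1$ once I also note $-\lambda_1 \le \mu_1$: this last bound comes from $\mu_1+2\mu_2+2M \ge 0$ combined with $\mu_2+M \le$ something — more precisely, from $q\mu\le\lambda$ we have $-\mu_1-2\mu_2-2M = \langle q\mu,\alpha_1^\vee\rangle \le \lambda_1+2\lambda_2$ but I actually want the cleaner route: $\langle r\mu, \alpha_1^\vee\rangle = \mu_1$ and since the segment from $\mu$ to $q\mu$ or an adjacent orbit point stays in the hull, $\mu_1 \ge -\lambda_1 - 2\lambda_2$; I will pin down the exact sharper bound $-\lambda_1 \le \mu_1$ by using $v\mu \le \lambda$, i.e. $\langle v\mu,\alpha_1^\vee\rangle = \mu_1+2\mu_2+2M \le \lambda_1+2\lambda_2$ together with $\mu_2 \le \lambda_2$ giving $\mu_1 \ge -\lambda_1$. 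For the second claim, $(v\mu)_2 = -\mu_2-2M$, and I must show $-\mu_2-2M \le -\lambda_2$, i.e. $\mu_2 + 2M \ge \lambda_2$; this follows from the inequality $\min(\lambda_1,\tfrac{\lambda_1+\mu_1}{2},\lambda_1+\mu_1) < \mu_1+\mu_2+2M-\lambda_2$ together with $\mu_1 \le \lambda_1$ (so $\tfrac{\lambda_1+\mu_1}{2} \le \lambda_1$ but more usefully $\mu_1 \le \lambda_1$ and the minimum is $\ge \mu_1$ when $\mu_1 \le 0$, or I bound the minimum below by, say, $\min(\tfrac{\lambda_1+\mu_1}{2}, \lambda_1+\mu_1) \ge \mu_1$ when $\mu_1 \ge -\lambda_1$): from $\mu_1 \le \mu_1+\mu_2+2M-\lambda_2$ one would get $\mu_2+2M \ge \lambda_2$ directly, but that inequality $\min(\dots) \ge \mu_1$ needs $\lambda_1 \ge -\mu_1$ and $\lambda_1+\mu_1 \ge 2\mu_1 \iff \lambda_1 \ge \mu_1$, both of which I have.

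The main obstacle I anticipate is bookkeeping: there are several overlapping linear inequalities (from $q\mu\le\lambda$, from $v\mu\le\lambda$, from $r\mu\not\le\lambda$, from $\mu < v\mu$) and the correct deduction of $\mu_1 \ge -\lambda_1$ and of $\mu_2+2M\ge\lambda_2$ requires selecting the right combination and being careful about which branch of the $\min$ in $\affphi_{21}$ is active. I would organize the proof by first writing down all the relevant inequalities explicitly in a single display, then doing the two short deductions. No deep idea is needed beyond \Cref{octineq}, \Cref{affphicompute}, and equations \eqref{eqqu}, \eqref{eqru}; the verification that the side conditions $\mu \le q_M\mu$ and $\mu \le r_M\mu$ hold follows from \Cref{bruhatorderWk} given $\mu < v_M\mu$.
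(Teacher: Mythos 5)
Your approach is genuinely different from the paper's: the paper argues geometrically by tracking which boundary segment of $\Conv(W\cdot\lambda)$ the ray $\{\mu - x\alpha_{21}\}$ exits through and then using the $s_1$-related weights $vq\mu$, $qr\mu$, $vr\mu$ to produce a contradiction, whereas you unpack both hypotheses into explicit inequalities via \Cref{affphicompute} and the threshold characterizations \eqref{eqqu}, \eqref{eqru}. Two of your three sub-arguments are sound. For $\mu_1 \leq \lambda_1$: the side condition $\mu \leq r_M\mu$ you hand-wave (your appeal to \Cref{bruhatorderWk} only covers $\mu_1 \geq 0$) is in fact automatic — if $r\mu < \mu$ then $r\mu < \mu \leq \lambda$ would force $r\mu \leq \lambda$ — and then $\affphi_{21}(\mu,\lambda) < \mu_1 + 2\mu_2 + 2M$ together with $\mu_1 + \mu_2 + 2M \leq \lambda_1 + \lambda_2$ (from $q\mu \leq \lambda$) does force $\min\bigl(\lambda_1, \tfrac{\lambda_1+\mu_1}{2}, \lambda_1+\mu_1\bigr) < \lambda_1$ and hence $\mu_1 < \lambda_1$. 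Your argument for $(v\mu)_2 \leq -\lambda_2$, using $\min(\cdots) \geq \mu_1$ once $\mu_1 \leq \lambda_1$ and $\lambda_1 \geq 0$ are known, is also correct.

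The genuine gap is in $\mu_1 \geq -\lambda_1$. The inequalities you cite, $\mu_1 + 2\mu_2 + 2M \leq \lambda_1 + 2\lambda_2$ and $\mu_2 \leq \lambda_2$, only combine to give an \emph{upper} bound on $\mu_1$, never a lower bound; and $\mu_2 \leq \lambda_2$ is not even available ($\mu \leq \lambda$ only yields $\mu_2 \leq \lambda_1 + \lambda_2$ by \Cref{octineq}). The repair — which is exactly the $s_1$-symmetry the paper's proof is built on — is to apply your $\mu_1 < \lambda_1$ argument to $\mu' := s_1\mu$. Since $\lambda$ is dominant, $\nu \leq \lambda \iff s_1\nu \leq \lambda$, and one checks $q\mu' = s_1 v\mu \leq \lambda$, $r\mu' = s_1 r\mu \not\leq \lambda$, $\mu' \leq \lambda$. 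Your derivation of $\mu_1 < \lambda_1$ used only these three facts (not $\mu < v\mu$), so it applies verbatim to $\mu'$ and yields $-\mu_1 = \mu'_1 < \lambda_1$, i.e.\ $\mu_1 > -\lambda_1$. With that substitution the proposal becomes a complete, valid alternative proof.
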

\begin{proof}
Observe that, since $\lam\in X_+$, for any $\nu \in X$ we have $\nu\leq \lam$ if and only if $s_1\nu\leq \lam$. So we also have $s_1r\mu=vq\mu\not\leq \lam$, $s_1q\mu =qr\mu\leq \lam$ and $s_1v \mu = vr\mu\leq \lam$. 	

If $\mu_1> \lam_1$ the line $\{ \mu - x \alpha_{21}\}_{x\in \bbR_{>0}}$ intersects the boundary of $\Conv(W\cdot \lambda)$ in the segment \[[s_2s_1\lam,s_2s_1s_2\lam]\subset H:=\{\nu \in X_\bbR \mid \langle\nu,\alpha_2^\vee\rangle= \langle s_2s_1\lam, \alpha_2^\vee\rangle=-\lam_1-\lam_2\},\] and since $r \mu \notin \operatorname{Conv}(W \cdot \lam)$ we have
	\[\langle r\mu,\alpha_2^\vee\rangle
 <-\lam_1 -\lam_2,\]
However, $qr\mu = s_1q \mu$ lies on the same side of $H$  as $r\mu$, since $\langle s_1q\mu,\alpha_2^\vee\rangle=\langle r\mu,\alpha_2^\vee\rangle$. Therefore, $s_1q\mu \not \in \Conv(W\cdot \lam)$, contradicting our assumption. Similarly, if $\mu_1<-\lam_1$, then we must have 
	\[\langle r\mu,\alpha_{12}^\vee\rangle <-\lam_1-\lam_2,\]
which implies $vr\mu=s_1v\mu \not \in\Conv(W\cdot \lam)$. We conclude theat $-\lam_1\leq \mu_1\leq \lam_1$. 

For the second part, assume that $(v\mu)_2>-\lam_2$, then 
the line $\{v\mu-x\alpha_{12}\}_{x\in \bbR_{>0}}$ intersects the segment \[[s_2s_1s_2\lam,w_0\lam]\subset H':=\{\nu \in X_\bbR \mid \langle \nu, \alpha_{21}^\vee\rangle=\langle w_0\lambda,\alpha_{21}^\vee\rangle=-\lambda_1-\lambda_2\}\] forcing
	$\langle qv\mu,\alpha_{21}^\vee\rangle<-\lam_1-\lam_2$.
	But since $\langle q\mu,\alpha_{21}^\vee\rangle=\langle qv\mu,\alpha_{21}^\vee\rangle$, this would contradict $q\mu \leq \lam$.
\end{proof}

\begin{proof}[Proof of \Cref{qmuswappable}.]

Recall that $\ell_m(\mu)=\ell_m(v\mu)-1$ by \eqref{lmX}. To conclude it is enough to show that \begin{equation}\label{elldiff}
	    \ell_m(\mu)-\ell_m(\mu,\lam) = \ell_m(v\mu)-\ell_m(v\mu,\lam)
	\end{equation} 

The proof is divided in two cases.
 Assume first that $r\mu \leq \lambda$. In this case, since additionally $q\mu \leq \lambda$, the convex hull $\Conv(W^{m+1}\cdot \mu)$ is contained in $\Conv(W\cdot \lam)$ entirely, so 
 $\ell_m(\mu,\lambda)=\ell_m(\mu)$ and $\ell_m(v\mu)=\ell_m(v\mu,\lambda)$.

We can assume now that $r\mu \not \leq \lambda$. 
By \Cref{qmuineq}, we have $-\lam_1\leq \mu_1\leq \lam_1$, which implies that $\lambda_1 + \mu_1 \geq 0$ and $\operatorname{min} (\lambda_1, \frac{\lambda_1 +\mu_1}{2}, \lambda_1 +\mu_1) = \frac{\lambda_1 +\mu_1}{2}$.
It follows from \Cref{affphicompute} that
\[ \affphi_{21}(\mu,\lam)=\mu_2+\lam_2+\frac{\mu_1+\lam_1}{2}.\]
		Since $q\mu \leq \lambda$, we have $\Arr_m^{12}(\mu)=\Arr_m^{12}(\mu,\lam)$ and \[\Arr_m^{21}(\mu)\setminus \Arr_m^{21}(\mu,\lam)=\{(\mu -k\alpha_{21})\raw \mu \mid \affphi_{21}(\mu,\lam)<k\leq \mu_1+2\mu_2+2M\},\] 
 so we get
	\begin{align}\label{Arrmu}
	\ell_m(\mu)-\ell_m(\mu,\lam) =	\ell_m^{21}(\mu)-\ell_m^{21}(\mu,\lam) =& 2M+\mu_1+2\mu_2-\affphi_{21}(\mu,\lambda)\nonumber\\
	=&2M +\mu_2 -\lam_2 +\frac{\mu_1-\lam_1}{2}.
	\end{align}
	 By \Cref{affphicompute}, since $(v\mu)_2\leq -\lam_2$ we have
	\[\affphi_{12}(v\mu,\lam)=\frac{\mu_1+\lam_1}{2}+\lam_2-M.\]
Similarly, since $rv\mu=s_1q\mu\leq \lam$ we have 
	$\Arr_m^{21}(v\mu)=\Arr_m^{21}(v\mu,\lam)$ and
	\[\Arr_m^{12}(v\mu)\setminus \Arr_m^{12}(v\mu,\lam)=\{v\mu-k\alpha_{12}\raw v\mu \mid \affphi_{12}(v\mu,\lam)<k\leq (v\mu)_1+(v\mu)_2+M\}\]
	We get
	\begin{align}\nonumber\label{arrtu}\ell_m(v\mu)-\ell_m(v\mu,\lam)&=\ell_m^{12}(v\mu)-\ell_m^{12}(v\mu,\lam)\\\nonumber&=(v\mu)_1+(v\mu)_2+M-\affphi_{12}(v\mu,\lam)\\
	&=\mu_1+\mu_2+M-\lam_2+M-\frac{\mu_1+\lam_1}{2}.\end{align}
The claimed identity \eqref{elldiff} now follows by comparing \eqref{Arrmu} and \eqref{arrtu}.
\end{proof}

As a consequence, an edge $\mu\raw v\mu$ can only be not swappable if $q\mu\not \leq \lam$. This gives some constraint on the possible location of such weights $\mu$ (see \Cref{startingpoints}).
\begin{lemma}
\label{qmuneqlambda}
	If $q\mu\not\leq \lambda$, then $\mu_1> 0$, $\mu_2<\lambda_2$ and $r\mu \not \leq \lambda$.
\end{lemma}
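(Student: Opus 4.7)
The plan is to translate every condition into the linear inequalities of \Cref{octineq}. Recall the explicit formulas
\[v\mu=(\mu_1+2\mu_2+2M,-\mu_2-2M),\quad q\mu=(-\mu_1-2\mu_2-2M,\mu_2),\quad r\mu=(\mu_1,-\mu_1-\mu_2-2M),\]
and that the standing hypotheses $\mu<v\mu\leq\lambda$ provide all the inequalities of \Cref{octineq} for $\mu\leq\lambda$ and $v\mu\leq\lambda$, together with the strict positivity $\mu_2+M>0$ (and hence $2M\geq 2$). My first step would be to show that of the eight inequalities of \Cref{octineq} required for $q\mu\leq\lambda$, six are automatically forced by these hypotheses; the only two that can genuinely fail are
\begin{align*}
\text{(a)}\qquad \mu_1+\mu_2+2M &\leq \lambda_1+\lambda_2, \\
\text{(b)}\qquad \mu_1+2M &\leq \lambda_1+2\lambda_2.
\end{align*}
Thus $q\mu\not\leq\lambda$ is equivalent to the failure of (a) or of (b).

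Parts (1) and (2) then reduce to short contrapositive arguments. If $\mu_1\leq 0$, then (a) follows by adding $\mu_1\leq 0$ to the inequality $(v\mu)_2\geq -\lambda_1-\lambda_2$ (which reads $\mu_2+2M\leq \lambda_1+\lambda_2$), and (b) follows similarly from $(v\mu)_1+2(v\mu)_2\geq -\lambda_1-2\lambda_2$ (which reads $2M\leq \lambda_1+2\lambda_2+\mu_1$) after a further application of $\mu_1\leq 0$; this contradicts $q\mu\not\leq \lambda$, so $\mu_1>0$. If instead $\mu_2\geq \lambda_2$, both (a) and (b) follow from the single inequality $(v\mu)_1\leq \lambda_1+2\lambda_2$ (i.e.\ $\mu_1+2\mu_2+2M\leq \lambda_1+2\lambda_2$) by subtracting $\mu_2\geq \lambda_2$ from LHS and $\lambda_2$ from RHS to get (a), and subtracting $2\mu_2\geq 2\lambda_2\geq 0$ from LHS to get (b); again contradiction, forcing $\mu_2<\lambda_2$.

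Part (3) needs a slight extra ingredient. Performing the same reduction on $r\mu$ shows that its eight defining inequalities collapse to (a) together with
\begin{align*}
\text{(c)}\qquad \mu_1+2\mu_2+4M \leq \lambda_1+2\lambda_2
\end{align*}
(coming from the lower bound $(r\mu)_1+2(r\mu)_2\geq -\lambda_1-2\lambda_2$). Because $\mu_2+M>0$, condition (c) is strictly stronger than (b): subtracting $2(\mu_2+M)>0$ from (c) yields the strict inequality $\mu_1+2M<\lambda_1+2\lambda_2$. Equivalently, if (b) fails then adding $2(\mu_2+M)>0$ to both sides of its reverse shows (c) also fails. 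Hence $r\mu\leq\lambda\iff (\text{a})\wedge(\text{c})\Rightarrow (\text{a})\wedge(\text{b})\iff q\mu\leq\lambda$, and taking the contrapositive gives $q\mu\not\leq\lambda\Rightarrow r\mu\not\leq\lambda$.

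The main subtle point I expect is the distinction between (b) and (c) in part (3), bridged by the strict positivity $\mu_2+M>0$ (note that $q\mu\leq\lambda$ does \emph{not} in general imply $r\mu\leq\lambda$, so a careful identification of (c) is essential). Apart from this, the proof is a routine manipulation of linear inequalities, with attention to signs and to the systematic use of $2M\geq 2$.
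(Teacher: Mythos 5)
Your proof is correct, and it takes a genuinely more elementary route than the paper's. You reduce $q\mu\leq\lambda$ and $r\mu\leq\lambda$ directly to the eight linear inequalities of \Cref{octineq}, check that six of them (and the parity condition) are automatic given $\mu<v\mu\leq\lambda$, and then handle the remaining constraints by adding and subtracting the obvious quantities; the key structural observation is that $r\mu\leq\lambda$ is governed by (a) and your (c), and that (c) $\Rightarrow$ (b) thanks to the strict positivity $\mu_2+M>0$. The paper instead proves $\mu_1>0$ and $\mu_2<\lambda_2$ by combining the criteria \eqref{eqtu} and \eqref{eqqu}, which turn $v\mu\leq\lambda$ and $q\mu\not\leq\lambda$ into the single strict comparison of minima \eqref{minconfront}, and then reads both conclusions off that inequality. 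For $r\mu\not\leq\lambda$ the paper uses a group-theoretic shortcut: since $\lambda$ is dominant, $\{\leq\lambda\}$ is $s_1$-stable, so $s_1q\mu\not\leq\lambda$; then $r=qs_1q$ and the Bruhat-order criterion \eqref{bruhatorder} give $s_1q\mu = q(r\mu) < r\mu$, hence $r\mu\not\leq\lambda$ by downward-closure. Your version is fully self-contained and makes the geometry of the octagon completely explicit, while the paper's is shorter given that the $\affphi$-machinery and the Bruhat-order formalism are already in play and are reused throughout the section.
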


\begin{figure}
\begin{center}
\begin{tikzpicture}[line cap=round,line join=round,>=triangle 45,x=.15cm,y=.15cm]
\draw[style=black] (4,10) -- (-4,10) -- (-10,4) -- (-10,-4) -- (-4,-10) -- (4,-10) -- (10,-4) -- (10,4) -- cycle;
\fill [red] (4,10)-- (4,4)-- (-7,-7) -- (-10,-4) -- (-10,4) -- (-4,10) -- (4,10) -- cycle;
\fill [orange]  (-4,-4)-- (-7,-7) -- (-10,-4) -- cycle;
\begin{scriptsize}
\draw (-10,-10) -- (10,10) node[above] {$\lam_1=0$}; 
\draw (4,-13) -- (4,13) node[above] {$\mu_2=\lam_2$};
\draw (-13, -4) -- (13,-4) node[right] {$\mu_1+\mu_2=-\lam_2$};    
\end{scriptsize}
\end{tikzpicture}
\end{center}
\caption{By \Cref{qmuneqlambda} the starting point of a non-swappable edge in the $\alpha_2$-direction must lie in the red or in the orange region. We further show in \Cref{mu1<lam1} that actually a starting point of a non-swappable edge can only be in the red region.}\label{startingpoints}
\end{figure}
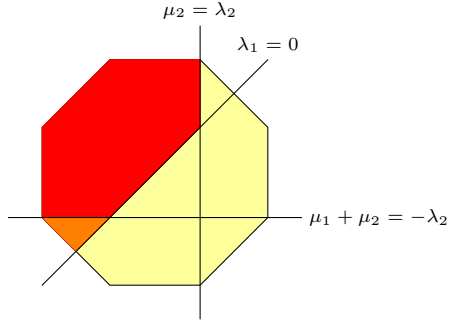

\begin{proof}
	Assume that $q\mu \not \leq \lambda$. Then by \eqref{eqtu} and \eqref{eqqu} we have 
	\[ \affphi_2(\mu,\lambda)\geq \mu_2+M > \affphi_{12}(\mu,\lambda) -\mu_1.\]
	This is equivalent to
	\begin{equation}\label{minconfront}
		\min(\lambda_2+\mu_1+\mu_2,\lfl\frac{\lambda_2+\mu_1+\mu_2}{2}\rfl,\lambda_2)>\min(\lambda_2+\mu_2,\lfl\frac{\lambda_2+\mu_2}{2}\rfl,\lambda_2).\end{equation}
	This forces $\mu_1>0$.  Moreover, we have $\mu_2< \lambda_2$ otherwise both sides of \eqref{minconfront} would be  equal to $\lambda_2$.
	
	Notice that if $q\mu \not \leq \lam$, also $s_1q\mu\not \leq \lam$.
 Moreover,  $r=qs_1q$ and $\langle r\mu , \alpha_{12}^\vee\rangle = -\mu_2 -2M< -M$. By \eqref{bruhatorder}, we conclude that $q\mu <r\mu \not \leq \lambda$.
\end{proof}

\subsubsection{Classification of swappable edges}


\begin{lemma}
Assume $\mu_1\geq 0$. An edge $\mu\raw v\mu$ is swappable if and only if 
\begin{equation}\label{count1}2(\mu_2+M) +\ell_{m}^{12}(v\mu,\lambda)+\ell_{m}^{21}(v\mu,\lambda)=\ell_{m}^{12}(\mu,\lambda)+\ell_{m}^{12}(\mu,\lambda).\end{equation}
\end{lemma}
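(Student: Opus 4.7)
The plan is to reduce the swappability condition to an identity involving only the $\alpha_{12}$- and $\alpha_{21}$-components of $\ell_m$ by computing the $\alpha_1$- and $\alpha_2$-contributions explicitly. By \eqref{ellsubdivided}, the edge $\mu\to v\mu$ is swappable if and only if
\[
\sum_{i\in\{1,2,12,21\}} \bigl(\ell_m^i(v\mu,\lambda)-\ell_m^i(\mu,\lambda)\bigr) \;=\; 1.
\]
I will show that the $i=1$ and $i=2$ summands contribute $2(\mu_2+M)$ and $1$ respectively; rearranging the remaining $i\in\{12,21\}$ terms then yields \eqref{count1} (with the evident typo in the RHS corrected to $\ell_m^{12}(\mu,\lambda)+\ell_m^{21}(\mu,\lambda)$).

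For the $\alpha_1$-direction: the assumption $\mu_1\geq 0$ together with $(v\mu)_1=\mu_1+2(\mu_2+M)$, which is also nonnegative because $\mu<v\mu$ forces $\mu_2+M>0$, combined with formula \eqref{ell1} gives $\ell_m^1(v\mu,\lambda)-\ell_m^1(\mu,\lambda)=2(\mu_2+M)$ at once.

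For the $\alpha_2$-direction, set $N:=\mu_2+M$. By \Cref{leqmu}(2), $\{\leq_m v\mu\}=\Conv(W^{4M}\cdot\mu)\cap(\mu+\bbZ\Phi)$. From \Cref{Wmu}, the $\alpha_2$-line through $v\mu$ (which is also the line through $\mu$) is precisely the top edge of the octagon $\Conv(W^{4M}\cdot\mu)$, so its lattice points that are $\leq_m v\mu$ are exactly $\mu-k\alpha_2$ for $k=0,\ldots,N$. Since $\mu,v\mu\leq\lambda$, the segment $[v\mu,\mu]$ lies in $\Conv(W\cdot\lambda)$ and every such lattice point is $\leq\lambda$ by \eqref{smallerthanlam}. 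Excluding $v\mu$ itself, this yields $\ell_m^2(v\mu,\lambda)=N$. An analogous argument, or more cleanly the bijection of \cite[Lemma 4.8]{Pat}, $\nu\mapsto v\nu$, which carries $\Arr_m^2(\mu)$ bijectively onto $\Arr_m^2(v\mu)\setminus\{\mu\to v\mu\}$ and preserves both the $\alpha_2$-line and the segment $[v\mu,\mu]$, gives $\ell_m^2(\mu,\lambda)=N-1$. Therefore $\ell_m^2(v\mu,\lambda)-\ell_m^2(\mu,\lambda)=1$.

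Substituting these two computations into the swappability equation produces
\[
2(\mu_2+M)+1+\bigl(\ell_m^{12}(v\mu,\lambda)+\ell_m^{21}(v\mu,\lambda)\bigr)-\bigl(\ell_m^{12}(\mu,\lambda)+\ell_m^{21}(\mu,\lambda)\bigr)=1,
\]
which rearranges exactly to \eqref{count1}. The delicate point is the $\alpha_2$-computation: a priori restricting the arrow sets to $\leq\lambda$ could break the unrestricted identity $\ell_m^2(v\mu)-\ell_m^2(\mu)=1$, but the hypothesis $v\mu\leq\lambda$ places the entire segment $[v\mu,\mu]$ inside $\Conv(W\cdot\lambda)$, so every relevant source is automatically $\leq\lambda$ and the restriction is invisible in this direction. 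This is what fails for the long-root directions $\alpha_{12}$ and $\alpha_{21}$, which is precisely why those terms remain in the final identity.
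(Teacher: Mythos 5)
Your proposal is correct and takes essentially the same approach as the paper: compute the $\alpha_1$- and $\alpha_2$-contributions explicitly via \eqref{ell1} and \Cref{leqmu}, then rearrange \eqref{ellsubdivided}. You are more careful than the paper's terse two-line argument about why the restriction to $\{\leq\lambda\}$ is invisible in the $\alpha_2$-direction (the segment $[v\mu,\mu]$ lies in $\Conv(W\cdot\lambda)$ since both endpoints do), which is a genuine point the paper leaves implicit; you also correctly flag the typo in the RHS of \eqref{count1}.
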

\begin{proof}
By \Cref{leqmu}, an arrow $(\mu-k\alpha_2\raw v\mu)$ is in $\Arr_m^2(\mu,\lam)$ if and only if $0\leq k< \mu_2 +M$. It follows that $\ell_m^2(\mu)=\ell_m^2(v\mu)-1$.
Moreover, by \eqref{ell1}, we have \[ \ell_{m}^{1}(\mu,\lam) = \ell_{m}^1(v\mu,\lam) - 2(\mu_2+M).\]
The claim now follows directly from \eqref{ellsubdivided}. 
\end{proof}
We now need to estimate carefully $\ell_m^{12}(\mu,\lambda)$ and $\ell_m^{21}(\mu,\lam)$, i.e., we need to characterize the arrows in $\Arr_m^{12}(\mu)$ and $\Arr_m^{21}(\mu)$ whose starting point is contained in $\Conv(W\cdot \lam)$.


We are now ready to classify all swappable $\alpha_2$-edges. We have already seen that it is always swappable if $\mu_1\leq 0$. Now we divide the rest into two cases: $\mu_1\geq \lam_1$ and $0<\mu_1<\lam_1$.
As illustrated in \Cref{figmu1>l1}, in the case $\mu_1\geq \lam_1$ it is sufficient to compare the number of weights below $\mu$ and $v\mu$ in the convex hull of $W\cdot \lam$. We prove now this analytically.

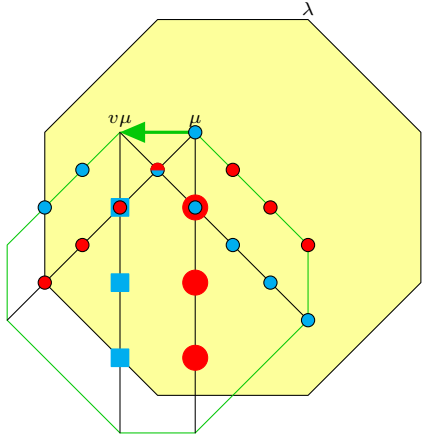
\begin{figure}
\begin{center}
\begin{tikzpicture}[line cap=round,line join=round,>=triangle 45,x=.5cm,y=.5cm]
\draw[style =black] (2,5) -- (-2,5) -- (-5,2) -- (-5,-2) -- (-2,-5) -- (2,-5) -- (5,-2) -- (5,2) -- cycle;
\draw[style=green] (-6,-3) -- (-6,-1) -- (-3,2) -- (-1,2) -- (2,-1) -- (2,-3) -- (-1,-6) -- (-3,-6) -- cycle;
 \draw  (-1,2)-- (-6,-3);
 \draw  (-1,2)-- (-1,-6);
 \draw  (-3,2)-- (-3,-6);
 \draw  (-3,2)-- (2,-3);
 \begin{scriptsize}
\node at (2,5.3) {$\lam$};
 \node at (-1,2.3) {$\mu$};
 \node at (-3,2.3) {$v\mu$};  
 \end{scriptsize}
\draw[green, very thick, ->] (-1,2) to (-3,2);

\draw [fill=cyan] (-1,2) circle (2.5pt);
\draw [fill=red] (2,-1) circle (2.5pt);
\draw [fill=cyan] (2,-3) circle (2.5pt);
\draw [fill=cyan] (-5,0) circle (2.5pt);
\draw [fill=red] (-5,-2) circle (2.5pt);
\halfcirc{-2}{1};
\draw [fill=cyan] (-4,1) circle (2.5pt);
\draw [fill=red] (0,1) circle (2.5pt);
\draw [fill=red] (-4,-1) circle (2.5pt);
\draw [fill=cyan] (0,-1) circle (2.5pt);
\draw [fill=cyan] (1,-2) circle (2.5pt);
\draw [fill=red] (1,0) circle (2.5pt);

\node[style=red] at (-1,-2) {};
\node[style=red] at (-1,-4) {};
\node[style=red] at (-1,0) {};
\node[style=blue] at (-3,-2) {};
\node[style=blue] at (-3,-4) {};
\node[style=blue] at (-3,0) {};
\draw [fill=red] (-3,0) circle (2.5pt);
\draw [fill=cyan] (-1,0) circle (2.5pt);
\end{tikzpicture}
\end{center}
\caption{We have $\mu_1\geq \lam_1$, so to check whether $\mu\raw v\mu$ is swappable we just need to count the weights below $\mu$ and $v\mu$. In this example they are both $3$, hence the edge is swappable.}\label{figmu1>l1}
\end{figure}

\begin{proposition}\label{mu1geqlam1}
Let $\mu$ be such that $\mu_1\geq \lam_1$. Then $\mu\raw v\mu$ is swappable if and only if 
\[ \mu_2\geq -\lam_2+1\text{ and }M \leq \lce \frac{\lam_2-\mu_2 }{2}\rce.\]
\end{proposition}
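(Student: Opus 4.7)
The plan is to apply the numerical criterion \eqref{count1} from the preceding lemma, which says that $\mu\to v\mu$ is swappable if and only if
\[
2(\mu_2+M)+\ell_m^{12}(v\mu,\lambda)+\ell_m^{21}(v\mu,\lambda)=\ell_m^{12}(\mu,\lambda)+\ell_m^{21}(\mu,\lambda),
\]
and to evaluate all four $\ell$-terms explicitly, exploiting the simplifications afforded by $\mu_1\geq \lambda_1$. Since $\mu<v\mu\leq \lambda$ implies $\mu_2+M>0$, one also has $(v\mu)_1=\mu_1+2\mu_2+2M>\mu_1\geq \lambda_1$, and plugging this into \Cref{affphicompute} collapses the three-term minimum in $\affphi_{21}$ to $\lambda_1$ for both weights, giving $\affphi_{21}(\mu,\lambda)-\affphi_{21}(v\mu,\lambda)=2(\mu_2+M)$.

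Next I will split the argument into two cases. When $q_M\mu\leq \lambda$, \Cref{qmuswappable} immediately gives swappability, and a direct check shows that the octagon inequality $\mu_1+\mu_2+2M\leq \lambda_1+\lambda_2$ combined with $\mu_1\geq \lambda_1$ yields $M\leq (\lambda_2-\mu_2)/2\leq \lceil(\lambda_2-\mu_2)/2\rceil$ and, together with $\mu_2+M>0$, also $\mu_2\geq -\lambda_2+1$. In the opposite case $q_M\mu\not\leq \lambda$, \Cref{qmuneqlambda} forces $r_M\mu\not\leq \lambda$, and a parallel octagon check on $q(v\mu)=(-\mu_1,-\mu_2-2M)$ and on $r(v\mu)=v\mu-\mu_1\alpha_{21}$ shows $q(v\mu),r(v\mu)\not\leq \lambda$ as well (all four failures reduce to $\mu_1+\mu_2+2M>\lambda_1+\lambda_2$ or $\mu_1+2M>\lambda_1+2\lambda_2$). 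By \eqref{eqqu} and \eqref{eqru} each $\ell^i$-value then equals the corresponding $\affphi_i$; substituting into the swap criterion and using the cancellation from the setup reduces swappability to the single equation
\[
\affphi_{12}(\mu,\lambda)=\affphi_{12}(v\mu,\lambda).
\]

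To unpack this last equation I set $a=\lambda_2+\mu_2$ and $b=\lambda_2-\mu_2-2M=\lambda_2+(v\mu)_2$, so that $a-b=2(\mu_2+M)$ and $a+b=2\lambda_2-2M$ is even; hence $a$ and $b$ share parity and the crucial identity $\lfloor a/2\rfloor-\lfloor b/2\rfloor=\mu_2+M$ holds. Since $(v\mu)_1-\mu_1=2(\mu_2+M)$, the displayed equation becomes
\[
\min(\lambda_2,\lfloor a/2\rfloor,a)-\min(\lambda_2,\lfloor b/2\rfloor,b)=\mu_2+M,
\]
and a short case split on the signs of $a$ and $b$ finishes the argument. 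If $a\leq 0$ the equation always fails (which gives the necessity of $\mu_2\geq -\lambda_2+1$); if $a\geq 1$ and $b\geq 0$, the equation holds directly by the floor identity; if $a\geq 1$ and $b=-1$ (which forces $M=\lceil(\lambda_2-\mu_2)/2\rceil$ with $\lambda_2-\mu_2$ odd), a direct substitution again gives equality; and if $a\geq 1$ and $b\leq -2$ (i.e.\ $M>\lceil(\lambda_2-\mu_2)/2\rceil$), one checks that the left-hand side exceeds the right by at least one.

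The hard part will be the bookkeeping around the floor functions appearing in \Cref{affphicompute}: recognising that the correct threshold is $b\geq -1$ rather than $b\geq 0$ is precisely what produces the ceiling $\lceil(\lambda_2-\mu_2)/2\rceil$ in the statement instead of a floor, and the extra boundary subcase $b=-1$ is exactly what distinguishes the even and odd values of $\lambda_2-\mu_2$.
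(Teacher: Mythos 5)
Your proof follows essentially the same route as the paper: compare the two counts via the criterion \eqref{count1}, split on whether $q_M\mu\leq\lambda$, dispatch the first case via \Cref{qmuswappable}, and in the second case cancel the $\affphi_{21}$ contributions and reduce swappability to the single equation $\affphi_{12}(\mu,\lambda)=\affphi_{12}(v\mu,\lambda)$, which is then resolved by a floor/case analysis. Your case split on the signs of $a=\lambda_2+\mu_2$ and $b=\lambda_2+(v\mu)_2$ is a bit more explicit than the paper's (which compresses the conclusion into "both minima achieved at $\lfloor(\lambda_2+\mu_2)/2\rfloor$"), and you correctly isolate the boundary subcase $b=-1$ which is what turns the floor into the ceiling $\lceil(\lambda_2-\mu_2)/2\rceil$; this is a genuine improvement in clarity over the paper's phrasing.

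One small imprecision: in the parenthetical justification that $q(v\mu)$ and $r(v\mu)$ fail to lie in $\Conv(W\cdot\lambda)$, you claim "all four failures reduce to $\mu_1+\mu_2+2M>\lambda_1+\lambda_2$ or $\mu_1+2M>\lambda_1+2\lambda_2$". This is not quite right for $q(v\mu)=(-\mu_1,-\mu_2-2M)$, whose $\alpha_{21}^\vee$-pairing gives $\mu_1+2\mu_2+4M>\lambda_1+2\lambda_2$ rather than $\mu_1+2M>\lambda_1+2\lambda_2$. The conclusion is nevertheless correct, but the cleanest way to see it — and the one implicit in the paper (cf. the last paragraph of the proof of \Cref{qmuswappable}) — is $s_1$-conjugacy: $q_M(v_M\mu)=s_1 r_M\mu$ and $r_M(v_M\mu)=s_1 q_M\mu$, so since $q_M\mu\not\leq\lambda$ and (by \Cref{qmuneqlambda}) $r_M\mu\not\leq\lambda$, both $q_M(v_M\mu)$ and $r_M(v_M\mu)$ fail as well, and therefore by \eqref{eqqu} and \eqref{eqru} all four $\ell^i$-counts equal the corresponding $\affphi_i$-values. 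With this fix the argument is complete.
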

\begin{proof}

Since $\mu_1\geq \lam_1$ and $\mu\leq \lam$, we have $\mu_2\leq \lam_2$.
	Since $\mu <v\mu$ we have $M+\mu_2 >0$.
We know that if $q\mu \leq \lam$ then $\mu \raw v\mu $ is swappable. In the other direction, if $\mu_2\leq -\lam_2$ or $\mu_2>-\lam_2$ and $M> \lce \frac{\lam_2-\mu_2 }{2}\rce$ then it follows from \eqref{eqqu} that $q\mu\not \leq \lam$. So it is enough to consider the case $q\mu \not \leq \lam$.

We have $\lam_1\leq \mu_1< (v\mu)_1$ and $(v\mu)_2\leq \lam_2$. In this case, we have 
\begin{equation}\label{phi12vu} \affphi_{21}(v\mu,\lambda)=\lam_1 + \lam_2 -\mu_2 -2M = \affphi_{21}(\mu,\lambda)-2(\mu_2+M).
\end{equation}

Combining this with \eqref{count1} and \eqref{eqqu} we get that $\mu\raw v\mu$ is swappable if and only if 
$\affphi_{12}(\mu,\lambda)=\affphi_{12}(v\mu,\lambda)$, which is equivalent to
\[ \min \left(\lam_2+\mu_2,\lfl \frac{ \lam_2+ \mu_2}{2}\rfl\right)=\min\left(\lam_2-M ,\lfl \frac{ \lam_2+ \mu_2}{2}\rfl\right).\]
This equality holds if and only if both minima are achieved at  $\lfl \frac{ \lam_2+ \mu_2}{2}\rfl$, i.e. if
\[ \lam_2 + \mu_2 \geq \lfl \frac{ \lam_2+ \mu_2}{2}\rfl\leq \lam_2 -M.\]
So we have $-\mu_2< M\leq \lce \frac{\lam_2-\mu_2 }{2}\rce$, which is equivalent to $\mu_2\geq -\lam_2+1$ and the claim follows.
\end{proof}

\begin{proposition}
\label{mu1<lam1}
	Let $\mu\leq \lam$ be such that $0<\mu_1< \lam_1$. Then $\mu\raw v\mu\in E^N(\lam)$ if and only if
\begin{equation}\label{mu1>lam1ineq}
M>\frac{\lam_1-\mu_1}{2}+\max\left(-\mu_2,\lce\frac{\lam_2-\mu_2}{2} \rce\right).\end{equation}
\end{proposition}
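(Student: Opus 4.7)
The plan is to mimic the strategy of \Cref{mu1geqlam1} but with more intricate case analysis, reflecting the fact that $v\mu$ can sit in various positions relative to the walls of $\Conv(W\cdot \lambda)$. First, by \Cref{qmuswappable} I may assume $q\mu \not\leq \lambda$, so by \Cref{qmuneqlambda} also $r\mu \not\leq \lambda$ and $\mu_2 < \lambda_2$. A short computation in $\affW$ gives the identities $qv = s_1 r$ and $rv = s_1 q$; combined with the $W$-stability of $\{\leq \lambda\}$ (valid since $\lambda \in X_+$), this yields $qv\mu \not\leq \lambda$ and $rv\mu \not\leq \lambda$. Using \eqref{bruhatorder} one checks $v\mu \leq q_M v\mu$ (from $\mu_1+\mu_2+M > 0$) and $v\mu \leq r_M v\mu$ (from $\mu_1 > 0$), so the equivalences \eqref{eqqu} and \eqref{eqru} apply at both $\mu$ and $v\mu$. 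This gives
\[
\ell_m^{12}(\mu,\lambda) = \phi_{12}(\mu,\lambda), \quad \ell_m^{21}(\mu,\lambda) = \phi_{21}(\mu,\lambda), \quad \ell_m^{12}(v\mu,\lambda) = \phi_{12}(v\mu,\lambda), \quad \ell_m^{21}(v\mu,\lambda) = \phi_{21}(v\mu,\lambda).
\]

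Substituting into the criterion \eqref{count1}, swappability is equivalent to
\[
\phi_{12}(\mu,\lambda) + \phi_{21}(\mu,\lambda) - \phi_{12}(v\mu,\lambda) - \phi_{21}(v\mu,\lambda) = 2(\mu_2 + M).
\]
From \Cref{affphicompute} and the hypothesis $0 < \mu_1 < \lambda_1$, we have $\phi_{21}(\mu,\lambda) = \lambda_2 + \mu_2 + \frac{\lambda_1+\mu_1}{2}$; the other three $\phi$-values can be written down in terms of $A := (v\mu)_1 = \mu_1+2\mu_2+2M$ and $B := (v\mu)_2 = -\mu_2-2M$, each as a simple linear term plus a minimum over three integer arguments. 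After cancelling linear pieces, the criterion reduces to an equality of the form $m_1 - m_2 = \min\!\left(\tfrac{\lambda_1-\mu_1}{2}+(\mu_2+M),\, 2(\mu_2+M),\, \tfrac{\lambda_1+\mu_1}{2}+3(\mu_2+M)\right)$, where $m_1$ and $m_2$ denote the minima appearing in the formulas for $\phi_{12}(\mu,\lambda)$ and $\phi_{12}(v\mu,\lambda)$.

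The main obstacle is the ensuing case analysis, which I would split according to the $\max$ in the statement. In \emph{Case A} ($\mu_2 \leq -\lambda_2$), the minimum in $\phi_{12}(\mu,\lambda)$ is forced onto the unfloored branch $\lambda_2 + \mu_2$, and the arithmetic collapses to the swappability condition $M \leq \frac{\lambda_1 - \mu_1}{2} - \mu_2$. In \emph{Case B} ($\mu_2 > -\lambda_2$), the floor $\lfloor(\lambda_2+\mu_2)/2\rfloor$ is active, and the criterion becomes $M \leq \frac{\lambda_1 - \mu_1}{2} + \lceil(\lambda_2 - \mu_2)/2\rceil$. Each case ramifies further depending on which branch of $\phi_{21}(v\mu,\lambda)$ is active (governed by the sign of $A$ relative to $\pm\lambda_1$); the delicate step is to verify that the parity conversion between $\lfloor(\lambda_2+\mu_2)/2\rfloor - \lfloor(\lambda_2-\mu_2-2M)/2\rfloor$ and $\lceil(\lambda_2-\mu_2)/2\rceil$ works uniformly across all sub-cases. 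Negating the swappability condition and combining both cases via the $\max$ yields precisely \eqref{mu1>lam1ineq}.
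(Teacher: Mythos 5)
Your approach is correct and is essentially the one the paper uses: reduce to the case $q\mu\not\leq\lam$ via \Cref{qmuswappable}, convert the edge counts $\ell_m^{i}$ into the explicit $\affphi_i$ formulas from \Cref{affphicompute} using \eqref{eqqu} and \eqref{eqru} at both $\mu$ and $v\mu$ (you should write $\affphi$ rather than $\phi$), and then analyze the resulting identity case-by-case; the only cosmetic difference is that you first split by $\mu_2$ relative to $-\lam_2$ while the paper splits by $(v\mu)_1$ relative to $\lam_1$. One fact you use implicitly but need to establish is that $q\mu\not\leq\lam$ forces $(v\mu)_2<-\lam_2$ (this is the Claim in the paper's proof), since it is what fixes the active branch of $\affphi_{12}(v\mu,\lam)$; your list of ramification sources mentions the branches of $\affphi_{21}(v\mu,\lam)$ governed by $(v\mu)_1$ but not this one.
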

\begin{proof}
Notice that if the inequality \eqref{mu1>lam1ineq} holds, then $q\mu\not \leq \lam$ by \eqref{eqqu}. Since $\mu\raw v\mu$ is swappable if $q\mu\leq \lam$, we can just assume that $q\mu\not \leq \lam$.

We begin by proving the following inequality.
\begin{claim}\label{claimtmu}
 We have $(v\mu)_2< -\lam_2$.
\end{claim}
\begin{proof}[Proof of the claim.]
We have $(v\mu)_2=-\mu_2-2M$.
If $\mu_2\leq -\lam_2$, then $-\mu_2-2M<-M<\mu_2\leq -\lam_2$. If $\mu_2\geq \lam_2$, we have $-\mu_2-2M<-\mu_2\leq -\lam_2$.

If $-\lam_2<\mu_2<\lam_2$, then we have by \eqref{eqqu} that
\begin{align*} -\mu_2-2M&<\mu_2+2\mu_1-2\affphi_{12}(\mu,\lam)\\
&=-\lam_1+\mu_1+\mu_2-2\lfl\frac{\lambda_2+\mu_2}{2}\rfl\leq -\lam_2\qedhere
\end{align*}
\end{proof}

Assume first $\mu_1<(v\mu)_1\leq \lam_1$, or equivalently that $M \leq \frac{\lam_1-\mu_1}{2}-\mu_2$. Since $q\mu \not \leq \lambda$, we have by \eqref{eqqu} that
\begin{equation}\label{notsmaller}M>\frac{\lam_1-\mu_1}{2}-\min(\lam_2,\lfl \frac{\lam_2-\mu_2}{2}\rfl),\end{equation}
forcing $\mu_2<M-\frac{\lam_1-\mu_1}{2}<-\lam_2$. Now we can easily compute both sides of \eqref{count1} 
and check that they are both equal to $\lam_1+\mu_1+2(\lam_2+\mu_2)$. So $\mu\raw v\mu$ is always swappable.

Assume now $\mu_1<\lam_1<(v\mu)_1$, that is, that $M>\frac{\lam_1-\mu_1}{2}-\mu_2$. Since we assumed that $q\mu \not\leq \lam$, by \eqref{eqqu}, we also have  that 
\begin{equation}\label{notsmaller2}M>\frac{\lam_1-\mu_1}{2}+\lfl\frac{\lam_2-\mu_2}{2}\rfl.\end{equation}

In this case \eqref{count1} is equivalent to 
\begin{equation}\label{mu1<lam1eq} \frac{3\lam_1+\mu_1}{2}+2\lam_2+\mu_2-M= \lam_1+\mu_1+\lam_2+\mu_2+\min\left(\lam_2+\mu_2,\lfl \frac{\lam_2+ \mu_2}{2}\rfl\right)\end{equation}
and we get an equality if and only if 
\begin{equation}\label{Mlimit}
    M = \frac{\lam_1-\mu_1}{2}+\max\left(-\mu_2,\lce\frac{\lam_2-\mu_2}{2}\rce\right)
\end{equation}
Notice that by \eqref{notsmaller2} we cannot have $M< \frac{\lam_1-\mu_1}{2}+\lce\frac{\lam_2-\mu_2}{2}\rce$. The claim now follows.
\end{proof}

We can restate \Cref{mu1<lam1} in more convenient terms.

\begin{corollary}
    Assume $0<\mu_1<\lam_1$. Then $(\mu \raw v \mu)\in E^N(\lam)$ if and only if $\lam_1<(v\mu)_1$ and $q\mu\not \leq \lam$, except when $\lam_2 \not \equiv \mu_2 \pmod{2}$, $\lam_2+\mu_2>0$ and
    \[ M = \frac{\lam_1-\mu_1}{2}+\frc{\lam_2-\mu_2}\]
\end{corollary}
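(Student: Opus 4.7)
The plan is to reduce everything to a comparison of three linear inequalities in $M$, followed by a small case analysis on $\mu_2$. By Proposition~\ref{mu1<lam1}, the edge $\mu \to v\mu$ is non-swappable if and only if
\[
(\mathrm{i}) \qquad M > \tfrac{\lam_1-\mu_1}{2} + \max\bigl(-\mu_2,\,\lceil(\lam_2-\mu_2)/2\rceil\bigr).
\]
The condition $\lam_1 < (v\mu)_1 = \mu_1 + 2\mu_2 + 2M$ is equivalent to
\[
(\mathrm{ii}) \qquad M > \tfrac{\lam_1-\mu_1}{2} - \mu_2,
\]
and by \eqref{eqqu} together with the formula for $\affphi_{12}$ in Lemma~\ref{affphicompute}, the condition $q\mu \not\leq \lambda$ translates to
\[
(\mathrm{iii}) \qquad M > \tfrac{\lam_1-\mu_1}{2} + \min\bigl(\lam_2+\mu_2,\,\lfloor(\lam_2+\mu_2)/2\rfloor,\,\lam_2\bigr) - \mu_2.
\]
The task is to show that (i) is equivalent to (ii)$\wedge$(iii), except in the configuration stated in the corollary.

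Splitting on the position of $\mu_2$ relative to $\pm\lam_2$, three cases arise. When $\mu_2 \leq -\lam_2$, the minimum in (iii) equals $\lam_2+\mu_2$, reducing (iii) to $M > \tfrac{\lam_1-\mu_1}{2}+\lam_2$, which is weaker than (ii) since $-\mu_2 \geq \lam_2$; a parallel computation shows the maximum in (i) equals $-\mu_2$, so (i), (ii), and (ii)$\wedge$(iii) all agree. The exception requires $\lam_2+\mu_2 > 0$ and so does not apply here. When $\mu_2 > \lam_2$, the existence of the edge forces $v\mu \leq \lambda$, which via the bound $(v\mu)_1 \leq \lam_1+2\lam_2$ gives $M \leq \tfrac{\lam_1-\mu_1}{2}+\lam_2-\mu_2$. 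The minimum in (iii) is then $\lam_2$, so (iii) becomes $M > \tfrac{\lam_1-\mu_1}{2}+\lam_2-\mu_2$ and is incompatible with the edge existing; using $\lceil(\lam_2-\mu_2)/2\rceil > \lam_2-\mu_2$ (as $\lam_2-\mu_2 < 0$) one also sees (i) fails. Both sides of the claimed equivalence are then vacuously false, and again the exception is irrelevant.

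The core case is $-\lam_2 < \mu_2 \leq \lam_2$, where $\lam_2+\mu_2 > 0$ is automatic, the minimum in (iii) equals $\lfloor(\lam_2+\mu_2)/2\rfloor$, and the elementary inequality $-\mu_2 \leq \lfloor(\lam_2-\mu_2)/2\rfloor$ (again from $\lam_2+\mu_2 > 0$) shows that (ii)$\wedge$(iii) simplifies to $M > \tfrac{\lam_1-\mu_1}{2}+\lfloor(\lam_2-\mu_2)/2\rfloor$. When $\lam_2 \equiv \mu_2 \pmod 2$ the floor and ceiling coincide, so (i) matches (ii)$\wedge$(iii) exactly. When $\lam_2 \not\equiv \mu_2 \pmod 2$, the ceiling exceeds the floor by exactly $1$, so there is the unique integer $M = \tfrac{\lam_1-\mu_1}{2}+\lceil(\lam_2-\mu_2)/2\rceil$ at which (ii)$\wedge$(iii) holds but (i) fails; this is precisely the stated exception. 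The main obstacle is the bookkeeping of maxima, minima, and parities in this central case, in particular the inequality $-\mu_2 \leq \lfloor(\lam_2-\mu_2)/2\rfloor$ and the rigidity of the parity mismatch; the third case is then handled automatically by the constraint $v\mu \leq \lam$.
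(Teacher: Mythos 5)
Your proof is correct and follows essentially the same route as the paper: both deduce from Proposition~\ref{mu1<lam1} that the possible disagreement between non-swappability and the condition $\lam_1 < (v\mu)_1 \wedge q\mu\not\leq\lam$ is pinned down to a single value of $M$, and both identify the parity mismatch $\frc{\lam_2-\mu_2}>\frf{\lam_2-\mu_2}$ together with $\lam_2+\mu_2>0$ as the obstruction. Your version is slightly more explicit in that it splits on the position of $\mu_2$ relative to $\pm\lam_2$ and verifies that the extremal cases contribute nothing, whereas the paper handles those implicitly by reusing the constraint \eqref{Mlimit} from within the proof of Proposition~\ref{mu1<lam1}; the substance is the same.
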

\begin{proof}
As in the proof of \Cref{mu1<lam1}, we know that the only case in which $\lam_1<(v\mu)_1$, $q\mu \not \leq \lam$ and $(\mu \raw v\mu)\in E^S(\lam)$ is for $M$ as in \eqref{Mlimit}.
    Since $\lam_1<(v\mu)_1$ then $M>\frac{\lam_1-\mu_1}{2}-\mu_2$. Since $q\mu\not \leq \lam$ then $M>\frac{\lam_1-\mu_1}{2}+\frc{\lam_2-\mu_2}$.
So the equality in \eqref{Mlimit} is possible if and only if 
$\frc{\lam_2-\mu_2}>\frf{\lam_2-\mu_2}$ and $\frc{\lam_2-\mu_2}>-\mu_2$, i.e. if $\lam_2 \not \equiv \mu_2\pmod{2}$ and $\lam_2+\mu_2\geq1$.   
\end{proof}

\begin{example}
Let $\lam=(3,2)$, $\mu=(1,-1)$ and $M=\frac{\lam_1-\mu_1}{2}-\frc{\lam_2-\mu_2}=3$. As illustrated in \Cref{limitcase}, in this case the edge $\mu\raw v\mu$ is swappable even if $(v\mu)_1>\lam_1$.
\end{example}

\begin{figure}
\begin{center}
\begin{tikzpicture}[line cap=round,line join=round,>=triangle 45,x=.4cm,y=.4cm]
\draw[style=black] (2,5) -- (-2,5) -- (-5,2) -- (-5,-2) -- (-2,-5) -- (2,-5) -- (5,-2) -- (5,2) -- cycle;
\draw[style=green] (-6,-5) -- (-6,-1) -- (-5,0) -- (-1,0) -- (0,-1) -- (0,-5) -- (-1,-6) -- (-5,-6) -- cycle;
\draw  (-1,0)-- (-6,-5);
\draw  (-1,0)-- (-1,-6);
\draw  (-5,0)-- (-5,-6);
\draw  (-5,0)-- (0,-5);
\begin{scriptsize}
\draw [fill=cyan] (-1,0) circle (2.5pt);
\draw [fill=red] (0,-1) circle (2.5pt);
\draw [fill=red] (-4,-3) circle (2.5pt);
\draw [fill=red] (-1,-2) circle (2.5pt);
\draw [fill=cyan] (-5,-2) circle (2.5pt);
\draw[green, very thick, ->] (-1,0) to (-5,0);

\halfcirc{-3}{0}
\halfcirc{-3}{-2}
\halfcirc{-1}{-4}
\node at (2,5.4) {$\lam$};
\node at (-1,0.4) {$\mu$};
\node at( -5.4,0.4) {$v\mu$};

\draw [fill=red] (-2,-1) circle (2.5pt);
\draw [fill=cyan] (-4,-1) circle (2.5pt);
\draw [fill=cyan] (-2,-3) circle (2.5pt);
\draw [fill=cyan] (-1,0) circle (2.5pt);
\draw [fill=cyan] (0,-5) circle (2.5pt);
\end{scriptsize}
\end{tikzpicture}
\end{center}
\caption{The exceptional case in which the edge $\mu\raw v\mu$ is swappable even if $(v\mu)_1>\lam_1$.}
\label{limitcase}
\end{figure}
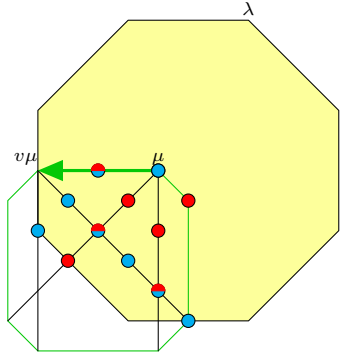

\subsection{Analysis of \texorpdfstring{$\alpha_{12}$-}{vertical }edges}

Assume now that $m+1=4M-2$, so that $q_M:=t_{m+1}$ is the reflection corresponding to the root $M\delta-\alpha_{12}^\vee$, i.e. a reflection over the hyperplane $\{\nu \mid \langle \nu,\alpha_{12}^\vee\rangle =-M\}$. Let $r:=r_M$, $q:=q_M$ and $v:=v_M$ so that the reflections in the reflection subgroup $W^{m+1}$ are $\{s_1,r,q,v\}$. The classificiation of NS-edges in the $\alpha_{12}$-direction can be reduced to the known case of $\alpha_2$-edges, as the \Cref{12notswap} shows.

\begin{proposition}\label{12notswap}
An edge of the form $\mu \ra q\mu$ is swappable if and only if $s_1\mu\ra v s_1\mu$ is swappable.
\end{proposition}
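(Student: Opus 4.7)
The crux is the identity $q = s_1 v s_1$, so $s_1\mu \to vs_1\mu$ is the $s_1$-image on weights of the edge $\mu \to q\mu$. Since swappability of the two edges is defined at different reflection-order steps --- step $4M-2$ for the $\alpha_{12}$-edge and step $4M$ for the $\alpha_2$-edge --- the proposition is equivalent to proving the arrow-count identity
\[
\ell_{4M-3}(\mu,\lambda) - \ell_{4M-3}(q\mu,\lambda) = \ell_{4M-1}(s_1\mu,\lambda) - \ell_{4M-1}(vs_1\mu,\lambda).
\]

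I would establish this by decomposing $\ell_m(\tau,\lambda) = \sum_i \ell_m^i(\tau,\lambda)$ over root directions $i \in \{1,2,12,21\}$ and matching contributions. The key geometric input is $\affphi_{12}(\mu,\lambda) = \affphi_2(s_1\mu,\lambda)$, which follows by direct substitution into the formulas of \Cref{affphicompute}: using $(s_1\mu)_1 = -\mu_1$ and $(s_1\mu)_1 + (s_1\mu)_2 = \mu_2$, both sides reduce to $\tfrac{\lambda_1+\mu_1}{2}+\min(\lambda_2+\mu_2,\lfloor(\lambda_2+\mu_2)/2\rfloor,\lambda_2)$. The analogous identity $\affphi_{21}(\mu,\lambda) = \affphi_{21}(s_1\mu,\lambda)$ holds because $s_1$ fixes the $\alpha_{21}$-direction. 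Combined with the explicit characterizations of $\Arr_m^i(\tau)$ in \eqref{eqtu}, \eqref{eqqu}, \eqref{eqru}, and the observation that $N(y_{4M-1}) \setminus N(y_{4M-3}) = \{M\delta-\alpha_{12}^\vee,\, 2M\delta-\alpha_{21}^\vee\}$ whereas $N(y_{4M+1})\setminus N(y_{4M-1})$ already contains the $s_1$-image $M\delta - \alpha_2^\vee$, the $\alpha_{12}/\alpha_2$-matched contributions and the $s_1$-invariant $\alpha_{21}$ contributions can be paired and shown to cancel between the two sides.

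The main obstacle is the $\alpha_1$-direction: by \eqref{ell1}, $\ell_m^1(\tau,\lambda)$ depends discontinuously on $\operatorname{sign}(\tau_1)$, and the first coordinates $\mu_1$, $(q\mu)_1 = -\mu_1-2\mu_2-2M$, $(s_1\mu)_1 = -\mu_1$, $(vs_1\mu)_1 = \mu_1+2\mu_2+2M$ have mixed signs. A case analysis on the signs of $\mu_1$ and $\mu_1+2\mu_2+2M$ shows that the $\alpha_1$-contributions to the two sides differ by $\pm 2$ in each case, and this residual must be absorbed by matching with contributions from the other directions at the two different levels. Paralleling the organization of the $\alpha_2$-edge classification in \Cref{qmuswappable}, \Cref{mu1geqlam1}, and \Cref{mu1<lam1} --- and in particular the split into the regimes $q\mu \leq \lambda$, $\mu_1 \geq \lambda_1$, and $0 < \mu_1 < \lambda_1$ --- the verification can be completed in each case.
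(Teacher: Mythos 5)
Your starting point matches the paper's: reduce the claim to the arrow-count identity
\[
\ell_{4M-3}(\mu,\lambda) - \ell_{4M-3}(q\mu,\lambda) = \ell_{4M-1}(s_1\mu,\lambda) - \ell_{4M-1}(vs_1\mu,\lambda),
\]
and the geometric identities $\affphi_{12}(\mu,\lambda)=\affphi_2(s_1\mu,\lambda)$ and $\affphi_{21}(\mu,\lambda)=\affphi_{21}(s_1\mu,\lambda)$ are correct and useful. However, the heart of the argument is left as a sketch, and there are two genuine gaps.

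First, the claim that ``the $\alpha_1$-contributions to the two sides differ by $\pm 2$ in each case'' is inaccurate. Writing $a=\mu_1$ and $b=\mu_1+2\mu_2+2M$, one finds
\[
\bigl(\ell^1(\mu)-\ell^1(q\mu)\bigr)-\bigl(\ell^1(s_1\mu)-\ell^1(vs_1\mu)\bigr)=\sgn(a)+\sgn(b),
\]
which is $\pm 2$ only when $a$ and $b$ are nonzero with the same sign; it can be $\pm 1$ or $0$ in the other sign configurations (including $\mu_1=0$, where $s_1\mu=\mu$). So the residual you would need to absorb is not constant, and the proposed bookkeeping would not close as stated. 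Second, the assertion that the $\alpha_{12}/\alpha_2$ and $\alpha_{21}$ contributions ``can be paired and shown to cancel between the two sides'' is nontrivial precisely because the two sides live at different levels ($4M-3$ versus $4M-1$); some mechanism must control what changes between those levels, and none is given.

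The paper avoids the directional case analysis entirely with two cleaner observations that you would want to find. (A) Since $N(y_{4M-3})$ is $s_1$-stable, conjugation by $s_1$ gives a bijection $\Arr_m(\mu,\lam)\setminus\{s_1\mu\raw\mu\}\cong\Arr_m(s_1\mu,\lam)\setminus\{\mu\raw s_1\mu\}$ for $m=4M-3$, handling all directions (including $\alpha_1$) in one stroke and reducing the statement to comparing $\ell_m(\nu,\lam)-\ell_m(v\nu,\lam)$ with $\ell_{m+2}(\nu,\lam)-\ell_{m+2}(v\nu,\lam)$ for $\nu=s_1\mu$. (B) Passing from level $m$ to $m+2$ only reverses the two edges labelled by $q_M$ and $r_M$, and by the Bruhat order relations in \Cref{bruhatorderWk} these contribute equally to $\nu$ and to $v\nu$, giving the cancellation directly. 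Your plan, even if completed, would amount to re-deriving these two facts by a much longer case split; as written it does not yet contain the idea that makes the cancellation work.
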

\begin{proof}
It is enough to show that
\begin{equation}\label{ellm+2} \ell_m(\mu,\lam)-\ell_m(q\mu,\lam)=\ell_{m+2}(s_1\mu,\lam)-\ell_{m+2}(vs_1\mu,\lam).
\end{equation}

\begin{claim}
Conjugation by $s_1$ induces a  bijection \[\Arr_m(\mu,\lambda)\setminus \{s_1\mu\raw \mu\}\cong \Arr_m(s_1\mu,\lam)\setminus \{\mu \raw s_1\mu\}\] which sends $(u\mu \raw \mu)$ to $(s_1 u \mu \raw s_1 \mu)$. In particular, we have  $\ell_m(\mu,\lam)=\ell_m(s_1\mu,\lam)+1$ if $\mu_1>0$.
\end{claim}
\begin{proof}[Proof of the claim.]
Notice that, since $m=4M-3$, we have  $\beta^\vee\in N(y_m)$ if and only if $s_1(\beta^\vee)\in N(y_m)$.
If $t\mu<_m\mu$ for $t=s_{N\delta-\alpha^\vee}$ with $\alpha^\vee\in \{\alpha_2^\vee,\alpha_{12}^\vee,\alpha_{21}^\vee\}$, then also $s_1(\alpha^\vee)\in \{\alpha_2^\vee,\alpha_{12}^\vee,\alpha_{21}^\vee\}$
and, since $\langle \mu,\alpha^\vee\rangle = \langle s_1(\mu),s_1(\alpha^\vee)\rangle$ it follows by \eqref{bruhatorder} that also $s_1 t s_1s_1(\mu) <_m s_1(\mu)$.
If $t=s_{N\delta+\alpha_1^\vee}$ with $N\neq 0$, then $s_1ts_1=s_{N\delta-\alpha_1^\vee}$
and we have
\begin{align*}
    t\mu <_m \mu &\iff \sgn(N) \langle \mu, \alpha_1^\vee\rangle >|N|\\
    & \iff \sgn(-N) \langle s_1(\mu), \alpha_1^\vee\rangle>|-N|\\
    & \iff s_1t(\mu)<_m s_1(\mu).\qedhere
\end{align*} 
\end{proof}

We assume now $\mu_1< 0$ and let $\nu=s_1(\mu)$. Notice that $(q\mu)_1<0$.
Since $vs_1=s_1q$, using the claim, \eqref{ellm+2} is equivalent to
\[\ell_m(\nu,\lam)-\ell_m(v\nu,\lam)=\ell_{m+2}(\nu,\lam)-\ell_{m+2}(v\nu,\lam).\]

For any weight $\mu'$, 
the symmetric difference $\Arr_m(\mu',\lam)\vartriangle \Arr_{m+2}(\mu',\lam)$ is contained in $\{q\mu'\raw \mu', r\mu'\raw \mu'\}$ since these are the two only edges which we are possibly reversing. Then the claim follows since we have 
\[ \ell_{m+2}(\nu,\lam)-\ell_m(\nu,\lam) = |\{q\nu,r\nu\} \cap \{\leq \lam\}|= \ell_{m+2}(v\nu,\lam)-\ell_m(v\nu,\lam).\]
In fact, by \Cref{bruhatorderWk} we have $\nu<q\nu$, and $v\nu<rv\nu$ and $qv\leq \lam \iff s_1q\nu =rv\nu<\lam$. Similarly, we have $\nu<r\nu$ and $v\nu <qv\nu$ and $r\nu \leq \lam \iff s_1r\nu =qr\nu < \lam$.

The case $\mu_1\geq 0$ is similar.
\end{proof}

\subsection{Analysis of \texorpdfstring{$\alpha_{21}$-}{diagonal }edges}

We conclude the classification of swappable edges by looking at edges in the $\alpha_{21}$-direction. In this case, the classification is trivial since, as it turns out, all the $\alpha_{21}$-edges are swappable. 
\begin{proposition}
\label{21swappable}
Any edge of the form $\mu \ra \mu-k\alpha_{21}$ is swappable.
\end{proposition}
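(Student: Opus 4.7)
The approach I would take is to reduce this to the identity \eqref{lmX}, which says $\ell_m(\mu) = \ell_m(t_{m+1}\mu) - 1$ at the level of the full Bruhat graph $\Gamma_X^m$ (without the restriction to $\{\leq \lam\}$). Since we need $\ell_m(\mu, \lam) = \ell_m(t_{m+1}\mu, \lam) - 1$, it suffices to show that the \emph{deficit} $D(\mu) := \ell_m(\mu) - \ell_m(\mu, \lam)$, which counts arrows into $\mu$ whose source lies in $\Conv(W^{m+1}\cdot\mu)\setminus \Conv(W\cdot\lam)$, satisfies $D(\mu) = D(t_{m+1}\mu)$. Writing $t = t_{m+1}$, the reflection corresponds to a root $K\delta - \alpha_{21}^\vee$, so $t\mu = \mu - (K + \mu_1 + 2\mu_2)\alpha_{21}$.

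A first key observation is that $s_1$ commutes with $t$, since $s_1$ fixes $\alpha_{21}^\vee$. This provides two benefits: the octagonal $W^{m+1}$-orbit of $\mu$ is paired by $s_1$ into four pairs; and since $\{\leq \lam\}$ is $s_1$-stable, if one element of an $s_1$-pair lies in $\{\leq \lam\}$ then so does the other. The pairs $\{\mu, s_1\mu\}$ and $\{t\mu, s_1 t\mu\}$ both lie in $\{\leq \lam\}$ by hypothesis, so the only remaining question is whether the pairs $\{v_M\mu, s_1 v_M\mu\}$ and $\{q_M\mu, s_1 q_M\mu\}$ do. When all eight orbit vertices lie in $\{\leq \lam\}$, the convex hull $\Conv(W^{m+1}\cdot\mu)$ sits inside $\Conv(W\cdot \lam)$; then by \Cref{leqmu} every source of an arrow in $\Arr_m(\mu)$ or $\Arr_m(t\mu)$ automatically lies in $\{\leq \lam\}$, both deficits are zero, and the claim follows directly from \eqref{lmX}. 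By \Cref{bruhatorderWk} this easy case covers a substantial range of $\mu$: for instance, if $\mu_1 \geq 0$ and $\mu < v_M\mu$ then $v_M\mu, q_M\mu \leq t\mu \leq \lam$, with an analogous statement after $s_1$-symmetrization.

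The main obstacle is the remaining case, in which $v_M\mu$ or $q_M\mu$ lies outside $\{\leq \lam\}$ (this does occur, even when $\mu_1 \leq 0$, once $M$ is large relative to $\lam$). Here one has to explicitly compute and balance the deficit contributions along the four root diagonals at $\mu$ and at $t\mu$, in the spirit of the proof of \Cref{qmuswappable}, using the formulas for $\affphi_i$ from \Cref{affphicompute} and the descriptions of $\Arr_m^i$ from equations \eqref{eqtu}--\eqref{eqru} together with the analogous description in the $\alpha_{21}$-direction. The crucial structural observation is that the finite part of $t$ is the short reflection $s_{\alpha_{21}}$, which fixes the $\alpha_1$- and $\alpha_{21}$-directions and satisfies $s_{\alpha_{21}}(\alpha_2) = -\alpha_{12}$. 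Consequently $t$ induces a natural pairing between the $\alpha_2$-diagonal at $\mu$ and the $\alpha_{12}$-diagonal at $t\mu$ (and vice versa), while preserving the $\alpha_1$- and $\alpha_{21}$-diagonals at each vertex individually. Combined with the $s_1$-symmetry of $\Conv(W\cdot \lam)$, this pairing is what ultimately matches the excess lattice points on the relevant diagonal segments and yields $D(\mu) = D(t\mu)$. I expect the hardest part of the write-up to be the explicit verification that this pairing respects the $\{\leq \lam\}$-boundary, since $t$ itself does not preserve $\Conv(W\cdot\lam)$; the check must be carried out diagonal-by-diagonal and will invoke the octagon inequalities of \Cref{octineq} applied simultaneously at $\mu$, $t\mu$, and the midpoint of the segment between them (which lies on the reflection hyperplane of $t$).
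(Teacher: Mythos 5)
Your high-level strategy matches the paper's: reduce to showing the deficit $D(\mu):=\ell_m(\mu)-\ell_m(\mu,\lam)$ equals $D(t\mu)$, use the $s_1$-symmetry coming from $s_1(\alpha_{21}^\vee)=\alpha_{21}^\vee$, and dispose of the case where the whole $W^{m}$-orbit of $\mu$ lands inside $\Conv(W\cdot\lam)$ by pure containment (this is exactly what the paper does by citing the argument of \Cref{qmuswappable}). But there is a structural gap in the way you set up the octagon.

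You implicitly assume that $t:=t_{m+1}$ is a reflection of the finite reflection subgroup $W^{m+1}$, so that $t\mu$ is one of the eight orbit vertices and the remaining question is only whether the two pairs $\{v_M\mu,s_1v_M\mu\}$ and $\{q_M\mu,s_1q_M\mu\}$ lie in $\{\leq\lam\}$. This is true only when the label of the edge is $2M\delta-\alpha_{21}^\vee$ (even coefficient), in which case $t=r_M$. The $\alpha_{21}$-labels in $N(y_\infty)$ also include the roots $(2M-1)\delta-\alpha_{21}^\vee$ (odd coefficient), and in that case $t\neq r_M$ and $t\notin W^{m+1}=W^{4M}$: the octagon $W^{m+1}\cdot\mu$ has $r_M\mu$ as a vertex, not $t\mu$, and the segment from $\mu$ to $t\mu$ along the $\alpha_{21}$-diagonal does not reach the boundary of that octagon. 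Your claim that ``the only remaining question is whether the pairs $\{v_M\mu, s_1 v_M\mu\}$ and $\{q_M\mu, s_1 q_M\mu\}$ lie in $\{\leq \lam\}$'' therefore fails here, because $r_M\mu$ may fall outside $\{\leq\lam\}$ even when $t\mu\leq\lam$. The paper handles this by noting that one still has
$\{\leq_m\mu\}\subset\Conv(W^{m+1}\cdot\mu)\setminus\{r_M\mu\}\subset\Conv(W\cdot\lam)$,
using that $r_M\mu$ already dominates $\mu$ in the twisted order, so its possible absence from $\{\leq\lam\}$ does not affect $\Arr_m(\mu)$. You need this extra observation for half of the $\alpha_{21}$-edges.

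Separately, for the genuinely hard case (when $q_M\mu\not\leq\lam$), your proposal is an honest sketch rather than a proof: the $s_{\alpha_{21}}(\alpha_2)=-\alpha_{12}$ pairing is a good heuristic, but carrying it out requires first establishing a constraint such as $\mu_2<-\lam_2$ (the Claim inside the paper's proof), after which the deficit identity reduces to a short explicit computation with $\ell^2_m$ and $\affphi_{12}$ via \Cref{affphicompute}. Without that auxiliary inequality the ``diagonal-by-diagonal'' bookkeeping you describe does not close.
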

\begin{proof}
We can assume that $\mu-k\alpha_{21}=s_{(2M-j)\delta -\alpha_{21}^\vee}(\mu)$ with $j=0$ or $j=1$. The root $(2M-j)\delta -\alpha_{21}^\vee$ is the $(4M-1-2j)$-th root occurring in \eqref{reflectionorder}.  Let $m+1=4M-1-2j$ so that $r:=t_{m+1}=s_{(2M-j)\delta -\alpha_{21}^\vee}$.

We have $\ell^1(\mu)=\ell^1(r\mu)$ and $\ell^{21}(\mu,\lam)=\ell^{21}(r\mu,\lam)-1$, so to show that $\mu\raw r\mu$ is swappable it is enough to check that
\begin{equation}\label{ell212}
  \ell^2(\mu,\lam)+\ell^{12}(\mu,\lam)=\ell^2(r\mu,\lam)+\ell^{12}(r\mu,\lam).  
\end{equation}

We consider first the case $j=0$, so $W^m$ is the reflection subgroup with reflections $s_1,q_M,r_M,v_M$. Notice that $r=r_M$.
If $\mu_1\geq 0$ and $v_M\mu \geq\mu$ then
 $\Conv(W^m\cdot \mu)\subset \Conv(W\cdot \lam)$ and the edge $\mu \ra r\mu$ is swappable by the same argument as in the proof of \Cref{qmuswappable}.
 
 Assume now $\mu_1\geq 0$ and $v_M\mu < \mu$.
 Notice that we also have $v_Mr\mu < r\mu$.
 If $q\mu \leq \lam$, then again $\Conv(W^m\cdot \mu)\subset \Conv(W\cdot \lam)$. If $q\mu \not \leq \lam$, then also $s_1q\mu =qr\mu\not \leq \lam$ and we can rewrite \eqref{ell212} as
 \begin{equation}\label{diagonalcount} \ell^2_m(\mu)+\affphi_{12}(\mu,\lam)=\ell^2_m(r\mu)+\affphi_{12}(r\mu,\lam).\end{equation}

\begin{claim}
    We have $\mu_2<-\lam_2$.
\end{claim}
\begin{proof}[Proof of the claim.]
we have $\mu>v_M\mu$ so $\mu_2+M<0$. If $\mu_2\geq -\lam_2$ then $\affphi_{12}(\mu,\lam)=\lam_1+\mu_1+\frf{\lam_2+\mu_2}$ and $q\mu\not\leq \lam$ implies by \eqref{eqqu} that $\mu_1+\mu_2+2M>\lam_1+\lam_2$.  In particular, $\mu_1> \lam_1+\lam_2-\mu_2-2M\geq \lam_1$, so $\affphi_{21}(\mu,\lam)=\lam_2+\mu_2+\lam_1$ but this leads to a contradiction since $r\mu\leq \lam$ and by \eqref{eqru} we get $\mu_1+\mu_2+2M\leq\lam_1+\lam_2$.
\end{proof}

We now go back to the proof of \eqref{diagonalcount}.
 We have $\ell^2_m(\mu)=-\mu_2-M-1$ and $\ell^2_m(r\mu)=\mu_1+\mu_2+M-1$. Since $\mu_2<-\lam_2$ we have by \Cref{affphicompute} that $\affphi_{12}(\mu,\lam)=\frac{\mu_1+\lam_1}{2}+\lam_2+\mu_2$ and $\affphi_{12}(r\mu,\lam)=\frac{\mu_1+\lam_1}{2}+\lam_2-\mu_1-\mu_2-2M$ and the claim easily follows. The case $\mu_1<0$ is analogous.

 Consider now the case $j=1$. The proof here is similar, with the main difference being that the reflections in  $W^m$ are $s_1,q_M,r_M,v_M$ but $r\neq r_M$. In fact, we  have  $r=s_{(2M-1)\delta-\alpha_{21}^\vee}$ and $r_M=s_{2M\delta-\alpha_{21}^\vee}$. In the case $\mu_1\geq 0$ and $v_M\mu\geq \mu$, or $q_M\mu\leq \lam$ then,  similarly to the previous case, we have
 \[ \{\leq_m \mu\} \subset \Conv(W^m\cdot \mu)\setminus \{r_M\mu\} \subset \Conv(W\cdot \lambda).\]
 (In other words, the convex hull of $W^m\cdot \mu$ must lie inside $\Conv(W\cdot \lam)$, except possibly for $r_M\mu$, but this does not matter since $\mu <_m r_M\mu$.)
 It follows that $\mu \raw r\mu$ is swappable. If $\mu>v_M\mu$ and $q_M\mu \not \leq \lambda$ then we conclude by checking the identity \eqref{diagonalcount} as before. The case $\mu_1<0$ is symmetric.
\end{proof}

\subsection{Consequences of the classification}

We can summarize the results from the previous in three sections in the following proposition.

\begin{proposition}
    \label{mu1>0nonswap2}
Assume $\mu_{1} >0$ and let $t$ be a reflection. If the edge $\mu \ra t\mu$ is not swappable, then $t$ corresponds to a root of the form $M\delta -\alpha^{\vee}_{2}$. 
\end{proposition}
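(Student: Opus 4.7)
The plan is to argue that only the three families of roots in $N(y_\infty)$ can correspond to $t$, and then exclude two of these three families. By the enumeration \eqref{reflectionorder}, we have $N(y_\infty)=\{M\delta-\alpha_2^\vee,\,M\delta-\alpha_{12}^\vee,\,N\delta-\alpha_{21}^\vee : M,N\geq 1\}$. Since the notion of swappability (Definition \ref{def:swappableedge}) only applies to edges of the form $\mu\to t_{m+1}\mu$ (equivalently, to edges whose label is in $N(y_\infty)$), the reflection $t$ must correspond to a root in one of these three families. It therefore suffices to rule out the $\alpha_{21}$- and $\alpha_{12}$-cases under the hypothesis $\mu_1>0$.

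The $\alpha_{21}$-case is immediate: by Proposition \ref{21swappable}, every edge in the $\alpha_{21}$-direction is swappable, independently of any positivity assumption on $\mu_1$. So $t$ cannot correspond to a root of the form $N\delta-\alpha_{21}^\vee$.

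For the $\alpha_{12}$-case, suppose by contradiction that $t=q_M$ corresponds to $M\delta-\alpha_{12}^\vee$. Proposition \ref{12notswap} converts swappability of $\mu\to q_M\mu$ into swappability of the $s_1$-companion edge $s_1\mu\to v_M s_1\mu$, where $v_M$ is the reflection corresponding to $M\delta-\alpha_2^\vee$. The hypothesis $\mu_1>0$ yields $(s_1\mu)_1=-\mu_1<0$, so the contrapositive of Lemma \ref{qmuneqlambda}, applied to $s_1\mu$, gives $q_M(s_1\mu)\leq\lambda$. The edge $s_1\mu\to v_M s_1\mu$ exists in $\Gamma_\lambda$ (using $\mu<q_M\mu\leq\lambda$ and the identity $v_M s_1=s_1 q_M$), so Proposition \ref{qmuswappable}, applied with $s_1\mu$ in place of $\mu$, asserts that this companion edge is swappable. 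Transporting back through Proposition \ref{12notswap} shows $\mu\to q_M\mu$ is swappable, contradicting the assumption.

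The remaining possibility is that $t$ corresponds to a root of the form $M\delta-\alpha_2^\vee$, as claimed. The main subtlety in this plan lies in the $\alpha_{12}$-case: one has to verify that the $s_1$-symmetry of Proposition \ref{12notswap} transports the edge, and in particular the hypotheses $s_1\mu<v_M s_1\mu$ and $v_M s_1\mu\leq\lambda$ needed to invoke Proposition \ref{qmuswappable}, which follow from the existence of $\mu\to q_M\mu$ in $\Gamma_\lambda$ together with $W$-stability of $\{\leq\lambda\}$.
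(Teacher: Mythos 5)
Your proof is correct and takes essentially the same route as the paper: rule out $\alpha_{21}$ by Proposition \ref{21swappable}, and rule out $\alpha_{12}$ by transporting to the $\alpha_2$-case via the $s_1$-symmetry of Proposition \ref{12notswap}. The paper's proof is terser — it simply cites \Cref{12notswap} together with $\mu_1 > 0$ for the $\alpha_{12}$-exclusion — whereas you explicitly fill in the chain (contrapositive of Lemma \ref{qmuneqlambda} gives $q_M(s_1\mu)\leq\lambda$, hence Proposition \ref{qmuswappable} applies) and verify that the hypotheses $s_1\mu < v_M s_1\mu \leq \lambda$ transport correctly under $s_1$; this is an implicit step in the paper that your writeup makes clean.
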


\begin{proof}
By \Cref{21swappable}, we know that $\mu \ra t_{m+1}\mu$ cannot be in the $\alpha_{21}$-direction. Since $\mu_1\geq 0$, by \Cref{12notswap} we also know that it cannot be in the $\alpha_{12}$-direction. Hence, the only possibility is that it is an edge in the $\alpha_2$ direction.
\end{proof}

The classification of swappable edges also allows us to easily compare swappable edges for different atoms.

\begin{proposition}\label{corcon}
Let $\mu \leq \lam$ with $\mu_1\geq 0$. Let $m=4M$ so that $t_m=s_{M\delta-\alpha_2^\vee}$ and assume $\mu <t_m\mu \leq \lambda$. Consider the arrow $(\mu\raw t_m\mu)\in E(\lam)$.
\begin{enumerate}
    \item \label{smallerbad}
If $(\mu\raw t_m\mu)\in E^S(\lambda)$, then $(\mu\raw t_m\mu)\in E^S(\lambda+k\varpi_2)$, for any $k\geq 0$.
 \item If $(\mu\raw t_m\mu)\in E^S(\lambda)$, then  for any $k<m$ such that $\mu<t_k\mu\leq \lam$, we also have $(\mu\raw t_k\mu)\in E^S(\lam)$. 
 \item If $(\mu\raw t_m\mu)\in E^N(\lambda)$, then $\lambda - \varpi_2$ is dominant and $\mu\leq \lambda-\varpi_2$.
\end{enumerate}
\end{proposition}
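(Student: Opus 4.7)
The plan is to derive all three statements from the classification of swappable $\alpha_2$-edges obtained in Propositions~\ref{mu1geqlam1} and~\ref{mu1<lam1}. A crucial preliminary reduction is that, since $\mu_1\geq 0$, the only potentially non-swappable edges out of $\mu$ are in the $\alpha_2$-direction: $\alpha_{21}$-edges are always swappable by \Cref{21swappable}, while $\alpha_{12}$-edges from $\mu$ correspond via \Cref{12notswap} to $\alpha_2$-edges from $s_1\mu$, which must be swappable by \Cref{qmuswappable} since $(s_1\mu)_1=-\mu_1\leq 0$ forces $q(s_1\mu)\leq\lambda$ by the contrapositive of \Cref{qmuneqlambda}. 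Hence for (1) and (2) it suffices to consider edges of the form $\mu\to v_K\mu$; the case $\mu_1=0$ is immediate since then all edges from $\mu$ are swappable by \Cref{qmuswappable}.

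For (1), I would inspect the two characterisations. When $\mu_1\geq\lambda_1$, Proposition~\ref{mu1geqlam1} requires $\mu_2\geq -\lambda_2+1$ and $M\leq\lceil(\lambda_2-\mu_2)/2\rceil$; replacing $\lambda_2$ by $\lambda_2+k$ both weakens the first condition and increases the threshold in the second. When $0<\mu_1<\lambda_1$, Proposition~\ref{mu1<lam1} gives the upper bound $M\leq(\lambda_1-\mu_1)/2+\max(-\mu_2,\lceil(\lambda_2-\mu_2)/2\rceil)$, whose right-hand side is non-decreasing in $\lambda_2$. For (2), the same bounds apply with $K<M$ in place of $M$; since $M$ satisfies the relevant threshold by hypothesis and $K<M$, so does $K$.

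Part (3) is the hardest and I would prove it by case analysis on the three non-swappability scenarios: (A.i) $\mu_1\geq\lambda_1$ with $\mu_2\leq-\lambda_2$, (A.ii) $\mu_1\geq\lambda_1$ with $\mu_2\geq-\lambda_2+1$ and $M>\lceil(\lambda_2-\mu_2)/2\rceil$, and (B) $0<\mu_1<\lambda_1$ with $M$ exceeding the bound of Proposition~\ref{mu1<lam1}. In each scenario, I first establish $\lambda_2\geq 1$: when $\lambda_2=0$, a short computation using \Cref{affphicompute} shows no edge $\mu\to v_M\mu$ with $\mu<v_M\mu\leq\lambda$ can exist (in case A one is forced to $\mu_1=\lambda_1$, but then $(v_M\mu)_1\leq\lambda_1$ fails whenever $\mu_2+M>0$; in case B the non-swappability bound and $\mu_2+M\leq\affphi_2(\mu,\lambda)$ become contradictory). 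Then I verify the eight strict inequalities from \Cref{octineq} defining $\{\leq\lambda-\varpi_2\}$ — each octagonal inequality strengthened by $1$ or $2$ — using the key identity $(v_M\mu)_1+2(v_M\mu)_2=\mu_1-2M$, the fact that $\mu_2+M\geq 1$, and the constraint $v_M\mu\leq\lambda$. For instance, in case (A.i), the bound $(v_M\mu)_1+2(v_M\mu)_2\geq-\lambda_1-2\lambda_2$ immediately yields $\mu_1+2\mu_2\geq-\lambda_1-2\lambda_2+2(\mu_2+M)\geq-\lambda_1-2\lambda_2+2$.

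The main obstacle will be ruling out boundary configurations where $\mu$ lies on a vertex or edge of $\Conv(W\cdot\lambda)$, so that the weak inequality coming from $\mu\leq\lambda$ does not automatically sharpen. These are eliminated by the parity constraint $\mu_1\equiv\lambda_1\pmod 2$ (which forbids $\mu_1=\lambda_1+2\lambda_2-1$ and similarly $\mu_1+2\mu_2=\lambda_1+2\lambda_2-1$) together with direct checks that extremal weights admit no valid $\alpha_2$-edge: for example, $\mu=s_2\lambda$ gives $(v_M\mu)_1>\lambda_1+2\lambda_2$ for any $M\geq 1$, and a $\mu$ with $\mu_2=-\lambda_1-\lambda_2$ would force $M>\lambda_1+\lambda_2$, which then violates $(v_M\mu)_2\geq -\lambda_1-\lambda_2$. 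Once these boundary configurations are excluded, the eight inequalities follow from routine algebraic manipulation of the non-swappability bound combined with $v_M\mu\leq\lambda$.
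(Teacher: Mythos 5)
Your plan for parts (1) and (2) is essentially the paper's argument. The paper simply says these are ``clear from the explicit description'' in Propositions~\ref{mu1geqlam1} and~\ref{mu1<lam1}; you spell out the monotonicity of the swappability thresholds in $\lambda_2$ (for (1)) and in $M$ (for (2)), and you make explicit the reduction---via \Cref{21swappable}, \Cref{12notswap}, \Cref{qmuswappable} and \Cref{qmuneqlambda}---that for $\mu_1\geq 0$ only $\alpha_2$-edges can fail to be swappable, which is exactly the content of \Cref{mu1>0nonswap2} proved just before this statement. So for these two parts you and the paper coincide.

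For part (3) you take a more brute-force route than the paper. You set out to verify all eight inequalities of \Cref{octineq} for $\mu\leq\lambda-\varpi_2$ by case analysis on the three non-swappability scenarios, invoking integrality and the parity constraint $\mu_1\equiv\lambda_1\pmod 2$ to upgrade strict inequalities, and handling the corners by direct computation. The paper instead observes that a starting point of a non-swappable edge must lie in the ``red region'' of \Cref{startingpoints}, so only the three inequalities $\mu_1+\mu_2\leq\lambda_1+\lambda_2-1$, $\mu_1\leq\lambda_1+2\lambda_2-2$, $\mu_2\geq-\lambda_1-\lambda_2+1$ can be at stake; of those, the last two are immediate because equality in the unshifted bound would force $t_m\mu\not\leq\lambda$ (since $t_m\mu$ lies ``to the left'' of $\mu$), leaving one inequality that is then dispatched by a short contradiction with \Cref{mu1geqlam1}. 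Both routes are valid, but the paper's buys a much shorter argument. Two minor imprecisions in your sketch: the claim that ``$\mu=s_2\lambda$ gives $(v_M\mu)_1>\lambda_1+2\lambda_2$ for any $M\geq 1$'' is wrong as stated (one gets $(v_M\mu)_1=\lambda_1+2M$, so the strict inequality only holds once you combine with $M>\lambda_2$ coming from $\mu<v_M\mu$); and in case (A.ii) the constraint $\mu_2<\lambda_2$, which you implicitly use, needs a short extra argument (it follows from \Cref{qmuneqlambda}, or directly from $v_M\mu\leq\lambda$ as in the (A.ii) analysis), which you should make explicit. With those small repairs your plan would yield a complete, if longer, proof.
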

\begin{proof}
The first two statements are clear from the explicit description given in \Cref{mu1geqlam1} and \Cref{mu1<lam1}.

To prove $3.$ first notice that if $\lambda_2 = 0$, by \Cref{qmuneqlambda} and \Cref{mu1<lam1} there can be no non-swappable edges.

\
\
\

We now need to show the inequalities from Lemma \ref{octineq} for $\mu$ and for $\lam' = \lam - \varpi_{2}$. In fact, by \Cref{qmuneqlambda} and \Cref{mu1<lam1}, we only need to establish the following inequalities, since they describe the hyperplanes delimiting the red region in \Cref{startingpoints}:

\begin{enumerate}
\item \label{3.1} $\mu_1 + \mu_2 \leq \lambda_1 + \lambda_2 -1$
\item \label{3.2} $\mu_1 \leq \lambda_1 + 2\lambda_2 - 2$
\item \label{3.3} $\mu_2 \geq -\lambda_1 - \lambda_2 +1$
\end{enumerate}

However note that if $\mu$ lies on either one of the hyperplanes defined by $\mu_1 = \lambda_1 + 2\lambda_2$ or $\mu_2 = -\lambda_1 -\lambda_2 $ or , then $t_m \mu \nleq \lambda$ since $t_m\mu$ is ``on the left'' of $\mu$. Therefore the only inequality we really need to prove is \ref{3.1}.

We assume that the inequality is not true, that is, $\mu$ lies in the hyperplane defined by $\lambda_1 +\lambda_2 = \mu_1 +\mu_2$. In particular, since $\mu \leq \lambda$, such $\mu$ must belong to the ``top side'' of the octagon $\Conv(W\cdot \lam)$ and it must satisfy $\lambda_1 \leq \mu_1 \leq \lambda_1 + 2\lambda_2$ and $-\lam_2\leq \mu_2\leq 
\lam_2$. Also $(t_m\mu)_2$ must lie on the same side of the octagon, so  necessarily then $(t_m \mu)_2 = -\mu_2 -2M \geq -\lambda_2$, which holds if and only if 
\[ M \leq \frac{\lambda_2 -\mu_2}{2}.\]
We get a contradiction, since by  \Cref{mu1geqlam1} the edge $\mu\raw t_m\mu$ is swappable.

\end{proof}

We are now ready to count the number of non-swappable edges.

\begin{definition}
\label{nonswappablenumber}
	For $\mu\leq \lambda$ and $m\in \bbN$, we denote by
	\[\calN_m(\mu,\lambda):=|\{k \leq m \mid  \mu<t_k\mu \leq \lambda \text{ and }\mu\raw t_k\mu\text{ is not swappable}\}|\]
	the number of  non-swappable edges in $E^N(\lambda)$ corresponding to a reflection $t_k$, for $k\leq m$, with starting point $\mu$. Let
\
\[\calN_\infty(\mu,\lam):=\left\lvert\{k \in \bbN \mid  \mu<t_k\mu \leq \lambda \text{ and }\mu\raw t_k\mu\text{ is not swappable}\}\right\rvert.\]
\end{definition}	

\begin{lemma}\label{Nintmuis0}
If $\mu\leq t_m\mu\leq \lam$, then $\calN_m(t_m\mu,\lam)=0$.
\end{lemma}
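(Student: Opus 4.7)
The plan is to show that no upward non-swappable edge from $\nu := t_m\mu$ has index at most $m$. By the classification theorems established in Section 4, any non-swappable edge $\nu \to t_k\nu$ must be either an $\alpha_2$-edge (which forces $\nu_1 > 0$ by \Cref{qmuneqlambda}) or an $\alpha_{12}$-edge (which forces $\nu_1 < 0$ via the $s_1$-reduction in \Cref{12notswap}); edges in the $\alpha_{21}$-direction are all swappable by \Cref{21swappable}. This naturally splits the proof according to the sign of $\nu_1$.

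If $\nu_1 = 0$ there are no non-swappable edges from $\nu$ at all and the conclusion is immediate. If $\nu_1 > 0$, the only candidates are $\alpha_2$-edges $\nu \to v_K \nu$ with $k = 4K \leq m$; \Cref{smallerissmaller}.(1) applied to the hypothesis $\mu \leq t_m\mu \leq \lambda$ with $(t_m\mu)_1 \geq 0$ gives $v_K \nu \leq \nu$ for every such $K$, contradicting the assumption that the edge points upward.

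The remaining case $\nu_1 < 0$ requires an $s_1$-symmetric analog of \Cref{smallerissmaller}.(1): under $\mu \leq t_m\mu \leq \lambda$ and $(t_m\mu)_1 \leq 0$, we must show $q_K t_m\mu \leq t_m\mu$ for every $k = 4K-2 \leq m$. I would prove this directly by mimicking the case analysis in \Cref{smallerissmaller}, distinguishing $t_m \in \{v_L,\, q_L,\, r_L,\, r'_L := t_{4L-3}\}$. In each sub-case, the criterion for $\mu \leq t_m\mu$ supplied by \eqref{bruhatorder} yields a strict lower bound (e.g.\ $\mu_2 + L > 0$ when $t_m = v_L$), while the hypothesis $(t_m\mu)_1 \leq 0$ bounds $\mu_1$ from above; combining them forces $\nu_1 + \nu_2 + K < 0$, which is exactly $q_K \nu < \nu$. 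For example, in the case $t_m = v_L$ with $K \leq L$, one computes $\nu_1+\nu_2 = \mu_1+\mu_2 \leq -\mu_2 - 2L < -L \leq -K$.

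The main obstacle is the bookkeeping in the final case, where four sub-cases of $t_m$ need to be checked individually. An alternative via pure $s_1$-conjugation is tempting because $s_1 q_K s_1 = v_K$ and \Cref{12notswap} matches the non-swappability on the two sides, but this reduction does not work uniformly: the index of $s_1 v_L s_1 = q_L$ is $4L-2$ rather than $4L$, so for $t_m = v_L$ the boundary value $K = L$ falls outside the range covered by Case $\nu_1 > 0$, and a direct computation in that slot is unavoidable. Consequently, the direct case-by-case verification is both the cleanest and most robust route.
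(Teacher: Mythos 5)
Your proposal is correct and takes essentially the same route as the paper, which simply cites \Cref{smallerissmaller} for $(t_m\mu)_1\geq 0$ and declares the case $(t_m\mu)_1<0$ "symmetric." Your discussion of why a naive $s_1$-conjugation of \Cref{smallerissmaller} misses the boundary case $K=L$ (when $t_m=v_L$ or $t_m=r_L$) is a valid observation — the remedy is exactly what you do, namely to re-run the four-subcase computation directly in the $\alpha_{12}$-direction, and indeed in each subcase the two hypotheses $\nu_1\leq 0$ and $\mu\leq t_m\mu$ yield $\nu_1+\nu_2\leq -K$, i.e.\ $q_K\nu\leq\nu$, for all relevant $K$.
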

\begin{proof}
If $(t_m\mu)_1\geq 0$, this follows directly from \Cref{smallerissmaller}. The case $(t_m\mu)<0$ is symmetric.
\end{proof}

Note that since $\Gamma_\lambda$ is a finite graph, we have $\calN_\infty(\mu,\lam)=\calN_m(\mu,\lam)$ for $m$ large enough. If $\mu_1\geq 0$, the only non-swappable edges are in the $\alpha_2$-direction, so in this case we have
		\[\calN_{m}(\mu,\lambda)=\left\lvert\left\{ 1 \leq K\leq \lfl \frac{m}{4}\rfl \mid  
		(\mu\raw s_{K\delta-\alpha_2^\vee}\mu)\in E^N(\lam)\right\}\right\rvert.\]

\begin{proposition}
 
Let $\tilde{M}=\min(\lfl \frac{m}{4}\rfl,-\mu_2+\affphi_2(\mu,\lam))$ and assume $\mu_1\geq 0$. Then we have
\[ \calN_m(\mu,\lambda)=\begin{cases}\tilde{M}+\min(\mu_2,\lfl\frac{\mu_2-\lam_2}{2}\rfl) & \begin{array}{l}\text{if }\mu_1\geq \lam_1\end{array}\\
\tilde{M}+\frac{\mu_1-\lam_1}{2}+
 \min(\mu_2,\lfl\frac{\mu_2-\lam_2}{2}\rfl) 
&		
\begin{array}{l}\text{if }0<\mu_1<\lam_1,\; \mu_2\leq \lam_2\\ \text{ and }\mu_1+\mu_2\geq -\lam_2\end{array}\\

0 & \begin{array}{l}\text{if }\mu_1=0,\; \mu_2\geq \lam_2 \\ \text { or }\mu_1+\mu_2\leq -\lam_2.\end{array}\end{cases}
	\]

\end{proposition}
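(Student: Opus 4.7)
Since $\mu_1 \geq 0$, by \Cref{mu1>0nonswap2} every non-swappable edge out of $\mu$ has the form $\mu \to v_K\mu$ with $v_K = s_{K\delta - \alpha_2^\vee}$, so
\[ \calN_m(\mu,\lam) = \#\{K \in \bbZ_{\geq 1} : K \leq \lfl m/4 \rfl,\ \mu < v_K\mu \leq \lam,\ (\mu \to v_K\mu) \in E^N(\lam)\}. \]
My first step is to pin down the admissible range of $K$: using \eqref{vMmu}, the condition $\mu < v_K\mu$ rewrites as $K > -\mu_2$, while $v_K\mu \leq \lam$ amounts to $K + \mu_2 \leq \affphi_2(\mu,\lam)$ by \Cref{affphicompute} and the definition of $\affphi_2$. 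Intersecting with $K \leq \lfl m/4 \rfl$ yields the admissible interval $\max(-\mu_2, 0) < K \leq \tilde{M}$.

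The heart of the proof is then to intersect this admissible range with the non-swappability criteria from the previous subsection. When $\mu_1 \geq \lam_1$, \Cref{mu1geqlam1} says the edge is non-swappable iff $\mu_2 \leq -\lam_2$ or $K > \lce (\lam_2-\mu_2)/2 \rce$. Combining with the inequality $K > -\mu_2$ automatically imposed by admissibility, the non-swappability condition becomes $K > \max(-\mu_2, \lce (\lam_2-\mu_2)/2 \rce)$, and counting integers in the resulting window yields the claimed formula after applying the identity $-\lce x \rce = \lfl -x \rfl$. A short verification shows that the subcase $\mu_2 \leq -\lam_2$ (where every admissible $K$ is non-swappable) is absorbed into the $\min$-formula by the elementary equivalence $\mu_2 \leq \lfl (\mu_2-\lam_2)/2 \rfl \iff \mu_2 \leq -\lam_2$. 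The regime $0 < \mu_1 < \lam_1$ under the hypotheses of the second case of the formula is treated identically, now using \Cref{mu1<lam1}: its threshold is simply shifted by $(\lam_1-\mu_1)/2$, producing the extra summand in the count.

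For the degenerate cases I would argue that $\calN_m(\mu,\lam) = 0$ by showing that every admissible edge is swappable. If $\mu_1 = 0$ and $\mu_2 \geq \lam_2$, the contrapositive of \Cref{qmuneqlambda} forces $q_K\mu \leq \lam$ for each relevant $K$, whence \Cref{qmuswappable} gives swappability. The case $\mu_1 + \mu_2 \leq -\lam_2$ is analogous: \Cref{qmuneqlambda} again rules out any $\alpha_2$-direction NS edge starting at such $\mu$.

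The main technical hurdle is the bookkeeping with $\max,\min$ and ceiling/floor operations in the two non-degenerate regimes, in particular the verification that the subcase $\mu_2 \leq -\lam_2$ of \Cref{mu1geqlam1} folds neatly into the unified $\min$-formula rather than requiring a separate branch. Once this is dispatched, the counting is a direct transcription of the classification results obtained in the previous subsection.
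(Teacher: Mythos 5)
Your approach matches the paper's, which simply cites the classification results (Propositions \ref{mu1geqlam1} and \ref{mu1<lam1}); you are filling in the counting details, and the bookkeeping in the two non-degenerate regimes is correct, including the nice observation that the subcase $\mu_2\leq-\lam_2$ folds into the $\min$-formula via $\mu_2\leq\lfl(\mu_2-\lam_2)/2\rfl\iff\mu_2\leq-\lam_2$.

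One small imprecision: the degenerate case $\mu_1+\mu_2\leq-\lam_2$ does not follow from \Cref{qmuneqlambda} alone. That lemma only yields (contrapositively) that $\mu_1=0$ or $\mu_2\geq\lam_2$ forces $q_K\mu\leq\lam$; it says nothing directly about $\mu_1+\mu_2$. What you want here is that the two thresholds in \Cref{mu1<lam1} collide: from \eqref{eqtu} the constraint $v_M\mu\leq\lam$ forces $M\leq\frac{\lam_1+\mu_1}{2}+\lam_2$, while non-swappability requires $M>\frac{\lam_1-\mu_1}{2}-\mu_2$, and these two conditions are simultaneously satisfiable precisely when $\mu_1+\mu_2>-\lam_2$. (Equivalently, this is the statement illustrated by the red/orange regions in the figure accompanying \Cref{qmuneqlambda}: the NS starting points are confined to the red region.) With this substituted for the appeal to \Cref{qmuneqlambda} in that subcase, the argument is complete.
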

\begin{proof}
 This follows directly from \Cref{mu1geqlam1,mu1<lam1}.
\end{proof}

If $\mu_1\geq 0$, taking the limit $m\raw \infty$ we get
\begin{equation}\label{Ninf}
    \calN_\infty(\mu,\lambda)=\begin{cases}\affphi_2(\mu,\lam)-\max(0,\frc{\mu_2+\lam_2}) & \begin{array}{l}\text{if }\mu_1\geq \lam_1;\end{array}\\
\affphi_2(\mu,\lam)+\frac{\mu_1-\lam_1}{2}
 -\max(0,\frc{\mu_2+\lam_2}) 
&	\begin{array}{l}	
\text{if }0<\mu_1<\lam_1,\; \mu_2\leq \lam_2 \\\text{ and }\mu_1+\mu_2\geq -\lam_2;\end{array}\\
0 &  \begin{array}{l}\text{if }\mu_1=0,\; \mu_2\geq \lam_2 \\\text { or }\mu_1+\mu_2\leq -\lam_2.\end{array}\end{cases}
\end{equation} 

If $\mu_1<0$ we have $\calN_\infty(\mu,\lam)=\calN_\infty(s_1(\mu),\lam)$.	In particular, we have
\begin{equation}\label{Ninf<0}
    \calN_\infty(\mu,\lambda)=\begin{cases}\begin{aligned}\affphi_{12}(\mu,\lam)+\min\left(0,\frac{-\mu_1-\lam_1}{2}\right)\\-\max\left(0,\frc{\mu_1+\mu_2+\lam_2}\right)\end{aligned} & \begin{array}{l}\text{if }
\mu_1+ \mu_2\leq \lam_2 \\\text{ and }\mu_2\geq -\lam_2; \end{array}\\
0 &  \begin{array}{l}\text{if } \mu_1+\mu_2\geq \lam_2\\\text { or }\mu_2\leq -\lam_2.\end{array}\end{cases}
\end{equation}

A remarkable property is that the number of NS edges gives exactly the correction term in \eqref{eqswapdef} for non-swappable edges.

\begin{proposition}\label{Ncount}For any $\mu\leq \lam$ with $t_{m+1}\mu\leq \lam$, we have 
\[\ell_{m+1}(\mu,\lam)-\ell_{m+1}(t_{m+1}\mu,\lam)-1=\ell_m(\mu,\lam)-\ell_m(t_{m+1}\mu,\lam)+1=\calN_{m+1}(\mu,\lam).\]
\end{proposition}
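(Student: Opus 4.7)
The first equality is a direct consequence of the construction of twisted Bruhat graphs. Passing from $\Gamma^m_\lambda$ to $\Gamma^{m+1}_\lambda$ reverses exactly the edges with label $\alpha^\vee_{t_{m+1}} \in N(y_{m+1}) \setminus N(y_m)$. Under the hypothesis $\mu < t_{m+1}\mu \leq \lambda$, the unique such edge in $\Gamma_\lambda$ incident to $\mu$ or $t_{m+1}\mu$ is the edge $\mu \raw t_{m+1}\mu$. After reversal, $\ell_{m+1}(\mu,\lam) = \ell_m(\mu,\lam)+1$ and $\ell_{m+1}(t_{m+1}\mu,\lam) = \ell_m(t_{m+1}\mu,\lam)-1$, so subtracting yields the first equality immediately.

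For the second equality, the plan is to exploit the bijection $\Arr_m(\mu) \longleftrightarrow \Arr_m(t_{m+1}\mu) \setminus \{\mu \raw t_{m+1}\mu\}$ from \cite[Lemma 4.8]{Pat}, which sends $(\tau\mu \raw \mu)$ to its $t_{m+1}$-conjugate $(t_{m+1}\tau\mu \raw t_{m+1}\mu)$. Write $\ell_m(\mu,\lam) = \ell_m(\mu)-a$ and $\ell_m(t_{m+1}\mu,\lam) = \ell_m(t_{m+1}\mu)-b$, where $a$ and $b$ count arrows of $\Arr_m(\mu)$, $\Arr_m(t_{m+1}\mu)$ respectively whose tail lies outside $\{\leq \lam\}$. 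Combining with \eqref{lmX} we get $\ell_m(\mu,\lam) - \ell_m(t_{m+1}\mu,\lam) + 1 = b-a$, so the task reduces to proving
\[b-a = \calN_{m+1}(\mu,\lam).\]
Under the bijection, $b-a$ equals the signed count of arrows $(\tau\mu \raw \mu)$ for which exactly one of the two conditions $\tau\mu \leq \lam$, $t_{m+1}\tau\mu \leq \lam$ holds, weighted by $+1$ if only $\tau\mu \leq \lam$ and $-1$ if only $t_{m+1}\tau\mu \leq \lam$.

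To identify this signed count with $\calN_{m+1}(\mu,\lam)$, I would split $\Arr_m(\mu,\lam) = \bigsqcup_i \Arr_m^i(\mu,\lam)$ by direction $i \in \{1,2,12,21\}$ and compute each $\ell_m^i(\mu,\lam) - \ell_m^i(t_{m+1}\mu,\lam)$ using the explicit formula \eqref{ell1} for $\ell^1$, the geometric description of $\{<_m \cdot\}$ from \Cref{leqmu}, and the distance functions $\affphi_i$ from \Cref{affphicompute}. This yields an explicit expression for $b-a$ in terms of $\mu$, $\lam$, and the reflection $t_{m+1}$. By \Cref{mu1>0nonswap2} and \Cref{21swappable}, when $\mu_1 > 0$ only $\alpha_2$-type edges contribute to $\calN_{m+1}(\mu,\lam)$, so the comparison of the two expressions reduces to matching against the explicit count in \eqref{Ninf} (truncated at $m+1$); the case $\mu_1 < 0$ follows by $s_1$-symmetry using \Cref{12notswap}, while the borderline case $\mu_1 = 0$ requires tracking both $\alpha_2$- and $\alpha_{12}$-contributions through \Cref{12notswap}.

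The main obstacle is the extensive case analysis inherent in this approach: both $b-a$ and $\calN_{m+1}(\mu,\lam)$ are piecewise-linear functions that change formula as $\mu$ and $t_{m+1}\mu$ cross the walls of $\op{Conv}(W\cdot \lam)$, so one must carefully verify the identity in each geometric region determined by the classifications of \Cref{mu1geqlam1} and \Cref{mu1<lam1}. The monotonicity property \Cref{corcon}\eqref{smallerbad} ensures that within the $\alpha_2$-direction the non-swappable edges from $\mu$ form a suitably structured family, which is essential to aligning the piecewise formulas on both sides.
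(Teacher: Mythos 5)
Your reduction of the first equality is correct and matches the paper's (which states it more tersely as $\mu <_m t_{m+1}\mu <_{m+1}\mu$). Your setup for the second equality — passing through \cite[Lemma 4.8]{Pat} to rewrite $\ell_m(\mu,\lam)-\ell_m(t_{m+1}\mu,\lam)+1$ as the signed count $b-a$, and then splitting by root direction — is also a valid reformulation, and it is essentially the strategy the paper follows (the paper works directly with $\ell_m^i(\mu,\lam)-\ell_m^i(t_{m+1}\mu,\lam)$ rather than routing through the bijection, but the bookkeeping is the same).

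The gap is that you stop precisely at the step that carries the mathematical content. Showing $b-a=\calN_{m+1}(\mu,\lam)$ is not a matter of "matching piecewise formulas" in the abstract: one has to actually produce the closed-form value of each side in every region. The paper first disposes of the swappable case (where $\calN_{m+1}(\mu,\lam)=0$ follows from \Cref{corcon}(2)), then fixes $m+1=4M$, $v=s_{M\delta-\alpha_2^\vee}$, $\mu_1>0$ (the other cases being handled by the $s_1$-symmetry of \Cref{12notswap} or by \Cref{21swappable}), and computes $\ell_m(\mu,\lam)-\ell_m(v\mu,\lam)+1=\ell_m^{12}(\mu,\lam)-\ell_m^{12}(v\mu,\lam)$ explicitly in the two subcases $\mu_1\geq\lam_1$ and $\mu_1<\lam_1$, using \Cref{affphicompute} and the non-swappability thresholds from \Cref{mu1geqlam1,mu1<lam1}, to obtain in each case $M+\min(\mu_2,\lfloor(\mu_2-\lam_2)/2\rfloor)$ (plus $\frac{\mu_1-\lam_1}{2}$ in the second). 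It then independently identifies the minimal $K$ with $(\mu\raw s_{K\delta-\alpha_2^\vee}\mu)\in E^N(\lam)$ and verifies $\calN_{m+1}(\mu,\lam)=M-K+1$ equals the same quantity. Your proposal describes the existence of such a verification but does not perform it, and it is only at this point that the key reduction $\ell_m(\mu,\lam)-\ell_m(v\mu,\lam)+1=\ell_m^{12}(\mu,\lam)-\ell_m^{12}(v\mu,\lam)$ — which collapses the four directions down to one — emerges; that simplification is not visible from the $b-a$ formulation alone. A secondary point: the monotonicity you invoke via \Cref{corcon}\eqref{smallerbad} concerns varying $\lam$, not the alignment of NS edges at a fixed $\lam$; the statement you actually need is \Cref{corcon}(2).
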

\begin{proof}
The first equality is clear because $\mu <_m t_{m+1}\mu <_{m+1} \mu$, so we just need to show the second one.

If $\mu \raw t_{m+1}\mu$ is swappable the claim is clear since $\calN_{m+1}(\mu,\lam)=0$ by \Cref{corcon}. We can assume $\mu_1>0$ and $v:=t_{m+1}=s_{M\delta-\alpha_2^\vee}$, since the case $\mu_1<0$ and $t_{m+1}=s_{M\delta-\alpha_{12}^\vee}$ is analogous. 
In this case we have $q_M\mu\not \leq \lam$ and $q_Mv\mu \not \leq \lam$.

Assume first $\mu_1\geq \lam_1$.  Then
\begin{align*}
   \ell_m(\mu,\lam)-\ell_m(v\mu,\lam)+1 &=
      \ell_m^{12}(\mu,\lam)-\ell_m^{12}(v\mu,\lam)=\\
   & =
   \min (\lam_2+\mu_2,\lfl \frac{ \lam_2+ \mu_2}{2}\rfl)-\min(\lam_2-M ,\lfl \frac{ \lam_2+ \mu_2}{2}\rfl)\\
   &= M+\min(\mu_2,\lfl\frac{\mu_2-\lam_2}{2}\rfl)
\end{align*}
In fact, since $\mu\raw v\mu$ is not swappable, and $\lam_2+\mu_2>\lam_2-M$, we have $\lam_2-M>\frf{\lam_2+\mu_2}$.
The same computation also shows that the minimal $K$ such that $\mu\raw s_{K\delta-\alpha_2^\vee}\mu\in E^N(\lam)$ is 
\[K=\max(-\mu_2,\lce\frac{\lam_2-\mu_2}{2}\rce) +1\]
so also 
$\calN_{m+1}(\mu,\lam)=M-K+1=M+
\min(\mu_2,\lfl\frac{\mu_2-\lam_2}{2}\rfl)$.

Assume now $\mu_1<\lam_1$. Recall that in this case we have $\mu_2\geq -\lam_2+1$. As in \eqref{mu1<lam1eq}, we have
\begin{align*}
   \ell_m(\mu,\lam)-\ell_m(t_{m+1}\mu,\lam)+1=  M+\lfl\frac{\mu_2-\lam_2}{2}\rfl+\frac{\mu_1-\lam_1}{2}
\end{align*}
In this case, the minimal $K$ such that $\mu\raw s_{K\delta-\alpha_2^\vee}\mu\in E^N(\lam)$ is 
\[K=\frac{\lam_1-\mu_1}{2}+\frc{\lam_2-\mu_2}+1.\]
and again $\calN_{m+1}(\mu,\lam)=M-K+1$.
\end{proof}

We can also generalize \Cref{Ncount} to the case when $t_{m+1}\mu \not \leq \lambda$. In this case $\ell_m(t_{m+1}\mu,\lambda)$ is not properly defined, so we first need to generalize its definition.

\begin{definition}
Let $\mu \in X$ and assume $\mu_1\geq 0$. 
For $m\in \bbN$ and $i \in \{2,12,21\}$ we define 
\[\affell_m^{i}(\mu,\lam) :=\begin{cases}
    \ell_m^i(\mu,\lam) & \text{if }\mu \leq \lam\\
    \affphi_i(\mu,\lam) & \text{if }\mu\not \leq \lam,
\end{cases}\]
where here $\affphi_i(\mu,\lam)$ is to be interpreted as the function given in \Cref{affphicompute} (notice that $\affphi_{i}$ is not properly defined if $\mu\not \leq \lam$).
Then we define
\[\affell_m(\mu,\lambda):=\ell^1(\mu)+\ell_m^2(\mu)+\affell^{21}_m(\mu,\lam)+\affell^{12}_m(\mu,\lam).\]
\end{definition}
Notice that $\affell_m(t_{m}\mu,\lam)=\ell_m(t_{m}\mu,\lam)$ if $m=4M$ and  $\mu\leq t_{m}\mu\leq \lam$.

\begin{corollary}\label{Ncountgen}
Let $\mu \leq \lam$ and $m=4M$. Then we have 
\[\ell_{m}(\mu,\lam)-\affell_{m}(t_{m}\mu,\lam)-1=\calN_{m}(\mu,\lam)\]
\end{corollary}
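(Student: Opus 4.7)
The proof naturally splits according to whether $t_m\mu \leq \lambda$. When $t_m\mu \leq \lambda$, by definition $\affell_m(t_m\mu,\lambda) = \ell_m(t_m\mu,\lambda)$, so the identity reduces to $\ell_m(\mu,\lambda) - \ell_m(t_m\mu,\lambda) - 1 = \calN_m(\mu,\lambda)$. Assuming $\mu \leq t_m\mu$ (i.e.\ $\mu_2 \geq -M$, which is the relevant configuration since the quantity $\affell$ is set up so that $t_m\mu$ is reached by an ``upward'' $\alpha_2$-edge), this is precisely \Cref{Ncount} applied with $m$ replaced by $m-1$, since then $t_{(m-1)+1} = t_m = v_M$.

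Assume henceforth that $v_M\mu \not\leq \lambda$, and set $K_0 := \affphi_2(\mu,\lambda) - \mu_2$, the largest integer with $v_{K_0}\mu \leq \lambda$; then $M > K_0$. By \Cref{mu1>0nonswap2}, for $\mu_1 > 0$ the only non-swappable edges out of $\mu$ are $\alpha_2$-edges $\mu \to v_K\mu$, and such an edge lies in $E(\lambda)$ only when $K \leq K_0$. Hence $\calN_m(\mu,\lambda) = \calN_{4K_0}(\mu,\lambda)$, and applying the already-handled first case at $m_0 := 4K_0$ gives $\ell_{m_0}(\mu,\lambda) - \ell_{m_0}(v_{K_0}\mu,\lambda) - 1 = \calN_m(\mu,\lambda)$. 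The corollary therefore reduces to establishing the identity
\[ \ell_m(\mu,\lambda) - \affell_m(v_M\mu,\lambda) \;=\; \ell_{m_0}(\mu,\lambda) - \ell_{m_0}(v_{K_0}\mu,\lambda). \]

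My plan is to prove this last identity by induction on $M - K_0 \geq 0$; the base case $M = K_0$ is tautological. For the inductive step $M \rightsquigarrow M+1$ one reverses the four edges $t_{4M+1},\dots,t_{4M+4}$. On the left, only those $t_k$ with $\mu < t_k\mu \leq \lambda$ contribute to the change of $\ell_m(\mu,\lambda)$; since $v_{M+1}\mu \not\leq \lambda$, only the $\alpha_{12}$- and $\alpha_{21}$-edges do. On the right, $\nu' := v_{M+1}\mu$ differs from $\nu := v_M\mu$ by $\nu'_1 = \nu_1 + 2,\ \nu'_2 = \nu_2 - 2$, and the shifts in $\ell_m^2(\nu)$, $\affphi_{12}(\nu,\lambda)$ and $\affphi_{21}(\nu,\lambda)$ can be read off directly from \Cref{affphicompute}.

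The main obstacle is to verify that these two changes agree: this pits an integer combinatorial count on the left against the piecewise-linear geometric quantity $\affell$ on the right. The verification requires case analysis according to which branch of the $\min$'s in \Cref{affphicompute} is active — equivalently, which facet of $\Conv(W\cdot \lambda)$ the weights $\mu$ and $v_M\mu$ lie closest to — together with the constraints on the location of $\mu$ obtained in \Cref{mu1<lam1}. Finally, the case $\mu_1 = 0$, where \Cref{mu1>0nonswap2} does not apply, is handled by appealing to the $s_1$-symmetry and reducing the situation on the $\mu_1 < 0$ side.
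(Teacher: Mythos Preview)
Your reduction in the case $t_m\mu\leq\lambda$ is correct and is exactly what the paper does. The divergence is in the case $v_M\mu\not\leq\lambda$, and here your inductive plan, while not wrong in spirit, misses the observation that makes the computation short.

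The key fact, used at the very start of the paper's argument, is that $v_M\mu\not\leq\lambda$ automatically forces $q_M\mu\not\leq\lambda$ and $r_M\mu\not\leq\lambda$ (this follows from \Cref{affphicompute}: for $\mu_1\geq 0$ one has $\affphi_{12}(\mu,\lambda)-\mu_1\leq\affphi_2(\mu,\lambda)$ and $\affphi_{21}(\mu,\lambda)-\mu_1\leq 2\affphi_2(\mu,\lambda)$). This has two consequences you did not exploit. First, your claim that in the inductive step ``only the $\alpha_{12}$- and $\alpha_{21}$-edges'' contribute to the change in $\ell_m(\mu,\lambda)$ is not right: for $M>K_0$ the reflections $q_{M+1}\mu$, $r_{M+1}\mu$ and $s_{(2M+1)\delta-\alpha_{21}^\vee}\mu$ all lie outside $\{\leq\lambda\}$ as well, so $\ell_{4M}(\mu,\lambda)$ is already constant in $M$ once $M>K_0$. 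Your induction then collapses to showing that $\affell_{4M}(v_M\mu,\lambda)$ is constant in $M$ for $M>K_0$, which is a direct computation with \Cref{affphicompute} rather than an induction. Second, and more importantly, the same observation gives $\ell_m^{12}(\mu,\lambda)=\affphi_{12}(\mu,\lambda)$, $\ell_m^{21}(\mu,\lambda)=\affphi_{21}(\mu,\lambda)$ and $\calN_m(\mu,\lambda)=\calN_\infty(\mu,\lambda)$, so both sides of the identity are now explicit functions of $(\mu,\lambda)$ alone. The paper simply evaluates $\ell_m(\mu,\lambda)-\affell_m(v_M\mu,\lambda)-1$ from \Cref{affphicompute} in the two cases $\mu_1\geq\lambda_1$ and $0<\mu_1<\lambda_1$ (using $(v_M\mu)_2\leq-\lambda_2$ from \Cref{claimtmu}) and matches the result against the closed formula \eqref{Ninf} for $\calN_\infty(\mu,\lambda)$; this is about five lines.

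There is also a boundary issue in your reduction: anchoring the induction at $m_0=4K_0$ requires an actual edge $\mu\to v_{K_0}\mu$ in $E(\lambda)$, i.e.\ $\affphi_2(\mu,\lambda)>0$, which can fail when $\mu$ sits on the $\alpha_2$-facet of $\Conv(W\cdot\lambda)$. The direct computation avoids this. In short, your induction is salvageable but does not buy anything over the paper's two-case direct evaluation, and the sketch as written misidentifies which edges survive in the inductive step.
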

\begin{proof}
Let $v:=t_{m}$.
We can assume $v\mu \not \leq \lam$, otherwise the claim follows by \Cref{Ncount}. Notice that this forces $q_M\mu \not \leq \lam$ and $r_M\mu \not \leq \lam$. Notice also that $\ell^2_{m}(v\mu)=-\mu_2-M-1$ and that $\calN_{m}(\mu,\lam)=\calN_{\infty}(\mu,\lam)$.

Assume first $\mu_1\geq \lam_1$. We have
\begin{align*}\ell_{m}(\mu,\lam)-\affell_{m}(v\mu,\lam)-1 &=
\affphi_{12}(\mu,\lam)-\affphi_{12}(v\mu,\lam)+\affphi_2(\mu,\lam)-\ell^2_{m}(v\mu)-1\\
&=M+\min(\mu_2,\frf{\mu_2-\lam_2})+\affphi_2(\mu,\lam)-\mu_2-M\\
&=\affphi_2(\mu,\lam)+\min(0,-\lce\frac{\lam_2+\mu_2}{2}\rce).\end{align*}

Assume now $\mu_1<\lam_1$. In this case, we have
\begin{align*}\ell_{m}(\mu,\lam)-\affell_{m}(v\mu,\lam)-1 &=
M+\lfl\frac{\mu_2-\lam_2}{2}\rfl+\frac{\mu_1-\lam_1}{2}+\affphi_2(\mu,\lam)-\ell^2_{m}(v\mu)-1\\
&=M+\min(\mu_2,\frf{\mu_2-\lam_2})+\affphi_2(\mu,\lam)-\mu_2-M\\
&=\affphi_2(\mu,\lam)+\min(0,-\lce\frac{\lam_2+\mu_2}{2}\rce)+ \frac{\mu_1-\lam_1}{2}.\end{align*}

The claim follows by comparing these formulas with \eqref{Ninf}.
\end{proof}

\subsection{Non-swappable staircases}

In type $A_n$, swapping functions can be defined within a single atom.
Unfortunately, the existence of non-swappable edges in type $C_2$ means that we cannot do the same, causing a relevant increase in complexity. Instead, for every non-swappable edge, the swapping functions we are going to construct in \Cref{sec:swapping} will involve two elements from two different atoms within the same preatom. To determine which are the two atoms involved we need to introduce a new quantity, which we call the elevation of an edge and that measures the height of the maximal staircases of non-swappable edges lying underneath it.


\begin{definition}
\label{def:elevation}
Let $e=(\mu\raw \mu-k\alpha)\in E(\lambda)$ be an edge. We call the \emph{elevation} of $e$, and denote it by $\Omega(e)$, the minimum integer $j\geq 0$ such that $(\mu\raw \mu-(k-j)\alpha)\in E^S(\lambda-j\varpi_2)$. 
\end{definition}

Notice that $\Omega(e)=0$ if and only if $e$ is swappable. 
The elevation of a non-swappable edge is well defined by \Cref{mu1<lam1}.

In the other directions, we need a way to control how many times an element gets swapped with elements from higher atoms.

\begin{definition}
\label{def:nonswappable}
Let $k\geq 0$ and let $\mu \leq \lambda$.  A \emph{staircase of non-swappable edges over $(\mu,\lam)$} (or \emph{NS-staircase}, for short) is a sequence of edges $(e_i)_{1\leq i\leq a}$ such that 
\begin{itemize}
\item $e_i:=(\mu \raw \mu - (n+i)\alpha)\in E^N(\lambda+i\varpi_2)$ for any  $i=1,\ldots, a$.
\item $n = 0$ or $e_0:=(\mu \raw \mu -n\alpha)\in E^S(\lam)$.
\end{itemize}

We define  $\affD_\infty(\mu,\lam)$ to be the length of the longest NS-staircase over $(\mu,\lam)$.
We define  $\affD_m(\mu,\lam)$ to be the length of the longest NS-staircase over $(\mu,\lam)$ where the label of every edge in $e_i$ is a root in $N(y_m)$.
\end{definition}

\begin{example}
Let $\lam=(3,1)$ and $\mu=(3,0)$. Then $e_0:=\mu\raw \mu-\alpha_2=v_{1}\mu$ is a swappable edge, while $e_1:=(\mu\raw \mu-2\alpha_2)\in E^N(\lam+\varpi_2)$ and $e_2:=(\mu\raw \mu-3\alpha_2)\in E^N(\lam+2\varpi_2)$, as illustrated in \Cref{fig:staircase}. So $(e_1,e_2)$ is a NS staircase of $(\mu,\lam)$ and we have $\Omega(e_2)=2$, $\Omega(e_1)=1$ and $\Omega(e_0)=0$.

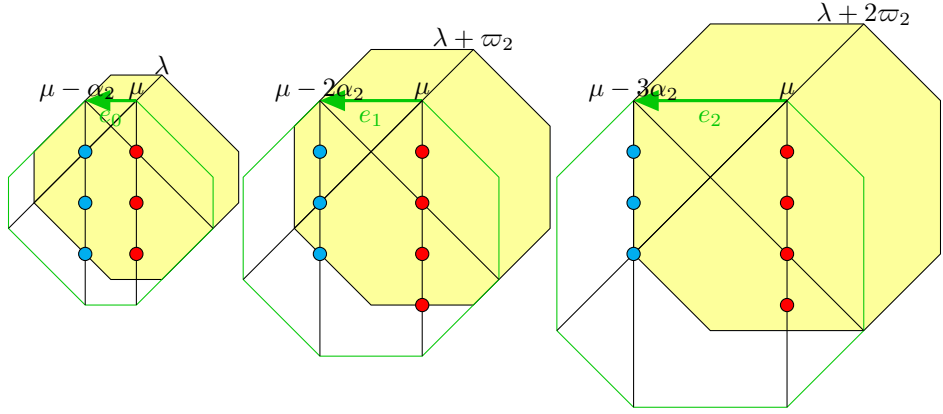
\begin{figure}
\begin{center}
\begin{tikzpicture}[x=0.17cm,y=0.17cm]
\draw[black] (2,8) -- (-2,8) -- (-8,2) -- (-8,-2) -- (-2,-8) -- (2,-8) -- (8,-2) -- (8,2) -- cycle;
\draw[green] (-10,-4) -- (-10,0) -- (-4,6) -- (0,6) -- (6,0) -- (6,-4) -- (0,-10) -- (-4,-10) -- cycle;
\draw  (0,6)-- (-10,-4);
\draw  (0,6)-- (0,-10);
\draw  (-4,6)-- (-4,-10);
\draw  (-4,6)-- (6,-4);
\draw  (-8,-2)-- (2,8);
\draw[green, very thick, -triangle 45] (0,6)  to node[below,color=mygreen] {$e_0$} (-4,6);
\node at (2,8.8) {$\lam$};
\node at (0,6.8) {$\mu$};
\node at (-4.6,6.8) {$\mu-\alpha_2$};

\draw [fill=cyan] (-4,-6) circle (2.5pt);
\draw [fill=cyan] (-4,2) circle (2.5pt);
\draw [fill=red] (0,2) circle (2.5pt);
\draw [fill=red] (0,-2) circle (2.5pt);
\draw [fill=cyan] (-4,-2) circle (2.5pt);
\draw [fill=red] (0,-6) circle (2.5pt);
\begin{scope}[xshift=3.8cm]
\draw[black] (4,10) -- (-4,10) -- (-10,4) -- (-10,-4) -- (-4,-10) -- (4,-10) -- (10,-4) -- (10,4) -- cycle;
\draw[green] (-14,-8) -- (-14,0) -- (-8,6) -- (0,6) -- (6,0) -- (6,-8) -- (0,-14) -- (-8,-14) -- cycle;
\draw[green, very thick, -triangle 45] (0,6) to node[below,color=mygreen] {$e_1$} (-8,6);
\draw  (0,6)-- (-14,-8);
\draw  (0,6)-- (0,-14);
\draw  (-8,6)-- (-8,-14);
\draw  (-8,6)-- (6,-8);
\draw  (-10,-4)-- (4,10);
\node at (4,10.8) {$\lam+\varpi_2$};
\node at (0,6.8) {$\mu$};
\node at (-8,6.8) {$\mu-2\alpha_2$};
\draw [fill=cyan] (-8,-6) circle (2.5pt);
\draw [fill=cyan] (-8,-2) circle (2.5pt);
\draw [fill=red] (0,-2) circle (2.5pt);
\draw [fill=red] (0,2) circle (2.5pt);
\draw [fill=cyan] (-8,2) circle (2.5pt);
\draw [fill=red] (0,-6) circle (2.5pt);
\draw [fill=red] (0,-10) circle (2.5pt);
\end{scope}
\begin{scope}[xshift=8.65cm]
\draw[black] (6,12) -- (-6,12) -- (-12,6) -- (-12,-6) -- (-6,-12) -- (6,-12) -- (12,-6) -- (12,6) -- cycle;
\draw[green] (-18,-12) -- (-18,0) -- (-12,6) -- (0,6) -- (6,0) -- (6,-12) -- (0,-18) -- (-12,-18) -- cycle;
\draw[green, very thick, -triangle 45] (0,6) to node[below,color=mygreen] {$e_2$} (-12,6);
\draw  (0,6)-- (-18,-12);
\draw  (0,6)-- (0,-18);
\draw  (-12,6)-- (-12,-18);
\draw  (-12,6)-- (6,-12);
\draw  (-12,-6)-- (6,12);
\node at (6,12.8) {$\lam+2\varpi_2$};
\node at (0,6.8) {$\mu$};
\node at (-12,6.8) {$\mu-3\alpha_2$};
\draw [fill=cyan] (-12,-6) circle (2.5pt);
\draw [fill=red] (0,-6) circle (2.5pt);
\draw [fill=red] (0,2) circle (2.5pt);
\draw [fill=cyan] (-12,2) circle (2.5pt);
\draw [fill=red] (0,-2) circle (2.5pt);
\draw [fill=red] (0,-10) circle (2.5pt);
\draw [fill=cyan] (-12,-2) circle (2.5pt);
\end{scope}
\end{tikzpicture}
\end{center}
\caption{The edge $e_0$ is swappable while $e_1$ and $e_2$ are not. To check this, since $\mu_1\geq \lam_1$, as explained in \Cref{mu1geqlam1}, it is enough to compare the number of weight in the convex hull lying below $\mu$ and $v\mu$.} 
\label{fig:staircase}
\end{figure}

Moreover, as illustrated in \Cref{staircasemax}, the staircase $(e_1,e_2)$ cannot be extended, since $\mu-4\alpha_2\not \leq \lam+3\varpi_2$. Hence, we have $\affD_\infty(\mu,\lam)=2$.

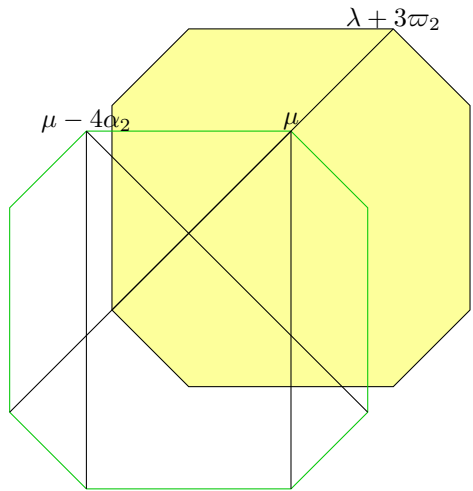
\begin{figure}
\begin{center}
\begin{tikzpicture}[x=0.17cm,y=0.17cm]
\draw[black] (8,14) -- (-8,14) -- (-14,8) -- (-14,-8) -- (-8,-14) -- (8,-14) -- (14,-8) -- (14,8) -- cycle;
\draw[green] (-22,-16) -- (-22,0) -- (-16,6) -- (0,6) -- (6,0) -- (6,-16) -- (0,-22) -- (-16,-22) -- cycle;
\draw  (0,6)-- (-22,-16);
\draw  (0,6)-- (0,-22);
\draw  (-16,6)-- (-16,-22);
\draw  (-16,6)-- (6,-16);
\draw  (-14,-8)-- (8,14);
\node at (8,14.8) {$\lam+3\varpi_2$};
\node at (0,6.8) {$\mu$};
\node at (-16,6.8) {$\mu-4\alpha_2$};
\end{tikzpicture}
\end{center}
\caption{We have $\mu-4\alpha_2\not \leq \lam+3\varpi_2$ and the NS staircase $(e_1,e_2)$ from \Cref{fig:staircase} cannot be extended.}
\label{staircasemax}
\end{figure}

\end{example}

\begin{lemma}\label{staircaseunique}
There exists at most one non-empty NS-staircase over $(\mu,\lam)$.
\end{lemma}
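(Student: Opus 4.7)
The plan is to show that both the root direction $\alpha$ and the starting offset $n$ of a non-empty NS-staircase over $(\mu,\lambda)$ are uniquely determined by the pair $(\mu,\lambda)$; since a staircase is then completely specified by its length $a$, this forces any two non-empty NS-staircases to be nested prefixes of a common maximal one.

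First I would pin down the direction $\alpha$. By \Cref{21swappable}, $\alpha_{21}$-direction edges are always swappable, so $\alpha \in \{\alpha_2,\alpha_{12}\}$. By \Cref{mu1>0nonswap2}, if $\mu_1 > 0$ any NS edge from $\mu$ in any crystal $\lambda + i\varpi_2$ must be in the $\alpha_2$-direction, forcing $\alpha = \alpha_2$; by the $s_1$-symmetric version obtained via \Cref{12notswap}, the case $\mu_1 < 0$ forces $\alpha = \alpha_{12}$. The remaining case $\mu_1 = 0$ is excluded because \Cref{qmuneqlambda} (and its $s_1$-conjugate handling $\alpha_{12}$) show that no NS edge from $\mu$ can exist in any enlarged crystal, so no non-empty staircase is possible.

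Next, with $\alpha = \alpha_2$ fixed (the $\alpha_{12}$ case is entirely symmetric under $s_1$), I would show $n$ is unique by contradiction. Assume two non-empty staircases with parameters $n < n'$. Since $n' \geq 1$, the condition on $e'_0$ in the second staircase gives $(\mu \raw \mu - n'\alpha_2) \in E^S(\lambda)$, and in particular $\mu - n'\alpha_2 \leq \lambda$. Combining this with $\mu \leq \lambda$, $n+1 \leq n'$, and the convexity description of $\{\leq\lambda\}$ from \eqref{smallerthanlam}, I obtain $\mu - (n+1)\alpha_2 \leq \lambda$, so this edge exists in $E(\lambda)$. The first staircase provides $(\mu \raw \mu - (n+1)\alpha_2) \in E^N(\lambda+\varpi_2)$, and the contrapositive of \Cref{corcon}.\ref{smallerbad} upgrades it to $E^N(\lambda)$. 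Parametrising $\mu - k\alpha_2 = v_M\mu$ via $k = \mu_2 + M$ (with $v_M = t_{4M}$), the strict inequality $n+1 < n'$ translates to $4M_1 < 4M_2$, so applying \Cref{corcon}.2 to $(\mu \raw v_{M_2}\mu) \in E^S(\lambda)$ yields $(\mu \raw v_{M_1}\mu) \in E^S(\lambda)$, contradicting the previous line. The edge case $n+1 = n'$ is handled identically, since then $M_1 = M_2$ and the single edge would need to be simultaneously in $E^S(\lambda)$ and $E^N(\lambda)$.

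The main obstacle I anticipate is keeping the two indexing conventions aligned: the geometric offset $k$ along the $\alpha_2$-line and the reflection index $m$ in the infinite expression $y_\infty$. Once the dictionary $m = 4M$, $k = \mu_2 + M$ is established, the monotonicity from \Cref{corcon}.2 transfers cleanly, and the convexity input \eqref{smallerthanlam} guarantees that all edges in play actually exist in $E(\lambda)$. No further ingredients beyond the classification of swappable edges (Propositions \ref{mu1geqlam1}, \ref{mu1<lam1}, \ref{12notswap}, \ref{21swappable}, \ref{mu1>0nonswap2}) and the monotonicity Corollary \ref{corcon} are needed.
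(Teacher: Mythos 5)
Your proof is correct and takes essentially the same route as the paper: it pins down the direction $\alpha$ via Propositions \ref{21swappable}, \ref{mu1>0nonswap2} and \ref{12notswap}, and then uses \Cref{corcon} (parts 1 and 2) plus the convexity description \eqref{smallerthanlam} to force the two offsets $n,n'$ to coincide, deriving a contradiction from a single edge being both swappable and non-swappable. The only cosmetic difference is that you reach the contradiction at level $\lambda$ whereas the paper's proof works at level $\lambda+\varpi_2$, and you explicitly dispose of the case $\mu_1=0$ (which the paper leaves implicit).
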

\begin{proof}
Assume that the there are two non-empty NS-staircases of the form $\mu\raw \mu-(n+i)\alpha\in E^N(\lam+i\varpi_2)$ and $\mu\raw \mu-(n'+i)\beta\in E^N(\lam+i\varpi_2)$. Now, if $\mu_1> 0$, by \Cref{mu1>0nonswap2} we have $\alpha=\beta=\alpha_2$ and if $\mu_1<0$ we have $\alpha=\beta=\alpha_{12}$, so in particular $\alpha=\beta$.

We can assume $n'<n$. Since $\mu \raw \mu - n\alpha\in E^S(\lam)$, by \Cref{corcon}.1), we have that $\mu\raw \mu -n\alpha\in E^S(\lam+\varpi_2)$. With this and  \Cref{corcon}.2), we get $(\mu\raw \mu -(n'+1)\alpha)\in E^S(\lam+\varpi_2)$. Our second NS-staircase must therefore be empty. 
\end{proof}

\begin{lemma}\label{ifN=0thenD=0}
If $\calN_m(\mu,\lam)=0$ and $\mu <t_{m}\mu\leq \lam$, then also $\affD_m(\mu,\lam)=0$.
\end{lemma}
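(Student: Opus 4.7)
The plan is to argue by contradiction: I will assume that a non-empty NS-staircase $(e_i)_{1\leq i\leq a}$ over $(\mu,\lam)$ with all edge labels in $N(y_m)$ exists, and show that this forces $\calN_m(\mu,\lam)\geq 1$, contradicting the hypothesis.

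First I would reduce to the normal form $\mu_1>0$ and $\alpha=\alpha_2$. By \Cref{21swappable} the staircase direction $\alpha$ cannot be $\alpha_{21}$, and by \Cref{mu1>0nonswap2} together with its $s_1$-symmetric analogue (obtained via \Cref{12notswap}) the direction must be $\alpha_2$ when $\mu_1>0$ and $\alpha_{12}$ when $\mu_1<0$. If $\mu_1=0$, then by \Cref{qmuneqlambda} combined with $s_1$-symmetry there are no NS edges starting at $\mu$ in any graph $\Gamma_{\lam'}$, so no staircase exists and $\affD_m(\mu,\lam)=0$ automatically. Conjugating by $s_1$ reduces the case $\mu_1<0$ to the case $\mu_1>0$, so from now on I take $\mu_1>0$ and $\alpha=\alpha_2$.

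The crucial step is to show that $e_1$ already lies in $E(\lam)$. Write $e_1=(\mu\raw v_{M_1}\mu)\in E^N(\lam+\varpi_2)$, where $v_{M_1}=s_{M_1\delta-\alpha_2^\vee}$. Its label sits at position $4M_1$ in the reflection ordering \eqref{reflectionorder}, so the hypothesis that this label belong to $N(y_m)$ reads $4M_1\leq m$. Since $\mu_1\geq 0$ and $\mu<t_m\mu\leq \lam$ by hypothesis, the second part of \Cref{smallerissmaller} gives $t_k\mu\leq \lam$ for every $k\leq m$ whose reflection has the form $s_{K\delta-\alpha_2^\vee}$. Applying this with $k=4M_1\leq m$ yields exactly $v_{M_1}\mu\leq \lam$, hence $e_1\in E(\lam)$.

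Once $e_1\in E(\lam)$, the hypotheses of part (1) of \Cref{corcon} are satisfied for the $\alpha_2$-edge $(\mu\raw v_{M_1}\mu)$, since $\mu_1\geq 0$ and $\mu<v_{M_1}\mu\leq \lam$. Its contrapositive promotes $e_1\in E^N(\lam+\varpi_2)$ to $e_1\in E^N(\lam)$. But then $e_1$ is an NS edge in $\Gamma_\lam$ starting at $\mu$ whose label lies in $N(y_m)$, so $\calN_m(\mu,\lam)\geq 1$, yielding the desired contradiction. I expect the main obstacle to be realizing that the hypothesis $\mu<t_m\mu\leq \lam$ (with $t_m$ a priori an arbitrary reflection, not necessarily of $\alpha_2$-type) is precisely what \Cref{smallerissmaller} requires in order to pull every lower $\alpha_2$-reflection into $\{\leq\lam\}$; once this observation is in hand, the remainder is a one-line application of \Cref{corcon}.
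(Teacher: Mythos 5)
Your proof is correct and follows essentially the same route as the paper's, relying on the same two key ingredients: \Cref{smallerissmaller} to handle what happens when $t_k\mu$ threatens to escape $\{\leq\lam\}$, and \Cref{corcon} to transport swappability from $E(\lam)$ up to $E(\lam+\varpi_2)$. The paper phrases it as a two-case dichotomy on whether $t_k\mu\leq\lam$, whereas you first collapse the dichotomy by using \Cref{smallerissmaller} to show $t_k\mu\leq\lam$ always holds, then run the \Cref{corcon} argument once; this is a clean reorganization, not a genuinely different idea.
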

\begin{proof}
Assume that $\mu_1>0$.
If $\mu\raw t_k\mu\in E^S(\lam)$ for $k\leq m$, then also $\mu \raw t_k\mu \in E^S(\lam+\varpi_2)$ by \Cref{corcon}. If $t_k\mu\not \leq \lam$ and $(\mu\raw t_k\mu)\in E^N(\lam+\varpi_2)$, then $t_k=s_{K\delta-\alpha_2^\vee}$. But this cannot happen by \Cref{smallerissmaller}.

The case $\mu_1<0$ is symmetric.
\end{proof}

\begin{proposition}\label{Dcount}
 If $\mu_1>0$ we have
	\[ \affD_\infty(\mu,\lambda)=
	\begin{cases}
	\max(0,\min(\lam_1,\mu_1)-1) &\begin{array}{l}\text{if }-\lam_2\leq \mu_2\leq \lam_2 \\\text{ and }\mu_2\not\equiv\lam_2 \pmod{2};\end{array}\\
\max(0,\min(\mu_1,\lam_1)+\lam_2+\mu_2) &\begin{array}{l}\text{if } \mu_2<-\lam_2;\end{array}\\
0 &\begin{array}{l}\text{otherwise}.\end{array}
\end{cases}
\]
\end{proposition}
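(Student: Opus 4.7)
The plan begins with the observation from \Cref{mu1>0nonswap2}: since $\mu_1 > 0$, every non-swappable edge emanating from $\mu$ lies in the $\alpha_2$-direction, so every NS-staircase $(e_i)_{1 \leq i \leq a}$ over $(\mu, \lam)$ has $\alpha = \alpha_2$. Writing $e_i = (\mu \to \mu - (n+i)\alpha_2)$, the attached reflection is $v_{M_i}$ with $M_i = n+i-\mu_2$, so $e_i$ is an edge in $\Gamma_{\lam + i\varpi_2}$ iff $v_{M_i}\mu \leq \lam + i\varpi_2$, non-swappable iff the criterion of \Cref{mu1geqlam1} or \Cref{mu1<lam1} (applied with $\lam$ replaced by $\lam + i\varpi_2$) holds. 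I would then translate these two conditions into explicit inequalities using \Cref{octineq}: edge-existence collapses, after discarding redundant constraints, to the single binding inequality $2n + i \leq \lam_1 + \lam_2 + \mu_2$ coming from the $\alpha_2^\vee$-face, while non-swappability reads $M_i > T_i$ with $T_i = \lceil (\lam_2 + i - \mu_2)/2 \rceil$ when $\mu_1 \geq \lam_1$ (supplemented by the automatic NS regime $\mu_2 \leq -(\lam_2+i)$) and $T_i = \tfrac{\lam_1-\mu_1}{2} + \max(-\mu_2, \lceil(\lam_2+i-\mu_2)/2\rceil)$ when $0 < \mu_1 < \lam_1$. One must also enforce the starting condition: either $n=0$, or $e_0$ is a swappable edge in $\Gamma_\lam$.

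The bulk of the argument is a case analysis. For $\mu_2 < -\lam_2$ with $\mu_1 \geq \lam_1$, the automatic regime together with \Cref{mu1geqlam1} forces every $e_0$ that exists to be non-swappable; only $n=0$ survives, and a direct check (using that $T_i-T_{i-1} \in \{0,1\}$ while $M_i-M_{i-1}=1$) propagates NS past the transition at $i = -\lam_2-\mu_2+1$, so the length equals the edge bound $\lam_1 + \lam_2 + \mu_2$. For $0 < \mu_1 < \lam_1$ with $\mu_2 < -\lam_2$, the threshold simplifies to $T_i = (\lam_1-\mu_1)/2 - \mu_2$ in the relevant range; $e_1$ then forces $n \geq n_{\min} := (\lam_1-\mu_1)/2$, at which value $M_0 = T_0$ makes $e_0$ swappable and the edge bound yields $a = \mu_1 + \lam_2 + \mu_2$. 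Together these give $\affD_\infty = \max(0, \min(\mu_1, \lam_1) + \lam_2 + \mu_2)$ in this range.

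For $-\lam_2 \leq \mu_2 \leq \lam_2$ a parity analysis governs the answer. Rewriting ``$M_i > T_i$'' as $2n \geq (\lam_1-\mu_1) + \lam_2 + \mu_2 + 1 + [(\lam_2+i-\mu_2)\ \text{odd}]$ at $i=1$, one sees that if $\mu_2 \equiv \lam_2 \pmod{2}$ then the thresholds at $i=0$ and $i=1$ coincide, so $e_1$ non-swappable forces $e_0$ non-swappable, ruling out $n>0$; the parity obstruction also rules out $n=0$, and we obtain $\affD_\infty = 0$. When $\mu_2 \not\equiv \lam_2 \pmod 2$, the thresholds differ by $1$: at the optimal $n$ minimising $2n$ subject to the NS condition at $i=1$, $e_0$ sits just below its threshold and is swappable, and splitting by the parity of $i$ shows $M_i > T_i$ for all $i \geq 1$ up to the edge bound, giving $a = \lam_1 + \lam_2 + \mu_2 - (\lam_1 - \mu_1 + \lam_2 + \mu_2 + 1) = \mu_1 - 1$ (the analogous computation giving $\lam_1-1$ when $\mu_1 \geq \lam_1$), i.e.\ $\min(\mu_1, \lam_1) - 1$. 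Finally, $\mu_2 > \lam_2$ forces $\mu_1 < \lam_1$ (from $\mu \leq \lam$), and a short estimate shows that already at $i=1$ the NS threshold $T_1$ exceeds the $\alpha_1^\vee$-face edge bound $(\lam_1-\mu_1)/2 + \lam_2 + 1 - \mu_2$, so no NS-staircase of positive length exists.

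The main obstacle I expect is the parity bookkeeping in the middle range: one must track carefully when the ceiling $\lceil (\lam_2 + i - \mu_2)/2 \rceil$ jumps, combine this with $\lam_1 \equiv \mu_1 \pmod 2$ (forced by $\mu \leq \lam$), and check NS not merely at $i=1$ but throughout the entire interval $i = 1, \ldots, a$ at the optimal $n$. The subtle phenomenon separating the ``always zero'' subcase from the ``$\min(\mu_1,\lam_1)-1$'' subcase is whether the NS thresholds for $M_0$ and $M_1$ land on the same side of the parity divide, and a short but fiddly verification is needed to close this gap.
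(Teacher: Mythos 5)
Your plan mirrors the paper's proof in all essentials: reduce to $\alpha_2$-edges via Proposition~\ref{mu1>0nonswap2}, use Propositions~\ref{mu1geqlam1} and~\ref{mu1<lam1} to pin down the unique admissible starting index (your $n$, the paper's $M$ via $M=n-\mu_2$) from the constraint that $e_1$ is non-swappable while $e_0$ is swappable or trivial, split on the sign of $\mu_2+\lambda_2$ and on the parity of $\mu_2-\lambda_2$, and read off the length from the $\alpha_{21}^\vee$-face edge bound (your $2n+i\leq\lambda_1+\lambda_2+\mu_2$, the paper's appeal to \eqref{eqtu}). Your explicit threshold notation $T_i$ and the uniform edge-bound inequality are clarifying organizational choices rather than a different route, and your direct dismissal of the $\mu_2>\lambda_2$ subcase by comparing $T_1$ against the $\alpha_1^\vee$-face constraint on $M_1$ makes explicit something the paper handles only tacitly. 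The ``fiddly verification'' you flag at the end does close cleanly: at the optimal $n$, where $M_0$ lands exactly on the threshold $T_0$, one computes $M_i-T_i=\lceil i/2\rceil$ for all $i\geq 1$ (using that $\lambda_2-\mu_2$ is odd so the ceiling alternates), so non-swappability holds uniformly and only the edge bound limits the length --- this is precisely the paper's ``It is easy to check that for any $k\geq 1$ ...'' step.
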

\begin{proof}
 Let $(e_i)_{1\leq i \leq a}=(\mu\raw v_{M+i}\mu)_{1 \leq i \leq a}$ be a non-empty maximal $NS$-staircase over $(\mu,\lam)$ with $M\geq -\mu_2$.

Assume first $\mu_1\geq \lam_1$. We have $e_1=(\mu\raw v_{M+1}\mu)\in E^N(\lam+\varpi_2)$, so by \Cref{mu1geqlam1} we get $\mu_2<-\lam_2$ or $M+1> \frc{\lam_2+1-\mu_2}$. We have either $v_M\mu=\mu$  or $e_0=(\mu\raw v_M\mu)\in E^S(\lam)$.
In the first case we get $-M=\mu_2$. In the second case we have $\mu_2\geq -\lam_2+1$, $M\leq \frc{\lam_2-\mu_2}$ and $M+1> \frc{\lam_2+1-\mu_2}$, so the only possibility is \[M=\frc{\lam_2-\mu_2}=\frc{\lam_2-\mu_2+1},\]
which also implies $\lam_2\not\equiv \mu_2 \pmod{2}$.

Assume further that $\mu_2<-\lam_2$.
From the discussion above we must have $M=-\mu_2$. It is easy to check that for any $k\geq 1$ we have $e_k\in E^N(\lam+k\varpi_2)$ if  $v_{M+k}\mu \leq \lam+k\varpi_2$ and that \[v_{M+k}\mu \leq \lam+k\varpi_2\iff \frf{\lam_1+\lam_2+\mu_2-k} \geq 0\]
so we get $\affD_{\infty}(\mu,\lam)=\max(0,\lam_1+\lam_2+\mu_2)$.

Assume now $\mu_2\geq -\lam_2$. If $\lam_2\not\equiv \mu_2 \pmod{2}$ then $\affD_\infty(\mu,\lam)=0$. If $\lam_2\not\equiv \mu_2 \pmod{2}$ we must have $M=\frc{\lam_2-\mu_2}$.
Since $v_M\mu\leq \lam$, from \eqref{eqtu} we get
\[ \frc{\lam_2-\mu_2}\leq \frf{\lam_1+\lam_2-\mu_2},\]
so this is possible only if $\lam_1 > 0$.
It easy to check that for any $k\geq 1$ if $v_{M+k}\mu \leq \lam + k\varpi_2$, then also $(\mu\raw v_{M+k}(\mu))\in E^N(\lam+k\varpi_2)$. Moreover, from \eqref{eqtu} $v_{M+k}\leq \lam+k\varpi_2$ we see that is equivalent to
\[\frc{\lam_2-\mu_2}\leq \frf{\lam_1+\lam_2-\mu_2-k}\]
which is true if and only if $k\leq \lam_1-1$. Hence $\affD_\infty(\mu,\lam)=\max(0,\lam_1-1)$.

The proof in the case  $0<\mu_1 < \lam_1$ is similar.
Since $e_1\in E^N(\lam+\varpi_2)$ we have $M+1>\frac{\lam_1-\mu_1}{2}+\max(-\mu_2,\frc{\lam_2-\mu_2+1})$. We have either $v_M\mu=\mu$ or $(\mu\raw v_M\mu)\in E^S(\lam)$. However, the first case is not possible because 
\[M+1=-\mu_2+1\leq \frac{\lam_1-\mu_1}{2}+\max(-\mu_2,\frc{\lam_2-\mu_2+1}).\]
In the second case, we have $M\leq \frac{\lam_1-\mu_1}{2}+\max(-\mu_2,\frc{\lam_2-\mu_2})$, which forces 
\begin{equation}\label{maxequality}
\max(-\mu_2,\frc{\lam_2-\mu_2})=(-\mu_2,\frc{\lam_2-\mu_2+1})
\end{equation}
 and $M=\frac{\lam_1-\mu_1}{2}+\max(-\mu_2,\frc{\lam_2-\mu_2}).$
The equality in \eqref{maxequality} can only occur if $\mu_2< -\lam_2$ or if $\mu_2\geq -\lam_2$ and $\lam_2\not \equiv \mu_2 \pmod{2}$.

Assume now $\mu_2<-\lam_2$. Then $M=\frac{\lam_1-\mu_1}{2}-\mu_2$. It is easy to check that for any $k\geq 1$ we have $e_k\in E^N(\lam+k\varpi_2)$ if  $v_{M+k}\mu \leq \lam+k\varpi_2$ and that \[v_{M+k}\mu \leq \lam+k\varpi_2\iff \frf{\mu_1+\lam_2+\mu_2-k} \geq 0\]
so we get $\affD_{\infty}(\mu,\lam)=\max(0,\mu_1+\lam_2+\mu_2)$.

Finally assume $\mu_2\geq -\lam_2$. If $\lam_2\not\equiv \mu_2 \pmod{2}$ then $\affD_\infty(\mu,\lam)=0$. If $\lam_2\not\equiv \mu_2 \pmod{2}$ we must have $M=\frac{\lam_1-\mu_1}{2}+\frc{\lam_2-\mu_2}$.
It easy to check that for any $k\geq 1$ if $v_{M+k}\mu \leq \lam + k\varpi_2$, then also $(\mu\raw v_{M+k}(\mu))\in E^N(\lam+k\varpi_2)$. Moreover, from \eqref{eqtu} $v_{M+k}\leq \lam+k\varpi_2$ we see that is equivalent to $k+1\leq \mu_1$.
 Hence $\affD_\infty(\mu,\lam)=\max(0,\mu_1-1)$.
\end{proof}

\begin{corollary}\label{Dinf<0}
If $\mu_1<0$ we have 
\begin{align*}\affD_\infty(\mu,\lambda) =\begin{cases}
	\max(0,\min(\lam_1,-\mu_1)-1) &\begin{array}{l}
 \text{if }-\lam_2\leq \mu_1+\mu_2\leq \lam_2 \\\text{and } \mu_1+\mu_2\not\equiv\lam_2 \;(\mathrm{mod }\;2);\end{array}\\
\max(0,\min(-\mu_1,\lam_1)+\lam_2+\mu_1+\mu_2) \hspace{-10pt}&\begin{array}{l}\text{if } \mu_1+\mu_2<-\lam_2;\end{array}\\
0 &\begin{array}{l}\text{otherwise}.\end{array}\end{cases}\end{align*}
\end{corollary}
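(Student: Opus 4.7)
The plan is to reduce the case $\mu_1 < 0$ to the already-proved case $\mu_1 > 0$ via the $s_1$-symmetry encoded in \Cref{12notswap}. Set $\nu := s_1(\mu) = (-\mu_1, \mu_1+\mu_2)$, so $\nu_1 = -\mu_1 > 0$ and $\nu_2 = \mu_1 + \mu_2$. I claim that
\[ \affD_\infty(\mu,\lambda) = \affD_\infty(\nu,\lambda), \]
from which the formula will follow at once by plugging $\nu$ into \Cref{Dcount}: the three cases $-\lam_2 \leq \nu_2 \leq \lam_2$ with $\nu_2 \not\equiv \lam_2 \pmod 2$, resp. $\nu_2 < -\lam_2$, resp. ``otherwise'', translate directly into the three cases of the statement once $\nu_1, \nu_2$ are written in terms of $\mu_1, \mu_2$, and the expressions $\min(\lam_1, \nu_1)$, $\lam_2 + \nu_2$ become $\min(\lam_1, -\mu_1)$, $\lam_2 + \mu_1 + \mu_2$, matching \Cref{Dinf<0}.

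To establish the displayed equality, first observe that by the analogue of \Cref{mu1>0nonswap2} for $\mu_1 < 0$ (obtained via $s_1$-symmetry; alternatively, $s_1$ exchanges $\alpha_2$ and $\alpha_{12}$, and all $\alpha_{21}$-edges are swappable by \Cref{21swappable}), any non-swappable edge starting at $\mu$ lies in the $\alpha_{12}$-direction. For each $i\geq 1$, the edge $(\mu \to \mu - k\alpha_{12})$ lies in $\Gamma_{\lam+i\varpi_2}$ iff the edge $(\nu \to \nu - k\alpha_2) = (s_1\mu \to s_1(\mu - k\alpha_{12}))$ does, since $\lam + i\varpi_2$ is dominant and $\Gamma_{\lam+i\varpi_2}$ is $s_1$-stable. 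By \Cref{12notswap}, the edge $(\mu \to \mu - k\alpha_{12})$ is in $E^S(\lam+i\varpi_2)$ iff $(\nu \to \nu - k\alpha_2)$ is; the same equivalence holds with $E^N$ in place of $E^S$. Hence conjugation by $s_1$ provides a weight- and swappability-preserving bijection between edges starting at $\mu$ in the $\alpha_{12}$-direction and edges starting at $\nu$ in the $\alpha_2$-direction.

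Now unpack the definition of an NS-staircase. A staircase $(e_i)_{1\leq i \leq a}$ over $(\mu,\lam)$ consists of edges $e_i = (\mu \to \mu - (n+i)\alpha_{12}) \in E^N(\lam + i\varpi_2)$ together with the boundary condition that either $n = 0$, or $e_0 := (\mu \to \mu - n\alpha_{12}) \in E^S(\lam)$. Applying the bijection above term by term produces the sequence $e'_i := (\nu \to \nu - (n+i)\alpha_2)$, satisfying $e'_i \in E^N(\lam + i\varpi_2)$, together with the boundary condition $n = 0$ or $e'_0 = (\nu \to \nu - n\alpha_2) \in E^S(\lam)$ --- exactly an NS-staircase over $(\nu,\lam)$ of the same length. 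The process is reversible, and thus $\affD_\infty(\mu,\lam) = \affD_\infty(\nu,\lam)$, finishing the reduction.

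The main (minor) obstacle will be verifying the case-by-case matching of the formulas after substitution: one must observe that the parity condition $\nu_2 \not\equiv \lam_2 \pmod 2$ becomes $\mu_1+\mu_2 \not\equiv \lam_2 \pmod 2$, and that the $\nu$-inequalities $-\lam_2 \leq \nu_2 \leq \lam_2$ and $\nu_2 < -\lam_2$ translate verbatim into the conditions of \Cref{Dinf<0} on $\mu_1+\mu_2$. All other terms are immediate substitutions. No further geometric input is required.
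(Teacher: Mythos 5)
Your proposal is correct and takes essentially the same route as the paper: the paper's proof is the one-liner that the claim follows from \Cref{Dcount} together with the $s_1$-symmetry $\affD_\infty(\mu,\lambda)=\affD_\infty(s_1(\mu),\lambda)$ (via \Cref{12notswap}), and you simply spell out why that symmetry holds and carry out the substitution $\nu=s_1(\mu)=(-\mu_1,\mu_1+\mu_2)$ in detail.
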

\begin{proof}
    This immediately follows from \Cref{Dcount}, since by symmetry (cf. \Cref{12notswap}) we have $\affD_{\infty}(\mu,\lam)=\affD_{\infty}(s_1(\mu),\lam)$.
\end{proof}

Suppose than $T\in \calA(\lamk)\subset \calP(\lam)$.
In our applications in \Cref{sec:swapping}, we are only interested in NS staircase over $(\wt(T),\lamk)$ that live inside the preatom $\calP(\lam)$. In other words, we truncate our NS staircases $(e_i)_{1\leq i\leq a}$ so that $a\leq k$. 

The following quantity measures the longest possible truncated NS staircase over $(\mu,\lamk)$ in a preatom of highest weight $\lambda$.
\begin{definition}
\label{truncatedns}
Assume that $k\geq 0$ and $\mu \leq \lam-k\varpi_2$. Then, for any $m\in \bbN\cup \{\infty\}$ we define
\[ \calD_m(\mu,\lam,k):=\min(k,\affD_m(\mu,\lam-k\varpi_2)).\]
\end{definition}

\section{The charge and recharge statistics}\label{sec:swapping}

\subsection{A family of cocharacters}\label{sec:family}

We recall some definitions from \cite{Pat}.
Let $\affX=X\oplus \bbZ d$ be the cocharacter lattice of $T^\vee \times \bbC^*$, where $T^\vee$ is the maximal torus of $G^\vee$.
Let $\affX_\bbQ  := \affX \otimes_\bbZ \bbQ$ and $\affX_\bbR := \affX \otimes_\bbZ \bbR$.

 The \emph{KL region} is the subset of $\affX_\bbQ$ of the elements $\eta$ such that $\langle \alpha^\vee,\eta\rangle >0$ for all $\alpha^\vee \in \affPhi_+^\vee$. Concretely, an element in the KL region can be written as $\eta=\lambda+Cd$ where $\lambda\in X_{++}$ and $C>\langle \lambda,\beta^\vee\rangle$ for all $\beta^\vee \in \Phi^\vee_+$.
 The \emph{MV region} is the subset of $\affX_\bbQ$ consisting of elements of the form $\eta=\lambda+Cd$, with $\lambda\in X_{++}$ and $C=0$.

We call \emph{wall} a hyperplane in $\affX_\bbR$ of the form 
\[H_{\alpha^\vee}:=\{ \eta \in \affX_\bbR \mid \langle \eta,\alpha^\vee \rangle =0\}\subset \affX_\bbR\]
for $\alpha^\vee\in \affPhi^\vee$.
	For $\lambda\in X_+$, we denote by $\affPhi^\vee(\lambda)$ the set of all the labels present in the graph $\Gamma_\lambda$. We say that a wall $H_{\alpha^\vee}$ is a $\lambda$-wall if $\alpha^\vee\in \affPhi^\vee(\lambda)$.
We call \emph{$\lambda$-chamber} (or simply chamber, if $\lambda$ is clear from the context)  the intersection of $\affX_\bbQ$ with a connected component of 
\[\affX_\bbR \setminus \bigcup_{\alpha^\vee\in \affPhi^\vee(\lambda)}H_{\alpha^\vee}.\]
Two chambers are \emph{adjacent} if they are separated by a single $\lambda$-wall. 
The \emph{KL chamber} is the unique chamber containing the KL region and the  \emph{MV chamber} is the unique chamber containing the MV region.
We say that $\lambda\in X_\bbQ$ is \emph{regular} if it does not lie on any wall, and it is \emph{singular} otherwise.

For $\lam\in X_+$ let $\bar{\Gr^\lam}$ denote the corresponding Schubert variety in the affine Grassmannian of $G^\vee$ (cf. \cite[\S 2.1.2.]{Pat}). For any regular $\eta \in \affX$ and any $\mu \leq \lam$ the hyperbolic localization induces a functor 
\[ \HL^\eta_\mu: \calD^b_{T^\vee \times \bbC^*}(\bar{\Gr^\lam})\raw \calD^b(pt)\cong \mathrm{Vect}^\bbZ,\]
where $\calD^b_{T^\vee \times \bbC^*}(\bar{\mathcal{G}r}^\lam)$ is the derived category of $T^\vee \times \bbC^*$-equivariant constructible sheaves on the Schubert variety $\bar{\Gr^\lam}$ with $\bbQ$-coefficients, and $\calD^b(pt)$ is the derived category of sheaves on a point, which is equivalent to the category of graded $\bbQ$-vector spaces (see \cite[\S 2.4]{Pat}).
In general, for any regular $\eta\in \affX_\bbQ$ we can define $\HL^\eta_\mu$ as $\HL^{N\eta}_\mu$, where $N$ is any positive integer such that $N\eta\in \affX$. By abuse of terminology, we are then allowed to refer to all the elements in $\affX_\bbQ$ as cocharacters.

Let $\htil^\eta_{\mu,\lam}(v) := \grdim(\HL^\eta_\mu(IC_\lam))$, where $IC_\lam$ denotes the intersection cohomology sheaf on $\bar{\Gr^\lam}$. The polynomials $\htil^\eta_{\mu,\lam}(v)$ are called \emph{renormalized $\eta$-Kazhdan--Lusztig polynomials}. We say that a function $r:\calB(\lam)\raw \bbZ$ is a $\eta$-\emph{recharge} for $\eta$ if we have
\[\htil^\eta_{\mu,\lam}(q^{\frac12})=\sum_{T\in \calB(\lam)_\mu} q^{r(T)}\in \bbZ[q^{\frac12},q^{-\frac12}].\]
If $\eta_{KL}$ is in the KL chamber and $\mu \in X_+$, then
\[K_{\mu,\lam}(q)=\htil^{\eta_{KL}}_{\mu,\lam}(q^{\frac12})q^{\frac12 \ell(\mu)}\] is a Koskta--Foulkes polynomial by \cite[Proposition 2.14]{Pat}. So if $r_{KL}$ is a recharge for $\eta_{KL}$  in the KL region, we obtain a charge statistic $c:\calB(\lam)\raw \bbZ$ by setting $c(T):=r_{KL}(T)+\frac 12 \ell(\wt(T))$. 
Notice that if $\wt(T)\in X_+$ this is equal to $c(T)=r_{KL}(T)+\langle \wt(T),\rho^\vee\rangle$.

We specialize \cite[Definition 3.29]{Pat} to our setting. 

\begin{definition}
Let $\lambda\in X_+$. We call \emph{$\lambda$-parabolic region} the subset of $\affX_\bbQ$ consisting of regular cocharacters $\eta$ such that	\begin{itemize}
		\item $\langle \eta,\beta^\vee\rangle>0$ for every $\beta^\vee$ of the form 
		$M\delta -\alpha_1^\vee$ with $M>0$, or of the form $M\delta+\alpha^\vee$, with $\alpha\in \Phi_+$ and $M\geq 0$.
		\item $\langle \eta,\beta^\vee\rangle<0$ for every $\beta^\vee\in \affPhi^\vee_+(\lambda)$ of the form $M\delta-\alpha_i^\vee$ such that $M> 0$ and $i\in \{2,12,21\}$.
	\end{itemize} 
\end{definition}

The walls that separate the  parabolic region from the KL region are precisely
\[ H_{M\delta-\alpha_i^\vee} \qquad \text{with }M>0\text{ and }i\in \{2,12,21\}.\]
Every cocharacter $\eta_P$ of the form
\[ \eta_P= A_1\varpi_1 + A_2\varpi_2 + Cd\]
with $0\ll A_1\ll C\ll A_2$ lies in the parabolic region.\footnote{More precisely, sufficient conditions are $0<A_1<C<\frac{A_2}{\gamma}$ where $\gamma=\max\{ M \mid M\delta-\beta^\vee \in \Phi^\vee(\lam)\}$.}

We consider the following family of cocharacters:
\begin{equation}\label{family}
	\eta:\bbQ_{\geq 0}\ra \affX_\bbQ,\qquad \eta(t)=\eta_P+t d.
\end{equation} 
Observe that $\eta(t)$ is in the KL chamber for $t\gg 0$. We can choose $t_0$ such that $\eta(t_0)$ is in the KL chamber and for any $i$ we choose $t_{i+1}<t_{i}$ so that $\eta(t_i)$ and $\eta(t_{i+1})$ lie in adjacent $\lambda$-chambers until we arrive at $t_M$ in the parabolic region. We can furthermore choose  $t_M=0$ and set $t_{M+1}=\ldots =t_{\infty}=0$ and $\eta_i:=\eta(t_i)$ for any $i\in \bbN \cup \{\infty\}$.

\subsection{Recharge statistics from the parabolic to the KL region}

Our goal is to attach a recharge statistic to each of the cocharacters $\eta_i$. 

Let $T\in \calB(\lambda)$. Recall that by \Cref{preatomicnumber,defatomicnumber} we have 
\[T\in \calA(\lam-\at(T)\varpi_2-2\pat(T) \varpi_1)\subset \calP(\lam-2\varpi_1(T))\subset \calB(\lam).\]

\begin{definition}\label{N=0}
Assume that $T\in  \calP(\lam)\subset \calB(\lam')$ with $\mu:=\wt(T)$. Let $a:=\at(T)$ and $p:=\pat(T)$ so that $\lam'=\lam+2p\varpi_1$. We define
\[ \sigma_m(T):= \ell_m(\mu,\lama)-\calN_m(\mu,\lama)+\calD_m(\mu,\lambda,a)+a+2p.\]
Let $r_m(T):=-\sigma_m(T)+\langle \lam',\rho^\vee\rangle=-\sigma_m(T)+\langle \lam,\rho^\vee\rangle+3p$.

\end{definition}

Our main result is the following.

\begin{theorem}
\label{maintheorem}
The function $r_m:\calB(\lambda)\raw \bbZ$ is a recharge statistic for $\eta_m$ for any $m\in \bbN\cup \{\infty\}$.
\end{theorem}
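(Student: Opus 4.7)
The plan is to prove \Cref{maintheorem} by induction on $m$, using the wall-crossing formulas for hyperbolic localization from the introduction. I would take as base case the parabolic region, i.e.\ $m=\infty$ (equivalently $m\geq M$, where the twisted Bruhat graph $\Gamma_\lam^m$ has stabilized). There, by \Cref{PreatomPrecan} and \Cref{atoms} together with the geometric interpretation of $\HL^{\eta_P}_\mu$ described in \cite[\S 3]{Pat}, the renormalized KL polynomial $\htil^{\eta_\infty}_{\mu,\lam}(v)$ splits as a sum over atoms $\calA(\lam-a\varpi_2-2p\varpi_1)$ containing an element of weight $\mu$, each contributing a monomial whose degree is controlled by $a,p$ and the height $\langle \lam-\mu,\rho^\vee\rangle$. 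For $m=\infty$ the explicit expressions in \eqref{Ninf}, \eqref{Ninf<0}, \Cref{Dcount} and \Cref{Dinf<0} allow one to verify that $r_\infty(T)$ reproduces this monomial, giving the base case.

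For the inductive step, let $t:=t_{m+1}$ be the reflection labelling the wall separating the chambers of $\eta_{m+1}$ and $\eta_m$. Combining \cite[Proposition 2.33]{Pat} with the substitution $v=q^{1/2}$ reduces the claim to three verifications. First, for every weight $\nu$ which is neither $\mu$ nor $t\mu$ for some edge $\mu<t\mu\leq \lam$ labelled by $\alpha^\vee_t$, one needs $r_{m+1}(T)=r_m(T)$ on $\calB(\lam)_\nu$. This is essentially automatic, since the only arrows flipped in passing from $\Gamma_\lam^m$ to $\Gamma_\lam^{m+1}$ have label $\alpha^\vee_t$ and therefore do not affect $\ell_m,\calN_m,\calD_m$ at $\nu$. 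Second, on $\calB(\lam)_{t\mu}$ one needs $r_{m+1}(T)=r_m(T)-1$; this follows from $\ell_{m+1}(t\mu,\lam-a\varpi_2)=\ell_m(t\mu,\lam-a\varpi_2)-1$ (which is the definition of the inversion) together with \Cref{Nintmuis0}, which ensures $\calN_\bullet(t\mu,\lam-a\varpi_2)=0$, and a parallel argument showing $\calD_\bullet$ is unchanged at $t\mu$.

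The third and main verification is the construction of the swapping function $\psi_{t\mu}:\calB(\lam)_{t\mu}\to\calB(\lam)_\mu$ announced in the introduction, satisfying $r_{m+1}(\psi_{t\mu}(T))=r_{m+1}(T)-1$. Given $T\in\calA(\lam-a\varpi_2)\subset\calP(\lam')$ of weight $t\mu$, set $e:=(\mu\raw t\mu)\in E(\lam-a\varpi_2)$ and define $\psi_{t\mu}(T)$ as the unique element of weight $\mu$ in $\calA(\lam-(a+\Omega(e))\varpi_2)\subset\calP(\lam')$. One checks this is well defined using the characterization of atoms via adapted strings (\Cref{atomformula}, \Cref{atom0}) and the weight-preservation of $\bPsi$. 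The key identity $r_{m+1}(\psi_{t\mu}(T))=r_{m+1}(T)-1$ is then verified term by term: when $e$ is swappable one has $\Omega(e)=0$, the swap stays within the same atom, and the identity $\ell_{m+1}(\mu,\lam-a\varpi_2)=\ell_{m+1}(t\mu,\lam-a\varpi_2)-1$ (which is the definition of swappability, \Cref{def:swappableedge}) yields the claim, mirroring the type $A$ proof of \cite{Pat}. When $e$ is non-swappable, $\Omega(e)>0$ and the swap lands in a strictly lower atom; here \Cref{Ncount} and \Cref{Ncountgen} give precisely the correction term $\calN_{m+1}(\mu,\lam-a\varpi_2)$ needed to absorb the defect in $\ell_{m+1}$, while the atomic shift $\at(\psi_{t\mu}(T))-\at(T)=\Omega(e)$ is compensated by the change in $\calD_{m+1}$ through the elevation machinery and the staircase uniqueness statement \Cref{staircaseunique}.

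The main obstacle is this last step: bookkeeping the joint transformation of the five quantities $\ell_{m+1},\calN_{m+1},\calD_{m+1},\at,\pat$ under $\psi_{t\mu}$ when $e$ is non-swappable. The classification of swappable edges (\Cref{mu1geqlam1}, \Cref{mu1<lam1}, \Cref{12notswap}, \Cref{21swappable}), the properties of non-swappable staircases (\Cref{def:nonswappable}, \Cref{staircaseunique}, \Cref{ifN=0thenD=0}), and the explicit formulas for $\affD_\infty$ are all used in a combined case analysis distinguishing the possible directions of $t$, the sign of $\mu_1$, and the position of $t\mu$ relative to the walls of $\Conv(W\cdot(\lam-a\varpi_2))$. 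The whole of Section 4 is designed precisely to make this case analysis tractable.
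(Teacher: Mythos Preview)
Your inductive step is essentially the paper's argument: the swapping function $\psi_{t\mu}$ is defined exactly as you describe, and the verification is \Cref{swapcheck} together with the proof of \Cref{swappingfunctionprop}, using \Cref{Nintmuis0}, \Cref{ifN=0thenD=0}, \Cref{Ncount}, \Cref{Ncountgen} and the staircase machinery. Two small corrections: the sign in your second verification is reversed (one has $r_m(T)=r_{m+1}(T)-1$ on $\calB(\lam)_{t\mu}$, not the other way), and you should also treat the elements $U$ of weight $\mu$ that are \emph{not} in the image of $\psi_{t\mu}$; showing $r_m(U)=r_{m+1}(U)$ there is not automatic and in the paper is done via \eqref{Ndiff}.

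The genuine gap is your base case. \Cref{PreatomPrecan} and \Cref{atoms} only compute ungraded characters, i.e.\ the specialization at $v=1$; they do not tell you the individual monomials in $\htil^{\eta_P}_{\mu,\lam}(v)$, so you cannot read off a recharge from them. The paper takes a different route. It first passes from the MV region, where $r_{MV}(T)=-\langle\rho^\vee,\wt(T)\rangle$ is known, to the parabolic region by crossing only walls $H_{M\delta-\alpha_1^\vee}$; after Levi branching this is a rank-one computation giving the explicit formula
\[
r_P(T)=-\langle\rho^\vee,\wt(T)\rangle+\phi_1(T)-\ell^1(\wt(T))
\]
(cf.\ \cite[Lemma 3.26]{Pat}). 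The substantial work is then \Cref{rP}, showing $r_P=r_\infty$. Via the $Z$-function identity of \Cref{atomicnumber} and \Cref{sigmainfnicelemma} this reduces to the crystal-theoretic statement of \Cref{phi12hard}: for $T\in\calP(\lam)$ with $\wt(T)_1\leq 0$ and $A=\at(T)$,
\[
\phi_{12}(T)=\affphi_{12}(\mu,\lam-A\varpi_2)-\calN_\infty(\mu,\lam-A\varpi_2)+\calD_\infty(\mu,\lam,A).
\]
This identity is the heart of the base case; its proof is an induction on $\at(T)$ (\Cref{phi12at0}, \Cref{phi2claim}) using the behaviour of $\phi_{12}$ under $\Psi$ from \Cref{cor:phi12psi} and the explicit formulas for $\affD_\infty$ in \Cref{Dcount}. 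Your proposal does not identify this ingredient, and without it there is no bridge between the formula defining $r_\infty$ and any independently known recharge in the parabolic region.
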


The proof that $r_i$ is a recharge for $\eta_i$ is divided in two parts. We first show directly in \Cref{sec:parabolic} that $r_\infty$ is a recharge statistic for $\eta_\infty=\eta(0)$, i.e. a recharge in the parabolic region, and then we construct for any $i$ swapping functions between $\eta_i$ and $\eta_{i+1}$. 
After putting everything together, this proves that $r_{KL}:=r_0$ is a recharge in the KL region, and we can easily obtain from that the following formula for a charge statistic in type $C_2$.

\begin{corollary}
\label{maincharge}
The function
$c:\calB(\lam)_+\raw \bbZ$ defined as
\[c(T)=\langle \lambda -\wt(T),\rho^\vee\rangle -\at(T)-\pat(T)\]
is a charge statistic.
\end{corollary}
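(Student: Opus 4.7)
The proof is an immediate consequence of \Cref{maintheorem} specialized at $m=0$. By construction $\eta_0$ lies in the KL chamber, so $r_0$ is an $\eta_{KL}$-recharge. By the general conversion recalled at the start of \Cref{sec:family} (cf.\ \cite[Prop.~2.14]{Pat}), the function $T \mapsto r_0(T) + \tfrac{1}{2}\ell(\wt(T))$ is a charge statistic on $\calB(\lam)$. For $T \in \calB(\lam)_+$ one has $\tfrac{1}{2}\ell(\wt(T)) = \langle \wt(T), \rho^\vee\rangle$, so the remaining task is to simplify $r_0(T) + \langle \wt(T), \rho^\vee\rangle$ into the closed form stated in the corollary.

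Write $\mu := \wt(T)$, $a := \at(T)$, $p := \pat(T)$ and $\tilde\lam := \lam - 2p\varpi_1$, so that $T$ sits in $\calA(\tilde\lam - a\varpi_2) \subset \calP(\tilde\lam) \subset \calB(\lam)$. Because $y_0 = e$ and $N(y_0) = \emptyset$, no edge is reversed and no NS-staircase can contribute to the recharge: concretely $\calN_0(\mu, \tilde\lam - a\varpi_2) = 0$ and $\calD_0(\mu, \tilde\lam, a) = 0$. The defining formula for $\sigma_0$ thus collapses to
\[
\sigma_0(T) = \ell_0(\mu, \tilde\lam - a\varpi_2) + a + 2p,
\]
and $r_0(T) + \langle\mu,\rho^\vee\rangle$ reduces to $\langle \lam, \rho^\vee\rangle + \langle \mu, \rho^\vee\rangle - \ell_0(\mu, \tilde\lam - a\varpi_2) - a - 2p$.

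Since $T \in \calA(\tilde\lam - a\varpi_2)$ its weight satisfies $\mu \leq \tilde\lam - a\varpi_2$, and transitivity of the Bruhat order then forces every arrow of $\Gamma_X$ ending at $\mu$ to have tail $\tau < \mu \leq \tilde\lam - a\varpi_2$. Hence all such arrows survive in the truncated graph and $\ell_0(\mu, \tilde\lam - a\varpi_2) = |\Arr_0(\mu)|$. The final step is the explicit evaluation of $|\Arr_0(\mu)|$ for dominant $\mu$: enumerating the pairs $(\alpha,k)$ with $\alpha\in\Phi^+$ and $1\le k\le \affphi_\alpha(\mu,\mu)$ via \Cref{affphicompute}, one obtains a closed expression in $\mu_1,\mu_2$, which after substitution and collection of terms (using $\langle\lam,\rho^\vee\rangle-\langle\tilde\lam,\rho^\vee\rangle=3p$, since $\langle\varpi_1,\rho^\vee\rangle=\tfrac{3}{2}$) produces the claimed formula $\langle \lam - \mu, \rho^\vee\rangle - \at(T) - \pat(T)$.

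The main obstacle is this last root-by-root enumeration of $|\Arr_0(\mu)|$: it is a four-case analysis depending on which facets of $\Conv(W\cdot \mu)$ each ray $\mu - k\alpha_i$ meets, and requires careful tracking of the floor functions appearing in \Cref{affphicompute} together with the parity constraint $\mu_1\equiv \tilde\lam_1\equiv \lam_1\pmod 2$ that encodes in which preatom $T$ sits. Every other ingredient -- the vanishing of $\calN_0$ and $\calD_0$ at $m=0$, the identification $\ell_0(\mu,\tilde\lam-a\varpi_2)=|\Arr_0(\mu)|$, and the conversion from recharge to charge -- is immediate from the definitions of Section~4.
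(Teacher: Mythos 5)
Your argument follows exactly the same route as the paper's proof: specialize \Cref{maintheorem} at $m=0$, note $\calN_0=\calD_0=0$ (no wall crossed yet), identify $\ell_0$ with ordinary length, and convert recharge to charge via $\tfrac12\ell(\mu)=\langle\mu,\rho^\vee\rangle$ for dominant $\mu$. Two points, one stylistic and one substantive.

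First, the closing ``four-case analysis'' re-deriving $|\Arr_0(\mu)|$ by enumerating $k \leq \affphi_\alpha(\mu,\mu)$ over the four positive roots is much heavier than needed and prone to floor/ceiling slips. The paper simply invokes $|\Arr_0(\mu)|=\ell_0(\mu)=\ell(\mu)$ (cf.\ \cite[Lemma 4.6]{Pat}) together with the standard identity $\ell(\mu)=2\langle\mu,\rho^\vee\rangle$ for $\mu$ dominant; nothing in \Cref{affphicompute} is required. Alternatively, for dominant $\mu$ one sees directly from \eqref{bruhatorder} that the arrows into $\mu$ are in bijection with pairs $(\beta^\vee,M)$ with $\beta^\vee\in\Phi^\vee_+$ and $0\le M<\langle\mu,\beta^\vee\rangle$, giving $\sum_{\beta^\vee\in\Phi^\vee_+}\langle\mu,\beta^\vee\rangle=2\langle\mu,\rho^\vee\rangle$ in one line.

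Second, and more importantly, there is an arithmetic gap at the end. Your intermediate display correctly gives
\[
r_0(T)+\langle\mu,\rho^\vee\rangle=\langle\lam,\rho^\vee\rangle+\langle\mu,\rho^\vee\rangle-\ell_0(\mu,\tilde\lam-a\varpi_2)-a-2p,
\]
and substituting $\ell_0(\mu,\tilde\lam-a\varpi_2)=2\langle\mu,\rho^\vee\rangle$ yields $\langle\lam-\mu,\rho^\vee\rangle-a-2p$. This does \emph{not} simplify to the stated $\langle\lam-\mu,\rho^\vee\rangle-\at(T)-\pat(T)$: the coefficient of $\pat(T)$ is $2$, not $1$. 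The shift $\langle\lam,\rho^\vee\rangle-\langle\tilde\lam,\rho^\vee\rangle=3p$ you invoke plays no role, since $\langle\lam,\rho^\vee\rangle$ already appears in $r_0$ by \Cref{N=0}. (The paper's own displayed computation in its proof of this corollary likewise produces $-\at(T)-2\pat(T)$, so the corollary statement appears to carry a typo; but your proof silently jumps from $-2p$ to $-p$ without justification, which is a genuine gap.)
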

\begin{proof}
By definition, we have $\calN_0=\calD_0=0$ and $\ell_0=\ell$. Hence
\[ c(T)=r_0(T)+\frac12\ell(\wt(T))=\langle \lambda,\rho^\vee\rangle -\frac12\ell(\wt(T))-\at(T)-2\pat(T)\]
is a charge statistic. We conclude since, for $T=\calB(\lam)_+$, we have $\ell(\wt(T))=2\langle \wt(T),\rho^\vee\rangle$.
\end{proof}

\subsection{Recharge in the parabolic region}\label{sec:parabolic}

Let $\eta_{MV}$ be a cocharacter in the MV region and $\eta_P$ be in the parabolic region. The only walls separating $\eta_{MV}$ from $\eta_P$ are of the form $H_{M\delta-\alpha_1^\vee}$, with $M>0$. We now from  \cite[Eq. (17)]{Pat} that
\[r_{MV}(T) = -\langle \rho^\vee, \wt(T)\rangle.\] 
is a recharge in the MV region. To construct a recharge in the parabolic region, after Levi branching, we can assume we are in rank $1$ and thus compute the recharge as illustrated in \cite[\S 3.4]{Pat}. In particular, it follows from \cite[Lemma 3.26]{Pat} that 
\[ r_P(T)=-\langle \rho^\vee,\wt(T)\rangle+\phi_1(T)-\ell^1(\wt(T))\]
is a recharge in the parabolic region. It remains to show the equality between $r_P$ and $r_{\infty}$.


Let $T\in \calP(\lam)\subset \calB(\lam+2p\varpi)$ with $p=\pat(T)$ and let $a=\at(T)$ and $\mu=\wt(T)$.
At $m=\infty$, we have 
\begin{align}\label{sigmainf}
 \sigma_\infty(T) = \ell^1(\mu)+  \sum_{i\in \{2,12,21\}}\affphi_i(\mu,\lama) \nonumber \\
 -\calN_\infty(\mu,\lama)+\calD_\infty(\mu,\lam,a)+a+2p.
\end{align}

Our next goal is to simplify the expression \eqref{sigmainf}.

\begin{lemma}\label{phi21k}
We have $\affphi_{21}(\mu,\lama)+a=\affphi_{21}(\mu,\lambda)$.
\end{lemma}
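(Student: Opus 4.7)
The plan is to reduce everything to the explicit formula for $\affphi_{21}$ from \Cref{affphicompute}, which gives
\[
\affphi_{21}(\mu,\nu)= \nu_2+\mu_2+\min\!\left(\nu_1,\tfrac{\nu_1+\mu_1}{2},\nu_1+\mu_1\right)
\]
for any $\mu \leq \nu$. So the main thing I would check first is that the formula is actually applicable with $\nu = \lam - a\varpi_2$, i.e.\ that $\mu \leq \lam - a\varpi_2$ in the Bruhat/dominance order on $X$. This is immediate from the setup in \Cref{N=0}: by definition of $\at(T)$, the element $T$ is in the image of $\bar\Psi^a$ applied to $\calP(\lam-a\varpi_2)$, and since $\bar\Psi$ is weight-preserving (\Cref{lemmaonPsi}.(1)), there is a $U \in \calP(\lam-a\varpi_2)\subset\calB(\lam-a\varpi_2)$ with $\wt(U)=\mu$. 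In particular $\mu \leq \lam-a\varpi_2$.

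Next, I would simply compare the two instances of the formula. Writing $\lam-a\varpi_2 = (\lam_1, \lam_2 - a)$, only the $\nu_2$ summand is affected, and the minimum expression, which depends on $\lam_1$ and $\mu_1$ alone, is unchanged. This gives
\[
\affphi_{21}(\mu,\lam-a\varpi_2) \;=\; (\lam_2-a)+\mu_2+\min\!\left(\lam_1,\tfrac{\lam_1+\mu_1}{2},\lam_1+\mu_1\right) \;=\; \affphi_{21}(\mu,\lam)-a,
\]
which is exactly the claim after adding $a$ to both sides.

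So this lemma is really just a bookkeeping observation: shifting $\lam$ by a multiple of $\varpi_2$ along the atomic direction shifts $\affphi_{21}$ by exactly that multiple, because the geometry in the $\alpha_{21}$-direction is controlled only by the walls of $\Conv(W\cdot\lam)$ transverse to $\alpha_{21}$, and among the octagon constraints in \Cref{octineq} it is only the lower bound $\langle \nu,\alpha_{21}^\vee\rangle \geq -\lam_1-2\lam_2$ that meets the line through $\mu$ along $\alpha_{21}$. There is no real obstacle here; the only caveat is verifying applicability of the formula, which is handled by the preceding paragraph.
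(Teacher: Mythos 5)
Your proof is correct and matches the paper's, which simply cites the explicit formula in \Cref{affphicompute} and notes that replacing $\lam$ by $\lam - a\varpi_2$ only changes the $\nu_2$ summand. Your added check that $\mu \leq \lam - a\varpi_2$ is a reasonable (if implicit in the paper's setup) verification that the formula applies.
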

\begin{proof}
    This follows directly from \Cref{affphicompute}.
\end{proof}

\begin{proposition}\label{phi12hard}
Let $\mu = \wt(T)$ and assume that $\mu_1\leq  0$. We have $\phi_2(T)=\affphi_2(\mu,\lama)$ and
\[\phi_{12}(T)=\affphi_{12}(\mu,\lama)-\calN_\infty(\mu,\lama)+\calD_\infty(\mu,\lambda,a).\]
\end{proposition}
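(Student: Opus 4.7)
The plan is to first reduce both identities to the case $\pat(T) = 0$ by means of the embedding $\Phi$. Proposition \ref{emb1str} shows that $\Phi$ transforms $\str_2$ by $(a,b,c,d) \mapsto (a, b+1, c+1, d+1)$, and combined with the formula $\phi_2(T) = \lam_2 + b + d - 2c - a$ derived in the proof of Proposition \ref{atomicnumber}, this shows $\Phi$ preserves $\phi_2$. Since $\Phi$ also commutes with $s_1$ by \eqref{emb3}, it preserves $\phi_{12} = \phi_2\circ s_1$ as well. The right-hand sides of both identities depend only on $\mu$, $\lam$ and $a$, so we may henceforth assume $T \in \calA(\lama) \subset \calP(\lam) \subset \calB(\lam)$.

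For the first identity, since $\mu_1 \leq 0$ we have $\bPsi = \Psi$, and by Lemma \ref{lemmaonPsi} the map $\Psi$ preserves $\phi_2$. Writing $T = \Psi^a(T_0)$ with $T_0 \in \calA(\lama) \subset \calB(\lama)$, it suffices to show $\phi_2(T_0) = \affphi_2(\mu, \lama)$. Setting $\str_2(T_0) = (a_0, b_0, c_0, d_0)$, we have $\phi_2(T_0) = \lama_2 + b_0 + d_0 - 2c_0 - a_0$, while writing $\mu = \wt(T_0)$ in terms of $(a_0,b_0,c_0,d_0)$ and substituting into Lemma \ref{affphicompute} yields
\[
\affphi_2(\mu, \lama) = (b_0 + d_0 - a_0 - c_0) + \min\!\left(\lama_1 + 2\lama_2 - b_0 - d_0,\; \lfl\tfrac{\lama_1 + 2\lama_2 - b_0 - d_0}{2}\rfl,\; \lama_2\right).
\]
The identity thus reduces to showing the minimum equals $\lama_2 - c_0$; the three required inequalities follow directly from Theorem \ref{Littelmannineq}, and the two cases of Corollary \ref{atom0} guarantee that equality is attained by at least one argument (the floor in case 1 with $d_0\leq 1$, the linear term in the case $c_0 = \lama_2 + d_0$, and either the floor or $\lama_2$ in case 2).

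For the second identity, we proceed by induction on $a = \at(T)$. In the base case $a = 0$ we have $\calD_\infty(\mu, \lambda, 0) = 0$, so the claim is $\phi_{12}(T) = \affphi_{12}(\mu, \lam) - \calN_\infty(\mu, \lam)$. This is verified by directly computing $\phi_{12}(T) = \phi_2(s_1 T)$ using the tropical action of $s_1$ on adapted strings (given by $\theta_{12} \circ \sigma_1 \circ \theta_{21}$ via Theorem \ref{adaptedstring} and the formula for $\sigma_1$ recalled in the proof of Proposition \ref{atomicnumber}) and matching the resulting tropical rational function against Lemma \ref{affphicompute} and the explicit formula \eqref{Ninf<0}, again via the atom characterization of Corollary \ref{atom0}. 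For the inductive step, write $T = \Psi(T_1)$ with $T_1 \in \calA(\lama) \subset \calP(\lam - \varpi_2)$ and $\at(T_1) = a - 1$. Since $T$ and $T_1$ lie in the same atom $\calA(\lama)$, the summands $\affphi_{12}(\mu, \lama) - \calN_\infty(\mu, \lama)$ are identical for both, and the identity reduces to
\[
\phi_{12}(T) - \phi_{12}(T_1) \;=\; \calD_\infty(\mu, \lam, a) - \calD_\infty(\mu, \lam - \varpi_2, a - 1).
\]
The right-hand side equals $\min(a, \affD_\infty(\mu, \lama)) - \min(a - 1, \affD_\infty(\mu, \lama))$, which is $1$ if $\affD_\infty(\mu, \lama) \geq a$ and $0$ otherwise. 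By Corollary \ref{cor:phi12psi} the left-hand side is also $0$ or $1$, governed by an explicit condition on $\str_2(T_1)$, and the proof is completed by matching the two conditions via Corollary \ref{Dinf<0}.

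The main obstacle is the base case of the second identity and the equivalence of conditions in the inductive step: both reduce to substantial casework combining the atom inequalities of Corollary \ref{atom0} with the explicit formulas for $\affD_\infty$ and $\calN_\infty$, and in practice the base case is most efficiently handled by certifying the resulting tropical rational identity with the aid of SageMath, as was done for the $Z$-function in the proof of Proposition \ref{atomicnumber}.
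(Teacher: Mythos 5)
Your proposal follows the same overall skeleton as the paper: reduce to $\pat(T)=0$ via $\Phi$, prove the $\phi_2$-identity by reducing to the top atom through $\Psi$, and prove the $\phi_{12}$-identity by induction on $\at(T)$ using $\Psi$, \Cref{cor:phi12psi}, and a matching of conditions. Your treatment of $\phi_2$ is a valid and more explicit alternative to the paper's \Cref{atomicphi2}: where the paper argues via the multiset of $\phi_2$-values of a fixed weight in $\calP(\lam)$ and induction on $\lam_2$, you compute $\phi_2(T_0)$ directly from $\str_2$ and verify that the minimum in \Cref{affphicompute} equals $\lama_2-c_0$ using the membership conditions of \Cref{atom0}; the three bounding inequalities and the three equality cases you list check out, so this part is fine and arguably cleaner than the paper's.

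The base case $\at(T)=0$ of the $\phi_{12}$-identity is where your proposal has a real gap. You propose to express $\phi_{12}(T)=\phi_2(s_1T)$ as a tropical rational function of $\str_2(T)$ and certify the identity with SageMath, analogously to the $Z$-function computation in \Cref{atomicnumber}. However, unlike the $Z$-function identity (where the only non-linear term $\max\left(0,\frac{|\mu_1|-\lam_1}{2}\right)$ remains an integer linear form because $\mu_1\equiv\lam_1\pmod 2$), the right-hand side $\affphi_{12}(\mu,\lam)-\calN_\infty(\mu,\lam)$ contains genuine floor and ceiling terms: after substituting $\mu_1,\mu_2$ one is left with $\frf{b_0+d_0}$ and $\frc{\lam_1-b_0-d_0}$, whose parity dependence on $b_0+d_0$ is \emph{not} eliminated by the constraints of \Cref{atom0}. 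The paper sidesteps this by first using $\phi_{12}=Z-\phi_1-\phi_2$ together with \Cref{phi1at0}, isolating all floor/ceiling contributions into the single identity $\chi-\min(\chi,\frf{\chi})=\max(0,\frc{\chi})$, which relies on $\frf{\chi}+\frc{\chi}=\chi$ and is invisible to a naive tropical check over $\bbQ$. To make your base case rigorous you would need either to reproduce this parity bookkeeping explicitly or to split the SageMath verification by the parity of $b_0+d_0$. The inductive step — reducing the increment of $\phi_{12}$ under $\Psi$ to \Cref{cor:phi12psi} and matching with \Cref{Dinf<0} and \Cref{atomformula} — coincides with the paper's argument.
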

The proof of \Cref{phi12hard} is rather long and technical and we postpone it to \Cref{sec:phi2}.

\begin{lemma}\label{sigmainfnicelemma}
Let $\mu=\wt(T)$. We have 
\begin{equation}\label{sigmainfnice}\sigma_\infty(T)=\ell^1(\mu)+\phi_2(T)+\phi_{12}(T)+\affphi_{21}(\mu,\lambda)+2p.
\end{equation}
\end{lemma}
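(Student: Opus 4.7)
The plan is as follows. Expanding the definition of $\sigma_\infty(T)$ from \eqref{sigmainf} and using \Cref{phi21k} to absorb $a$ into $\affphi_{21}(\mu,\lama)$ yields
\[
\sigma_\infty(T) = \ell^1(\mu) + \affphi_2(\mu,\lama) + \affphi_{12}(\mu,\lama) + \affphi_{21}(\mu,\lam) - \calN_\infty(\mu,\lama) + \calD_\infty(\mu,\lam,a) + 2p,
\]
so the lemma reduces to the identity
\[
\affphi_2(\mu,\lama) + \affphi_{12}(\mu,\lama) - \calN_\infty(\mu,\lama) + \calD_\infty(\mu,\lam,a) = \phi_2(T) + \phi_{12}(T).
\]

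First I would handle the case $\mu_1 \leq 0$, where the identity is immediate from \Cref{phi12hard}: that proposition gives $\phi_2(T)=\affphi_2(\mu,\lama)$ and $\phi_{12}(T)=\affphi_{12}(\mu,\lama)-\calN_\infty(\mu,\lama)+\calD_\infty(\mu,\lam,a)$, so summing yields exactly the required equation.

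For the case $\mu_1 > 0$, the idea is to reduce to the previous case by applying $s_1$. Since preatoms and atoms are stable under $s_1$ (by construction of $\bPsi$), we have $s_1(T)\in\calP(\lam)$ with the same $\at$ and $\pat$ values, and $(s_1\mu)_1 = -\mu_1 \leq 0$, so the identity holds for $s_1(T)$. It then suffices to verify that both sides of \eqref{sigmainfnice} transform the same way under $T\mapsto s_1(T)$. On the left, the $s_1$-invariant terms are $\calN_\infty$, $\calD_\infty$, $a$, $p$ (by the formulas \eqref{Ninf}--\eqref{Ninf<0} and \Cref{Dinf<0}), as is $\affphi_{21}(\mu,\lama)$ (since $s_1$ fixes $\alpha_{21}$); moreover $\affphi_2(s_1\mu,\lama)=\affphi_{12}(\mu,\lama)$ and vice versa (since $s_1$ swaps $\alpha_2$ and $\alpha_{12}$), so the sum $\affphi_2+\affphi_{12}$ is invariant. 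The only non-invariant contribution is $\ell^1(\mu)$, which by \eqref{ell1} satisfies $\ell^1(s_1\mu)-\ell^1(\mu)=-1$ when $\mu_1>0$. Hence $\sigma_\infty(s_1T)-\sigma_\infty(T)=-1$. On the right-hand side, $\affphi_{21}(\mu,\lam)$ is again $s_1$-invariant, and by \Cref{def:modified} we have $\phi_{12}=\phi_2\circ s_1$, so $\phi_2(s_1T)+\phi_{12}(s_1T)=\phi_{12}(T)+\phi_2(T)$; the only shift on the right comes from $\ell^1$ as well, yielding the same $-1$.

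The main technical input of the argument is \Cref{phi12hard}, which is the actual crystal-theoretic computation; everything else is bookkeeping and symmetry. The only subtlety to be careful about is verifying that the various quantities $\calN_\infty$, $\calD_\infty$, $\affphi_2+\affphi_{12}$, $\affphi_{21}$, $\at$, $\pat$ are genuinely $s_1$-invariant, which follows cleanly from their explicit formulas established in Section 4 together with the $s_1$-stability of $\bPsi$.
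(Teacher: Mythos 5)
Your proof is correct and follows essentially the same approach as the paper's: handle $\mu_1 \leq 0$ directly via \Cref{phi21k} and \Cref{phi12hard}, then reduce $\mu_1 > 0$ to that case using the $s_1$-stability of atoms together with the $s_1$-symmetry of $\calN_\infty$, $\calD_\infty$, $\affphi_{21}$, and the swap $\affphi_2 \leftrightarrow \affphi_{12}$ (resp. $\phi_2 \leftrightarrow \phi_{12}$), with the matching $-1$ shift coming only from $\ell^1$.
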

\begin{proof}
If $\mu_1\leq 0$, this follows immediately from \Cref{phi21k} and \Cref{phi12hard}.

If $\mu_1>0$, then let $T'=s_1(T)$. Recall that atoms are stable under $s_1$ by \Cref{lemmaonPsi}. So the element $T'$ can also be characterized as the element in the same atom of $T$ with weight $s_1(\mu)$. 
Notice that $\affphi_{21}$, $\calN_\infty$ and $\calD_\infty$ are preserved by $s_1$, while  $\affphi_{2}(\mu,\lama)=\affphi_{12}(s_1(\mu),\lama)$ and $\ell^1(\mu)=\ell^1(s_1(\mu))- 1$. It follows that $\sigma_\infty(T)=\sigma_\infty(T')-1$. On the other hand, we also have $\phi_2(T)=\phi_{12}(T')$ and $\phi_{12}(T')=\phi_2(T)$, so we obtain the desired identity \eqref{sigmainfnice} for $T$ as well.
\end{proof}

\begin{proposition}\label{rP}
We have $r_P(T)=r_\infty(T)$ for any $T \in \calB(\lambda')$.
\end{proposition}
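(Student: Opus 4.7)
The plan is a direct computation using the formulas already gathered, reducing the claim to an elementary piecewise-linear identity. Throughout I will write $\mu = \wt(T)$, $p = \pat(T)$, and $\lambda' = \lambda + 2p\varpi_1$, so that $T \in \calP(\lambda) \subset \calB(\lambda')$.

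First I will substitute the expression for $\sigma_\infty(T)$ provided by \Cref{sigmainfnicelemma} into $r_\infty(T) = -\sigma_\infty(T) + \langle\lambda',\rho^\vee\rangle$ and compare with $r_P(T) = -\langle\rho^\vee,\mu\rangle + \phi_1(T) - \ell^1(\mu)$. The terms $\pm\ell^1(\mu)$ cancel, so the desired equality $r_P(T) = r_\infty(T)$ reduces to
\[
\phi_1(T) + \phi_2(T) + \phi_{12}(T) + \affphi_{21}(\mu,\lambda) + 2p \;=\; \langle\lambda',\rho^\vee\rangle + \langle\mu,\rho^\vee\rangle.
\]

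Next I will invoke \Cref{atomicnumber} to rewrite $\phi_1(T) + \phi_2(T) + \phi_{12}(T) = Z(T)$ as $\lambda_1 + \lambda_2 + \mu_1 + \mu_2 + \max\bigl(0, \tfrac{|\mu_1|-\lambda_1}{2}\bigr) + p$, and I will use $\rho^\vee = \tfrac{3}{2}\alpha_1^\vee + 2\alpha_2^\vee$ to compute $\langle\lambda',\rho^\vee\rangle = \tfrac{3}{2}\lambda_1 + 3p + 2\lambda_2$ and $\langle\mu,\rho^\vee\rangle = \tfrac{3}{2}\mu_1 + 2\mu_2$. After cancelling the linear terms $\lambda_i+\mu_i$ and the $3p$ contributions, the identity collapses to
\[
\max\bigl(0, \tfrac{|\mu_1|-\lambda_1}{2}\bigr) + \affphi_{21}(\mu,\lambda) \;=\; \tfrac{\lambda_1+\mu_1}{2} + \lambda_2 + \mu_2.
\]

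Finally, substituting the explicit formula $\affphi_{21}(\mu,\lambda) = \lambda_2 + \mu_2 + \min\bigl(\lambda_1,\tfrac{\lambda_1+\mu_1}{2},\lambda_1+\mu_1\bigr)$ from \Cref{affphicompute} and cancelling $\lambda_2+\mu_2$ reduces the claim to the piecewise-linear identity
\[
\max\bigl(0, \tfrac{|\mu_1|-\lambda_1}{2}\bigr) + \min\bigl(\lambda_1, \tfrac{\lambda_1+\mu_1}{2}, \lambda_1+\mu_1\bigr) \;=\; \tfrac{\lambda_1+\mu_1}{2},
\]
which I will verify by a short three-case analysis on $\mu_1$: if $|\mu_1|\leq\lambda_1$ the maximum vanishes and the minimum equals $\tfrac{\lambda_1+\mu_1}{2}$; if $\mu_1>\lambda_1$ the maximum is $\tfrac{\mu_1-\lambda_1}{2}$ and the minimum is $\lambda_1$; and if $\mu_1<-\lambda_1$ the maximum is $\tfrac{-\mu_1-\lambda_1}{2}$ and the minimum is $\lambda_1+\mu_1$. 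In all three cases the sum equals $\tfrac{\lambda_1+\mu_1}{2}$. There is no genuine obstacle at this step: the substantive work has already been carried out in \Cref{sigmainfnicelemma} (via the hard \Cref{phi12hard}) and in the formulas of \Cref{atomicnumber} and \Cref{affphicompute}, so what remains is routine bookkeeping.
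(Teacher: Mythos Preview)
Your proof is correct and follows the same approach as the paper's: both reduce, via \Cref{sigmainfnicelemma}, to the identity $Z(T)+\affphi_{21}(\mu,\lambda)=\langle\lambda+\mu,\rho^\vee\rangle+p$, and then verify it using \Cref{atomicnumber} and \Cref{affphicompute}. The only difference is cosmetic: the paper writes the final verification as a chain of equalities ending in $Z(T)-p$, while you isolate the piecewise-linear identity $\max(0,\tfrac{|\mu_1|-\lambda_1}{2})+\min(\lambda_1,\tfrac{\lambda_1+\mu_1}{2},\lambda_1+\mu_1)=\tfrac{\lambda_1+\mu_1}{2}$ and check it case by case.
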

\begin{proof}
Let $\mu=\wt(T)$ and assume $T\in \calP(\lam)\subset \calB(\lam+2p\varpi_1)$. By \Cref{sigmainfnicelemma} we have
\[r_\infty(T)=-\ell^1(\mu)-\phi_2(T)-\phi_{12}(T)-\affphi_{21}(\mu,\lam)+\langle \lam,\rho^\vee\rangle+p.\]
So our claim is equivalent to
\[ \langle \lambda+\mu,\rho^\vee \rangle -\affphi_{21}(\mu,\lam)=\phi_1(T)+\phi_2(T)+\phi_{12}(T)-p=Z(T)-p.\]
By  \Cref{atomicnumber} and \Cref{affphicompute} we have
\begin{align*}\langle \mu+\lam,\rho\rangle -\affphi_{21}(\mu,\lam)
		&= \lam_2+\mu_2+\frac32\lam_1+\frac32\mu_1-\min(\lam_1,\frac{\lam_1+\mu_1}{2},\lam_1+\mu_1)\\
		&= \lam_2+\mu_2+\lam_1+\mu_1-\min(\frac{\lam_1-\mu_1}{2},0,\frac{\lam_1+\mu_1}{2})\\
  & =Z(T) - p.\qedhere
\end{align*}
\end{proof}

\subsection{Computing \texorpdfstring{$\phi_2$}{psi}}
\label{sec:phi2}
It remains to prove the identity \Cref{phi12hard}.

We begin by considering the case $\at(T)=0$. The general case will follow by induction on the atomic number.

\begin{proposition}\label{atomicphi2}
    For any $T \in \calP(\lam)$ with $\wt(T)_1\leq 0$ we have $\phi_2(T)=\affphi_2(\wt(T),\lam-\at(T)\varpi_2)$.
\end{proposition}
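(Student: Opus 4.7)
My plan is to first reduce to the case $\at(T) = 0$ via the embedding $\bPsi$, and then verify the equality for atomic elements by a short case analysis using \Cref{atom0} and the adapted string formulas.

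For the reduction, I would write $T = \bPsi^a(T_0)$ with $a = \at(T)$ and $T_0 \in \calA(\lam - a\varpi_2) \subset \calP(\lam - a\varpi_2)$. Since $\bPsi$ is weight-preserving and the hypothesis $\wt(T)_1 \leq 0$ passes to every $\bPsi^i(T_0)$, \Cref{defPsi} forces $\bPsi = \Psi$ along the whole chain. By \Cref{lemmaonPsi}(2), $\phi_2(\Psi(U)) = \phi_2(U)$, so iteration yields $\phi_2(T) = \phi_2(T_0)$; since $\wt(T_0) = \wt(T) = \mu$, the claim reduces to $\phi_2(T_0) = \affphi_2(\mu, \lam - a\varpi_2)$, i.e., the atomic case (after relabelling $\lam - a\varpi_2$ as $\lam$).

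For the atomic case, I would write $\str_2(T) = (a,b,c,d)$. The factorization $T = f_2^a f_1^b f_2^c f_1^d v_\lam$ together with the last inequality in \Cref{Littelmannineq} gives $\phi_2(T) = \lam_2 + d - 2c + b - a$. Writing out $\mu$ in terms of $(a,b,c,d)$ using $\alpha_1 = 2\varpi_1 - \varpi_2$ and $\alpha_2 = -2\varpi_1 + 2\varpi_2$, one computes $\tfrac{\lam_1 - \mu_1}{2} = b + d - a - c$ and $\lam_2 + \mu_1 + \mu_2 = \lam_1 + 2\lam_2 - b - d$, so \Cref{affphicompute} realises $\affphi_2(\mu, \lam)$ as the minimum of three expressions $E_1, E_2, E_3$ that are linear (up to a floor) in $(a,b,c,d,\lam_1,\lam_2)$. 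Since $\wt(T)_1 \leq 0$ makes the notions of $\at$ via $\Psi$ and via $\bPsi$ coincide, \Cref{atom0} forces one of three mutually exhausting constraints: (i) $b = \lam_1 - 2d + 2c$ with $d \in \{0,1\}$; (ii) $b = \lam_1 - 2d + 2c$ with $c = \lam_2 + d$; (iii) $c = d = 0$ with $b \leq \lam_1$. In each case, substitution makes one specific $E_j$ equal to $\phi_2(T)$, and the remaining Littelmann inequalities show that the other two $E_k$ are at least $\phi_2(T)$; this gives $\affphi_2(\mu, \lam) = \phi_2(T)$.

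The hard part will be the bookkeeping for the case analysis: each sub-case needs a substitution and a brief parity check to handle the floor function in $E_2$, particularly when $d = 1$ (where $b+d$ becomes odd). The conceptual core of the argument is the reduction step, which relies crucially on the fact that $\bPsi$ agrees with $\Psi$ on the region $\{\wt_1 \leq 0\}$ and that $\Psi$ commutes with $\phi_2$; this is what lets us transfer the computation from $T$ sitting deep in $\calP(\lam)$ to an extremal element of a single atom, where the atomicity condition $\at = 0$ reduces the problem to a short finite list of configurations.
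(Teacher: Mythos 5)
Your proof is correct, and the reduction step (iterating $\phi_2(\Psi(U))=\phi_2(U)$ along the chain $T=\Psi^a(T_0)$, which is valid since $\wt_1\le 0$ forces $\bPsi=\Psi$ throughout) is essentially the same move the paper makes in its inductive step on $\lambda_2$. Where you genuinely diverge is the base case $\at(T_0)=0$. The paper avoids any string-parameter computation: it observes that $\calP(\lambda)$ is a union of $f_2$-strings, that these strings biject with the atoms meeting weight $\mu$, and that therefore the multiset $M_2(\mu,\lambda)=\{\phi_2(X)\mid X\in\calP(\lambda),\,\wt(X)=\mu\}$ equals $\{\affphi_2(\mu,\lambda-k\varpi_2)\}$; comparing $M_2(\mu,\lambda)$ with $M_2(\mu,\lambda-\varpi_2)$ then isolates the top atom's value without ever writing $\phi_2$ explicitly. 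You instead compute $\phi_2(T_0)=\lambda_2+d-2c+b-a$ from the Littelmann cone and match it against the octagon formula $\affphi_2(\mu,\lambda)=\tfrac{\lambda_1-\mu_1}{2}+\min(\lambda_2+\mu_1+\mu_2,\lfloor\tfrac{\lambda_2+\mu_1+\mu_2}{2}\rfloor,\lambda_2)$ under the three $\at=0$ configurations from \Cref{atom0}; the identity reduces each time to $\min(S,\lfloor S/2\rfloor,\lambda_2)=\lambda_2-c$ with $S=\lambda_1+2\lambda_2-b-d$, and each case closes using $c\le\lambda_2+d$. Your route is more mechanical and wholly self-contained (no need to know that preatoms are unions of $f_2$-strings, which itself requires unpacking \Cref{preatomsclosed} and \Cref{lemmaonPsi}); the paper's multiset argument is shorter once that structural fact is granted, and it sidesteps the case analysis and parity bookkeeping entirely. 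Both are valid; minor nitpicks: in case (iii) you should have the strict inequality $b<\lambda_1$ (the boundary $b=\lambda_1$ already falls under case (i)), and the parity issue occurs when $S$ is odd, i.e.\ in case (i) with $d=1$, not quite ``when $b+d$ is odd.''
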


\begin{proof}
Let $\mu \leq \lam$. Consider the multiset
\[ M_2(\mu,\lam):=\{ \phi_2(X) \mid X \in \calP(\lambda)\text{ with }\wt(X)=\mu\}\]
Since $\calP(\lambda)$ is a union of $f_2$-strings, we have an equality of multisets
\begin{equation}\label{M2alt}M_2(\mu,\lam)=\{\affphi(\mu,\lamk) \mid 0\leq k\leq \lam_2 \text{ with }\mu \leq \lamk\}.
\end{equation}
In fact, the $f_2$-strings contained in $\calP(\lambda)$ which pass through an element of weight $\mu$ are in bijection with the atoms in $\calP(\lambda)$ containing an element of weight $\mu$.

The claim now follows by induction on $\lam_2$. If $\lam_2=0$, then $\calP(\lambda)=\calA(\lambda)$ and $M_2(\mu,\lambda)=\{\phi_2(T)\} =\{\affphi_{2}(\mu,\lam)\}$.

If $\lam_2>0$, consider the embedding $\Psi:\calP(\lam-\varpi_2)\hookrightarrow \calP(\lam)$ from \Cref{defPsi}. The map $\Psi$ is weight preserving and we have  $\phi_2(\Psi(X))=\phi_2(X)$ and $\at(\Psi(X))=\at(X)+1$ for any $X\in \calP(\lam-\varpi_2)$ with $\wt(X)_1\leq 0$. 
If $T= \psi(X)$ for some $X\in \calP(\lam-\varpi_2)$, then $\phi_2(T)=\phi_2(X)=\affphi_2(\mu,\lam-\varpi_2-\at(X)\varpi_2)$ and the claim follows. Otherwise, we have $T\in \calA(\lam)=\calP(\lam)\setminus \Psi(\calP(\lam-\varpi_2))$
and by \eqref{M2alt} we see that
\[ \{\phi_2(T)\}=M_2(\mu,\lam)\setminus M_2(\mu,\lam-\varpi_2)=\{ \affphi_2(\mu,\lam)\}.\qedhere\]
\end{proof}

\begin{lemma}\label{phi1at0}
Let $T\in \calP(\lam)\subset \calB(\lam)$ and let $\mu=\wt(T)$. Assume $\mu_1< 0$ and $\at(T)=0$. Then we have $
    \phi_1(T) = \max(0,\mu_1+\mu_2-\lam_2,-\mu_2-\lam_2).$
\end{lemma}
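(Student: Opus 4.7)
The plan is to rewrite both $\phi_1(T)$ and the right-hand side as explicit functions of the adapted string $\str_2(T) = (a,b,c,d)$ and then verify the equality by a case split coming from \Cref{atom0}.

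First, from \eqref{phi1stn} in the proof of \Cref{phi1Psi} we have
\[
\phi_1(T) = \lam_1 + 2a + 2c - 2b - 2d + \max(d,\,2c-b,\,b-2a).
\]
A direct computation using $\mu_1 = \lam_1 + 2a + 2c - 2b - 2d$ and (as in \Cref{atomicnumber}) $\mu_2 = \lam_2 + b + d - 2a - 2c$ gives
\[
\mu_1 + \mu_2 - \lam_2 = \lam_1 - b - d, \qquad -\mu_2 - \lam_2 = 2a + 2c - b - d - 2\lam_2,
\]
so the claim becomes $\phi_1(T) = \max(0,\, \lam_1 - b - d,\, 2a + 2c - b - d - 2\lam_2)$. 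The two sides are then polynomials in $(a,b,c,d,\lam_1,\lam_2)$, and I just need to match them.

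Next, by \Cref{atom0}, since $\at(T) = 0$ we are in one of two cases. In \textbf{Case (a)}, $b = \lam_1 + 2c - 2d$ and either $d \leq 1$ or $c = \lam_2 + d$. Substituting $b$, the three candidates inside the max for $\phi_1(T)$ become $0$, $-\lam_1 + 2a - 2c + 3d = \mu_1 + d$, and $-2\lam_1 + 2a - 2c + 4d$; since $d \leq \lam_1$ the third is dominated by the second, so $\phi_1(T) = \max(0, \mu_1 + d)$. Similarly the RHS becomes $\max(0,\, d - 2c,\, 2a + d - \lam_1 - 2\lam_2)$, and since $d \leq c$ the middle term is $\leq 0$. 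If $c = \lam_2 + d$, substituting in $\mu_1 + d = -\lam_1 + 2a - 2c + 3d$ yields exactly $2a + d - \lam_1 - 2\lam_2$ and the two sides coincide. Otherwise $d \leq 1$: since $\mu_1 < 0$ is an integer, $\mu_1 \leq -1$, and using $c \leq \lam_2 + d$ one checks $2a + d - \lam_1 - 2\lam_2 < d \leq 1$; combined with the integrality this forces both sides to equal $0$.

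In \textbf{Case (b)}, $c = d = 0$ and $b < \lam_1$. Then the three candidates inside the max for $\phi_1(T)$ are $\mu_1$, $\mu_1 - b$ and $\lam_1 - b$; since $\mu_1 < 0$ forces $b \geq 1$, the last is strictly the largest, giving $\phi_1(T) = \lam_1 - b > 0$. The RHS is $\max(0,\, \lam_1 - b,\, 2a - b - 2\lam_2)$, and $2a - b - 2\lam_2 < 2b - \lam_1 - b - 2\lam_2 = b - \lam_1 - 2\lam_2 < 0$ using $2a < 2b - \lam_1$ (from $\mu_1 < 0$) and $b < \lam_1$, so the RHS also equals $\lam_1 - b$.

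The only real obstacle is keeping the inequalities straight; once the preatom equation $b = \lam_1 + 2c - 2d$ (or $c = d = 0$) is plugged in, both sides collapse to the same piecewise linear expression and the proof is a bookkeeping exercise.
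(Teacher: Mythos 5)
Your proof is correct and takes essentially the same route as the paper: both start from the expression $\phi_1(T)=\lam_1+2a-2b+2c-2d+\max(d,2c-b,b-2a)$ and split into the two preatom cases of \Cref{atom0}, reducing each side to the same piecewise-linear function of $(a,b,c,d,\lam_1,\lam_2)$. The paper's writeup of the $b=\lam_1+2c-2d$, $c=\lam_2+d$ subcase contains a small sign slip in its displayed identity, which your more explicit bookkeeping sidesteps; otherwise the two arguments are the same.
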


\begin{proof}


Let $\stn(T)=(a,b,c,d)$.
By \Cref{atom0} we have 
\[\at(T)=0\iff (c=d=0) \text{ or }(b=\lam_1+2c-2d\text{ and }d\leq 1\text{ or }c= \lam_2+d)\]

As computed in \eqref{phi1stn}, we have
\[\phi_1(T)=\lam_1+2a-2b+2c-2d+\max(d,2c-b,b-2a).\]

We now divide into several cases. 
Assume first $c=d=0$. Then the statement is equivalent
\begin{equation}\label{eqcd0}\lam_1+2a-b-\min(2a,b) = \max(0,\lam_1-b,-2\lam_2-b+2a).\end{equation}
Since $\mu_1=\lam_1-2b+2a <0$ and $b\leq \lam_1$, we have $2a\leq b$, so the LHS in \eqref{eqcd0} is $\lam_1-b$.
Moreover, $\lam_1-b\geq 0$ and $\lam_1-b\geq 2a-b-2\lam_2$ otherwise we get.
$\lam_1+2\lam_2< 2a<b.$ So the RHS in \eqref{eqcd0} is also equal to $\lam_1-b$.

We can now assume $b=\lam_1+2c-2d$, so we have
$\phi_1(T)=\max(-\lam_1+2a-2c+3d,0)$,
while the RHS  can be rewritten as 
$\max(0,d-2c,-2\lam_2+d+2a-\lam_1)$.
Moreover, we have $d-2c\leq 0$ and $\mu_1 = -\lam_1+2a-2c+2d\leq 0$.

So it is enough to show that 
\begin{equation}\label{eq2}\max(0,\mu_1+d)=\max(0,\mu_1-2(c-\lam_2-d)+d)\end{equation}
The equality is clear if $c=\lam_2+d$ and it also follows if $d\leq 1$ since that both term vanish for $\mu_1<0$.
\end{proof}
%

\begin{proposition}
\label{phi12at0}
Let $T\in \calP(\lam)$ and let $\mu=\wt(T)$. Assume $\mu_1\leq  0$ and $\at(T)=0$. Then we have $\phi_{12}(T)=\affphi_{12}(\mu,\lam)-\calN_{\infty}(\mu,\lam).$
\end{proposition}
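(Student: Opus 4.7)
The proof proceeds by direct substitution of the known explicit formulas and reduces the claimed identity to a piecewise-linear equality in $(\mu_1, \mu_2, \lam_1, \lam_2)$. Since $T \in \calP(\lam) \subset \calB(\lam)$ we have $\pat(T) = 0$, so \Cref{atomicnumber} yields
\[Z(T) = \lam_1+\lam_2+\mu_1+\mu_2+\max\left(0, \tfrac{-\mu_1-\lam_1}{2}\right).\]
Using the relation $Z(T) = \phi_1(T) + \phi_2(T) + \phi_{12}(T)$ together with $\phi_2(T) = \affphi_2(\mu, \lam)$ (from \Cref{atomicphi2}, applicable since $\at(T) = 0$) and the explicit formula for $\phi_1(T)$ from \Cref{phi1at0}, we obtain an explicit piecewise-linear expression for $\phi_{12}(T) = Z(T) - \phi_1(T) - \phi_2(T)$.

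The target $\affphi_{12}(\mu, \lam) - \calN_\infty(\mu, \lam)$ is likewise piecewise-linear, computed from \Cref{affphicompute} and \eqref{Ninf<0}. In the regime $\mu_1 + \mu_2 \leq \lam_2$ and $\mu_2 \geq -\lam_2$ (where $\calN_\infty$ is non-trivial) the two terms collapse to
\[\max\left(0, \tfrac{\mu_1 + \lam_1}{2}\right) + \max\left(0, \left\lceil \tfrac{\mu_1+\mu_2+\lam_2}{2} \right\rceil\right),\]
while outside this regime $\calN_\infty(\mu, \lam) = 0$ and only $\affphi_{12}(\mu, \lam)$ remains. The identity therefore reduces to a case-by-case comparison, with cases delimited by the hyperplanes $\mu_2 = -\lam_2$, $\mu_1 + \mu_2 = \lam_2$, $\mu_1 = -\lam_1$ (which carves out the regions in $\Conv(W \cdot \lam) \cap \{\mu_1 \leq 0\}$ on which the minima in $\affphi_2, \affphi_{12}$ stabilize), together with the parity condition on $\mu_1+\mu_2+\lam_2$ appearing in the ceiling term of $\calN_\infty$. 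In each region both sides become affine functions of $(\mu_1, \mu_2, \lam_1, \lam_2)$, and the identity is verified by direct arithmetic.

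The edge case $\mu_1 = 0$ is handled separately: since \Cref{phi1at0} is stated for $\mu_1 < 0$, we verify the formula for $\phi_1(T)$ at $\mu_1 = 0$ directly from the adapted-string formula \eqref{phi1stn}, exploiting $s_1$-stability of atoms (\Cref{lemmaonPsi}) to match $T$ and $s_1T$ when both lie in $\calA(\lam)_\mu$. The main obstacle is the sheer bookkeeping of cases: roughly a dozen sub-regions are generated by the hyperplanes and parity conditions listed above, and care is required to treat the boundary cases consistently (in particular to avoid the off-by-one discrepancies that can arise when comparing floor and ceiling expressions on parity boundaries). Each individual sub-case is a short arithmetic check, so the bulk of the argument is enumerating the regions without gaps rather than any single difficult computation.
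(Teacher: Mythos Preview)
Your approach is essentially the same as the paper's: compute $\phi_{12}(T)=Z(T)-\phi_1(T)-\phi_2(T)$ using \Cref{atomicnumber}, \Cref{phi1at0}, and \Cref{atomicphi2}, then match against $\affphi_{12}(\mu,\lam)-\calN_\infty(\mu,\lam)$ by a region-by-region comparison. The paper carries this out with only three regions (determined by the sign of $\mu_1+\mu_2-\lam_2$ and of $\mu_2+\lam_2$), not a dozen: the key observation you are missing is that the condition $\phi_1(T)=0$ from \Cref{phi1at0} coincides exactly with the region $\mu_1+\mu_2\leq\lam_2$ and $\mu_2\geq -\lam_2$, which is also the only region where $\calN_\infty$ is nonzero, so the case structure collapses.

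Your handling of $\mu_1=0$ is also more complicated than necessary. When $\mu_1=0$ one has $s_1T=T$, hence $\phi_{12}(T)=\phi_2(s_1T)=\phi_2(T)$ and likewise $\affphi_{12}(\mu,\lam)=\affphi_2(\mu,\lam)$; together with $\calN_\infty(\mu,\lam)=0$ for $\mu_1=0$, the statement reduces immediately to \Cref{atomicphi2}, with no need to revisit the adapted-string formula for $\phi_1$.
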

\begin{proof}
If $\mu_1=0$, then $\phi_{12}(T)=\phi_2(T)$, $\affphi_{12}(T)=\affphi_{2}(T)$ and $\calN_\infty(\mu,\lam)=0$, so the claim follows from \Cref{atomicphi2}. We assume in the rest of the proof $\mu_1<0$. We can also assume that $T$ lies in the biggest preatom, i.e. that $\calP(\lam)\subset \calB(\lam)$. In fact, since $\Phi$ commutes with $s_1$ and $f_2$, the claim for the other preatoms easily follows by induction.

Recall now by \Cref{atomicphi2} that $\phi_2(T)=\affphi_2(\mu,\lam)$. We divide into three cases.

We assume first $\mu_1+\mu_2\leq \lam_2$ and $\mu_2\geq -\lam_2$. Notice that this precisely means that $\phi_1(T)=0$. By \Cref{Ninf}, we have in this case
\[ \affphi_{12}(T)-\calN_\infty(\mu,\lam)=\max(0,\frf{\lam_2+\mu_1+\mu_2})-\min(0,\frac{-\lam_1-\mu_1}{2}).\]
Let $\chi:=\lam_2+\mu_1+\mu_2$. Then by \Cref{atomicnumber,atomicphi2,phi1at0} we have
\begin{align*} \phi_{12}(T) &=Z(T)-\phi_1(T)-\phi_2(T)\\
&=\frac{\lam_1+\mu_1}{2}+\chi +\max(0,\frac{-\mu_1-\lam_1}{2})-\min(\chi,\frf{\chi},\lam_2).
\end{align*}

Notice that $\min(0,\frac{-\lam_1-\mu_1}{2})+\max(0,\frac{-\lam_1-\mu_1}{2})=\frac{-\lam_1-\mu_1}{2}$ and that $\lam_2\geq \frf{\chi}$. So our claim results equivalent to the easy-to-check identity
\[\chi -\min(\chi,\frf{\chi})=\max(0,\frc{\chi}).\]

We now assume that $\mu_1+\mu_2> \lam_2$ or that $\mu_2<-\lam_2$. In both cases we have from \Cref{octineq} that $\mu_1>-\lam_1$, so $Z(T)=\lam_1+\lam_2+\mu_1+\mu_2$. Moreover, we have from \Cref{Ncount} that $\calN_{\infty}(\mu,\lam)=0$, so the claim is equivalent to $Z(T)-\phi_1(T)-\phi_2(T)=\affphi_{12}(T)$.

If $\mu_1+\mu_2>\lam_2$, then 
$\affphi_2(T)=\frac{\lam_1-\mu_1}{2}+\lam_2$ and $\affphi_{12}(T)=\frac{\lam_1+\mu_1}{2}+\lam_2$, so the desired equality reduces to the identity
\[ \lam_1+\lam_2+\mu_1+\mu_2-\mu_1-\mu_2+\lam_2-\frac{\lam_1}{2}+\frac{\mu_1}{2}-\lam_2=\frac{\lam_1}{2}+\frac{\mu_1}{2}+\lam_2.\]
Finally, if $\mu_2<-\lam_2$, the desired equality reduces to the identity
 \[ \lam_1+\lam_2+\mu_1+\mu_2+\mu_2+\lam_2-\frac{\lam_1}{2}+\frac{\mu_1}{2}-\mu_1-\mu_2-\lam_2=\frac{\lam_1}{2}+\frac{\mu_1}{2}+\lam_2+\mu_2.\qedhere\]
\end{proof}

\begin{proposition}\label{phi2claim}
Let $T\in \calP(\lam)$ and let $\mu=\wt(T)$. Let $A:=\at(T)$.
	If $\mu_1\leq 0$, we have
\[\phi_{12}(T)=\affphi_{12}(\mu,\lam- A\varpi_2)-\calN_\infty(\mu,\lambda-A\varpi_2)+\calD_\infty(\mu,\lambda,A).\]
\end{proposition}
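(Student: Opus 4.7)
The plan is to proceed by induction on $A=\at(T)$. For the base case $A=0$, note that by \Cref{truncatedns} we have $\calD_\infty(\mu,\lambda,0)=\min(0,\affD_\infty(\mu,\lambda))=0$, so the claim reduces exactly to the base-case identity $\phi_{12}(T)=\affphi_{12}(\mu,\lambda)-\calN_\infty(\mu,\lambda)$, which is precisely \Cref{phi12at0}.

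For the inductive step, assume $A\geq 1$ and write $T=\bPsi(T')$ with $T'\in\calP(\lambda-\varpi_2)$, $\at(T')=A-1$, and $\wt(T')=\mu$. Since $\mu_1\leq 0$, by the definition of $\bPsi$ we have $\bPsi=\Psi$ on such elements. The inductive hypothesis applied to $T'$, with $\lambda$ replaced by $\lambda-\varpi_2$ and $A$ replaced by $A-1$, yields
\[\phi_{12}(T')=\affphi_{12}(\mu,\lambda-A\varpi_2)-\calN_\infty(\mu,\lambda-A\varpi_2)+\calD_\infty(\mu,\lambda-\varpi_2,A-1),\]
since $(\lambda-\varpi_2)-(A-1)\varpi_2=\lambda-A\varpi_2$. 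Comparing with the desired formula, the claim for $T$ reduces to proving the identity
\[\phi_{12}(T)-\phi_{12}(T')=\calD_\infty(\mu,\lambda,A)-\calD_\infty(\mu,\lambda-\varpi_2,A-1).\]

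The right-hand side is easy to analyze. By \Cref{truncatedns}, $\calD_\infty(\mu,\lambda,A)=\min(A,\affD_\infty(\mu,\lambda-A\varpi_2))$ and $\calD_\infty(\mu,\lambda-\varpi_2,A-1)=\min(A-1,\affD_\infty(\mu,\lambda-A\varpi_2))$, so their difference is $1$ if $\affD_\infty(\mu,\lambda-A\varpi_2)\geq A$ and $0$ otherwise. The left-hand side is also $0$ or $1$ by \Cref{cor:phi12psi}; it equals $1$ precisely when $\stn(T')=(a,b,c,d)$ satisfies either $d=0$ and $2a>b>2c$, or $d\notin\{0,\lambda_1\}$ and $b>2a+d$.

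The main obstacle, therefore, is to verify that these two conditions are equivalent. The plan is to make the string of $T'$ explicit: since $\at(T')=A-1$, we have $T'=\bPsi^{A-1}(S)$ for some $S\in\calA(\lambda-A\varpi_2)\subset\calP(\lambda-A\varpi_2)$, and iterating \Cref{defPsi} from $\stn(S)$ together with the atom-boundary description in \Cref{atom0} yields an explicit parametric form for $(a,b,c,d)$ in terms of $\stn(S)$ and $A-1$. On the other side, $\affD_\infty(\mu,\lambda-A\varpi_2)$ is computed by \Cref{Dinf<0} (since $\mu_1\leq 0$) and splits into the three regimes $\mu_1+\mu_2<-(\lambda_2-A)$, $-(\lambda_2-A)\leq\mu_1+\mu_2\leq\lambda_2-A$ with a parity constraint, and the trivial case. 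The verification is then a case-by-case comparison: in each regime one translates $\affD_\infty(\mu,\lambda-A\varpi_2)\geq A$ into inequalities among $\mu_1,\mu_2,\lambda_1,\lambda_2,A$, and these are shown to coincide, via \Cref{preatominequalities} and the relations $\mu_1=\lambda_1+2a-2b+2c-2d$ and $\mu_2=\lambda_2+b-2c+d-a$, with the string condition of \Cref{cor:phi12psi} for $T'$. The parity assumption in \Cref{Dinf<0} is what forces the distinction between $d=0$ and $d\neq 0,\lambda_1$ in \Cref{cor:phi12psi}, providing the conceptual reason the two characterizations agree.
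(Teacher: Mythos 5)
Your proposal follows the paper's proof structure exactly: both induct on $A$, both handle $A=0$ via \Cref{phi12at0} (using $\calD_\infty(\mu,\lambda,0)=0$), and both reduce the inductive step — after writing $T=\Psi(T')$ and applying \Cref{cor:phi12psi} — to showing that the string condition on $\stn(T')$ is equivalent to $\affD_\infty(\mu,\lambda-A\varpi_2)\geq A$. So far this matches the paper line for line.

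The gap is at the decisive step. You correctly identify that the ``main obstacle'' is this equivalence, but you then only outline a \emph{plan} to verify it (``iterating \Cref{defPsi} from $\stn(S)$ $\dots$ yields an explicit parametric form $\dots$ The verification is then a case-by-case comparison'') and never carry it out. This equivalence is the entire content of the proposition; without proving it you have reduced one unproven statement to another. The paper isolates it as a standalone lemma and proves it: instead of iterating $\Psi$ to parametrize $\stn(T')$, it invokes \Cref{atomformula} to express $A$ directly as a function of $\stn(X)$ (the target element with $\at(X)=A$), and then compares with the formula for $\affD_\infty$ from \Cref{Dinf<0} in three explicit cases ($d=0$, $d=\lambda_1$, $0<d<\lambda_1$). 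Your route of iterating $\Psi^{A-1}$ on $\stn(S)$ would in principle work, but it requires tracking when the piecewise definition of $\Psi$ switches branches along the trajectory, which is exactly the analysis packaged inside \Cref{atomformula}; going through \Cref{atomformula} directly is cleaner. You also miss a simplifying observation the paper uses: for $\mu_1\leq 0$ the branch ``$d=0$ and $2a>b>2c$'' of \Cref{cor:phi12psi} is vacuous, since $b<2a$ and $d=0$ would force $\mu_1>\lambda_1+2c-b\geq 0$, so only the branch $d\neq 0,\lambda_1$ needs to be matched against the $\affD_\infty$-condition. Finally, the paper begins by reducing to $\calP(\lambda)\subset\calB(\lambda)$ (the biggest preatom), justified by $\Phi$-equivariance; you do not mention this reduction, though it is a minor point.
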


\begin{proof}
As in \Cref{phi12at0} we can assume that $\calP(\lam)\subset \calB(\lam)$.
We show the claim it by induction on $A$.
	If $A=0$, the claim immediately follows from \Cref{phi1at0} since $\calD_\infty(\mu,\lam,0)=0$. 

 If $A>0$, then $T=\Psi(U)$ for some $U\in \calP(\lambda-\omega_2)\subset \calB(\lam-\omega_2)$ with $\at(U)=A-1$.
By induction, we have 
\[ \phi_{12}(U)=\affphi_{12}(\mu,\lamA)-\calN_\infty(\mu,\lamA)+\calD_{\infty}(\mu,\lam-\varpi_2,A-1).\]
 So it suffices to show that, for any $U$ in $\calP(\lam-\varpi_2)\subset \calB(\lam-\varpi_2)$ with $\wt(U)=\mu$, we have
\begin{align}\label{Psiphi12} \phi_{12}(\Psi(U))-\phi_{12}(U) &=\calD_\infty(\mu,\lambda,A)-\calD_{\infty}(\mu,\lam-\varpi_2,A-1)\\
& =\min(A,\affD_\infty(\mu,\lamA))-\min(A-1,\affD_\infty(\mu,\lamA)).\nonumber\end{align}

Let $\stn(U)=(a,b,c,d)$. We know from \Cref{cor:phi12psi} that 
\[ \phi_{12}(\Psi(U))-\phi_{12}(U)=\begin{cases}
1& \text{if }d=0\text{ and }2a>b>2c\text{ or }d\neq 0,\lam_1\text{ and }b\geq 2a+d\\
0&\text{otherwise}.
\end{cases}\]
However, notice that we cannot have $d=0$ and $2a>b>2c$ since otherwise
$\mu_1=\lam_1+2a-2b+2c> \lam_1+2c-b\geq 0$.
It follows that \eqref{Psiphi12} is equivalent to showing that
\[ \affD_\infty(\mu,\lamA)\geq A \iff d\neq 0,\lam_1 \text{ and }b\geq 2a + d.\]
We show this in the following lemma.
\end{proof}

\begin{lemma}
Let $X\in \calP(\lam)\subset \calB(\lam)$ with $\mu=\wt(X)$ such that $\mu_1< 0$. Let $A:=\at(X)$ and $\stn(X)=(a,b,c,d)$. We have 
\[\affD_\infty(\mu,\lamA)> A \iff d\neq 0,\lam_1 \text{ and }b\geq 2a + d.
\]
\end{lemma}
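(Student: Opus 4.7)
The plan is a direct case analysis based on the value of $d$ in $\stn(X) = (a,b,c,d)$, using Corollary \ref{preatominequalities} for the three disjunctive conditions characterizing membership in $\calP(\lam)$, the explicit atom formula from Proposition \ref{atomformula}, and the closed-form expression for $\affD_\infty$ from Corollary \ref{Dinf<0}. The key coordinate translations, obtained from $\wt(X) = \lam - (b+d)\alpha_1 - (a+c)\alpha_2$, are
\[
\mu_1 = \lam_1 + 2a + 2c - 2b - 2d \qquad \text{and} \qquad \mu_1 + \mu_2 = \lam_1 + \lam_2 - b - d.
\]

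In the principal case $0 < d < \lam_1$ we must have $b = \lam_1 + 2c - 2d$ by Corollary \ref{preatominequalities}, so Proposition \ref{atomformula} gives $A = \min(\lam_2 + d - c,\, d - 1)$, and the condition $b \geq 2a + d$ is equivalent to $-\mu_1 \geq d$. I would split into two subcases depending on which term realizes the minimum in $A$. When $c \leq \lam_2 + 1$, we have $A = d-1$ and $\tau_2 := \lam_2 - A = \lam_2 - d + 1$; the parity condition of Corollary \ref{Dinf<0} is automatic since $\mu_1 + \mu_2 - \tau_2 = 2d - 2c - 1$ is odd, yielding $\affD_\infty(\mu, \lam - A\varpi_2) = \max(0, \min(\lam_1, -\mu_1) - 1)$; an identical expression is recovered at the boundary $c = \lam_2 + 1$ via Case 2 of the corollary. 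When $c \geq \lam_2 + 1$, we have $A = \lam_2 + d - c$ and $\tau_2 = c - d$, so Case 2 of Corollary \ref{Dinf<0} applies and gives $\affD_\infty = \max(0, \min(\lam_1, -\mu_1) + \lam_2 - c)$. Using $\lam_1 > d$, a direct comparison in each subcase then shows $\affD_\infty > A$ if and only if $-\mu_1 \geq d$.

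For the boundary values $d = 0$ and $d = \lam_1$ the right-hand side is false, and I would verify directly that $\affD_\infty(\mu, \lam - A\varpi_2) \leq A$. When $d = 0$, we have $A = \min(c, \lam_1 + 2c - b)$ and $\mu_1 + \mu_2 = \lam_1 + \lam_2 - b$, and each branch of Corollary \ref{Dinf<0} yields $\affD_\infty \leq A$ after a short comparison using the Littelmann inequalities of Theorem \ref{Littelmannineq}. The case $d = \lam_1$ is analogous, using $A = 2c - \lam_1 - b + \min(\lam_1 + \lam_2 - c,\, \lam_1 - 1)$ from Proposition \ref{atomformula}.

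The main obstacle is orchestrating the boundary transitions cleanly: at $c = \lam_2 + 1$ in the principal case, the branch of the formula for $A$ changes simultaneously with the transition from Case 1 to Case 2 of Corollary \ref{Dinf<0}, and it is a delicate but straightforward check that the resulting expression for $\affD_\infty$ matches across this transition. The corner situations $d = 1$ (where $A$ can be zero and the positivity regime of $\affD_\infty$ degenerates) and $A = 0$ in subcase (b) (forcing $c = \lam_2 + d$) also require isolated verifications to ensure the comparison $\affD_\infty > A \iff -\mu_1 \geq d$ continues to hold at the boundary of parameter space.
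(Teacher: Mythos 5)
Your plan uses the same three ingredients as the paper's proof (the preatom description from \Cref{preatominequalities}, the atomic-number formula from \Cref{atomformula}, and the closed form for $\affD_\infty$ from \Cref{Dinf<0}) and the same case analysis on $d$ with a subcase split on which term realizes the minimum in $A$, so the overall route is essentially identical.

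Two points you should tighten. First, for the boundary case $d=0$ your statement that ``each branch of \Cref{Dinf<0} yields $\affD_\infty\leq A$ after a short comparison'' is not correct as written: the Case~1 expression $\max(0,\min(\lam_1,-\mu_1)-1)$ can genuinely exceed $A$ (take for instance $\lam=(4,4)$, $(a,b,c,d)=(0,11,4,0)$, where the formula value is $3$ while $A=1$). The correct argument, as in the paper, is to show that Case~1 cannot actually apply: the two Case~1 inequalities force $A=\lam_1+2c-b$, and then $\mu_1+\mu_2-(\lam_2-A)=2(\lam_1+c-b)$ is even, so the parity condition fails and one lands in Case~3, giving $\affD_\infty=0$. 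The $d=\lam_1$ boundary, which you wave off as ``analogous,'' does require a direct bound ($\affD_\infty\leq\lam_1-1\leq A$ when $\lam_2\geq c$) rather than a non-applicability argument. Second, be aware that your final ``direct comparison'' for $0<d<\lam_1$ actually yields $\affD_\infty>A\iff \min(\lam_1,-\mu_1)>d\iff -\mu_1>d\iff b>2a+d$, with strict inequality (at $-\mu_1=d$ one gets $\affD_\infty=A$); the $\geq$ appearing both in the lemma as printed and in your write-up is a strictness typo inherited from the paper, which is internally consistent with the strict form in \Cref{cor:phi12psi}.
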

\begin{proof}

Recall from \Cref{Dinf<0} that we have 
\[\affD_\infty(\mu,
\lamA)=\begin{cases}
	\max(0,\min(\lam_1,-\mu_1)-1) &\begin{array}{l}\text{if }\mu_1+\mu_2+\lam_2\geq A,\\
 \mu_1+\mu_2+A\leq \lam_2 \text{ and }\\
 \mu_1+\mu_2\not\equiv\lam_2-A \pmod{2};\end{array}\\
\begin{aligned}\max(0,\min(-\mu_1,\lam_1)+\lam_2\\-A+\mu_1+\mu_2) \end{aligned}&\begin{array}{c}\text{if } \mu_1+\mu_2+\lam_2<A;\end{array}\\
0 &\begin{array}{c}\text{otherwise.}\end{array}\end{cases}\]
Moreover,  from \Cref{atomformula} we have
\[A= \at(X)=\begin{cases} \min(c,\lam_1+2c-b)& \text{if }d=0\\
\lam_1+2c-2d-b+\min(\lam_2+d-c,d-1) &\text{if }d>0.\end{cases}\]

We divide the proof into three cases.

\noindent \textbf{First case: $d=0$.\;}
 We claim that in this case we  actually have $\affD_\infty(\mu,\lamA)=0$.
Notice that $\mu_1+\mu_2+\lam_2=\lam_1+2\lam_2-b\geq \lam_1+2c-b\geq A$. So we can also assume that $A\leq \lam_2-\mu_1-\mu_2=b-\lam_1$. Notice that this is equivalent to $c+\lam_1\geq b$ and $A=\lam_1+2c-b$. However, if $A=\lam_1+2c-b$ then $\mu_1+\mu_2+\lam_2+A\equiv 0 \pmod{2}$, and therefore $\affD_\infty(\mu,\lam-A\varpi_2)=0$.

\noindent \textbf{Second case: $d=\lam_1$.\;} In this case we have $A=\min(\lam_2+c-b,2c-b-1)$ and $\mu_1+\mu_2+\lam_2=2\lam_2-b$. It follows that $\mu_1+\mu_2+\lam_2\geq A$ if and only if $\lam_2\geq c$. Recall also that $b\leq 2c-\lam_1$.

Assume first $\lam_2\geq c$, so that $A=2c-b-1$ and $\mu_1+\mu_2+\lam_2\geq A$. The claim now follows since $\affD_\infty(\mu,\lam-A\varpi_2)\leq \lam_1-1\leq 2c-b-1$.

Assume now $\lam_2<c$ so that $A=\lam_2+c-b$ and $\mu_1+\mu_2+\lam_2\leq A$. The claim follows because, if $\affD_\infty(\mu,
\lamA)\geq 0$, then 
$\affD_\infty(\mu,
\lamA)\leq \lam_1+\lam_2+\mu_1+\mu_2-A=\lam_1+\lam_2-c\leq \lam_2+c-b=A$.

\noindent \textbf{Third case: $d\neq 0,\lam_1$.\;}
In this case we have $b=\lam_1-2d+2c$. Notice that $b\geq 2a+d$ is equivalent to $\lam_1-2a+2c> 3d$. We also have $A=\min(\lam_2+d-c,d-1)$ and $\mu_1+\mu_2+\lam_2=2\lam_2-2c+d$, so $\mu_1+\mu_2+\lam_2\geq A$ if and only if $\lam_2\geq c$.

Assume first $\lam_2\geq c$, so that $A=d-1$ and $\mu_1+\mu_2+\lam_2\geq A$. Notice that $\lam_1-1>A$ and also \[-\mu_1-1>A\iff \lam_1+2c-2a-2d-1>d-1\iff \lam_1+2c-2a>3d\]
Hence, $\affD_\infty(\mu,\lamA)>A$ if and only if $\lam_1+2c-2a>3d$.

Finally assume $\lam_2<c$ so that $A=\lam_2+d-c$ and $\mu_1+\mu_2+\lam_2<A$. In this case we have $\affD_\infty(\mu,\lamA)=\max(0,\min(0,\mu_1+\lam_1)+\lam_2+\mu_2-A)$.
We have $\mu_1+\lam_1+\lam_2+\mu_2-A=\lam_1+\lam_2-c>\lam_2+d-c=A$ and
\[\lam_2+\mu_2-A>A\iff \lam_1+2c-2a> 3d.\]
It follows that $\affD_\infty(\mu,\lamA)>A$ if and only if $\lam_1+2c-2a>3d$.
\end{proof}

\section{Swapping functions}

Recall the family of cocharacters $\{\eta_m\}_{m\in \bbN}$ introduced in \Cref{sec:family}.
The unique wall separating $\eta_m$ and $\eta_{m+1}$ is 
$H_{\alpha_{m+1}^\vee}$, where $\alpha_{m+1}^\vee \in \affPhi^\vee_+$ is the $(m+1)$-th root occurring in the sequence \eqref{reflectionorder}.  
As in \eqref{tm}, let $t:=t_{m+1}$  denote the corresponding reflection.
 For any $\mu\in X$ such that $\mu<t\mu\leq \lambda$ we define
\[ \psi_{t\mu}:\calB(\lambda)_{t\mu}\ra 
\calB(\lambda)_{\mu}\]
as follows. 
Let $T\in \calB(\lam)_{t\mu}$ and assume that $T\in \calA(\lam-a\varpi_2)\subset \calP(\lam)$ and let $e:=(\mu \raw t\mu)\in E(\lambda-a\varpi_2)$. Then $\psi_{t\mu}(T)=T'$, where $T'$ is the only element of weight $\mu$ in $\calA(\lam-(a+\Omega(e))\varpi_2)\subset \calP(\lam)$.

\begin{proposition}\label{swappingfunctionprop}
The collection of maps $\psi=\{\psi_\nu\}$ is a swapping function between $\eta_{m+1}$ and $\eta_m$. In particular, if $r_{m+1}$ is a recharge for $\eta_{m+1}$ then $r_m$ is a recharge for $\eta_m$. 
\end{proposition}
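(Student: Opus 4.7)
The second assertion (that $r_m$ is a recharge for $\eta_m$ whenever $r_{m+1}$ is a recharge for $\eta_{m+1}$) is a formal consequence of the first one together with the wall-crossing formula \cite[Prop.~2.33]{Pat}. Indeed, $\eta_m$ and $\eta_{m+1}$ lie in adjacent $\lambda$-chambers separated only by the wall $H_{\alpha_{m+1}^{\vee}}$, so if the maps $\psi_\nu$ form a family of bijections $\calB(\lambda)_{t\mu}\to\calB(\lambda)_\mu$ satisfying $r_{m+1}(T)=r_{m+1}(\psi_{t\mu}(T))+1$, then summing $q^{r_{m+1}}$ over weight spaces and applying the identities for $\htil^{\eta_{m+1}}$ vs.\ $\htil^{\eta_m}$ gives exactly the renormalized $\eta_m$-polynomial. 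So I would reduce immediately to proving the two ingredients: that $\psi_{t\mu}$ is a well-defined bijection and that the recharge decreases by one.

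The bijection claim is straightforward from the construction. Fix $T\in\calB(\lambda)_{t\mu}$ contained in a preatom $\calP(\lambda-2p\varpi_1)$ and an atom $\calA(\zeta)$ with $\zeta=\lambda-2p\varpi_1-a\varpi_2$; by the atomic decomposition the element $T'$ of weight $\mu$ in $\calA(\zeta-\Omega(e)\varpi_2)$ exists uniquely (since $\mu$ lies below $t\mu\leq\zeta$ after dropping by $\Omega(e)$ in the direction of $\varpi_2$, which is precisely the minimal drop that makes the edge swappable by Definition~\ref{def:elevation}). Inverting $\psi_{t\mu}$ amounts to, given $T'$ in $\calA(\zeta')$ of weight $\mu$, identifying the unique elevation $\Omega(e)$ and moving back to $T$ in $\calA(\zeta'+\Omega(e)\varpi_2)$; the uniqueness of NS-staircases (Lemma~\ref{staircaseunique}) guarantees that this reconstruction is unambiguous.

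The heart of the argument is the identity $r_{m+1}(T)=r_{m+1}(\psi_{t\mu}(T))+1$, equivalently $\sigma_{m+1}(T')-\sigma_{m+1}(T)=-1$, since $T$ and $T'$ share the same preatom and hence the same $\pat$-value. Writing $a_T=a$, $a_{T'}=a+\Omega$, $\lambda_a=\lambda-a\varpi_2$ and $\lambda_{a+\Omega}=\lambda-(a+\Omega)\varpi_2$, what must be checked is
\begin{align*}
\bigl[\ell_{m+1}(\mu,\lambda_{a+\Omega})-\ell_{m+1}(t\mu,\lambda_a)\bigr]
&-\bigl[\calN_{m+1}(\mu,\lambda_{a+\Omega})-\calN_{m+1}(t\mu,\lambda_a)\bigr]\\
&+\bigl[\calD_{m+1}(\mu,\lambda,a+\Omega)-\calD_{m+1}(t\mu,\lambda,a)\bigr]+\Omega=-1.
\end{align*}
I would attack this by splitting into cases. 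When $\Omega=0$ the edge $e$ is swappable in $\calP(\lambda_a)$, so the bracketed $\ell$-difference equals $-1$ by Definition~\ref{def:swappableedge} and \eqref{lmX}; meanwhile Lemma~\ref{Nintmuis0} kills $\calN_{m+1}(t\mu,\lambda_a)$, and by Lemma~\ref{ifN=0thenD=0} the $\calD$ and $\calN$ contributions at $\mu$ are forced to cancel once one uses the classification of Section~4 together with Lemma~\ref{corcon}. When $\Omega>0$ the edge $e$ is non-swappable, so Proposition~\ref{Ncount} (resp.\ Proposition~\ref{Ncountgen} when $t\mu\not\leq\lambda_a$) supplies the correction $\calN_{m+1}(\mu,\lambda_a)=\ell_{m+1}(\mu,\lambda_a)-\ell_{m+1}(t\mu,\lambda_a)-1$, while Propositions~\ref{mu1geqlam1}--\ref{mu1<lam1} identify exactly how $\ell_{m+1}$ and $\calN_{m+1}$ change when one replaces $\lambda_a$ by $\lambda_{a+\Omega}$; the $\Omega$ extra increments from $\at(T')-\at(T)$ are matched against the drop in $\calD_{m+1}$ as one exits the staircase.

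The main obstacle I expect is precisely this last bookkeeping in the case $\Omega>0$: one must show that every unit by which $\at$ increases is balanced by a unit decrease in $\calD_{m+1}$ and simultaneously that the $\calN_{m+1}(\mu,\lambda_{a+\Omega})$ term captures exactly the $\ell$-mismatch, using the formulas for $\affD_\infty$ in Proposition~\ref{Dcount} and Corollary~\ref{Dinf<0}. A clean way to organise this is probably to telescope: go from $T$ to $T'$ through the chain $\calA(\lambda_a)\supset\calA(\lambda_{a+1})\supset\dots\supset\calA(\lambda_{a+\Omega})$, use Proposition~\ref{Ncount} at each intermediate step, and observe that each step decreases $\sigma_{m+1}$ by exactly one until the edge becomes swappable, at which point the base case $\Omega=0$ completes the argument.
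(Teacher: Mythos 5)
Your proposal correctly identifies the swapping-function property $r_{m+1}(T)=r_{m+1}(\psi_{t\mu}(T))+1$ (which the paper proves as the separate \Cref{swapcheck}) and the well-definedness of $\psi_{t\mu}$ as essential ingredients. However, there is a genuine gap in the opening claim that ``the second assertion is a formal consequence of the first one together with the wall-crossing formula.'' What the swapping property plus wall-crossing yield formally is the existence of \emph{some} recharge $r'_m$ for $\eta_m$, obtained by the usual recipe: $r'_m(T)=r_{m+1}(T)-1$ for $T$ of weight $t\mu$, $r'_m(U)=r_{m+1}(U)+1$ for $U$ of weight $\mu$ in $\Ima(\psi_{t\mu})$, and $r'_m=r_{m+1}$ otherwise. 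The nontrivial step your proposal omits is the verification that this $r'_m$ coincides with the explicitly defined statistic $r_m$ of \Cref{N=0}. This is in fact the bulk of the paper's proof: one shows that for $T$ of weight $t\mu$ the change $r_m(T)-r_{m+1}(T)=-1$ comes solely from $\ell_{m+1}-\ell_m$ while $\calN$ and $\calD$ stay fixed (\Cref{Nintmuis0,ifN=0thenD=0}), and for $U$ of weight $\mu$ one tracks how each of $\ell$, $\calN$, $\calD$ varies from $m$ to $m+1$ according to whether $U\in\Ima(\psi_{t\mu})$ (so that the reversed edge sits on an NS-staircase of the right elevation, contributing to $\calD$) or not (so the $\ell$- and $\calN$-increments cancel by \Cref{corcon} and $\calD$ is unchanged). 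Without this check the argument establishes that a recharge for $\eta_m$ exists, not that the particular formula of \Cref{N=0} defines one — and the latter is what the recursion driving \Cref{maintheorem} requires.

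Two minor points. The maps $\psi_{t\mu}:\calB(\lambda)_{t\mu}\to\calB(\lambda)_\mu$ are injections, not bijections: since $\mu<t\mu$ the target weight space is at least as large, and the wall-crossing bookkeeping uses precisely this injectivity (elements outside $\Ima(\psi_{t\mu})$ keep their recharge). Also your displayed equation should evaluate to $+1$, not $-1$: from $r_{m+1}(T)=r_{m+1}(T')+1$ and $r_{m+1}=-\sigma_{m+1}+\mathrm{const}$ one gets $\sigma_{m+1}(T')-\sigma_{m+1}(T)=1$, which matches the statement of \Cref{swapcheck}.
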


To prove \Cref{swappingfunctionprop} we need to check that for any $m$ and $T$ we have $r_{m+1}(T)=r_{m+1}(\psi_{t\mu}(T))+1$, or equivalently that $\sigma_{m+1}(T)=\sigma_{m+1}(\psi_{t\mu}(T))-1$. 

\begin{proposition}\label{swapcheck}
Assume $T\in \calA(\lam-k\varpi_2)\subset \calP(\lam)$ with $t\mu =\wt(T)$ and that $e:=(\mu\raw t\mu)\in E(\lama)$. 
Then, we have 
\begin{equation}\label{sigmans}\sigma_{m+1}(T)=\sigma_{m+1}(\psi_{t\mu}(T))-1.\end{equation}
\end{proposition}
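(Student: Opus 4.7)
The plan is to split into two cases according to whether the edge $e=(\mu\raw t\mu)\in E(\lama)$ is swappable. Since $T$ and $\psi_{t\mu}(T)$ lie in the same preatom $\calP(\lam)$, they share the same preatomic number $p$, so the $2p$ terms cancel and \eqref{sigmans} reduces to a relation involving only $\ell_{m+1}$, $\calN_{m+1}$, $\calD_{m+1}$ and the atomic numbers $k$ and $k+\Omega(e)$.

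\textbf{Case 1: $e\in E^S(\lama)$, so $\Omega(e)=0$ and $T'$ lies in the same atom $\calA(\lam-k\varpi_2)$ as $T$.} The identity to prove becomes
\[\ell_{m+1}(t\mu,\lama)-\calN_{m+1}(t\mu,\lama)+\calD_{m+1}(t\mu,\lam,k)
=\ell_{m+1}(\mu,\lama)-\calN_{m+1}(\mu,\lama)+\calD_{m+1}(\mu,\lam,k)-1.\]
By \Cref{Nintmuis0}, $\calN_{m+1}(t\mu,\lama)=0$, and I expect a parallel argument using \Cref{corcon}.1 together with \Cref{smallerissmaller} to show that $\calD_{m+1}(t\mu,\lam,k)=0$ as well (no NS-staircase can start at $t\mu$ inside $\calP(\lam)$ once $\mu\le t\mu$). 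Since $e$ is swappable, the only new inversion in passing from $m$ to $m+1$ does not alter $\calN$ or $\calD$ on the $\mu$-side either, so $\calN_{m+1}(\mu,\lama)=\calN_m(\mu,\lama)$ and likewise for $\calD$. \Cref{Ncount} then gives $\ell_{m+1}(\mu,\lama)-\ell_{m+1}(t\mu,\lama)-1=\calN_{m+1}(\mu,\lama)$, which is exactly the desired equality.

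\textbf{Case 2: $e\in E^N(\lama)$, so $j:=\Omega(e)>0$ and $T'\in\calA(\lam-(k+j)\varpi_2)$.} By definition of $\Omega$ the shifted edge $\mu\raw \mu-(k_0-j)\alpha$ becomes swappable exactly in $\Gamma_{\lam-(k+j)\varpi_2}$, where $t\mu=\mu-k_0\alpha$ for the appropriate root $\alpha$. Equation \eqref{sigmans} now reads
\[\ell_{m+1}(t\mu,\lama)-\calN_{m+1}(t\mu,\lama)+\calD_{m+1}(t\mu,\lam,k)\]
\[=\ell_{m+1}(\mu,\lam-(k+j)\varpi_2)-\calN_{m+1}(\mu,\lam-(k+j)\varpi_2)+\calD_{m+1}(\mu,\lam,k+j)+j-1.\]
The strategy is: (i) apply \Cref{Ncountgen} on the $\mu$-side of $\Gamma_{\lam-(k+j)\varpi_2}$ to rewrite $\ell_{m+1}(\mu,\lam-(k+j)\varpi_2)-\affell_{m+1}(t\mu,\lam-(k+j)\varpi_2)-1$ as $\calN_{m+1}(\mu,\lam-(k+j)\varpi_2)$, turning a difference of $\ell$'s into the $\calN$-correction; (ii) compare $\affell_{m+1}(t\mu,\lam-(k+j)\varpi_2)$ with $\ell_{m+1}(t\mu,\lama)$ using the closed formulas for $\affphi_i$ from \Cref{affphicompute} (the difference is a linear function of $j$ coming from the three terms $\affell^2,\affell^{12},\affell^{21}$ at $t\mu$); (iii) identify the surplus of $\calD_{m+1}(\mu,\lam,k+j)$ over $\calD_{m+1}(t\mu,\lam,k)$ with the length $j$ of the unique NS-staircase above $e$ guaranteed by \Cref{staircaseunique}, using the explicit formulas for $\affD_\infty$ in \Cref{Dcount,Dinf<0}.

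The main obstacle is step (ii)–(iii) in Case 2: converting the purely geometric height $\Omega(e)$ into the exact arithmetic shift in $\ell_{m+1}$, $\calN_{m+1}$, and $\calD_{m+1}$ between the two atoms. This requires invoking the full classification of swappable edges (\Cref{mu1geqlam1,mu1<lam1,12notswap,21swappable}) together with \Cref{Ncount,Ncountgen} in order to match, weight by weight, the arrows lost or gained when $\lama$ is replaced by $\lam-(k+j)\varpi_2$. By \Cref{mu1>0nonswap2} (and its $s_1$-symmetric counterpart for $\mu_1<0$) the non-swappable root $\alpha$ must be $\alpha_2$ or $\alpha_{12}$, and an appeal to \Cref{12notswap} allows us to reduce the $\alpha_{12}$-case to the $\alpha_2$-case by $s_1$-symmetry, so the core computation is a single-variable one.
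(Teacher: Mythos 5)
Your outline follows essentially the same route as the paper's proof: split on whether $\Omega(e)=0$, use \Cref{Nintmuis0} and \Cref{ifN=0thenD=0} to kill $\calN_{m+1}$ and $\calD_{m+1}$ at $t\mu$, invoke \Cref{Ncount} in the swappable case, and in the non-swappable case reduce via \Cref{Ncountgen} to the identity $\ell_{m+1}(t\mu,\lama)-\affell_{m+1}(t\mu,\lam-(a+\Omega)\varpi_2)=2\Omega$, which is then verified with \Cref{affphicompute} using $(t\mu)_2\le -\lam_2$ from \Cref{claimtmu}. One small spot to tighten in your Case 1: after invoking \Cref{Ncount} you still need $\calD_{m+1}(\mu,\lama,k)=0$ to close the identity (not merely $\calD_{m+1}=\calD_m$); this follows because \Cref{corcon}.2 gives $\calN_{m+1}(\mu,\lama)=0$ for a swappable edge, and then \Cref{ifN=0thenD=0} yields the vanishing of $\calD$. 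Also note that $\ell^2_{m+1}(\mu)$ in the definition of $\affell_{m+1}$ is independent of the highest weight, so only the $\affell^{12}$ and $\affell^{21}$ terms contribute the shift by $\Omega$ each, not three terms; in the $\Omega>0$ case the paper obtains $\calD_{m+1}(\mu,\lam,a+\Omega)=\Omega$ directly from the definition of elevation rather than from the explicit $\affD_\infty$ formulas, which is cleaner than the matching of \Cref{Dcount,Dinf<0} you propose.
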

\begin{proof} 
By \Cref{Nintmuis0} we have $\calN_{m+1}(t\mu,\lama)=0$ and by \Cref{ifN=0thenD=0} we also get $\calD_{m+1}(t\mu,\lam,a)=0$. Let $\Omega:=\Omega(e)$ and recall that $\psi_{t\mu}(T)$ is the element of weight $\mu$ in the atom $\calA(\lam-(a+\Omega)\varpi_2)\subset \calP(\lam)$.

First assume $\Omega=0$, or equivalently that $e$ is swappable. Since $\mu\raw t\mu$ is swappable, by definition we have $\ell_{m+1}(\mu,\lama)=\ell_{m+1}(t\mu,\lama)+1$. By \Cref{corcon} we have that  $\calN_{m+1}(\mu,\lama)=0$ and by  \Cref{ifN=0thenD=0}, we also get $\calD_{m+1}(\mu,\lam,a)$. The claim now easily follows.

Assume now $\Omega>0$, so $e$ is not swappable. Notice that $\calD_{m+1}(\mu,\lam,a+\Omega)=\Omega$.
Combining with \Cref{Nintmuis0}, our claim \eqref{sigmans} becomes equivalent to
\[\ell_{m+1}(t\mu,\lama)=\ell_{m+1}(\mu,\lam-(a+\Omega)\varpi_2)-\calN_{m+1}(\mu,\lam-(a+\Omega)\varpi_2)+2\Omega-1.\]

We can assume $\mu_1\geq 0$ as the case $\mu_1<0$ is symmetric. Because $e$ is not swappable, we have $m+1=4M$, $q_M\mu\not\leq \lam$ and $r_M\mu \not \leq \lam$. In particular, by \eqref{eqqu} and \eqref{eqru} we have
 $\ell^{12}_{m+1}=\affphi_{12}$ and $\ell^{21}_{m+1}=\affphi_{21}$. 
Using \Cref{Ncountgen}, our claim is then equivalent to
\begin{equation}\label{2Omegaeq}\ell_{m+1}(t\mu,\lama)-\affell_{m+1}(t\mu,\lam-(a+\Omega)\varpi_2)=2\Omega.
\end{equation}
By \Cref{affphicompute} we have  
\[\affphi_{21}(t\mu,\lama)-\affphi_{21}(t\mu,\lam-(a+\Omega)\varpi_2)=\Omega\]
\[\affphi_{12}(t\mu,\lama)-\affphi_{12}(t\mu,\lam-(a+\Omega)\varpi_2)=\Omega\]
because $(t_{m+1}\mu)_2\leq -\lam_2$ (as proven in \Cref{claimtmu}) and the identity \eqref{2Omegaeq} now follows directly from the definition of $\affell_{m+1}$. 
\end{proof}


\begin{proof}[Proof of \Cref{swappingfunctionprop}.]
\Cref{swapcheck} precisely shows that $\psi$ is a swapping function for $r_{m+1}$. This means that we can obtain a new recharge $r'_m$ for $\eta_m$ by swapping the values of $r_{m+1}$ as indicated by $\psi$. It remains to show that $r_m=r_m'$. In other words, for $t=t_{m+1}$ and for any $\mu\leq t\mu$  we need to show that 
\begin{enumerate}
    \item if $\wt(T)=t\mu$ then $r_m(T)=r_{m+1}(\psi(T))=r_{m+1}(T)-1$;
    \item if $\wt(U)=\mu$ and $U\in \Ima(\psi_{t\mu})$ then $r_m(U)=r_{m+1}(\psi_{t\mu}^{-1}(U))=r_{m+1}(U)+1$;
    \item   if $\wt(U)=\mu$ and $U\not\in \Ima(\psi_{t\mu})$ then $r_m(U)=r_{m+1}(U)$.
\end{enumerate}

The first statement is clear since $r_m(T)-r_{m+1}(T)=\ell_{m+1}(t\mu,\tau)-\ell_{m}(t\mu,\tau)=-1$ by \Cref{Nintmuis0,ifN=0thenD=0}. Let now $U\in \calA(\zeta)$ with $\wt(U)=\mu$ and let $a:=\at(U)$. We need to compute
\begin{align}
     r_m(U)-r_{m+1}(U)=&\ell_{m+1}(\mu,\zeta)-\ell_m(\mu,\zeta)-\calN_{m+1}(\mu,\zeta)+\calN_m(\mu,\zeta)+ \nonumber\\&+\calD_{m+1}(\mu,\zeta+a\varpi_2,a)-\calD_m(\mu,\zeta+a\varpi_2,a)\label{rmT'}.
\end{align}
If $U\in \Ima(\psi_{t\mu})$, there exists $\Omega\in \bbN$ such that $e:=(\mu\raw t\mu)\in E(\zeta+\Omega\varpi_2)$ and $\Omega=\Omega(e)$. 
If $\Omega=0$, then $e$ is swappable and $t\mu\leq \zeta$. So \eqref{rmT'} simplifies to $r_m(U)-r_{m+1}(U)=\ell_{m+1}(\mu,\zeta)-\ell_m(\mu,\zeta)=1$. If $\Omega>0$, then we have by \Cref{corcon}.1 that $(\mu\raw t\mu)\in E^N(\zeta)$ or $t\mu\not \leq \zeta$. It follows that
\begin{equation}\label{Ndiff}
    \ell_{m+1}(\mu,\zeta)-\ell_m(\mu,\zeta)=\calN_{m+1}(\mu,\zeta)-\calN_m(\mu,\zeta)=\begin{cases} 1&\text{if }t\mu \leq \zeta\\
0&\text{if }t\mu \not \leq \zeta,\end{cases}
\end{equation}
so the first line in the RHS of \eqref{rmT'}  vanishes.
Since $\Omega\leq a$, the edge $e$ belongs to a truncated NS staircase over $(\mu,\zeta+a\varpi_2)$, hence $\calD_{m+1}(\mu,\zeta+a\varpi_2,a)-\calD_m(\mu,\zeta+a\varpi_2,a)=1$.

Finally, assume that $U\not\in \Ima(\psi_{t\mu})$. This means that $(\mu\raw t\mu)\not\in E^S(\zeta)$, so \eqref{Ndiff} holds again in this case. Moreover, there does not exists $k\leq a$ such that $f:=(\mu\raw t\mu)\in E^N(\zeta+k\varpi_2)$ with $\Omega(f)=k$, from which it follows that $\calD_{m+1}(\mu,\zeta+a\varpi_2,a)=\calD_m(\mu,\zeta+a\varpi_2,a)$ and \eqref{rmT'} can be simplified to $r_m(U)-r_{m+1}(U)=0$.
\end{proof}

\subsection{Alternative formula}
We can obtain an alternative formula for the charge statistic by focusing on a single element and counting how many times its recharge gets changed by a swapping function. In type $A$, this is discussed in \cite[Remark 5.6]{Pat}.

\begin{definition}
	We define $\Delta^\alpha:
	\calB(\lambda)\raw \bbZ$, for $\alpha\in \Phi_+$ as the \emph{total contribution} of the swapping functions along the direction $\alpha$. It is defined as 
	\[ \Delta^{\alpha}=\sum r_m(T)-r_{m-1}(T)\]
	where the sum runs over all $m$ such that the (unique) wall between the $\lambda$-chambers of $\eta_m$ and $\eta_{m-1}$ is of the form $H_{M\delta-\alpha^\vee}$.
	
	We write $\Delta^i:=\Delta^{\alpha_i}$  for $i\in \{1,2,21,12\}$.
\end{definition}

We have $r_{KL}-r_{MV}=\sum_{\alpha\in \Phi_+} \Delta^\alpha$.
Recall in type $A$ for any $\alpha\in \Phi_+$ we have $\Delta^{\alpha}(T)=\phi_\alpha(T)-\ell^\alpha(\wt(T))$. When we apply the swapping functions along the $\alpha_1$-direction, to go from the MV region to the parabolic region, we  regard $\calB(\lambda)$  as a crystal of type $A_1$.  It follows that $\Delta^1(T)=\phi_1(T)-\ell^1(\wt(T))$ as in \cite[Lemma 3.26]{Pat}. Moreover, if $T\in \calB_+(\lam)$, we have
\begin{equation}\label{Deltaepsilon}
    \Delta^1(T)=\phi_1(T)-\langle \wt(T),\alpha_1^\vee\rangle=\eps_1(T).
\end{equation}


\begin{proposition}
	For $T\in \calA(\zeta)\subset \calB(\lambda)$ with $\wt(T)=\mu$ and $\at(T)=a$ we have 
	\begin{enumerate}
		\item $\Delta^{21}(T)=\affphi_{21}(\mu,\zeta)-\ell^{21}(\mu)(T)$.
		\item $\Delta^2(T)=\phi_2(T)-\ell^{2}(\wt(T))$
		\item $\Delta^{12}(T)=\phi_{12}(T)-\ell^{12}(\wt(T))$
	\end{enumerate}
\end{proposition}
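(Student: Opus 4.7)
The plan is to telescope $\Delta^\alpha(T)$ as a sum over wall crossings of direction $\alpha$ and then identify the result with the stated formula. Writing $r_m - r_{m-1} = -(\sigma_m - \sigma_{m-1})$ and unpacking the definition $\sigma_m(T) = \ell_m(\mu,\zeta) - \calN_m(\mu,\zeta) + \calD_m(\mu,\lambda,a) + a + 2p$ (with $\mu=\wt(T)$, $a=\at(T)$, $p=\pat(T)$, $\zeta = \lambda - a\varpi_2$), each $\alpha$-wall crossing alters only the $\alpha$-component $\ell^\alpha_m$ of $\ell_m$, and a new NS edge or NS-staircase step contributes to $\calN_m$ or $\calD_m$ only if the edge involved has direction $\alpha$. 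Telescoping therefore yields
\[
\Delta^\alpha(T) = \bigl(\ell_0^\alpha(\mu,\zeta) - \ell_\infty^\alpha(\mu,\zeta)\bigr) + \calN^{\alpha}_\infty(\mu,\zeta) - \calD^{\alpha}_\infty(\mu,\lambda,a),
\]
where the superscript $\alpha$ isolates the contributions coming from edges of direction $\alpha$.

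The case $\alpha=21$ is the simplest and I would address it first. By \Cref{21swappable} every $\alpha_{21}$-edge is swappable, so no NS edge has direction $\alpha_{21}$; by \Cref{mu1>0nonswap2} no NS-staircase has direction $\alpha_{21}$ either. Hence $\calN^{21}_\infty = \calD^{21}_\infty = 0$, and $\Delta^{21}(T) = \ell_0^{21}(\mu,\zeta) - \ell_\infty^{21}(\mu,\zeta)$. At $m=0$ the Bruhat graph is untwisted, so $\ell_0^{21}(\mu,\zeta) = |\{k>0 : \mu - k\alpha_{21}\leq \zeta\}| = \affphi_{21}(\mu,\zeta)$ from \Cref{affphidef}; at $m=\infty$ all labels of the form $M\delta - \alpha_{21}^\vee$ have been inverted, leaving exactly $\ell^{21}(\mu)$ arrows in the $\alpha_{21}$-direction into $\mu$, establishing part (1).

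For $\alpha\in\{2,12\}$ the terms $\calN$ and $\calD$ do not vanish, but the classification of swappable edges in Section 4 pins them down completely. Using the $s_1$-symmetry of atoms (\Cref{lemmaonPsi}(5)) I would reduce to $\mu_1\leq 0$, where by \Cref{mu1>0nonswap2} every NS edge and NS-staircase through $\mu$ sits in the $\alpha_{12}$-direction; consequently $\calN^{12}_\infty = \calN_\infty$, $\calD^{12}_\infty = \calD_\infty$, while $\calN^2_\infty = \calD^2_\infty = 0$. Now \Cref{atomicphi2} gives $\phi_2(T) = \affphi_2(\mu,\zeta)$, and \Cref{phi12hard} gives $\phi_{12}(T) = \affphi_{12}(\mu,\zeta) - \calN_\infty(\mu,\zeta) + \calD_\infty(\mu,\lambda,a)$. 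Substituting into the telescoping identity and identifying $\ell_\infty^\alpha(\mu,\zeta)$ with $\ell^\alpha(\wt(T))$ yields the formulas in parts (2) and (3); the case $\mu_1>0$ then follows by reflecting through $s_1$.

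The main obstacle is the attribution of each NS contribution to exactly one of $\Delta^2$ or $\Delta^{12}$: one has to be sure that when an NS edge or staircase step is produced by a wall crossing, it is counted in the $\calN$ and $\calD$ accumulators exactly once, and in the right direction. This is precisely what \Cref{mu1>0nonswap2} guarantees, since the sign of $\mu_1$ forces every NS edge through $\mu$ to lie in a single direction, so there is no overlap between the $\alpha_2$- and $\alpha_{12}$-contributions and the telescoping bookkeeping is unambiguous.
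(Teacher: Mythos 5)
Your telescoping strategy is a valid reformulation of the paper's swap-count argument, and you correctly observe that each $\alpha$-wall crossing changes only the $\alpha$-components of $\ell_m$, $\calN_m$, $\calD_m$. But the identification of the two ends of the telescope is backwards. At $m=0$ the graph $\Gamma_\zeta^0=\Gamma_\zeta$ is untwisted, so the arrows pointing into $\mu$ in the $\alpha$-direction are exactly those $\mu-k\alpha\raw\mu$ with $\mu-k\alpha<\mu$ in Bruhat order; for $\alpha$-dominant $\mu$ these are $0<k\leq\langle\mu,\alpha^\vee\rangle$, giving $\ell_0^\alpha(\mu,\zeta)=\ell^\alpha(\mu)$ (independent of $\zeta$). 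These arrows carry labels $M\delta+\alpha^\vee$ with $M\geq 0$, which are never in $N(y_m)$ and hence never reversed. What \emph{does} get reversed as $m$ grows are the originally outgoing arrows $\mu\raw\mu-k\alpha$ with $k>\langle\mu,\alpha^\vee\rangle$ and label $M\delta-\alpha^\vee$, $M>0$; reversing them \emph{adds} arrows into $\mu$, so $\ell_m^\alpha(\mu,\zeta)$ increases with $m$ and stabilizes at $\ell_\infty^\alpha(\mu,\zeta)=\affphi_\alpha(\mu,\zeta)$. You have these two values exchanged.

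Because your telescoping formula $\Delta^\alpha=(\ell_0^\alpha-\ell_\infty^\alpha)+\calN_\infty^\alpha-\calD_\infty^\alpha$ also carries the opposite sign from what the swap-count forces (the intended $\Delta^\alpha$ is the $\alpha$-contribution to $r_{KL}-r_P$, i.e.\ $\sum(r_{m-1}-r_m)=(\ell_\infty^\alpha-\ell_0^\alpha)-\calN_\infty^\alpha+\calD_\infty^\alpha$), the two errors cancel in parts (1) and (2) where $\calN_\infty^\alpha=\calD_\infty^\alpha=0$, and you land on the right answer by accident. They do \emph{not} cancel in part (3): your derivation produces $\affphi_{12}(\mu,\zeta)-\ell^{12}(\mu)+\calN_\infty-\calD_\infty$, whereas by \Cref{phi12hard} the target is $\phi_{12}(T)-\ell^{12}(\mu)=\affphi_{12}(\mu,\zeta)-\calN_\infty+\calD_\infty-\ell^{12}(\mu)$, a discrepancy of $2(\calN_\infty-\calD_\infty)$. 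With the corrected endpoint identifications and sign, the telescope does give all three formulas and is then essentially the same bookkeeping the paper carries out directly, by enumerating the swaps that touch $T$ in each direction rather than by telescoping $\sigma_m$.
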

\begin{proof}
By \Cref{21swappable}, the swaps in the $\alpha_{21}$-direction always occur  within the atom of $T$, so to compute $\Delta^{21}(T)$ we just need to consider the string of elements in the atom of $T$ of weights $\mu+k\alpha_{21}$. This means that we can compute $\Delta^{21}$ as in the rank one case and have $\Delta^{21}(T)=\affphi_{21}(T)-\ell^{21}(\mu)$.

	Assume first $\mu_1\leq 0$. Then the swapping occurring on $T$ in the $\alpha_2$ direction only occur within the atom of $T$, so as for $\Delta^{21}$, we have 
 \[\Delta^2(T)=\affphi_{2}(\mu,\zeta)-\ell^2(\mu)=\phi_2(T)-\ell^2(\mu),\]
where the second equality comes from \Cref{atomicphi2}.
 
	Assume now $\mu_1\geq 0$. Then by construction the number of swappable edges containing $\mu$ in the atom of $T$ is $\affphi_2(\mu,\zeta)-\calN_\infty(\mu,\zeta)$. Of these, there are $\ell^2(\mu)$ attached to roots $M\delta+\alpha_2^\vee$, which do not correspond to any crossed wall. Moreover, $T$ is also in the image of $\calD_\infty(\mu,\zeta+a\varpi_2,a)$ swapping functions, corresponding to non-swappable edges in atoms bigger than $\calA(\zeta)$. It follows that \[\Delta^2(T)=\affphi_2(\mu,\zeta)-\ell^2(\mu)-\calN_\infty(\mu,\zeta)+\calD_{\infty}(\mu,\zeta+a\varpi_2,a)=\phi_2(T)-\ell^2(\mu)\] by \Cref{phi2claim}.
	
	The proof of the formula for $\Delta^{12}$ is symmetric.
\end{proof}

Assume now that $T\in \calB_+(\lam)$. Then, as in \eqref{Deltaepsilon}, we have $\Delta^2(T)=\eps_2(T)$ and $\Delta^{12}(T)=\eps_{12}(T)$. 

\begin{definition}
Let $T\in \calA(\zeta)$ be such that  $\wt(T)=\mu$. We define $\affeps_{21}(T):=\affphi_{21}(\mu,\zeta)-\ell^{21}(\mu)$.
\end{definition}

Notice that $\affeps_{21}(T)$ can equivalently be defined as the largest integer $k$ such that $\wt(T)+k\leq \zeta$, for $T\in \calA(\zeta)$.

For $T\in \calB_+(\lam)$, we have $r_{KL}(T)-r_{MV}(T)=\eps_1(T)+\eps_2(T)+\eps_{12}(T)+\affeps_{21}(T)$. Since $r_{MV}(T)+\frac 12 \ell(\wt(T))=0$ for $T\in \calB_+(\lam)$, it follows that
\[ c(T)=\eps_1(T)+\eps_2(T)+\eps_{12}(T)+\affeps_{21}(T)\]
is a charge statistic on $\calB_+(\lam)$.

We conclude by giving a more explicit way to compute $\affeps_{21}(T)$.

\begin{definition}\label{defe21}
    Let $T$ be in the biggest atom, that is we assume $T\in \calA(\lam)\subset \calP(\lam)\subset \calB(\lam)$ and let $\str_2(T)=(a,b,c,d)$.    We define
    \[e_{21}^{\str}(T):=\begin{cases}
        (a-1,b-1,c,d) & \text{if }c=d=0\\
        (a-1,b-2,c,d+1) & \text{if }c>0\text{ and }d=0\\
        (a,b,c-1,d-1) & \text{if }d>0
    \end{cases}\]
    and $\bar{e}_{21}(T)$ as the element in $\calB(\lam)$ such that $\str_2(\bar{e}_{21}(T))=e_{21}^{\str}(T))$ if it exists, and $0$ otherwise.
    Finally, define $\affe_{12}(T)$ as $\bar{e}_{12}(T)$ if $\wt(T)_1\leq 0$ and $s_1(\bar{e}_{12}(s_1(T)))$ if $\wt(T)_1\geq 0$.
\end{definition}

\begin{proposition}
    Let $T\in \calA(\lam)\subset \calB(\lam)$
    \begin{itemize}
        \item If $\affe_{21}(T)\neq 0$, then $\affe_{21}(T)\in \calA(\lam)$ and $\affeps_{21}(T)>0$.
        \item If $\affe_{21}(T)=0$ and $\langle\wt(T),\alpha_{21}^\vee\rangle \geq 0$, then $\affeps_{21}(T)=0$.
    \end{itemize}  
\end{proposition}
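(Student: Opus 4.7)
The plan is to exploit the $s_1$-symmetry of atoms (from \Cref{lemmaonPsi}) together with the $s_1$-equivariance built into \Cref{defe21}, which reduces everything to the case $\wt(T)_1 \leq 0$ where $\affe_{21}(T) = \bar{e}_{21}(T)$ is read off directly from $\str_2(T) = (a,b,c,d)$. First, using $\wt(T) = \lam - (b+d)\alpha_1 - (a+c)\alpha_2$ and $\alpha_{21} = \varpi_2$, a direct calculation in each of the three defining cases verifies that $\wt(\bar{e}_{21}(T)) = \wt(T) + \alpha_{21}$ whenever the new string is valid.

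For the first bullet, assume $\bar{e}_{21}(T) \neq 0$, so the modified string has non-negative entries and the Littelmann inequalities of \Cref{Littelmannineq} for $\lam$ are preserved (a direct check in each case). It remains to verify membership in $\calA(\lam)$ using the atom characterization of \Cref{atom0}, together with \Cref{atomemb} and \Cref{atomformula} in the $\lam_1 = 0$ subcase where the atom condition relaxes to $\at \leq 1$. In the case $c = d = 0$ the new string inherits $c' = d' = 0$ and lands trivially in the atom. In the case $c > 0, d = 0$, the hypothesis $T \in \calA(\lam)$ forces $b = \lam_1 + 2c$ by \Cref{atom0}, so the new string $(a-1, b-2, c, 1)$ satisfies $b' = \lam_1 + 2c' - 2d'$ with $d' = 1 \leq 1$. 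In the case $d > 0$, the atom equation $b = \lam_1 + 2c - 2d$ transfers to the new string as $b' = \lam_1 + 2c' - 2d'$, and either $d' = d - 1 \leq 1$ or else $c = \lam_2 + d$ transfers to $c' = \lam_2 + d'$. The weight shift then yields $\wt(\affe_{21}(T)) = \wt(T) + \alpha_{21} \leq \lam$, so $\affeps_{21}(T) \geq 1$ by the equivalent characterization of $\affeps_{21}$ (the largest $k$ with $\wt(T) + k\alpha_{21} \leq \zeta$).

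For the second bullet, suppose $\bar{e}_{21}(T) = 0$ and $\langle \wt(T), \alpha_{21}^\vee \rangle \geq 0$. The vanishing forces the proposed string to contain a negative entry or to violate $b' \geq c' \geq d'$, and the atom constraints from \Cref{atom0} cut this down to a small list of boundary configurations (typically $a = 0$ or $b = 0$ in case $c = d = 0$; $a = 0$ or $b - 2 < c$ in case $c > 0, d = 0$; $c = 0$ in case $d > 0$). In each such subcase, substitute the explicit $\wt(T)$ into the octagonal description of $\{\leq \lam\}$ from \Cref{octineq} and check that $\wt(T) + \alpha_{21}$ violates at least one of the upper-bound inequalities $\mu_2 \leq \lam_1 + \lam_2$, $\mu_1 + \mu_2 \leq \lam_1 + \lam_2$, or $\mu_1 + 2\mu_2 \leq \lam_1 + 2\lam_2$; the hypothesis $\langle \wt(T), \alpha_{21}^\vee \rangle \geq 0$ is precisely what rules out the dual escape across the lower $\alpha_{21}^\vee$-face of the octagon, ensuring that the violation is genuine. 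Hence $\wt(T) + \alpha_{21} \not\leq \lam$, i.e.\ $\affeps_{21}(T) = 0$.

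The main obstacle is the second bullet: within each of the three formula cases one must further split by which Littelmann or preatom chain fails and then pair this with the specific octagonal inequality of \Cref{octineq} that $\wt(T) + \alpha_{21}$ violates. The delicate subcase is $\lam_1 = 0$, where \Cref{atomemb} allows $\calA(\lam)$ to contain elements with $\at(T) = 1$ coming from a $\bPsi^2$-decomposition, so \Cref{atom0} alone does not determine membership and one must use \Cref{atomformula} to track atomic numbers across the case analysis; the rest of the argument is mechanical, and nearly all the work is this exhaustive boundary check.
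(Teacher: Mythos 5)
Your treatment of the first bullet is essentially the paper's: both use \Cref{atom0} to check that the shifted string stays inside the atom, and then $\affeps_{21}(T)>0$ follows automatically because the new element's weight $\wt(T)+\alpha_{21}$ must lie below $\lam$. Your explicit weight computation $\wt(\bar e_{21}(T))=\wt(T)+\alpha_{21}$ in each branch is correct.

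For the second bullet the two approaches genuinely diverge. The paper introduces dual operators $f_{21}^{\str},\bar f_{21},\afff_{21}$, shows $\afff_{21}$ preserves the atom, and argues by contradiction: if $\affeps_{21}(T)\neq 0$ there is a unique $U\in\calA(\lam)$ of weight $\wt(T)+\alpha_{21}$, and either $\afff_{21}(U)=T$ (giving $\affe_{21}(T)\neq 0$) or $\afff_{21}(U)=0$, which is then shown incompatible with $\langle\wt(U),\alpha_{21}^\vee\rangle\geq 2$. You instead aim to verify directly that $\wt(T)+\alpha_{21}$ escapes $\Conv(W\cdot\lam)$ by checking the three octagonal inequalities involving $\alpha_{2}^\vee,\alpha_{12}^\vee,\alpha_{21}^\vee$ from \Cref{octineq} in every configuration where $\bar e_{21}(T)=0$. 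This is a legitimate alternative: the paper's dual-operator argument is a more conceptual packaging of precisely this weight-level statement (it sidesteps most of the boundary bookkeeping your route requires by leaning on the fact that the atom contains exactly one element per weight). Your route is workable but noticeably heavier.

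Two concrete issues with your case analysis. First, you list ``$c=0$ in case $d>0$'' as a failure mode for $\bar e_{21}$; but $d>0$ forces $c\geq d>0$, and one can check that all five Littelmann inequalities for $(a,b,c-1,d-1)$ are implied by those for $(a,b,c,d)$, so $\bar e_{21}(T)$ in fact never vanishes when $d>0$ --- the slip is harmless because that case contributes nothing. Second, you rightly flag $\lam_1=0$ as the delicate subcase: there the atom condition is $\at(T)\leq 1$ rather than \Cref{atom0}'s $\at(T)=0$, and the additional failure mode $d+1>\lam_1$ for the case $c>0,d=0$ enters. You do not actually resolve this subcase, and it is worth noting that this is also where the paper's own argument is on its shakiest ground (the ``$\bar f_{21}(U)=0$ only if $a=\lam_2+b-2c+2d$'' step does not account for the $b<c+1$ or $d+1>\lam_1$ failure modes of $f_{21}^{\str}$). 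If you push your direct route through you will need to handle this carefully, likely with \Cref{atomformula} to track $\at(T)$ rather than \Cref{atom0} alone, exactly as you suggest.
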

\begin{proof}
    It can be easily verified by \Cref{atom0} that if $T\in \calA(\lam)$ and $\affe_{21}(T)\neq 0$, then also $\affe_{21}(T)\in \calA(\lam)$.

To prove the second statement, we introduce operators $f_{21}^{\str},\bar{f}_{21},\afff_{21}$, similarly to \Cref{defe21}, where $f_{21}^{\str}(T)$ is defined, for $T\in \calA(\lam)$ with $\str_2(T)=(a,b,c,d)$ as 
    \[f_{21}^{\str}(T)=\begin{cases}(a+1,b+1,c,d)& \text{if }b<\lam_1-2d+2c\\
    (a,b,c+1,d+1)& \text{if }d=0\text{ or }c=\lam_2+d\\
    (a-1,b+2,c,d-1)&\text{if }d=1\text{ and }c<\lam_2+d\end{cases}\]
    Again, one can verify via \Cref{atom0}, that if $T\in \calA(\lam)$ also $\afff_{21}(T)\in \calA(\lam)$. If $\affeps_{21}(T)\neq 0$, there exists $U\in \calA(\lam)$ with $\wt(U)=\wt(T)+\alpha_{21}$. Then, we have  $f_{21}(U)=T$, from which it follows that $e_{21}(T)=U\neq 0$, or $\afff_{21}(U)=0$. But we cannot have $\afff_{21}(U)=0$ and $\langle\wt(U),\alpha_{21}^\vee\rangle \geq 2$. For example, if $c=\lam_2+d$ or $d=0$, then $\bar{f}_{21}(T)=0$ only if $a=\lam_2+b-2c+2d$, which implies $\langle\wt(U),\alpha_{21}^\vee\rangle =\wt(U)_1+2\wt(U)_2=-b\leq 0$.
\end{proof}
The proposition implies that $\affeps_{21}$ is associated to the operator $\affe_{21}$. That is, we have $\affeps_{21}(T)=\max\{ k \mid \affe_{21}^k(T)\neq 0\}$.
Similar expressions for $\affe_{21}$ on the other atoms can be obtained recursively using the embeddings $\Phi$ and $\bPsi$.

We believe one can construct similar charge statistics in higher ranks.
\begin{conjecture}
Assume $\calB$ is a crystal of type $C_3$. Then there exists a function $\affeps_{32}:\calB\raw \bbZ_{\geq 0}$ such that
\[
\begin{aligned} c(T)=\eps_1(T)+\eps_2(T)+\eps_2(s_1(T))+\eps_3(T)+\eps_3(s_2(T))+\eps_3(s_1s_2(T))\\+\affeps_{32}(T)+\affeps_{32}(s_1(T))+\affeps_{32}(s_2s_1(T))\end{aligned}\]
    is a charge statistic on $\calB_+(\lam)$.
\end{conjecture}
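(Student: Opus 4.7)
The plan is to mimic in type $C_3$ the entire geometric-Satake strategy developed for $C_2$, using the same family of cocharacters $\{\eta_m\}$ interpolating between the MV and KL regions, and showing that each term on the right-hand side records the total contribution of swapping functions attached to one $W$-orbit of positive roots. Matching the nine positive roots of $C_3$ with the nine summands gives the organising principle: $\eps_1(T)$ tracks $\alpha_1$; the two $\eps_2$-summands track the short-difference $W$-orbit $\{\alpha_2,s_1\alpha_2\}$; the three $\eps_3$-summands track the three long roots $\{\alpha_3,s_2\alpha_3,s_1s_2\alpha_3\}=\{2\epsilon_3,2\epsilon_2,2\epsilon_1\}$; and the three $\affeps_{32}$-summands should track the short-sum $W$-orbit $\{\alpha_2+\alpha_3,\,\alpha_1+\alpha_2+\alpha_3,\,\alpha_1+2\alpha_2+\alpha_3\}$, exactly as $\affeps_{21}$ played this role in $C_2$.

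First, I would construct the preatomic and atomic decomposition of $\calB(\lambda)$. By analogy with \Cref{emb} and \Cref{atomemb}, one expects a tower of weight-preserving injections $\Phi_i:\calB(\lambda)\hookrightarrow \calB(\lambda+2\varpi_i)$ for $i=1,2$ and an additional $\bar\Psi:\calB(\lambda)\hookrightarrow\calB(\lambda+\varpi_3)$, yielding a nested decomposition
\[
\calB(\lambda)=\bigsqcup \calA(\lambda-\mathrm{pat}_1\varpi_1\cdot 2-\mathrm{pat}_2\varpi_2\cdot 2-\mathrm{at}\,\varpi_3).
\]
The natural way to define these maps is via adapted string coordinates, as in \Cref{emb1str,defPsi}, and to verify their characterising properties using Littelmann's cone description of strings. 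Positivity and agreement with the spherical Hecke algebra should be established as in \Cref{PreatomPrecan} by identifying the ungraded characters of the (pre)atoms with specialisations at $v=1$ of appropriate modifications of Libedinsky--Patimo--Plaza's precanonical bases $\bfN^i$ for $i=2,3,4$; one new modification analogous to $\tilN^3$ will be needed at each height.

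Second, I would set up and classify swappable edges for the $y_\infty$-twisted Bruhat graphs of $C_3$, using an infinite reduced word for $\widehat{W}_{C_3}$ chosen so that successive truncations $W^m=y_m W y_m^{-1}$ exhaust the finite reflection subgroups. As in Section~4, the classification reduces to a careful geometric analysis of when the polytope $\Conv(W^{m+1}\cdot\mu)$ crosses the walls of $\Conv(W\cdot\lambda)$. \Cref{21swappable} should generalise: all edges in $W$-orbits of the simple short-difference direction $\alpha_1$ remain automatically swappable, while \Cref{12notswap}-type symmetries reduce the analysis to $\alpha_3$-edges (the long direction) and to a new family of $\alpha_{32}$-type short-sum edges. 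This is where the main difficulties lie, and I would expect the hardest case to be precisely the classification of non-swappable edges in the short-sum direction, together with the construction of their elevation $\Omega$ and non-swappable staircases $\calD_m,\affD_m$ (\Cref{def:elevation,def:nonswappable}). New phenomena are likely to appear because staircases can now live in multi-parameter families indexed by two independent preatomic directions $\varpi_1,\varpi_2$.

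Third, having classified swappable edges and defined NS-staircases, the candidate recharge for each $\eta_m$ would be the direct analogue of \Cref{N=0},
\[
r_m(T)=-\ell_m(\mu,\zeta)+\calN_m(\mu,\zeta)-\calD_m(\mu,\lambda,\vec a)-\mathrm{at}(T)-2\mathrm{pat}_2(T)-3\mathrm{pat}_1(T)+\langle\lambda',\rho^\vee\rangle,
\]
and the swapping functions $\psi_{t\mu}$ would be defined exactly as in \Cref{sec:swapping}, sending an element of an atom $\calA(\zeta)$ to the unique element of the correct weight in $\calA(\zeta-\Omega(e)\varpi_3)$. The identity $\sigma_{m+1}(\psi(T))=\sigma_{m+1}(T)+1$ should then follow by the counting arguments of \Cref{swapcheck}, once the classification of short-sum non-swappable edges has been settled. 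Finally, $\affeps_{32}(T)$ for $T\in\calA(\zeta)$ should be \emph{defined} as the largest $k$ such that $\wt(T)+k\alpha_{32}\leq\zeta$, in perfect analogy with $\affeps_{21}$; the alternative-formula argument of Section~6 then rewrites $r_0-r_\infty$ as a sum of nine $\Delta^\alpha(T)$'s, each equal to $\eps_\alpha(T)$ for standard roots and to $\affeps_\alpha(T)$ for the short-sum orbit, producing the conjectured formula upon adding $\frac12\ell(\wt(T))$. The principal obstacle is therefore not the final combinatorial identity but the Section-4-style edge classification in rank three: extending \Cref{mu1geqlam1} and \Cref{mu1<lam1} to $C_3$ and understanding the interaction between NS-staircases in the $\alpha_3$- and $\alpha_{32}$-directions is where most of the technical work will concentrate.
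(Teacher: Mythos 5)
This statement is explicitly labeled a \emph{conjecture} in the paper, and the paper supplies no proof of it. The only evidence the authors offer is the remark that for $\wt(T)=0$ the formula reduces to $c(T)=\eps_1(T)+2\eps_2(T)+3\eps_3(T)+3\affeps_{32}(T)$, and that they have verified by computation that such a function exists on weight-$0$ elements in many examples. There is therefore no ``paper's own proof'' to compare your attempt against.

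Your write-up is not a proof either; it is a plausible research programme, and you are candid about this: you say outright that ``the principal obstacle is \dots the Section-4-style edge classification in rank three'' and leave that classification, together with the rank-$3$ analogues of the elevation function, the NS-staircases, and the recharge identity of \Cref{swapcheck}, entirely open. Several of your individual ingredients also need to be checked, not merely asserted: it is not clear that a single chain of maps $\Phi_1,\Phi_2,\bPsi$ produces atoms in $C_3$ (the paper's $C_2$ construction already required a case split $\bPsi$ vs.\ $\bPsi^2$ depending on $\lambda_1$, and the precanonical bases $\bfN^i$ need not stay positive past a modification at each height); you correctly flag that NS-staircases could live in a two-parameter family of preatomic directions, but you give no mechanism for controlling this interaction; and the grouping of the nine positive roots into ``short-difference'', ``long'', and ``short-sum'' families is a heuristic rather than a Weyl-orbit decomposition (all six positive short roots lie in a single $W$-orbit), so the claim that each $\Delta^\alpha$ matches the corresponding $\eps$- or $\affeps$-term is not a consequence of any symmetry you have established. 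In short: the outline reflects a sound understanding of the $C_2$ machinery, but it proves nothing new; the conjecture remains a conjecture, and the hard parts of your plan are precisely the parts that neither you nor the paper supply.
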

Notice that if $\wt(T)=0$ the conjecture predicts that $c(T)=\eps_1(T)+2\eps_2(T)+3\eps_3(T)+3\affeps_{321}(T)$. We have checked in many examples that such a function exists on elements of weight $0$.

\appendix
\section{Proof of \texorpdfstring{\Cref{atomicnumber}}{the preatomic formula} with SageMath}
\label{appendix}
\begin{python}
R.<a,b,c,d,L1,L2>=PolynomialRing(QQ) 
#L1 and L2 represent lam_1 and lam_2
K=R.fraction_field()

def theta12(): 
    X = [a,b,c,d]
    X[0] = 1/K(1/d + b/c + a/b)
    X[1] = 1/K(1/c + b^2/(a*c^2) + 1/(a*d^2))
    X[2] = K(b+b^2*d/c+a*d)
    X[3] = K(a+b^2/c+c/d^2)
    F(a,b,c,d) = tuple(X)
    return F

def theta21():
    X = [a,b,c,d]
    X[0] = 1/K(1/d+b/c^2+a^2/b)
    X[1] = 1/K(1/c+1/(a*d)+b/(a*c^2))
    X[2] = K(b+b^2*d/c^2+a^2*d)
    X[3] = K(a+c/d+b/c)
    F(a,b,c,d) = tuple(X)
    return F

def RRTAux(P):  
#From a tropical polynomials we can remove coefficients bigger than 1.
#Moreover, we are only interested in the function on positive values of a,b,c,d,L1 and L2
#we can remove monomials which are divisible by another monomial, as the minimum is never 
#expressed exclusively by them.
    M = P.monomials()
    R = []
    for i in range(len(M)):
        for j in range(len(M)):
            if M[i].divides(M[j]) and i != j: 
                R.append(j)
    return sum([M[j] for j in range(len(M)) if not j in R])

def RemoveRedundantTerms(X):
    return RRTAux(X.numerator())/RRTAux(X.denominator())

t12 = theta12()
t21 = theta21()
s1(a,b,c,d) = (L1*b^2*d^2/(a*c^2),b,c,d)
phi2(a,b,c,d) = L2*b*d/(a*c^2)
Phi2= K(L2*b*d/(a*c^2))
phi1aux(a,b,c,d) = L1*b^2*d^2/(a*c^2)
Phi1 = K(phi1aux(*t21))
phi12aux1 = s1(*t21)
phi12aux2 = t12(*phi12aux1)
Phi12 = K(phi2(*phi12aux2))
Z = Phi2*Phi1*Phi12
RHS = K(L1^2*L2^2/(b*d*(1+L1*a*c/(b*d)+b*d/(a*c))))
Q = Z/RHS
\end{python}
We first compute the quotient $Q$ on the subset of elements in $\calP(\lam)$ such that $d=0$. 
\begin{python}
f1(a,b,c,d,L1,L2) = (a,b,c,1,L1,L2)
Q1 = RemoveRedundantTerms(K(Q(*f1)(a,b,c,d,L1,L2))) 
# Q(*f1) denotes composition of functions in Sage
print(Q1)
print(Q1.numerator()-Q1.denominator())

\end{python}
\begin{verbatim}
(a^3*c^3*L1 + a^2*c^4*L1 + a^2*b*c^2 + a*b*c^3 + a*b^2*c +
b^2*c^2 + b^3)/(a^3*c^3*L1 + a^2*c^4*L1 + a*c^5*L1 + 
a^2*b*c^2 + a*b*c^3 + a*b^2*c*L1 + b^2*c^2 + b^3)
-a*c^5*L1 - a*b^2*c*L1 + a*b^2*c
\end{verbatim}
There is one extra monomial ($ab^2c$) in the numerator  which does not occur  in the denominator
and two extra monomials ($ac^5\lam_1$ and $ab^2c\lam_1)$ in the denominator.
However, we have 
\begin{itemize}
\item $a + 2b + c + \lam_1 \geq  a + 2b + c \geq  \min(2a+b+2c,3b)$ 
\item
$a + 5c + \lam_1 \geq  a + b + 3c$ (because $b \leq \lam_1 + 2d - 2c)$).
\end{itemize}
Hence the minimum is never expressed by these monomials. So the quotient function $Q$ is constantly zero on the elements of the preatom with $d=0$.

Now we compute the quotient $Q$ on the subset of elements in $\calP(\lam)$ such that $d=\lam_1$.
\begin{python}
f2(a,b,c,d,L1,L2) = (a,b,c,L1,L1,L2)
Q2 = RemoveRedundantTerms(K(Q(*f2)(a,b,c,d,L1,L2)))
print(Q2)
print(Q2.numerator()-Q2.denominator())
\end{python}
\begin{verbatim}
(a^3*c^3*L1^2 + a^2*c^4*L1 + a^2*b*c^2*L1^2 + a*b*c^3*L1 + 
a*b^2*c*L1^2 + b^2*c^2*L1^2 + b^3*L1^3)/(a^3*c^3*L1^2 +
a^2*c^4*L1 + a^2*b*c^2*L1^2+ a*c^5 + a*b*c^3*L1 +
a*b^2*c*L1^2 +b^2*c^2*L1^2 + b^3*L1^3)
-a*c^5
\end{verbatim}
There is one extra monomial in the denominator: $ac^5$.
However, we have 
$a + 5c  \geq a + 2b+ c + 2\lam_1$  (because $b \leq 2c-\lam_1)$).
Hence the minimum is never expressed by this monomial and the tropical function $Q$ is constantly zero when $d=
lam_1$. Finally, we compute $Q$ for $b=\lam_1+2c-2d$.
\begin{python}
f3(a,b,c,d,L1,L2) = (a,L1*c^2/d^2,c,d,L1,L2)
Q3 =RemoveRedundantTerms(K(Q(*f3)(a,b,c,d,L1,L2)))
print(Q3)
print(Q3.numerator()-X.denominator())
\end{python}
\begin{verbatim}
(a^3*d^4 + a^2*c*d^3 + a^2*c*d^2*L1 + a*c^2*d^2+ a*c^2*d*L1 + 
c^3*d*L1 + c^3*L1^2)/(a^3*d^4 +a^2*c*d^3 + a^2*c*d^2*L1 + 
a*c^2*d^2 + a*c^2*d*L1+ c^3*d*L1 + a*c^2*L1^2 + c^3*L1^2)
-a*c^2*L1^2
\end{verbatim}
There is one extra monomial in the denominator: $ac^2\lam_1^2$.
However, we have 
$a + 2c +2\lam_1\geq  a + 2c + d+ \lam_1$  (because $d \leq  \lam_1$). This shows again that $Q$ is zero when $b=\lam_1+2d-2c$. Hence it is always zero on the preatom, concluding the proof of \Cref{atomicnumber} in the case $\pat(T)=0$.

\section{Combinatorial interpretation of the map \texorpdfstring{ \\ $\Psi: \mathcal{P}(\lambda) \rightarrow \mathcal{P}(\lambda + \varpi_{2})$}{the atomic embedding map}}\label{sec:psitableaux}

In this subsection we give a combinatorial description of the map $\Psi$ on Kashiwara--Nakashima tableaux. The reader interested in the thought process behind the present work might want to know that this was the first map $\Psi$ we obtained. Although it is not used in the rest of the paper, we include it for completeness. It may also be used to compute examples by hand directly. 

\begin{proposition}
Let $T \in \mathcal{P}(\lambda) \subset \calB(\lam)$. Then the Kashiwara--Nakashima tableau of $\Psi(T)$ can be obtained from the KN tableau corresponding to $T$ by the following algorithm.
Let $d'$ be the number of $\bar{1}$'s in the first row of $T$.

\begin{itemize}
\item Add the column $\Skew(0:\hbox{\tiny{$1$}} |0: \hbox{\tiny{$2$}} )$ on the left tableau.
\item If $d' = 0$ or $d'=\lam_1$ then
\begin{itemize}
    \item If $T$ has a column of the form $\Skew(0:\hbox{\tiny{$2$}} |0: \hbox{\tiny{$\bar 2$}} )$ (note that $T$ can have at most one such column), then replace this column by $\Skew(0:\hbox{\tiny{$ 2$}} |0: \hbox{\tiny{$\bar 1$}} )$
    \item If $T$ does not have column of the form $\Skew(0:\hbox{\tiny{$2$}} |0: \hbox{\tiny{$\bar 2$}} )$,  then replace the rightmost $1$ in the first row by a $2$.
    \item Replace the rightmost $2$ in the second row by $\bar 2$.
\end{itemize}
\item If $0<d'<\lam_1$ then replace the rightmost $2$ in the first row by a $\bar{1}$. If there is no $2$ in the first row, then replace the rightmost $\bar{2}$ in the first row by $\bar 1$ and the rightmost $2$ in the second row by $\bar 2$.
\end{itemize}

\end{proposition}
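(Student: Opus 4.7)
The approach is a direct verification: we compute the adapted string $\str_2$ of the output of the algorithm and check it matches the string predicted by \Cref{defPsi}. The heart of the proof lies in identifying the combinatorial parameter $d'$ (the number of $\bar 1$'s in the first row of $T$) with the adapted-string parameter $d$.

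The first step is to describe the KN tableau $T \in \calP(\lambda)$ with $\str_2(T) = (a,b,c,d)$ explicitly in terms of column types. Tracing through the action $T = f_2^a f_1^b f_2^c f_1^d(v_\lambda)$: the operator $f_1^d$ converts the rightmost $d$ ones of the top row (which sit in single columns since $d \leq \lambda_1$) into twos; then $f_2^c$ converts twos into $\bar 2$'s, first in the single columns of the top row and subsequently in the bottom row of the double columns once the top-row twos are exhausted; then $f_1^b$ acts on the resulting mixed-type word via a bracket-matching analysis that creates new twos from remaining ones and upgrades top-row $\bar 2$'s into $\bar 1$'s; finally $f_2^a$ creates further $\bar 2$'s in the bottom row. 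The preatomic conditions from \Cref{preatominequalities} --- namely that one of $d=0$, $d=\lambda_1$, or $b=\lambda_1+2c-2d$ holds --- pin down the column-type counts precisely, and the resulting explicit description shows in particular that $d' = d$.

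Given this explicit description of $T$, one applies the algorithm and reads off the $\str_2$ of the output. In the case $d' \in \{0,\lambda_1\}$, the modifications (adding a $(1,2)$-column on the left, replacing a $(2,\bar 2)$-column by $(2,\bar 1)$ or otherwise creating a new top-row two, and replacing the rightmost bottom-row two by $\bar 2$) precisely account for incrementing both $b$ and $c$ in the adapted string --- that is, adding one more $f_1$ and one more $f_2$ in the middle of the sequence, producing $(a,b+1,c+1,d)$. In the case $0 < d' < \lambda_1$, the single modification (converting the rightmost top-row two to $\bar 1$, or, if absent, the rightmost top-row $\bar 2$ to $\bar 1$ combined with adjusting the bottom row) corresponds to incrementing $c$ and $d$, producing $(a,b,c+1,d+1)$. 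Both shifts agree with \Cref{defPsi}.

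The main obstacle is the tableau description in the first step: the bracket-matching interactions between the $f_1$ and $f_2$ operators require careful bookkeeping, especially when $b = \lambda_1 + 2c - 2d$, where the top-row $\bar 2$'s created by $f_2^c$ interact non-trivially with the subsequent $f_1^b$. Once the structure of $T$ is in hand, however, the verification that the algorithm replicates the shifts prescribed by \Cref{defPsi} becomes a routine matter of counting column types in each of the two cases.
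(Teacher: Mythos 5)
Your approach and the paper's are essentially the same: both identify $d' = d$ by observing that the $\bar 1$'s in the first row of $T$ are produced exactly by the initial $f_1^d$ in $T = f_2^a f_1^b f_2^c f_1^d(v_\lambda)$, then trace the crystal operator sequence through the column structure of the tableau (using the preatom conditions from \Cref{preatominequalities} to resolve the branching at $d=0$, $d=\lambda_1$, or $b=\lambda_1+2c-2d$), and finally match the resulting tableau modifications against the string shifts $(a,b+1,c+1,d)$ or $(a,b,c+1,d+1)$ from \Cref{defPsi}. The only difference is presentational — you phrase it as reading off $\str_2$ of the algorithm's output, whereas the paper builds up $\Psi(T)$ operator-by-operator — but the substance, including the crucial bracket-matching analysis at the $f_1^b$ step, is the same.
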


\begin{proof}
Let $v_{\lambda}$ and $v_{\lambda + \varpi_{2}}$ be the highest weight tableaux of shapes $\lambda$ and $\lambda + \varpi_{2}$, respectively. 
Let $\stn(T)=(a,b,c,d)$.
Note that, since $b\geq c \geq d$, it follows from the definition of the crystal operators that the letter $\bar 1$ appears in the first row of $T$ precisely $d$ times, so $d' = d$. 

Assume first that $d \in \left\{ 0,\lambda_{1}\right\}$ and recall that $\stn(\Psi(T))=(a,b+1,c+1,d)$. One obtains the above described combinatorial interpretation by making the following observations.
 
 \begin{itemize}
     \item The element $v_{\lambda + \varpi_{2}}$ is obtained from $v_{\lambda}$ by adding to its left the column  $\Skew(0:\hbox{\tiny{$1$}} |0: \hbox{\tiny{$2$}} )$.
     \item When the operator $f_{1}$ is first applied $d$ times to $v_{\lambda}$, it transforms the $d$ leftmost $1$'s into $2's$. Similarly, $f_1^d(v_{\lam+\varpi_2})$ is obtained by replacing the $d$ leftmost $1$ in $v_{\lam+\varpi_2}$ by $2$.
     \item To obtain $f_2^cf_1^d(v_\lam)$, the $2$ which have just been transformed, are replaced again by $\bar 2$. Since $c\geq d$, we must also transform $c-d$ $2$ in the second row into $\bar 2$. Similarly,
     $f_2^{c+1}f_1^d(v_{\lam+\varpi_2})$ is obtained from $f_2^cf_1^d(v_\lam)$ by adding $\Skew(0:\hbox{\tiny{$1$}} |0: \hbox{\tiny{$2$}} )$ on the left and replacing one extra $2$ in the second row by $\bar{2}$.

     \item To obtain $f_1^bf_2^cf_1^d(v_\lam)$ from $f_2^cf_1^d(v_\lam)$ we  transform all the $\bar 2$'s in the first row (if any) into $\bar 1$'s. Then, the rightmost $b-d$ boxes in the columns of size two are transformed according to the crystal rule: $1 \mapsto 2 $ as well as $\bar 2 \mapsto \bar 1$. Therefore if in $T$ there is a column of the form $\Skew(0:\hbox{\tiny{$2$}} |0: \hbox{\tiny{$\bar 2$}} )$, it means that $f_{1}$ modified the top box of this column last, because $f_{2}$ would never modify this column further as its $2$-signature is $+-$. Therefore, if we would apply it one more time, it would modify also the bottom box into $\bar 1$. If there is no such column in $T$, it must therefore mean that the operator $f^{b}_{1}$ either finished at the bottom of one of these columns or was not applied at all to the columns of size two. Applying this operator once more to $f^{b}_{1}f^{c+1}_{2}f^{d}_{1}v_{\lambda + \varpi_{2}}$ would then transform the rightmost $1$ in the first column  into $\bar 2$.
     \item Finally, note that applying the operator $f_{2}^a$ to   $f^{b+1}_{1}f^{c+1}_{2}f^{d}_{1}v_{\lambda + \varpi_{2}}$ or to $f^{b}_{1}f^{c}_{2}f^{d}_{1}v_{\lambda + \varpi_{2}}$ only transforms the letters $2$ into a $\bar 2$ (with the one exception of $2, \bar 2$ being in the same column already described above), hence the changes made by $f^{b}_{1}, f^{b+1}_{1}$ are not modified by $f_{2}^{a}$. 
 \end{itemize}
 
 This finishes the proof in case $d \in \left\{ 0,\lambda_{1} \right\}$. Now assume that $0< d < \lambda_{1}$. From the description above we conclude that $f^{2}_{a}f^{b}_{1}f^{c+1}_{2}f^{d+1}_{1}v_{\lambda + \varpi_{2}}$ is obtained from $f^{c}_{2}f^{d}_{1}v_{\lambda }$ by adding a highest weight column of shape $\varpi_{2}$ at the beginning of $f^{d}v_{\lambda }$  and by replacing the rightmost letter $2$ in the first row, if it exists (note that this is the case if and only if $a<b$), by $\bar 1$, and re-ordering the letters of that row.  \\
 
 If there is no $2$ in the first row, then we claim that the first row must contain at least one letter $\bar 2$. This follows from \Cref{preatominequalities} since in this case we have $b = \lambda_{1}-2d+2c \geq \lambda_{1} > d$. Then from the description above we deduce that $f^{2}_{2}f^{b}_{1}f^{c+1}_{2}f^{d+1}_{1}v_{\lambda + \varpi_{2}}$ is obtained from $f^{c}_{2}f^{d}_{1}v_{\lambda }$ by adding a highest weight column of shape $\varpi_{2}$ at the beginning of $f^{d}v_{\lambda }$  and by replacing the rightmost letter $\bar 2$ in the first row, by $\bar 1$, and, since $a \geq b$, by replacing the rightmost $2$ in tho second row by $\bar 2$. This last action comes from the fact that since a new $\bar 1$ was created in the first column, $f_{1}^{b}$ created one $1 \mapsto 2$ move less (which would then become a $2 \mapsto \bar 2$ by $f_{2}^{a}$). Hence necessarily $f^{a}_{2}$ will create one more move of the form $2 \mapsto \bar 2$ in the second row. 
\end{proof}

\end{document}